\numberwithin{equation}{section}
\newtheorem{theorem}{Theorem}[section]
\newtheorem{lemma}[theorem]{Lemma}
\newtheorem{corollary}[theorem]{Corollary}
\theoremstyle{definition}
\newtheorem{example}[theorem]{Example}
\newtheorem{remark}[theorem]{Remark}
\newtheorem{definition}[theorem]{Definition}
\newtheorem*{maintheorem}{Main Theorem}
\newcommand{\A}{\mathbb{A}}
\newcommand{\R}{\mathbb{R}}
\newcommand{\Z}{\mathbb{Z}}
\newcommand{\N}{\mathbb{N}}
\newcommand{\C}{\mathbb{C}}
\newcommand{\HH}{\mathbb{H}}
\newcommand{\Sp}{\mathbb{S}} 
\newcommand{\ideal}[1]{\langle #1 \rangle} 
\newcommand{\norm}[1]{\lVert #1 \rVert}
\DeclareMathOperator{\Hom}{Hom}
\DeclareMathOperator{\Ker}{Ker}
\DeclareMathOperator{\Ima}{Im}
\DeclareMathOperator{\Aut}{Aut}
\DeclareMathOperator{\diag}{diag}
\begin{document}
\title{Semilinear clannish algebras}

\author{Raphael Bennett-Tennenhaus}
\address{Fakult\"at f\"ur Mathematik, Universit\"at Bielefeld, 33501 Bielefeld, Germany}
\email{raphaelbennetttennenhaus@gmail.com}

\author{William Crawley-Boevey 
}
\address{Fakult\"at f\"ur Mathematik, Universit\"at Bielefeld, 33501 Bielefeld, Germany}
\email{wcrawley@math.uni-bielefeld.de}


\subjclass[2020]{Primary 16G10; Secondary 15A04, 16S36.}




\keywords{String algebra, Clannish algebra, Dedekind-like ring, Dieudonn\'e module}


\thanks{The authors have been supported by the Alexander von Humboldt Foundation in the framework of an Alexander von Humboldt Professorship endowed by the German Federal Ministry of Education and Research.}

\begin{abstract}
We define a class of associative algebras generalizing `clannish algebras', as introduced by the second author, but also incorporating semilinear structure, like a skew polynomial ring. Clannish algebras generalize the well known `string algebras' introduced by Butler and Ringel. Our main result is the classification of finite-dimensional indecomposable modules for these new algebras. 
\end{abstract}

\maketitle

\section{Introduction}
We define a class of associative algebras, which we call `semilinear clannish algebras', generalizing the  `clannish algebras' introduced by the second author in \cite{Cra1989}, but whose modules may also incorporate semilinear structure. Recall that for an automorphism $\sigma$ of a division ring $K$, a map $\theta:V\to W$ between left $K$-modules is \emph{$\sigma$-semilinear} if $\theta(\lambda v+ \lambda' v') = \sigma(\lambda) \theta(v)+\sigma(\lambda')\theta(v')$ for all $v,v'\in V$ and $\lambda,\lambda'\in K$. Clannish algebras generalize the well known `string algebras' introduced by Butler and Ringel \cite{ButRin1987}. In unpublished work \cite{Rin-lec}, Ringel considered representations of the corresponding semilinear generalization of string algebras.

Our main result is the classification of the finite-dimensional indecomposable modules for semilinear clannish algebras, under suitable hypotheses. As a special case we recover results Ringel claimed for semilinear string algebras. Recall that the finite-dimensional indecomposable modules for string algebras are classified into two types, strings and bands, indexed by certain `words', and in addition, the band modules depend on the choice of an indecomposable module for a Laurent polynomial ring $K[x, x^{-1}]$. For clannish and semilinear clannish algebras there is also a classification into strings and bands, but each of these classes divides into two subclasses, asymmetric and symmetric, and there are several replacements for $K[x,x^{-1}]$. 

Let $K$ be a division ring, let $Q$ be a finite quiver and let $\boldsymbol{\sigma}$ be a collection of automorphisms $\sigma_a$ of $K$ indexed by the arrows $a$ in $Q$. The \emph{semilinear path algebra} $K_{\boldsymbol{\sigma}} Q$ of $Q$ over $K$ is the left $K$-module with basis the paths in $Q$, including a trivial path $e_i$ for each vertex $i$, with multiplication twisted by the rule that $a\lambda = \sigma_a(\lambda) a$ for $a$ an arrow and $\lambda\in K$. Its modules correspond to semilinear representations of $Q$.

To define a semilinear clannish algebra, we fix a set $\Sp$ of loops in $Q$, which we call \emph{special loops}; other arrows are called \emph{ordinary arrows}. For each $s\in \Sp$ we fix a monic quadratic element $q_s(x) = x^2-\beta_s x + \gamma_s$ in the skew polynomial ring $K[x;\sigma_s]$. Let $Z$ be a set of paths in $Q$ of length at least 2, which will be `zero-relations'. We assume that no element of $Z$ starts or ends with a special loop, or has one special loop occurring twice consecutively. Let $R = K_{\boldsymbol{\sigma}} Q/I$ where $I$ is the ideal generated by $Z$ and elements of the form $s^2 - \beta_s s + \gamma_s e_i$ for each special loop $s\in \Sp$, say at vertex $i$. We say that $R$ is a \emph{semilinear clannish algebra} provided that the following conditions are satisfied.
\begin{itemize}
\item[(1)]
At most two arrows have tail at any vertex of $Q$.
\item[($1'$)]
At most two arrows have head at any vertex of $Q$.
\item[(2)]
For any ordinary arrow $a$, there is at most one arrow $c$ with $ca$ a path not in $Z$.
\item[($2'$)]
For any ordinary arrow $a$, there is at most one arrow $c$ with $ac$ a path not in $Z$.
\end{itemize}

When $K$ is commutative, so a field, and all automorphisms in $\boldsymbol{\sigma}$ are trivial, this recovers the notion of a clannish algebra \cite{Cra1989}; when there are no special loops we call it a \emph{semilinear string algebra} (Ringel \cite{Rin-lec} also assumed that $K$ is commutative); when all of these restrictions hold, one recovers the notion of a string algebra (but without the finiteness conditions (3) and (3*) of \cite[p. 157]{ButRin1987}, so for example corresponding to string algebras as defined in \cite{Cra2018} given by a finite quiver). 

We say that a semilinear clannish algebra is
\begin{itemize}
\item[(i)]\emph{normally-bound} if $q_s(x)$ is normal in $K[x;\sigma_s]$ for all $s\in\Sp$. 
Recall that an element of a ring is \emph{normal} if the left and right ideals that it generates are equal. By Lemma~\ref{lemma:characterising_central_skew_quadratics}, it is equivalent that $\sigma_s(\beta_s)=\beta_s$ and $\sigma_s(\gamma_s)=\gamma_s$, and that $\beta_{s} \lambda= \sigma_{s}(\lambda) \beta_{s}$ and $\gamma_{s}\lambda =  \sigma^{2}_{s}(\lambda)\gamma_{s}$ for all $\lambda\in K$;

\item[(ii)]\emph{of non-singular type} if $q_s(x)$ is non-singular for all $s\in \Sp$.
Here we say that a polynomial $p(x) \in K[x;\sigma]$ is \emph{non-singular} if it has a non-zero constant term. Thus the condition is that $\gamma_s\neq 0$;

\item[(iii)]\emph{of semisimple type} if $q_s(x)$ is semisimple for all $s\in \Sp$. Here we say that a polynomial $p(x) \in K[x;\sigma]$ is \emph{semisimple} if the factor ring
$K[x;\sigma]/\ideal{p(x)}$ is a semisimple artinian ring. By Lemma~\ref{lemma:characterisingsemisimple-quadratics}, if $q_s(x)$ is normal, it is equivalent that it is not of the form $(x-\eta)^2$ with $\eta\in K$ and $\sigma(\lambda)\eta=\eta\lambda$ for all $\lambda\in K$.
\end{itemize}
See Remark~\ref{remark:condsonpolys} for a discussion of the necessity of these conditions. In Section~\ref{s:words} we define the notion of a `word', an equivalence relation on words, and sets of `strings' and `bands' which are unions of equivalence classes of words. For each string or band $w$, in Sections \ref{s:asymstring}--\ref{s:symband} we define a ring $R_w$ equipped with a ring homomorphism $K\to R_w$ and an $R$-$R_w$-bimodule $M(C_w)$, finitely generated and free as a right $R_w$-module.

\begin{maintheorem}
Let $R$ be a semilinear clannish algebra which is normally  bound, of non-singular type and of semisimple type. As $w$ runs through representatives of the equivalence classes of strings and bands and as $V$ runs through a complete set of non-isomorphic indecomposable $R_w$-modules, finite-dimensional over $K$, the modules $M(C_w)\otimes_{R_w} V$ run through a complete set of non-isomorphic indecomposable $R$-modules, finite-dimensional over $K$.
\end{maintheorem}

Under the stated conditions, if $w$ is a string, then $R_w$ is a semisimple artinian ring, and if $w$ is a band, then $R_w$ is a hereditary noetherian prime ring. 

The classifications in \cite{Cra1988ff} and \cite{Cra1989} are proved using the so-called `functorial filtration method', which goes back to Gelfand and Ponomarev \cite{GelPon1968}, essentially for modules for the string algebra $K[x,y]/\ideal{xy}$ (for a field $K$), and was adapted to the string algebra $K\langle x,y\rangle/\ideal{x^2,y^2}$ by Ringel \cite{Rin1975}. The method involves certain functorially-defined subspace filtrations on a module built from linear relations. Compatibility conditions are then checked between the filtrations and a list of indecomposables. These conditions form \cite[Lemma, p. 22]{Rin1975}, part (iii) of which is a `mapping property'. This property was verified in \cite[FF4]{Cra1989} using certain `splitting lemmas' written in terms of relations.

We adapt the method to our context by considering relations which are \emph{semilinear}, the prototypical example being the graph of a semilinear map. Additionally, certain subquotients of the functorial filtrations discussed above are realised as factors of the functor $\Hom_R(M(C_{w}),-)$. By writing our splitting lemmas in terms of this Hom-functor, we simplify the verification of the mapping property (see Lemma \ref{lemma:existsgamma}). 

To verify part (i) of \cite[Lemma, p. 22]{Rin1975}, the aforementioned subquotients are evaluated on each member of the given list of indecomposables. For clannish algebras this used certain `orientation results', see for example \cite[\S 4.1]{Cra1988ff}. Here we both generalise and simplify these orientation results.

%
We now give some examples of semilinear clannish algebras; see \S\ref{s:examples} for more details. The special case of string algebras, and especially so-called `gentle algebras', has attracted much interest, see for example \cite{AAG,BC,GLFS,LP,PPP,Schroll}.
%
%
%
If $K$ is a perfect field of characteristic $p>0$, and $\sigma$ is its Frobenius automorphism, then there is a semilinear string algebra whose modules are given by a $K$-vector space equipped with a $\sigma$-semilinear endomorphism $F$ and a $\sigma^{-1}$-semilinear endomorphism $V$, satisfying $FV=VF=0$. These are exactly Dieudonn\'e modules annihilated by $p$.
%
%
De Oliveira, Futorny, Klimchuk, Kovalenko and Sergeichuk \cite{Oliveira2013} have studied semilinear representations of a quiver of finite or extended Dynkin type $\A$ over $\C$, such that the automorphism of $\C$ associated to an arrow is either the identity or complex conjugation. This is a special case of a semilinear string algebra.

The clannish algebra $K\langle a,e\rangle /\ideal{a^2,e^2-e}$ is studied in \cite{Cra1988ff}; the classification of its finite-dimensional modules is exactly the classification of an idempotent matrix and a square-zero matrix up to simultaneous similarity. But the quadratic polynomial for the special loop $e$ is $q_e(x)=x^2-x$, which has constant term zero, so, like \cite{Cra1988ff,Cra1989}, our theory does not apply to this algebra. 
%
However, as explained in \cite{Cra1988ff}, if the field $K$ has more than two elements, and $\mu \in K\setminus\{0,1\}$, then replacing the generator $e$ by $t = e-\mu 1$, we obtain the relation $(t+\mu)(t+\mu-1) = 0$, giving a presentation of the algebra to which our theory does apply; see also Remark~\ref{remark:condsonpolys}(ii).
%
%
So-called `skewed-gentle algebras', see \cite{GdlP}, are a special class of clannish algebras, analogous to gentle algebras. 
Another example of a clannish algebra is $\R\langle a,t \rangle/\ideal{a^2,t^2+1}$, the free product over $\R$ of the ring of dual numbers $\R[a]/\ideal{a^2}$ and the field of complex numbers $\C$. Here $t$ is a special loop with $q_t(x)=x^2+1$, which is irreducible over $\R$, and hence this algebra is not covered by the classification in \cite{Cra1989}. 
It was mentioned there that the results may remain true for irreducible quadratics, provided that the splitting lemma held true. In this paper we prove that splitting does hold, so this example is covered by our present theory.

Another example of a semilinear clannish algebra is the ring $R$ whose modules are given by a $\C$-vector space equipped with conjugate-linear endomorphisms $a$ and $t$, satisfying $a^2=0$ and $t^2=-1$. This is a semilinear clannish algebra with $K=\C$, but because of the conjugate-linear endomorphisms, $\C$ is not central. However, it is an algebra in the usual sense over $\R$, and can be presented as
\[
R = \R \langle i,a,t\rangle/\ideal{i^2+1,a^2,t^2+1,ai+ia,ti+it}.
\]
In fact $R\cong M_2(\R[x,y]/\ideal{x^2+y^2})$, so our classification implicitly gives a classification of finite-dimensional indecomposable modules for $\R[x,y]/\ideal{x^2+y^2}$.

One difference with \cite{Cra1989}, is that that paper does not contain an explicit list of indecomposable modules for clannish algebras; rather, it explains how to convert the classification problem into a certain type of matrix problem called a `clan', and then gives a classification of representations of clans. In this paper, on the other hand, we give a classification of the indecomposable modules for semilinear clannish algebras directly, along the lines followed in \cite{Cra1988ff} for an idempotent and a square zero matrix, avoiding matrix problems. This is perhaps more convenient in applications. Going in the reverse direction, one can easily define the notion of a `semilinear clan' and obtain the classification of its indecomposable representations from those of a suitable semilinear clannish algebra, using the discussion in \cite[\S2.5]{Cra1989}.

\section{Preliminaries}
\subsection{Semilinear path algebras}
\label{s:basicdefns}
Let $K$ be a division ring and let $\sigma$ be an automorphism of $K$. If $V$ is a left $K$-module, we write ${}_\sigma V$ for its restriction via $\sigma$. We call it a \emph{twist} of $V$. Clearly a $\sigma$-semilinear map $V\to W$ is the same thing as a $K$-module homomorphism $V\to {}_\sigma W$. Note that a collection of elements of $V$ is a $K$-basis of $V$ if and only if it is a $K$-basis of ${}_{\sigma} V$, so $V$ and its twists have the same dimension.

Recall that a \emph{$K$-ring} is a ring $R$ equipped with a ring homomorphism $K\to R$; we identify $K$ with its image in $R$. Let $Q = (Q_0,Q_1,h,t)$ be a finite quiver, where $Q_0$ and $Q_1$ are the sets of vertices and arrows, and $h,t:Q_1\to Q_0$ give the head and tail of each arrow. Let $\boldsymbol{\sigma} = (\sigma_a)_{a\in Q_1}$ be a collection of automorphisms of~$K$.  The \emph{semilinear path algebra} $K_{\boldsymbol{\sigma}} Q$ is the $K$-ring generated by elements $e_i$ ($i\in Q_0$) and arrows $a\in Q_1$ subject to the relations
\[
e_i e_j = \begin{cases} e_i & (i=j) \\ 0 & (i\neq j),\end{cases}
\quad
\sum_{k\in Q_0} e_k = 1,
\quad 
e_{h(a)} a = a,
\quad
a e_{t(a)} = a,
\quad
e_i \lambda = \lambda e_i,
\quad
a \lambda = \sigma_a(\lambda) a
\]
for $i,j\in Q_0$, $a\in Q_1$ and $\lambda\in K$. Equivalently, $K_{\boldsymbol{\sigma}} Q$ is the tensor ring over the ring $S = K^{Q_0}$ of the $S$-$S$-bimodule 
\[
\bigoplus_{a\in Q_1} {}_{\pi_{h(a)}} K_{\sigma_a\pi_{t(a)}}
\]
where $\pi_i$ is the projection from $S$ to the $i$th copy of $K$, and the notation means that $K$ is considered as a left $S$-module by restriction via $\pi_{h(a)}$ and as a right $S$-module by restriction via $\sigma_a\pi_{t(a)}$. 

For any path $p$ we define an automorphism $\sigma_p$ of $K$ as follows. For a trivial path $e_i$ it is the identity, for an arrow $a$ it is $\sigma_a$ and for a path $a_1\dots a_n$, with each $a_{i}$ an arrow, it is $\sigma_{a_1}\dots \sigma_{a_n}$. 
It follows that $K_{\boldsymbol{\sigma}} Q$ is the left $K$-module with basis the paths in $Q$, where $e_i$ corresponds to the trivial path at vertex $i$,
and the multiplication satisfies
\[
\lambda p \cdot \mu q = \lambda \sigma_p(\mu) p q \quad (\lambda,\mu\in K,\text{ paths }p,q\text{ in }Q).
\]
The category of left modules for $K_{\boldsymbol{\sigma}} Q$ is equivalent to the category of semilinear representations of $Q$. Here a semilinear representation of $Q$ is a tuple $V = (V_i,V_a)$ consisting of a left $K$-module $V_i$ for each $i\in Q_0$, and a $\sigma_a$-semilinear map $V_a:V_i\to V_j$ for each arrow $a:i\to j$; and a morphism $\theta:V\to W$ of semilinear representations is a tuple $\theta = (\theta_i)$ consisting of a $K$-linear map $\theta:V_i\to W_i$ for each vertex $i\in Q_0$, satisfying $\theta_{h(a)} V_a = V_a \theta_{t(a)}$ for each arrow $a\in Q_1$. 
To a $K_{\boldsymbol{\sigma}} Q$-module $M$ corresponds the representation $V$ consisting of the $K$-modules $V_i = e_i M$ for $i\in Q_0$, with the map $V_a$ for $a\in Q_1$ given by the action of~$a$.

As an example, note that the semilinear path algebra for the quiver with one vertex, a loop $x$ and $\sigma_x=\sigma$ is the skew polynomial ring $K[x;\sigma]$ with $x\lambda = \sigma(\lambda)x$.
%
Note that semilinear representations of a quiver are nothing new: they are a special case of representations of a species \cite[\S7.4]{GabrielirII} (but without the nilpotence condition, hence without the need to complete the tensor algebra) or of a realization of a valued graph \cite{DlabRin1976}.

\subsection{Quadratic polynomials}
\label{s:quadpolyns}
Let $\sigma$ be an automorphism of the division ring $K$, and let $R = K[x;\sigma]$ be the skew polynomial ring. Recall that we say that a polynomial is \emph{non-singular} if its constant term is non-zero, and that an element $r$ in $R$ is said to be \emph{normal} in $R$ if $r R=R r$.

\begin{lemma}
\label{lemma:characterising_central_skew_quadratics}
A monic quadratic polynomial $q(x) = x^2-\beta x + \gamma$ in $R = K[x;\sigma]$ (with $\beta,\gamma\in K$) is
\begin{itemize}
\item[(i)]normal in $R$ if and only if $\sigma(\beta) = \beta$, $\sigma(\gamma) = \gamma$, and $\sigma(\lambda)\beta = \beta \lambda$ and $\sigma^2(\lambda)\gamma = \gamma\lambda$ for all $\lambda\in K$; and
\item[(ii)]central in $R$ if and only if in addition $\sigma^2 = 1$.
\end{itemize}
\end{lemma}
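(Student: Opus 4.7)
The plan is to prove normality in (i) by a bi-implication, and to deduce (ii) quickly from (i). In both directions of (i), the key is that $q$ has degree $2$, so any element of $Rq$ of degree $2$ must have the form $\mu q$ with $\mu\in K$, and any element of $Rq$ of degree $3$ with leading coefficient $1$ must have the form $(x+\mu)q$ with $\mu \in K$. This degree bookkeeping reduces the normality condition $qR\subseteq Rq$ to matching coefficients on two specific products, namely $q\lambda$ and $qx$.

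For necessity, first I would test $q\lambda\in Rq$: writing $q\lambda = \mu q$ and expanding $q\lambda = \sigma^2(\lambda)x^2 - \beta\sigma(\lambda)x + \gamma\lambda$ and $\mu q = \mu x^2 - \mu\beta x + \mu\gamma$, matching coefficients gives $\mu=\sigma^2(\lambda)$, then $\sigma^2(\lambda)\beta = \beta\sigma(\lambda)$ and $\sigma^2(\lambda)\gamma = \gamma\lambda$. Replacing $\lambda$ by $\sigma^{-1}(\lambda)$ in the first rewrites it as $\sigma(\lambda)\beta = \beta\lambda$. Next, testing $qx\in Rq$: write $qx = (x+\mu)q$ and expand using $x\beta = \sigma(\beta)x$ and $x\gamma = \sigma(\gamma)x$; matching coefficients in degrees $2$, $1$, $0$ yields $\mu = \sigma(\beta)-\beta$, $\sigma(\gamma) - \mu\beta = \gamma$, and $\mu\gamma = 0$. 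A short case analysis (the alternative $\gamma=0$, $\mu\neq 0$ forces $\beta=0$ and then $\mu=0$, a contradiction) shows $\mu=0$, hence $\sigma(\beta)=\beta$ and $\sigma(\gamma)=\gamma$.

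For sufficiency, assume all four conditions. Using $\beta\sigma(\lambda)=\sigma^2(\lambda)\beta$ (the rewrite of $\sigma(\lambda)\beta=\beta\lambda$) and $\gamma\lambda=\sigma^2(\lambda)\gamma$, a direct expansion gives $q\lambda = \sigma^2(\lambda) q$. Using $\sigma(\beta)=\beta$ and $\sigma(\gamma)=\gamma$ gives $xq = x^3 - \sigma(\beta)x^2 + \sigma(\gamma)x = qx$. Any $r\in R$ is a left $K$-linear combination of monomials $x^n$, and these two identities propagate to $q\cdot \lambda x^n = \sigma^2(\lambda)x^n\cdot q$ and, symmetrically, $\lambda x^n\cdot q = q\cdot \sigma^{-2}(\lambda)x^n$ (using that $\sigma$ is an automorphism), giving $qR=Rq$.

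Finally, for (ii), centrality is equivalent to normality together with $q\lambda=\lambda q$ for all $\lambda\in K$. Under normality the previous paragraph gives $q\lambda = \sigma^2(\lambda)q$, so centrality holds iff $\sigma^2(\lambda)=\lambda$ for all $\lambda$, i.e., $\sigma^2=1$. The only delicate step anywhere in this plan is the degenerate case analysis in the $qx\in Rq$ calculation; everything else is routine coefficient matching.
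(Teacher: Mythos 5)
Your proof is correct and essentially the same as the paper's: both directions are handled by degree bookkeeping on $q\lambda\in Rq$ and $qx\in Rq$, followed by checking that the resulting conditions on $\beta,\gamma$ suffice via $q\lambda=\sigma^2(\lambda)q$ and $qx=xq$. The only cosmetic differences are that you test the two products in the opposite order and you resolve the degenerate case by deriving $\mu=0$ outright (rather than observing the equalities hold in both branches of the case split).
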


\begin{proof}
(i) If the conditions hold then $q(x)x = xq(x)$ and $q(x)\lambda = \sigma^2(\lambda)q(x)$ for $\lambda\in K$, so $q(x)$ is normal. Thus suppose that $q(x)$ is normal. We must have $q(x)x = p(x)q(x)$ for some polynomial $p(x)$, which by degree arguments must be of the form $a+bx$. The term in $x^3$ gives $b=1$ and the constant term gives $a\gamma=0$. The other terms give $\beta = \sigma(\beta)-a$ and $\gamma = \sigma(\gamma)-a\beta$. Now $\sigma(\beta) = \beta$ and  $\sigma(\gamma) = \gamma$ because either $a=0$ or, if $a\neq 0$, then $\gamma=0$, so also $a\beta=0$, so $\beta=0$. For $\lambda\in K$ we must also have $q(x)\lambda = r(x)q(x)$ for some polynomial $r(x)$, which by degree arguments must be a constant polynomial, and the term in $x^2$ gives $r(x)=\sigma^2(\lambda)$. It follows that $\gamma\lambda = \sigma^2(\lambda)\gamma$ and $\beta \sigma(\lambda) = \sigma^2(\lambda)\beta$, which gives the claim.

(ii) Clear. 
\end{proof}

Take the automorphism $\sigma$ (of order $4$) of the quarternions $\HH=\R\oplus\R i\oplus \R j\oplus \R k$ which conjugates by $1+i$. Then $x^{2}+2i$ is normal but not central in $\HH[x;\sigma]$. 
For later purposes we note the following.

\begin{corollary}
\label{corollary:twisting-(and-inverting-non-singular)-preserves-normal/central-quadratics}
For $q(x)=x^{2}-\beta x+\gamma$ in $K[x;\sigma]$ the following statements are equivalent.
\begin{enumerate}
    \item The polynomial $q(x)$ is normal (respectively, central) in $K[x;\sigma]$.
    \item For any $\phi \in  \Aut(K)$, $x^{2}-\phi(\beta)x+\phi(\gamma)$ is normal (respectively, central) in $K[x;\phi\sigma\phi^{-1}]$.
\end{enumerate}
If additionally $q(x)$ is non-singular, then (i) and (ii) are equivalent to saying $q'(x)=x^{2}-\gamma^{-1}\beta x+\gamma^{-1}$ is normal (respectively, central) in $K[x;\sigma^{-1}]$.
\end{corollary}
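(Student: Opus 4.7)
My plan is to reduce both equivalences to a direct application of Lemma~\ref{lemma:characterising_central_skew_quadratics}, translating normality (respectively, centrality) into the four coefficient conditions it provides, and then checking that each condition is preserved under the relevant transformation of coefficients and twisting automorphism.

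For the equivalence of (i) and (ii), I would fix $\phi\in\Aut(K)$, write $\beta'=\phi(\beta)$, $\gamma'=\phi(\gamma)$, $\sigma'=\phi\sigma\phi^{-1}$, and verify each of the four conditions from Lemma~\ref{lemma:characterising_central_skew_quadratics}(i) in the primed system. For instance $\sigma'(\beta')=\phi\sigma\phi^{-1}\phi(\beta)=\phi(\sigma(\beta))$, which equals $\beta'=\phi(\beta)$ iff $\sigma(\beta)=\beta$; the identity $\sigma'(\mu)\beta'=\beta'\mu$ for all $\mu\in K$, after substituting $\mu=\phi(\lambda)$, becomes $\phi(\sigma(\lambda)\beta)=\phi(\beta\lambda)$, which is equivalent to $\sigma(\lambda)\beta=\beta\lambda$ for all $\lambda\in K$ since $\phi$ is bijective. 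The other two conditions transform in the same way, so (i) and (ii) have the same coefficient content. For centrality, the only extra condition is $\sigma^2=1$, which is equivalent to $(\sigma')^2=\phi\sigma^2\phi^{-1}=1$, so the central case follows identically.

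For the third equivalence under the non-singular assumption, the idea is to again apply Lemma~\ref{lemma:characterising_central_skew_quadratics}(i) to $q'(x)=x^2-\gamma^{-1}\beta x+\gamma^{-1}$ in $K[x;\sigma^{-1}]$; here $\gamma\neq 0$ is what allows us to write down $q'$. Assuming $q(x)$ is normal, the condition $\sigma(\gamma)=\gamma$ yields $\sigma^{-1}(\gamma^{-1})=\gamma^{-1}$, and combined with $\sigma^{-1}(\beta)=\beta$ this gives $\sigma^{-1}(\gamma^{-1}\beta)=\gamma^{-1}\beta$. The condition $\sigma^{-2}(\lambda)\gamma^{-1}=\gamma^{-1}\lambda$ is obtained by rewriting $\sigma^2(\mu)\gamma=\gamma\mu$ in the form $\mu\gamma^{-1}=\gamma^{-1}\sigma^2(\mu)$ and substituting $\mu=\sigma^{-2}(\lambda)$. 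The remaining condition $\sigma^{-1}(\lambda)\gamma^{-1}\beta=\gamma^{-1}\beta\lambda$ follows by combining the identity just derived (rearranged as $\sigma^{-1}(\lambda)\gamma^{-1}=\gamma^{-1}\sigma(\lambda)$) with $\sigma(\lambda)\beta=\beta\lambda$. The converse is handled without extra work by observing that the assignment $q\mapsto q'$ (with the twist by $\sigma^{-1}$) is an involution: applying it to $q'$ returns $x^2-\gamma\cdot\gamma^{-1}\beta\,x+\gamma=q(x)$ in $K[x;\sigma]$, so either direction of implication gives the other. The central statement then follows by noting $(\sigma^{-1})^2=1$ iff $\sigma^2=1$.

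The only place that needs any care is the non-singular case, where one must verify the coefficient identities symbolically; this is the computational heart of the corollary but is essentially bookkeeping once the involution $q\mapsto q'$ is recognised. I do not anticipate any genuine obstacle.
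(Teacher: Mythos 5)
Your proposal is correct, and the core technique — translate normality into the four coefficient identities of Lemma~\ref{lemma:characterising_central_skew_quadratics} and check they transfer — is sound throughout. The one place you genuinely diverge from the paper is in the equivalence of (i) and (ii): the paper simply observes that $\phi$ extends to a ring isomorphism $K[x;\sigma]\to K[x;\phi\sigma\phi^{-1}]$ sending $\lambda x^i \mapsto \phi(\lambda)x^i$ (and carrying $q$ to $x^2-\phi(\beta)x+\phi(\gamma)$), and ring isomorphisms transport normal/central elements to normal/central elements, so (i)$\Rightarrow$(ii) is immediate and (ii)$\Rightarrow$(i) follows by taking $\phi=\mathrm{id}$. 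Your coefficient-by-coefficient check is equally valid but longer; the isomorphism route avoids ever re-invoking Lemma~\ref{lemma:characterising_central_skew_quadratics} for that part, which is why the paper can do it in one line. For the non-singular case your argument is a careful elaboration of exactly what the paper intends by "applying $\sigma^{-1}$ to the equations," and your observation that $q\mapsto q'$ (with $\sigma\mapsto\sigma^{-1}$) is an involution is a clean way to phrase the "reverse implication by symmetry" step.
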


\begin{proof}
That (ii) implies (i) is trivial by taking $\phi$ to be the identity on $K$. That (i) implies (ii) follows from considering the extension of $\phi$ to a ring isomorphism $K[x;\sigma]\to K[x;\phi\sigma\phi^{-1}]$ which sends $ \lambda x^i$ to $ \phi(\lambda) x^i$. 
Now we assume $q(x)$ is non-singular and that (i) holds. Applying $\sigma^{-1}$ to the equations from Lemma \ref{lemma:characterising_central_skew_quadratics} yields 
equations which show that $q'(x)$ is normal (respectively, central), also by Lemma \ref{lemma:characterising_central_skew_quadratics}. The reverse implication follows by symmetry since the polynomial $q'$ is non-singular.
\end{proof}

The next result characterises when the quotient $S=K[x;\sigma]/\ideal{q(x)}$ by a normal monic quadratic $q(x)$ is a semisimple artinian ring.

\begin{lemma}
\label{lemma:characterisingsemisimple-quadratics}
Let $q(x) = x^2 - \beta x + \gamma$ be normal in $R=K[x;\sigma]$, and let
$S=R/\ideal{q(x)}$. Exactly one of the following four cases occurs.

\begin{itemize}
\item[(1)]$q(x)$ only factors trivially (through a constant polynomial) in $R$, and $S$ is a division ring;
        
\item[(2)]$q(x)=(x-\eta)(x-\mu)$ for $\eta,\mu\in K$ with $\sigma(\lambda)\eta\neq \eta\lambda$ for some $\lambda\in K$, and $S\cong M_{2}(D)$ for some division ring $D$;

\item[(3)]$q(x)=(x-\eta)(x-\mu)$ for distinct $\eta,\mu\in K$ with $\sigma(\lambda)\eta=\eta\lambda$ for all $\lambda\in K$, and $S\cong K\times K$; 

\item[(4)]$q(x)=(x-\eta)^{2}$ for $\eta\in K$ with $\sigma(\lambda)\eta=\eta\lambda$ for all $\lambda\in K$, and $S$ is not a semisimple ring.
\end{itemize}
\end{lemma}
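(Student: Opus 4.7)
My plan is to dichotomize first on whether $q(x)$ admits a linear right factor in $R$, and then, if so, on whether such a factorization can be chosen with $\sigma$-central elements. For the divisibility test, I would apply right Euclidean division in $R = K[x;\sigma]$ to write $q(x) = g(x)(x-c) + r$ with $r\in K$, so $(x-c)$ is a right factor iff $r = \sigma(c)c - \beta c + \gamma = 0$. In Case~(1) this fails for every $c\in K$, so $\bar g(\bar x)(\bar x - c) = -r \ne 0$ in $S$, exhibiting a left inverse of $\bar x - c$; left invertibility implies two-sided invertibility in the finite-dimensional ring $S$. Since every non-zero element of $S$ is either a unit of $K$ or a left $K$-multiple of some $\bar x - c$, it follows that $S$ is a division ring.

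Next, assuming $q(x) = (x-\eta)(x-\mu)$, I would use the normality identity $\sigma(\lambda)(\eta + \sigma(\mu)) = (\eta + \sigma(\mu))\lambda$ from Lemma~\ref{lemma:characterising_central_skew_quadratics} to deduce that $\eta$ is $\sigma$-central iff $\mu$ is, and that any non-zero $\sigma$-central element is then $\sigma$-fixed. Case~(4) ($\eta = \mu$, both $\sigma$-central) then follows: $\bar x - \eta$ is non-zero (by a degree argument in $R$), and the identities $(\bar x - \eta)\lambda = \sigma(\lambda)(\bar x - \eta)$ and $(\bar x - \eta)\bar x = \eta(\bar x - \eta)$ show $K(\bar x-\eta)$ is a two-sided ideal squaring to zero, so $S$ is not semisimple. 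In Case~(3) ($\eta \ne \mu$, both $\sigma$-central), I would set $e = (\eta - \mu)^{-1}(\bar x - \mu)$ and $f = 1 - e$, check using the computation $(\bar x - \mu)(\bar x - \eta) = \mu\eta - \eta\mu = 0$ (which uses $\eta\mu = \mu\eta$ coming from $\sigma$-centrality) that $e,f$ are orthogonal idempotents summing to $1$, and verify centrality using that $\sigma$ acts as conjugation by the non-zero $\sigma$-central element $\eta - \mu$. The resulting Peirce decomposition yields $S = Ke \oplus Kf$ with each summand isomorphic as a ring to $K$ via $\lambda \mapsto \lambda e$ (respectively $\lambda f$), giving $S \cong K \times K$.

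Case~(2) is the most delicate: $q(x)$ factors but no $\sigma$-central factorization exists. Here I would show $S \cong M_2(D)$ by establishing simplicity and invoking Wedderburn. Suppose $J \subsetneq S$ is a non-zero proper two-sided ideal. Then $J \cap K = 0$ (else $J$ contains a unit), so by dimension $J = K(\bar x - c)$ for some $c$. The identity $\lambda(\bar x - c) - (\bar x - c)\sigma^{-1}(\lambda) = c\sigma^{-1}(\lambda) - \lambda c \in J \cap K = 0$ forces $c$ to be $\sigma$-central, and the containment $\bar x(\bar x - c) \in K(\bar x - c)$ forces $c^2 - \beta c + \gamma = 0$. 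Consequently $q(x) = (x - (\beta - c))(x - c)$ is a $\sigma$-central factorization (since $\beta$ is $\sigma$-central by normality and $c$ is $\sigma$-central, so is $\beta - c$), contradicting the Case~(2) hypothesis. Hence $S$ is simple; being artinian (two-dimensional over $K$), Wedderburn gives $S \cong M_n(D)$. The zero divisor $(\bar x - \eta)(\bar x - \mu) = 0$ rules out $n = 1$, and the counting $2 = \dim_K S = n\cdot \dim_K V$ for the simple left $S$-module $V$ forces $n = 2$.

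Mutual exclusivity of the four cases follows from their characterizations: Case~(1) excludes factorization, while Cases~(2), (3), (4) are distinguished by whether a $\sigma$-central factorization of $q(x)$ exists and whether its roots are distinct or repeated. The main obstacle is the simplicity argument in Case~(2), which requires translating the ring-theoretic hypothesis of a non-zero proper two-sided ideal of $S$ into the existence of a $\sigma$-central factorization of $q(x)$, thereby contradicting the purely algebraic Case~(2) hypothesis.
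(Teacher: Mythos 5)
Your proof is correct and reaches the same conclusions, but it follows a genuinely different route in several places, most notably Case~(2). Where the paper exhibits $S$ directly as a sum $J\oplus J\lambda^{-1}$ of two isomorphic simple left ideals (choosing $\lambda$ to witness that $\eta$ fails to be $\sigma$-central and computing $J\lambda^{-1}$ explicitly), you instead prove that $S$ is \emph{simple} by contradiction: you show that any proper nonzero two-sided ideal must be of the form $K(\bar x - c)$, that two-sidedness forces $c$ to be $\sigma$-central and to satisfy $c^2-\beta c+\gamma=0$, and hence produces a $\sigma$-central factorization $q(x)=(x-(\beta-c))(x-c)$---contradicting your Case~(2) hypothesis. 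You then invoke Artin--Wedderburn together with the zero divisor $(\bar x-\eta)(\bar x-\mu)=0$ and the dimension count $\dim_K S = n\cdot\dim_K V = 2$ to pin down $n=2$. The paper's version is more constructive; yours is more structural, and has the side benefit of making the mutual exclusivity of the four cases transparent (existence of a $\sigma$-central factorization becomes equivalent to $S$ having a proper nonzero two-sided ideal, an intrinsic property of $q(x)$, so the case distinction does not depend on which factorization one writes down). In Case~(1) you compute an explicit left inverse of $\bar x-c$ from the Euclidean-division remainder $\sigma(c)c-\beta c+\gamma$, whereas the paper argues more abstractly that $S$ is a domain hence a division ring; in Case~(3) you build explicit central orthogonal idempotents $e=(\eta-\mu)^{-1}(\bar x-\mu)$ and $f=1-e$ rather than arguing ``semisimple, not simple, hence $K\times K$.'' Case~(4) is essentially the same as the paper's nilpotent-ideal argument. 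All the computations I checked (e.g.\ that normality makes $\eta$ $\sigma$-central iff $\mu$ is, that nonzero $\sigma$-central elements are $\sigma$-fixed, the idempotent and centrality verifications, and the derivation of $c^2-\beta c + \gamma=0$) go through.
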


\begin{proof}
Any $f(x)$ in $R$ can be written uniquely as $p(x)q(x) + r(x)$ for some $p(x),r(x)\in R$ with $r(x)$ of degree $\le 1$, so $1$ and $x$ give a basis for $S$ as a left or right $K$-module, and hence $S$ is artinian.

Suppose $q(x)$ only factorizes trivially. Then $S$ is a domain, for if a product of linear polynomials is zero in $S$, then the product must be a scalar multiple of $q(x)$, and hence $q(x)$ factorizes. It follows that $S$ is a division ring, so we have case (1). 

Thus we may suppose that $q(x)$ factorizes non-trivially, so $q(x)=f(x)g(x)$ for $f(x)=a_{1}x+a_{0}$ and $g(x)=b_{1}x+b_{0}$ where $a_{i},b_{i}\in K$ for $i=0,1$ and $a_{1}\neq 0\neq b_{1}$. Equating coefficients of $x^{2}$ gives $a_{1}\sigma(b_{1})=1$ and hence $q(x)=(x-\eta)(x-\mu)$ where $\eta=-a_{0}b_{1}$ and $\mu=-b_{1}^{-1}b_{0}$. Clearly exactly one of (2),(3),(4) holds. 

From $q(x)=(x-\eta)(x-\mu)$ we obtain that $\beta = \eta+\sigma(\mu)$ and $\gamma=\eta\mu$. Since $q(x)$ is normal, we have $\beta = \sigma(\beta) = \sigma^2(\mu)+\sigma(\eta)$. Also $\gamma\mu = \sigma^2(\mu)\gamma$, so $\eta\mu^2 = \sigma^2(\mu)\eta\mu$. Now considering the cases $\mu=0$ and $\mu\neq 0$ separately, we deduce that $\eta\mu = \sigma^2(\mu)\eta$. It follows that $q(x) = (x-\sigma^2(\mu))(x-\eta)$.

We have $R(x-\mu) = (R(x-\eta)+K)(x-\mu) = \ideal{q(x)} + K(x-\mu)$ and $R(x-\eta) = (R(x-\sigma^2(\mu)) + K) (x-\eta) = \ideal{q(x)} + K(x-\eta)$, so they define left ideals $I$ and $J$ of $S$ of dimension 1 over $K$. If $\sigma(\lambda)\eta = \eta\lambda$ for all $\lambda\in K$, then $K(x-\eta) = (x-\eta)K$, so
\[
R (x-\eta) = \ideal{q(x)} + K(x-\eta) = \ideal{q(x)} + (x-\eta)K = (x-\eta) ((x-\mu)R + K) = (x-\eta) R
\]
so $J$ is a two-sided ideal in $S$.

In case (4), $J$ is a non-trivial nilpotent two-sided ideal in $S$, so $S$ is not semisimple. In case (3) $I$ and $J$ are distinct left ideals in $S$, so $S = I\oplus J$. Moreover $J$ is also a two-sided ideal. Thus $S$ is semisimple, but not simple, so it must be isomorphic to $K\times K$. In case (2) we have $J \neq J \lambda^{-1}$ since $J \lambda^{-1}$ corresponds to 
\[
R(x-\eta)\lambda^{-1} = \ideal{q(x)} + K(x-\eta)\lambda^{-1} = \ideal{q(x)} + K(x-\sigma(\lambda)\eta\lambda^{-1}).
\]
So $S=J\oplus J\lambda^{-1}$, a direct sum of isomorphic simple left ideals, so $S\cong M_2(D)$ for some division ring $D$.
\end{proof}

For example let $K$ be (commutative, and) a Galois field extension of some subfield $F$, of degree $2$, and let  $q(x)=x^{2}-\gamma$ with $\gamma\neq 0$. Here $S\cong M_{2}(F)$ if and only if $\gamma=\sigma(\alpha)\alpha$ for some $\alpha\in K$, and otherwise  $S$ is a division ring, see for example \cite[Theorem 1.3.16]{Jac2009}. 
The following fact is trivial, but nonetheless important.

\begin{lemma}\label{lemma:invertible-variable-in-quotient-by-non-singular-quadratic}
If $q(x) = x^2-\beta x+\gamma\in R = K[x;\sigma]$, then
$x$ is invertible in $S=R/\ideal{q(x)}$ if and only if $\gamma\neq 0$, that is, $q(x)$ is non-singular. In this case, in the quotient $S$ we have
\[
x^{-1} = \gamma^{-1}\beta - \gamma^{-1} x
\quad\text{and}\quad
x = \beta - \gamma x^{-1}.
\]
\end{lemma}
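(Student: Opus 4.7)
The plan is to work directly inside $S = R/\ideal{q(x)}$, using only the defining relation $x^2-\beta x+\gamma=0$ together with the left-$K$-module basis $\{1,x\}$ of $S$ (which was already noted at the start of the proof of Lemma~\ref{lemma:characterisingsemisimple-quadratics} via the division algorithm in $R$).

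For the implication ``$\gamma\neq 0 \Rightarrow x$ invertible'', I would rewrite the relation in a form that exhibits $\gamma$ as a product whose right factor is $x$. Crucially, in the expression $\beta x - x\cdot x = \gamma$ all the scalars appear to the left of $x$, so no $\sigma$-twist intervenes when factoring $x$ out on the right; this gives
\[
(\beta - x)\,x \;=\; \gamma \qquad\text{in } S.
\]
Multiplying on the left by $\gamma^{-1}$ shows that $\gamma^{-1}\beta - \gamma^{-1} x$ is a left inverse of $x$. Because $S$ is finite-dimensional over $K$, a standard artinian argument promotes this to a two-sided inverse: having a left inverse forces left-multiplication by $x$ on $S$ to be injective, hence surjective, hence bijective, so $x$ is also right-invertible and the left and right inverses must coincide. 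This proves the first displayed formula, and the second identity $x = \beta - \gamma x^{-1}$ follows from it by an elementary rearrangement.

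For the converse, if $\gamma = 0$ then the relation reads $(x-\beta)\,x = 0$ in $S$. Since $\{1,x\}$ is $K$-linearly independent in $S$, the element $x-\beta$ is non-zero, so $x$ is a (left) zero-divisor and cannot be invertible.

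There is no real obstacle here; the only point requiring a moment's care is the order of multiplication when factoring out $x$ on the right, which goes through precisely because the relevant scalars sit on the left of $x$ and so avoid any $\sigma$-twist.
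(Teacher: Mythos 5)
The paper gives no proof of Lemma~\ref{lemma:invertible-variable-in-quotient-by-non-singular-quadratic}, flagging it as ``trivial, but nonetheless important,'' so there is no authorial argument to compare against. Your forward direction is correct: $(\beta-x)x=\gamma$ holds in $S$ with no twist needed, exhibiting $\gamma^{-1}\beta-\gamma^{-1}x$ as a left inverse; and although left multiplication by $x$ is $\sigma$-semilinear rather than $K$-linear, the paper observes in \S\ref{s:basicdefns} that a module and its twist have the same $K$-dimension, so injectivity still implies surjectivity and the left inverse is two-sided. You can also bypass the dimension count: since $\beta x = x\sigma^{-1}(\beta)$, one also has $x(\sigma^{-1}(\beta)-x)=\gamma$ in $S$, so $(\sigma^{-1}(\beta)-x)\gamma^{-1}$ is a right inverse, and a left inverse together with a right inverse already forces $x$ to be a unit.

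There is, however, a gap in the converse. You appeal to $\{1,x\}$ being a left $K$-basis of $S$, citing the start of the proof of Lemma~\ref{lemma:characterisingsemisimple-quadratics}; but that lemma assumes $q(x)$ is \emph{normal}, whereas Lemma~\ref{lemma:invertible-variable-in-quotient-by-non-singular-quadratic} does not. When $q(x)$ is not normal, the two-sided ideal $\ideal{q(x)}$ is strictly larger than $Rq(x)$, and $\{1,x\}$ can fail to be independent in $S$: indeed $x$ itself may lie in $\ideal{q(x)}$, and $\ideal{q(x)}$ may even equal $R$. Every application of the lemma in the paper has $q(x)$ normal, so your argument suffices for the paper's purposes, but the lemma as stated is more general. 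To repair the converse in that generality, replace the basis argument by a constant-term argument: when $\gamma=0$ the generator $q(x)=(x-\beta)x$ has zero constant term, hence so does every element of the two-sided ideal it generates. Thus $1\notin\ideal{q(x)}$, so $S\neq 0$; and if $x$ were invertible then $(x-\beta)x=0$ in $S$ would give $x-\beta\in\ideal{q(x)}$, forcing $\beta=0$, whereupon $x$ would be nilpotent in the nonzero ring $S$.
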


\subsection{Semilinear clannish algebras}
Given $K$, $Q$, a collection $\boldsymbol{\sigma}$ of automorphisms of $K$, a set $\Sp$ of special loops, quadratic polynomials $q_s(x) = x^2 - \beta_s x+\gamma_s \in K[x;\sigma_s]$ for each $s\in \Sp$, and a set $Z$ of zero relations, the notion of a semilinear clannish algebra $R = K_{\boldsymbol{\sigma}} Q/I$ is defined in the introduction. It is naturally a $K$-ring, and when we speak of finite-dimensional $R$-modules, we mean left $R$-modules that are finite-dimensional as a left $K$-module. It is an algebra in the more usual sense over the field of elements in the centre of $K$ which are invariant under all~$\sigma_a$.

\begin{remark}
\label{remark:condsonpolys}
For our main theorem we impose three conditions on the quadratic polynomials $q_s(x) = x^2-\beta_s x + \gamma_s\in K[x;\sigma_s]$ associated to special loops $s\in\Sp$.

(i) Normality is a sensible condition to impose, for otherwise $K[x;\sigma_s]/\ideal{q_s(x)}$ has dimension $\le 1$ over $K$, so if $s$ is a loop at vertex $i$, then $s$ acts as a scalar on $e_i M$ for any $R$-module $M$.

(ii) We assume that $q_s(x)$ is non-singular, that is, $\gamma_s\neq 0$. 
%
This is needed for our functorial filtration approach, but we conjecture that the condition is not necessary. For example, using a classification for matrix problems due to Bondarenko~\cite{Bon1991} rather than the functorial filtration method, Hansper~\cite{Hansper} has shown that an analogue of our main theorem holds for clannish algebras in which all special loops have polynomial $x^2-x$.
Note that if the centre of $K$ has at least three elements, then one can change the generators to ensure that the polynomials $q_s(x)$ are non-singular. Namely, suppose that $s$ is a special loop at vertex $i$ with $q_s(x) = x^2-\beta_s x$ normal. If $\beta_s=0$ we can consider $s$ as an ordinary loop. Otherwise, we can make a change of variable $s' = s - \mu\beta_s e_i$ with $\mu\neq 0,1$ in the centre of $K$. Then by Lemma~\ref{lemma:characterising_central_skew_quadratics} we have $s'\lambda = \sigma_s(\lambda) s'$ for $\lambda\in K$, and $(s')^2 - bs' + ce_i = 0$, where $q_{s'}(y) = y^2 - b y + c \in K[y;\sigma_s]$ is normal, with $b = \beta_s - \sigma_s(\mu\beta_s) - \mu\beta_s$ and $c = (\mu-1)\mu \beta_s^2 \neq 0$.


(iii) We assume that $q_s(x)$ is semisimple, meaning that $K[x;\sigma_s]/\ideal{q_s(x)}$ is a semisimple artinian ring. If not, then, assuming that $q_s(x)$ is normal, by Lemma~\ref{lemma:characterisingsemisimple-quadratics} we have $q_s(x) = (x-\eta)^2$ with $\eta \lambda = \sigma_s(\lambda) \eta$ for all $\lambda\in K$. Then with the change of variable $y = x-\eta$ the polynomial becomes $y^2=0$, and we can consider $s$ as an ordinary loop. 
\end{remark}

Next we consider a special type of semilinear clannish algebra. Instead of classifying its finite-dimensional modules, our main theorem uses the indecomposable modules for algebras of this form to parameterize modules associated to a symmetric band for any other semilinear clannish algebra. Here we show that these special semilinear clannish algebras are hereditary noetherian prime rings.
We point the reader to the survey \cite{Lev2000} by Levy on modules over hereditary noetherian prime rings. 

Let $\rho$ and $\tau$ be automorphisms of $K$, let $r(x)\in K[x;\rho]$ and $p(y)\in K[y;\tau]$ be normal monic non-singular quadratics, and let the factor rings be $S'=K[x;\rho]/\ideal{r(x)}$ and $S''=K[y;\tau]/\ideal{p(y)}$. 
We write $S = S' *_K S''$, the free product (or coproduct) of $S'$ and $S''$ over $K$ (see for example the end of \cite[\S 4]{Coh1959}).

\begin{theorem}
\label{theorem:symmetricbandparameterringisHNP}
The algebra $S$ defined above is isomorphic as a $K$-ring to the semilinear clannish algebra given over $K$ by a quiver with one vertex and two special loops $x,y$, with $\sigma_x=\rho$, $\sigma_y=\tau$, $q_x(x)=r(x)$ and  $q_y(y)=p(y)$. The alternating monomials in $x$ and $y$ give a basis for $S$ as a left or right $K$-module, and $S$ is a prime noetherian ring. 

If additionally $S'$ and $S''$ are semisimple, then $S$ is hereditary.
\end{theorem}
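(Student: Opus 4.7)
The plan is to verify each of the four assertions in turn.

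For the isomorphism, both rings are characterised by the same universal property in the category of $K$-rings: a $K$-ring homomorphism from either into a test ring $T$ amounts to a choice of a pair $(\bar x,\bar y)\in T^{2}$ subject to $\bar x\lambda=\rho(\lambda)\bar x$, $\bar y\lambda=\tau(\lambda)\bar y$, $r(\bar x)=0$ and $p(\bar y)=0$; matching generators gives the isomorphism. For the basis, I would note that $\{1,x\}$ and $\{1,y\}$ are $K$-bases of $S$ and $S'$ on both sides (using normality together with the standard basis of a quadratic quotient of a skew polynomial ring). Cohn's normal-form theorem for $K$-ring coproducts amalgamated over a shared unital basis then yields the alternating monomials as a two-sided $K$-basis of $P$. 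Equivalently, one works inside the semilinear path algebra $K_{\boldsymbol{\sigma}} Q$, whose $K$-basis consists of all monomials in $x$ and $y$ by Section~\ref{s:basicdefns}, and applies Bergman's diamond lemma: the reductions $x^{2}\mapsto\beta_{x}x-\gamma_{x}$ and $y^{2}\mapsto\beta_{y}y-\gamma_{y}$ operate on disjoint letters, so confluence is automatic, and the normal forms are exactly the alternating monomials.

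For primeness, I would use a leading-length argument in this basis. Given nonzero $a,b\in P$, let $w$ and $v$ be alternating monomials of maximal length occurring with nonzero coefficient in $a$ and $b$ respectively. If the last letter of $w$ differs from the first letter of $v$ (or if either is the trivial monomial), then $wv$ is itself alternating and is the unique term of length $\ell(w)+\ell(v)$ in the expansion of $ab$, forcing $ab\neq 0$. Otherwise, take $c\in\{x,y\}$ to be the other generator; then $wcv$ is alternating and uniquely longest in the expansion of $acb$, so $acb\neq 0$. In either case $aPb\neq 0$.

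For the noetherian assertion, set $z:=xy$. Since $\gamma_{x},\gamma_{y}\neq 0$, Lemma~\ref{lemma:invertible-variable-in-quotient-by-non-singular-quadratic} makes $x$ and $y$, and hence $z$, invertible in $P$; moreover $z\lambda=(\rho\tau)(\lambda)z$ for all $\lambda\in K$. Hence the subring $L\subseteq P$ generated by $K$ and $z^{\pm 1}$ is the skew Laurent polynomial ring $K[z,z^{-1};\rho\tau]$, which is a noetherian principal ideal domain over the division ring $K$. Using the identities $y^{-1}=z^{-1}x$ and $y=\beta_{y}-\gamma_{y}z^{-1}x$ together with $x^{-1}=\gamma_{x}^{-1}\beta_{x}-\gamma_{x}^{-1}x\in K+Kx$, and checking that $xz^{\pm 1}\in L+Lx$ by the direct twisting computation $xz=\beta_{x}z-\gamma_{x}y$, I would prove by induction on the length of an alternating monomial that every such monomial lies in $L+Lx$. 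This shows $P=L+Lx$, so $P$ is finitely generated as a left $L$-module by $\{1,x\}$; since $L$ is left noetherian, $P$ is left noetherian. A symmetric argument using $P=L+xL$ as a right $L$-module yields right noetherianity. The main technical obstacle is managing the $\rho$ and $\tau$ twists in this induction so as to prove $xL\subseteq L+Lx$ cleanly.

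Finally, when $S$ and $S'$ are both semisimple, $P$ is the coproduct of two semisimple $K$-rings amalgamated over the skew field $K$; the classical theorem of Cohn and Bergman asserts that such a coproduct has global dimension at most $1$. Combined with the prime and noetherian conclusions above this identifies $P$ as a hereditary noetherian prime ring, completing the proof.
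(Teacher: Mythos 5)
Your proposal follows essentially the same route as the paper: the isomorphism via universal property, the basis via a normal-form / coproduct theorem (the paper cites Bergman's Proposition 4.1 or its own Theorem~\ref{theorem:K-basis-of-paths-for-clannish-algebra}, which amount to the same normal-form statement you obtain from the diamond lemma), noetherianity via the skew Laurent subring $T=K[z,z^{-1};\rho\tau]$ with $z=xy$, and heredity via Bergman's coproduct theorem. The one variation worth noting is that you generate $P$ over $T$ by $\{1,x\}$, whereas the paper uses $\{1,x,y\}$; the computation you need ($xz^{\pm1}\in T+Tx$, i.e.\ rewriting $yx$) is exactly the one the paper performs when it ``writes $yx$ as a linear combination of $(xy)^{-1},x,y,1$'', so there is no extra obstacle there, just a marginally tighter generating set.

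There is, however, a genuine gap in your primeness argument. You claim that when the last letter of $w$ differs from the first letter of $v$, the monomial $wv$ is the \emph{unique} term of length $\ell(w)+\ell(v)$ in the expansion of $ab$. This is false when both $a$ and $b$ carry \emph{both} alternating monomials of maximal length: if $\tilde w$ and $\tilde v$ denote the companions of $w,v$ obtained by exchanging $x$ and $y$, then $\tilde w$ ends in the opposite letter to $w$ and $\tilde v$ starts in the opposite letter to $v$, so $\tilde w\tilde v$ is also alternating of the same length and appears with nonzero coefficient. Your conclusion $ab\neq 0$ nevertheless survives, because $wv$ and $\tilde w\tilde v$ have opposite first letters and hence are \emph{distinct} basis elements, so the coefficient of $wv$ is not cancelled; but the stated uniqueness is wrong as written. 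The paper sidesteps this cleanly by \emph{always} inserting a non-trivial middle monomial $b$ with $mbm'$ alternating: then $\tilde m b$ and $b\tilde m'$ each contain a repeated letter at the junction and reduce to lower degree, so $mbm'$ genuinely is the unique top-degree alternating monomial, and no case split is needed.
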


\begin{proof}
Let $R$ be the semilinear clannish algebra. By construction there are ring homomorphisms from $S'$ and $S''$ to $R$ giving a commutative square with the maps from $K$. One easily checks that these maps satisfy the universal property of the free product.

That the alternating monomials give a basis follows from \cite[Proposition 4.1]{Berg1974} (or  Theorem~\ref{theorem:K-basis-of-paths-for-clannish-algebra} below). Since $x\lambda = \rho(\lambda)x$ and $y\lambda = \tau(\lambda)y$ for $\lambda \in K$, any product of non-zero elements of $K$ and copies of $x$ and $y$ can be written in the form $\lambda m$ with $0\neq\lambda\in K$ and $m$ a monomial in $x$ and $y$. Moreover, the quadratic relations mean that any monomial which is not alternating can be written as a linear combination of monomials of smaller degree, hence by induction as a linear combination of alternating monomials of smaller degree.

To show that $S$ is prime, it suffices to show that if $a,a'$ are non-zero elements of $S$ then $aba'\neq 0$ for some $b\in S$. 
Now $a$ can be written in the form $a = \lambda m+c$ where $m$ is an alternating monomial, $0\neq \lambda\in K$, and $c$ is a linear combination of monomials of smaller degree than $m$, and possibly also another alternating monomial of the same degree as $m$, but with $x$ and $y$ exchanged.  Similarly $a' = \lambda' m' + c'$. Choose a non-trivial monomial $b$ such that $mbm'$ is alternating. Then $aba' = \mu mbm' + d$ where $0\neq \mu\in K$ and $d$ is a linear combination of monomials of smaller degree than $mbm'$, and possibly also a non-alternating monomial of the same degree as $mbm'$, but that too is a linear combination of monomials of smaller degree. Thus $aba'\neq 0$.

Since $x$ and $y$ are units in $S$ by Lemma \ref{lemma:invertible-variable-in-quotient-by-non-singular-quadratic}, and since $xy\lambda = \rho(\tau(\lambda)) xy$, there is a ring homomorphism from $T=K[z,z^{-1};\rho\tau]$ to $S$ sending $z$ to $xy$. Writing $yx$ as a linear combination of $(xy)^{-1}$, $x$, $y$ and $1$, it follows that $S$ is generated as a left or right $T$-module by $1$. $x$ and $y$. Now $T$ is noetherian by \cite[Theorem 1.4.5]{McCRob2001}, hence so is $S$. Finally, if $S'$ and $S''$ are semisimple, they have global dimension 0, so $S$ is hereditary by \cite[Corollary 2.5]{Berg1974}.
\end{proof}

We remark that another way to study $S = S'*_K S''$ when $S'$ and $S''$ are semisimple, is to use that $M_2(S)$ is a universal localization of the hereditary artinian ring
\[
\begin{pmatrix} S' & S'\otimes_K S'' \\ 0 & S'' \end{pmatrix}.
\]
See \cite[Theorem 4.10]{Sch1985}. Note that if $K$ is finite-dimensional over a central subfield $k$ which is fixed by $\rho$ and $\tau$ then $S$ is a classical hereditary order. Namely, the ring $K[z,z^{-1};\rho\tau]$ appearing in the proof above is module-finite over $k[z,z^{-1}]$, hence so is $S$, so it is a classical hereditary order by~\cite{RobsonSmall}.

\subsection{Words and strings and bands}
\label{s:words}
We consider `words' composed of the following types of letters:
\begin{itemize}
\item[(i)]An \emph{ordinary direct letter} is a symbol of the form $a$ with $a$ an ordinary arrow in $Q$;
\item[(ii)]An \emph{ordinary inverse letter} is a symbol of the form $a^{-1}$ with $a$ an ordinary arrow in $Q$;
\item[(iii)]A \emph{$*$-letter} is a symbol of the form $s^*$ with $s$ a special loop in $Q$.
\end{itemize}
We define the \emph{head} and \emph{tail} (in $Q_0$) of any of these letters as follows. The head and tail of a direct letter are the head and tail of the corresponding arrow, the head and tail of an inverse letter are the tail and head, respectively, of the corresponding arrow, and the head and tail of a $*$-letter are the head and tail of the special loop. The \emph{inverse} of a letter is defined by $(a^{-1})^{-1} = a$ and $(s^*)^{-1} = s^*$, so inversion swaps the head and tail of a letter.

Because of the conditions defining a semilinear clannish algebra, we can and do choose a \emph{sign} $\pm 1$ for each letter, such that if distinct letters $x,y$ have the same head and sign, then $\{x,y\} = \{a^{-1},b\}$ for some zero relation $ab$ defining the algebra (in particular, the arrows $a$ and $b$ are both ordinary). 

Let $I$ be a subset of $\Z$ of one of the following types: $\{0,\dots,n\}$ with $n\ge 0$ or $\N = \{0,1,\dots\}$ or $-\N=\{0,-1,\dots\}$ or $\Z$. We write $I'=\{i\in I\mid i-1\in I\}$. 
A \emph{word} $w$ indexed by $I$ is determined by letters $w_i$ for all $i\in I'$, vertices $v_i(w)\in Q_0$ for $i\in I$, and a sign $\epsilon=\pm1$, subject to the following conditions:
\begin{itemize}
\item[(i)]
The head of $w_i$ is $v_{i-1}(w)$ and the tail of $w_i$ is $v_i(w)$.
\item[(ii)]
If $w_i$ and $w_{i+1}$ are consecutive letters, then $w_i^{-1}$ and $w_{i+1}$ have opposite signs. (In particular this implies that $w_i$ and $w_{i+1}$ cannot be inverses of each other, or both equal to the same $*$-letter.)
\item[(iii)]
If $1 \in I'$ then the sign of $w_1$ is $\epsilon$. If $0\in I'$ then the sign of $w_0^{-1}$ is $-\epsilon$.
\end{itemize}
A word is \emph{finite} of \emph{length} $n$ if $I = \{0,1,\dots,n\}$, and otherwise \emph{infinite}. 
We denote by $1_{\ell,\epsilon}$ the trivial word of length 0 with $v_0(1_{\ell,\epsilon}) = \ell$ and sign $\epsilon$. Any other word is uniquely determined by the sequence of letters, and so we write $w = w_1 w_2 \dots w_n$ (if $I = \{0,1,\dots,n\}$), $w = w_1 w_2 \dots$ (if $I=\N$), $w = \dots w_{-1} w_0$ (if $I=-\N$) or $w = \dots w_{-1} w_0 | w_1 w_2 \dots$ (if $I=\Z$). 

The \emph{inverse} $w^{-1}$ of a word $w$ is obtained by inverting the letters and reversing their order. In particular we define the inverse of a $\Z$-indexed word by $(w^{-1})_i = (w_{-i})^{-1}$. (Warning: this differs from the convention in \cite{Cra2018}). For words of length zero, we define $(1_{\ell,\epsilon})^{-1} = 1_{\ell,-\epsilon}$. 

The $n$th \emph{shift} of a $\Z$-indexed word $w$ is given by $w[n] = \dots w_n|w_{n+1} \dots$, so $w[n]_i = w_{n+i}$. The shift operation is defined on words with other indexing sets, but has no effect. We say that a word $w$ is \emph{periodic} if it is $\Z$-indexed and $w=w[n]$ for some $n>0$, in which case the minimal such $n$ is the \emph{period}.

\begin{definition}
We say that two words $u,w$ are \emph{equivalent} if $u = w[n]$ or $u = (w^{-1})[n]$ for some $n\in\Z$. This defines an equivalence relation on the set of words.
\end{definition}

By a word with \emph{head} $\ell$ we mean a finite or $\N$-indexed word $w$ with $v_0(w)=\ell$. We define the product $uw$ of words $u$ and $w$ by concatenating the sequences of letters, provided that $u^{-1}$ and $w$ have the same head and opposite signs. Thus $1_{\ell,\epsilon} 1_{\ell,\epsilon} = 1_{\ell,\epsilon}$. If $u$ is $-\N$-indexed and $w$ is $\N$-indexed, then the product of $u$ and $w$ is $uw = \dots u_{-1} u_0 | w_1 w_2 \dots$. 

If $w = w_1\dots w_n$ is a finite word where $w$ and $w^{-1}$ have the same head and opposite signs, there is an $\N$-indexed word $w^\infty = w_1\dots w_n w_1 \dots w_n \dots$ and a periodic word $^{\infty} w^\infty = \dots w_1\dots w_n | w_1 \dots w_n \dots$.

If $w$ is an $I$-indexed word and $i\in I$, there are words $w_{\le i} = \dots w_{i-1} w_i$ and $w_{>i} = w_{i+1} w_{i+2} \dots$, with appropriate conventions if either of these has length 0, so that $w[i] = w_{\le i} w_{>i}$.

\begin{definition}
Given a non-trivial path $p=a_1 \dots a_n$, we obtain a sequence of letters $p^* = a_1^* \dots a_n^*$ on replacing any special loop $s$ by the corresponding $*$-letter $s^*$ (but leaving ordinary arrows unchanged). A word $w$ is \emph{relation-admissible} if $w$ and $w^{-1}$ do not contain, as a subword of consecutive letters, a sequence of letters of the form $r^*$ where $r$ is one of the zero relations defining the algebra. 
\end{definition}

\begin{definition}
We say that an $I$-indexed word $w$ is \emph{right-end-admissible} if either $I$ is not bounded above, or it is bounded above and there is no special loop $s$ with $w s^*$ a word. A word $w$ is \emph{end-admissible} if $w$ and $w^{-1}$ are right-end-admissible. For example if $w$ is $\Z$-indexed this is automatic.
\end{definition}

\begin{definition}
By a \emph{string} we mean a finite end-admissible relation-admissible word. A string $w$ is \emph{symmetric} if $w=w^{-1}$ and otherwise it is \emph{asymmetric}. By a \emph{band} we mean a relation-admissible $\Z$-indexed word $w$ which is periodic, so $w[n]=w$ for some $n>0$. A band $w$ is \emph{symmetric} if $w^{-1}$ is equal to some shift of $w$ and otherwise it is \emph{asymmetric}.
\end{definition}

\begin{lemma}
\label{lem-end-adm}
An $I$-indexed word $w$ is end-admissible if and only if for each $i\in I$ and each special loop $s$ at $v_i(w)$, either $i\in I'$ and $w_i = s^*$ or $i+1\in I'$ and $w_{i+1} = s^*$.
\end{lemma}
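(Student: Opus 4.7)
The plan is to isolate a uniqueness feature of $*$-letters and use it to show that the lemma's condition holds automatically at interior indices and is a direct reformulation of end-admissibility at the endpoints. The key observation, which I prove first, is that for any special loop $s$ at a vertex $v$, the letter $s^*$ is the unique letter with head $v$ and sign $\epsilon_{s^*}$: any distinct such letter $y$ would, by the choice of signs, satisfy $\{s^*, y\} = \{a^{-1}, b\}$ for some zero relation $ab$, but $s^*$ is not an ordinary direct or inverse letter, contradiction.

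For the interior case, suppose $i, i+1 \in I'$ and let $s$ be a special loop at $v_i(w)$. Both $w_i^{-1}$ and $w_{i+1}$ have head $v_i(w)$, and by axiom (ii) of a word they carry opposite signs. So one of them has sign $\epsilon_{s^*}$ and, by the uniqueness above, equals $s^*$. Since $(s^*)^{-1} = s^*$, this gives $w_i = s^*$ or $w_{i+1} = s^*$, so the lemma's condition is automatic at every interior index, with no input from end-admissibility.

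It then remains to match the endpoint cases. Suppose $I$ has maximum $n \geq 1$: the lemma at $i = n$ requires $w_n = s^*$ for every special loop $s$ at $v_n(w)$. On the other hand, $ws^*$ is a word iff axiom (ii) holds at the join, i.e.\ $w_n^{-1}$ and $s^*$ carry opposite signs, which by the uniqueness at $v_n(w)$ reduces to $w_n^{-1} \neq s^*$, that is $w_n \neq s^*$. So right-end-admissibility at $n$ is equivalent to $w_n = s^*$ for every such $s$, matching the lemma; applying the same argument to $w^{-1}$ handles the left endpoint at $i = 0$ when $|I| > 1$. The trivial word $w = 1_{\ell, \epsilon}$ requires brief separate treatment using condition (iii): the one-letter extension $s^*$ is a word iff the sign of $s^*$ equals the word sign, and imposing this for both $1_{\ell, \epsilon}$ and $1_{\ell, -\epsilon} = w^{-1}$ forces no special loop at $\ell$, matching the lemma's vacuous condition; for unbounded $I$, end-admissibility is automatic and there are no endpoint indices. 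The main obstacle is not conceptual but bookkeeping — tracking the sign axiom, the inversion convention, and the trivial-word condition (iii) at the two endpoints — and once the uniqueness observation of the first paragraph is secured the rest follows by unpacking definitions.
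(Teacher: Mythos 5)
Your proof is correct and, since the paper simply declares the lemma ``Clear,'' it supplies exactly the unpacking of definitions the authors leave to the reader. The key uniqueness observation — that $s^*$ is the only letter with its head and sign, because any competitor would have to pair with it as $\{a^{-1},b\}$ for a zero relation with $a,b$ ordinary, which is impossible for a $*$-letter — is precisely the right mechanism, and the reduction of the interior case, the endpoint case, and the trivial-word case to it all check out (your final remark about ``unbounded $I$'' is phrased a little loosely, since $I=\N$ still has the endpoint $i=0$, but that case is already covered by your earlier treatment of $w^{-1}$).
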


\begin{proof}
Clear.
\end{proof}

\begin{definition}
\label{definition:order-on-words}
Given a vertex $\ell\in Q_0$ and a sign $\epsilon = \pm 1$, let $H(\ell,\epsilon)$ be the set of right-end-admissible words $w$ which are finite or $\N$-indexed and have head $\ell$ and sign $\epsilon$. We define a total ordering on $H(\ell,\epsilon)$, with $w < w'$ if and only if one of (a), (b) or (c) below holds.
\begin{itemize}
\item[(a)]
$w = u y v$ and $w' = u x^{-1} v'$ where $u$ is a finite word, $x,y$ are ordinary arrows, and $v,v'$ are words.
\item[(b)]
$w = u y v$ and $w' = u$ where $u$ is a finite word, $y$ an ordinary arrow, and $v$ a word.
\item[(c)]
$w = u$ and $w' = u x^{-1} v'$ where $u$ is a finite word, $x$ an ordinary arrow, and $v'$ a word.
\end{itemize}
\end{definition}

\begin{definition}
Suppose $w$ is an end-admissible word, say indexed by $I$. Let $i\in I'$ and suppose that $w_i$ is a $*$-letter. It follows that the words $(w_{\le i-1})^{-1}$ and $w_{>i}$ have the same head and sign, so they are comparable. We say that $i$ is a \emph{symmetry} for $w$ if $(w_{\le i-1})^{-1} = w_{>i}$, that $i$ is \emph{naturally direct} for $w$ if $(w_{\le i-1})^{-1} > w_{>i}$, and that $i$ is \emph{naturally inverse} for $w$ if $(w_{\le i-1})^{-1} < w_{>i}$. 
\end{definition}

\begin{lemma}\label{lemma:form-of-symmetric-bands}
\begin{enumerate}
\item[(i)]
A finite end-admissible word $w$ has a symmetry if and only if $w=w^{-1}$. In this case $w$ is of the form $u s^* u^{-1}$, for some $s\in\Sp$ and finite word $u$. The unique symmetry for $w$ is $n+1$, where $n$ is the length of $u$.
\item[(ii)]
A periodic word $w$ has a symmetry if and only if $w^{-1}$ is equal to some shift of $w$. In this case there exist $s,t\in \Sp$ and words $u,v$ of lengths $p,r$ respectively, such that the period of $w$ is $2p+2r+t$ and such that $w$ has the form 
\[
\dots v s^* v^{-1} u^{-1} t^* u|v s^* v^{-1} u^{-1} t^* u v s^* v^{-1} u^{-1} t^* \dots={}^{\infty}(v s^* v^{-1} u^{-1} t^* u )^{\infty}.
\]
The symmetries of $w$ are the translates of $-p$ and $r+1$ by multiples of the period of $w$.
\end{enumerate}
\end{lemma}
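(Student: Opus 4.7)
My plan is to exploit axiom (ii) of the definition of a word — namely, that consecutive letters $w_i, w_{i+1}$ cannot be mutual inverses, since $w_i^{-1}$ and $w_{i+1}$ are required to have opposite signs, yet two equal letters have equal signs. This arithmetic constraint on parities will drive both parts.

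For part (i), if $i$ is a symmetry then $w_i$ is a $*$-letter, say $s^*$, and $(w_{\le i-1})^{-1} = w_{>i}$; writing $u = w_{\le i-1}$ of length $n = i-1$, one obtains $w = u\, s^* u^{-1}$ directly, and $w^{-1} = w$ then follows since $(s^*)^{-1} = s^*$. Conversely, assuming $w = w_1\dots w_N = w^{-1}$, I have $w_k = w_{N+1-k}^{-1}$ for all $k$; if $N$ were even then the consecutive pair $w_{N/2}, w_{N/2+1}$ would be mutual inverses, contradicting axiom (ii); so $N$ is odd, the middle letter $w_{(N+1)/2}$ is self-inverse hence a $*$-letter, and $(N+1)/2$ is plainly the unique symmetry since a symmetry forces $|w_{\le i-1}| = |w_{>i}|$.

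For part (ii), the forward direction is immediate: the symmetry relation $w_{j+k} = w_{j-k}^{-1}$ rearranges to $w^{-1} = w[2j]$. For the converse, assume $w^{-1} = w[n]$ so $w_k = w_{n-k}^{-1}$; if $n$ were odd the positions $(n-1)/2$ and $(n+1)/2$ would again be forced to be consecutive mutual inverses, contradicting axiom (ii), so $n$ is even and $j = n/2$ is a symmetry. To pin down the period $P$, I apply the same argument with $k = (P+1)/2$: periodicity gives $w_{j-(P+1)/2} = w_{j+(P-1)/2}$, so if $P$ were odd the symmetry relation would force $w_{j+(P+1)/2} = w_{j+(P-1)/2}^{-1}$, again a consecutive-mutual-inverses contradiction; hence $P$ is even. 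Periodicity then yields $w_{j+P/2} = w_{j-P/2}^{-1} = w_{j+P/2}^{-1}$, so $w_{j+P/2}$ is self-inverse and $j+P/2$ is a second symmetry; and any symmetry $j'$ satisfies $w^{-1} = w[2j']$, hence $2j' \equiv 2j \pmod{P}$ and $j' \equiv j \pmod{P/2}$, giving exactly the translates of $j$ and $j+P/2$ per period.

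To read off the structural form, I pick $\alpha, \beta$ to be the two symmetric positions within one period, with $\beta = \alpha + P/2$, $w_\alpha = s^*$, $w_\beta = t^*$. For any split $r, p \ge 0$ with $r + p = P/2 - 1$, set $v = w_{\alpha-r}\dots w_{\alpha-1}$ and $u = w_{\beta+1}\dots w_{\beta+p}$. The symmetry at $\alpha$ forces $w_{\alpha+1}\dots w_{\alpha+r}$ to spell $v^{-1}$, and the symmetry at $\beta$ forces $w_{\beta-p}\dots w_{\beta-1}$ to spell $u^{-1}$; after relabelling so that position $1$ is the first letter of $v$, one period of $w$ reads $v\, s^* v^{-1} u^{-1} t^* u$ with period $2r+2p+2$, and the symmetries lie at the translates of $r+1$ and (via periodicity) $-p$. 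The main obstacle is the parity argument linking axiom (ii) to the evenness of $P$ and $n$; once that is settled, the structural description is routine bookkeeping.
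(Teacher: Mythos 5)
Your proof is correct. Part (i) is essentially the paper's argument (parity of the length forces an odd length with a self-inverse middle letter). Part (ii) takes a genuinely different route: where the paper observes that the translations and reflections preserving $w$ form an infinite dihedral subgroup of the isometries of $\Z$, and reads off the period from $\rho_{r+1}\rho_{-p}=\tau_{2p+2r+2}$, you instead run three direct parity computations against axiom~(ii) of the word definition --- first to show the shift $n$ in $w^{-1}=w[n]$ is even (as the paper also does), then to show the period $P$ is even (periodicity plus the symmetry at $j$ would force $w_{j+(P-1)/2}$ and $w_{j+(P+1)/2}$ to be consecutive mutual inverses if $P$ were odd), and finally to locate a second symmetry at $j+P/2$ and show all symmetries lie in $j+(P/2)\Z$ by comparing $w[2j]=w[2j']$. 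Your approach is more elementary and self-contained (no appeal to the structure theory of isometry groups); the paper's dihedral-group argument is slicker, packaging the even-period claim and the description of the full set of symmetries into one conceptual step. Both arrive at the same structural decomposition $v\,s^*\,v^{-1}\,u^{-1}\,t^*\,u$ per period, and your bookkeeping at the end (reading $v$ and $u$ off the symmetries at $\alpha$ and $\beta=\alpha+P/2$) is correct. One small point: the statement's ``$2p+2r+t$'' is evidently a typo for $2p+2r+2$, which is what both you and the paper's proof derive.
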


\begin{proof}
(i) If $w=w^{-1}$ then $w$ must have odd length, for if it has length $2n$, then $(w_n)^{-1} = w_{n+1}$, contrary to the definition of a word. If it has length $2n+1$, then $(w_{n+1})^{-1} = w_{n+1}$, so it is a $*$-letter, say $s^*$, $n+1$ is a symmetry and $w$ has the stated form.

(ii) Note first that if $w^{-1} = w[j]$ then $j$ is even, for if $j = 2k+1$ then $(w_k)^{-1} = (w^{-1})_{-k} = w[j]_{-k} = w_{k+1}$, which is impossible. Now if $j=2i$, then $w^{-1} = w[j]$ if and only if $w[i]^{-1} = w[i]$, or equivalently $i$ is a symmetry for $w$.

The isometries of $\Z$ are the maps $k\mapsto c k+d$ with $c=\pm1$ and $d\in \Z$. 
Those preserving $w$, in the sense that $w_{ck+d} = w_k^c$, include translations $\tau_d(k) = k+d$ with $d$ a multiple of the period of $w$ and reflections $\rho_i(k) = 2i-k$ for $i$ a symmetry. 
So they form an infinite dihedral group, see for example \cite[p. 32]{Wad2017}. 

This group can be generated by the reflections associated to adjacent symmetries, say $-p$ and $r+1$ with $p,r\ge 0$. Then $\rho_{r+1}\rho_{-p} = \tau_{2p+2r+2}$, so $w$ has period $2p+2r+2$. It follows that $w$ has the stated form.
\end{proof}

\subsection{Walks and the canonically associated walk}
By definition a \emph{walk} $C$ is given by the same data as a word, except now the allowed letters $C_i$ are of the form
\begin{itemize}
\item[(i)]a \emph{direct letter} is a symbol of the form $a$ with $a$ an arrow in $Q$, either ordinary or special,
\item[(ii)]an \emph{inverse letter} is a symbol of the form $a^{-1}$ with $a$ an arrow in $Q$, either ordinary or special,
\end{itemize}
with the requirement that one obtains a word, denoted $C^*$, when one replaces any letter of the form $s$ or $s^{-1}$, where $s$ is a special loop, by the corresponding $*$-letter $s^*$. 


A walk $C$ is said to be  $I$-\emph{indexed}, or \emph{trivial}, or \emph{finite} of \emph{length} $n$, or \emph{infinite}, respectively, provided the same is true for the word $C^{*}$.   We define the sign of the letters $s$ and $s^{-1}$ to be that of $s^*$ for any special loop $s$. Hence for any $I$-indexed walk $C$ and any $i\in I$ we have that $C_{i}$ and $C^{*}_{i}$ have the same sign, and we let  $v_{i}(C)=v_{i}(C^{*})$. The \emph{head} and \emph{sign} of a finite or $\N$-indexed walk $C$ are defined to be that of $C^{*}$.

The \emph{inverse} $C^{-1}$ of a walk $C$  is defined by inverting the letters of and reversing their order. Similarly, following the definitions above which concerned words, we define: the $n$th shift of a walk; periodic walks and their period; the product of walks $C$ and $D$ where $C^{-1}$ and $D$ have the same head and opposite signs; and the walks $C_{\leq i}$ and $C_{>i}$ for any $I$-indexed walk $C$ and any $i\in I$.


Associated to a walk $C$ there is a quiver $Q_C$ and a morphism of quivers $f_C:Q_C\to Q$ defined as follows. The vertex set of $Q_C$ is the indexing set $I$ for $C$, and the morphism $f_C$ sends $i$ to $v_i(C)$. For each $i\in I'$, if $C_i = a$ is a direct letter, then there is an arrow $\alpha_i:i\to i-1$, while if $C_i = a^{-1}$ is an inverse letter, then there is an arrow $\alpha_i:i-1\to i$, and in both cases $f_C$ sends $\alpha_i$ to $a$.

\begin{example}
\label{example:walks-words-quivers}
Define $Q$ by two loops $a$ and $s$ at a single vertex. Take the division ring $K$ and the automorphisms $\sigma_{a},\sigma_{s}$ to be arbitrary, and relabel by $\sigma=\sigma_{s}$ and $\theta=\sigma_{a}$. 
Define $R$ by taking $\Sp=\{s\}$,  $Z=\{a^{2}\}$ and $q_{s}(x)$ to be any monic, non-singular, normal and semisimple quadratic in $K[x;\sigma]$.
In this case we have $R=K_{\sigma,\theta}\langle s,a \rangle/\ideal{q_{s}(s),a^{2}}$. 
For this semilinear clannish algebra the words are given by alternating sequences in ($a$ or $a^{-1}$) and $s^{*}$, and such a word is right-end admissible if and only if it ends in $s^{*}$. 
Since $Z$ only contains paths of length $2$, all words (for this example of $R$) are relation-admissible. 

Given the walk $C = s^{-1}a^{-1}s^{-1}a^{-1}s^{-1}a s a s$, the associated word is $C^* = s^*a^{-1}s^*a^{-1}s^*a s^* a s^*$ and the quiver $Q_C$ is given as follows, where we label the arrows by their images under $f_C$.
\[\begin{tikzcd}
0\arrow{r}{s} & 1 \arrow{r}{a}
& 2\arrow{r}{s} & 3\arrow{r}{a}
& 4\arrow{r}{s} & 5 
& 6\arrow[swap]{l}{a}  & 7\arrow[swap]{l}{s} 
& 8\arrow[swap]{l}{a} & 9\arrow[swap]{l}{s}
\end{tikzcd}
\]
\end{example}

\begin{definition}
Let $w$ be an end-admissible word, say $I$-indexed.
Let $C$ be a walk such that $C^*=w$. We say that $C$ is \emph{naturally oriented} if $C_i = s$ whenever $w_i = s^*$ and $i$ is naturally direct for $w$, and $C_i = s^{-1}$ whenever $w_i=s^*$ and $i$ is naturally inverse for $w$. 

The \emph{canonically associated walk} for $w$, denoted $C_w$, is the naturally oriented walk with $(C_w)^*=w$, such that if $i$ is a symmetry for $w$, we have $(C_w)_i = s^{-1}$ if $i>0$ and $(C_w)_i = s$ if $i\le 0$.
\end{definition}

If $w$ is a finite end-admissible word, with $w=w^{-1}$, then by Lemma~\ref{lemma:form-of-symmetric-bands}, $w$ is of the form $u^{-1}s^*u$ for some finite word $u$, and the canonically associated walk is of the form $C_w = D s^{-1}D^{-1}$ for some walk $D$ with $D^*=u$. If $w$ is a periodic word and $w^{-1}$ is equal to a shift of $w$, then $w$ is of the form in Lemma~\ref{lemma:form-of-symmetric-bands}, and the canonically associated walk $C_w$ has the form
\[
\dots E s E^{-1} D^{-1} t D|E s^{-1} E^{-1} D^{-1} t^{-1} D E s^{-1} E^{-1} D^{-1} t^{-1} \dots
\]
for some walks $D,E$ with $D^*=u$ and $E^*=v$.

For example, in Example~\ref{example:walks-words-quivers}, if $w$ is the periodic word ${}^\infty (a^{-1}s^*a^{-1}s^*as^{*}as^{*})^\infty$, then we have $D$ trivial, $E = a^{-1}s^{-1}a^{-1}$, $t=s$ and $Q_{C_w}$ is the following quiver.
\[
\cdots 
\xrightarrow{a} -5 \xleftarrow{\mathbf{s}} 
-4 \xleftarrow{a} -3 \xleftarrow{s} -2  \xleftarrow{a} -1 \xleftarrow{\mathbf{s}} 
0 \xrightarrow{a} 1 \xrightarrow{s} 2  \xrightarrow{a} 3 \xrightarrow{\mathbf{s}} 
4 \xleftarrow{a} 5 \xleftarrow{s} 6  \xleftarrow{a} 6 \xrightarrow{\mathbf{s}} 
7 \xrightarrow{a} 8 \xrightarrow{s} 9  \xrightarrow{a} 10 \xrightarrow{\mathbf{s}} 
11 \xleftarrow{a} 
\cdots
\]
The letters $(C_w)_i$ with $i$ a symmetry are shown in bold.

\subsection{The modules $M(C)$}
\begin{definition}\label{definition:modules-M(C)}
If $C$ is walk and the word $C^*$ is end-admissible, we define $M(C)$ to be the $R$-module with generators $(b_i)_{i\in I}$, where $I$ is the indexing set for $C$, subject to the following relations.
\begin{itemize}
\item[(i)]
If $i\in I$ then $e_\ell b_i = b_i$ where $\ell = v_i(C)$.
\item[(ii)]
If $\alpha:i\to j$ is an arrow in $Q_C$ then $a b_i = b_j$ where $a = f_C(\alpha)$.
\item[(iii)]
If $a$ is an ordinary arrow in $Q$, $i\in I$, $v_i(C) = t(a)$, and no arrow in $Q_C$ with tail $i$ is sent to $a$ by $f_C$, then $a b_i = 0$.
\end{itemize}
\end{definition}

\begin{lemma}\label{lemma:M(C)-spanned-over-K}
Suppose that $C$ is walk and that the word $C^*$ is end-admissible.
\begin{itemize}
\item[(i)]
If $s$ is a special loop in $Q$, $i\in I$, $v_i(C)=t(s)$, and no arrow in $Q_C$ with tail $i$ is sent to $s$ by $f_C$, then for some $j\in I$ there is an arrow $j\to i$ in $Q_C$ sent to $s$ by $f_C$, and 
$s b_i = \beta_s b_i - \gamma_s b_j$.
\item[(ii)]
The module $M(C)$ is spanned as a $K$-module by the elements $(b_i)_{i\in I}$.
\end{itemize}
\end{lemma}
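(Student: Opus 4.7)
The plan is to establish (i) first by a case analysis on the letters adjacent to position $i$ in the walk $C$, and then to deduce (ii) by showing the $K$-span of the $b_i$'s is $R$-stable.

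For (i), I would begin by applying Lemma~\ref{lem-end-adm} to the end-admissible word $C^*$: since $s$ is a special loop at $v_i(C) = t(s)$, either $i \in I'$ with $(C^*)_i = s^*$, or $i+1 \in I'$ with $(C^*)_{i+1} = s^*$. Recall from the construction of $Q_C$ that the arrow $\alpha_k$ associated to a letter $C_k$ has tail $k$ when $C_k$ is direct and tail $k-1$ when $C_k$ is inverse. Thus of the four possibilities $C_i \in \{s, s^{-1}\}$ and $C_{i+1} \in \{s, s^{-1}\}$, exactly the cases $C_i = s$ and $C_{i+1} = s^{-1}$ produce arrows with tail $i$ sent to $s$, which the hypothesis forbids. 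The surviving cases are $C_i = s^{-1}$ (giving $j = i-1$) and $C_{i+1} = s$ (giving $j = i+1$); they cannot both occur, since the definition of a word forbids two consecutive letters both equal to $s^*$. Hence at least one, and in fact precisely one, such $j$ exists, and from Definition~\ref{definition:modules-M(C)}(ii) we obtain $s b_j = b_i$. Using the defining relation $s^2 = \beta_s s - \gamma_s e_\ell$ of $R$, where $\ell = t(s) = v_j(C)$ (so $e_\ell b_j = b_j$), I would then compute
\[
s b_i \;=\; s(s b_j) \;=\; s^2 b_j \;=\; \beta_s (s b_j) - \gamma_s b_j \;=\; \beta_s b_i - \gamma_s b_j.
\]

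For (ii), let $V$ denote the $K$-submodule of $M(C)$ spanned by the $b_i$. Since the $b_i$ generate $M(C)$ as an $R$-module, it is enough to prove $V$ is $R$-stable, and since $R$ is spanned as a left $K$-module by the (residues of) paths, this reduces to checking $e_\ell b_i \in V$ and $a b_i \in V$ for every vertex $\ell$, every arrow $a$ in $Q$, and every generator $b_i$. The first is immediate from Definition~\ref{definition:modules-M(C)}(i). For $a b_i$ I would split cases: if $v_i(C) \neq t(a)$ then $a b_i = a e_{t(a)} b_i = 0$; otherwise, if an arrow in $Q_C$ with tail $i$ is sent to $a$, Definition~\ref{definition:modules-M(C)}(ii) gives $a b_i = b_j$, while if no such arrow exists I would appeal to Definition~\ref{definition:modules-M(C)}(iii) when $a$ is ordinary (giving $a b_i = 0$) and to part (i) of the present lemma when $a$ is a special loop (giving $a b_i = \beta_a b_i - \gamma_a b_j$). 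Every outcome lies in $V$.

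The main subtlety is in (i): the argument hinges on combining end-admissibility with the word axiom that forbids $(C^*)_i = (C^*)_{i+1} = s^*$, in order to see that the hypothesis of the lemma really does single out an arrow in $Q_C$ pointing \emph{into} $i$ (rather than merely ensuring that some $j$ adjacent in $Q_C$ exists). Once that point is clear, the identity $sb_i = \beta_s b_i - \gamma_s b_j$ is forced by a single application of the quadratic relation, and (ii) follows by a routine closure argument.
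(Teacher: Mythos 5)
Your proof is correct and follows essentially the same route as the paper's: part (i) rests on Lemma~\ref{lem-end-adm} plus the quadratic relation, and part (ii) is the closure-under-$R$-action argument via semilinearity. The only difference is cosmetic — you spell out the four-way case analysis (and the observation that the two surviving cases are mutually exclusive) that the paper compresses into the phrase ``The arrow exists by Lemma~\ref{lem-end-adm}.''
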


\begin{proof}
(i) The arrow exists by Lemma \ref{lem-end-adm}, since $C^*$ is end-admissible. Now the defining relations give $s b_j = b_i$, so the quadratic relation for $s$ gives $sb_i = s^2 b_j = (\beta_s s - \gamma_s e_{t(s)}) b_j = \beta_s b_i - \gamma_s b_j$.

(ii) From the defining relations and (i) we know that the generators of the algebra $R$ send the elements $b_i$ to $K$-linear combinations of the $b_j$. Thus we get the result by semilinearity, using that $p \lambda b_i = \sigma_p(\lambda) p b_i$ for a path $p$ in $Q$, see \S\ref{s:basicdefns}. 
\end{proof}

\begin{lemma}\label{lemma:K-basis-of-M(C)-iff-C-relation-admissible}
Suppose that $C$ is walk and that the word $C^*$ is end-admissible. 
The collection $(b_i)_{i\in I}$ is a $K$-basis of $M(C)$ if and only if the word $C^*$ is relation-admissible.
\end{lemma}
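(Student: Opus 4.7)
Since Lemma~\ref{lemma:M(C)-spanned-over-K} already shows that $(b_i)_{i\in I}$ spans $M(C)$ over $K$, only linear independence is at stake, and I would address the two implications separately.

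For the \emph{if} direction, I would build an explicit representative for $M(C)$. Take $F(C)$ to be the free left $K$-module on symbols $c_i$ ($i\in I$) and define an $R$-action by copying the recipe of Definition~\ref{definition:modules-M(C)}: each idempotent $e_\ell$ projects onto the span of those $c_i$ with $v_i(C)=\ell$; for an arrow $\alpha_i$ in $Q_C$ with $f_C(\alpha_i)=a$, set $ac_i=c_j$, where $j$ is the head of $\alpha_i$; for an ordinary arrow $a$ at $v_i(C)=t(a)$ with no such outgoing arrow, set $ac_i=0$; and for a special loop $s$ at $v_i(C)$ with no outgoing arrow sent to $s$, end-admissibility together with Lemma~\ref{lem-end-adm} supplies a unique incoming arrow $\alpha:j\to i$ with $f_C(\alpha)=s$, and I set $sc_i=\beta_s c_i-\gamma_s c_j$, mimicking Lemma~\ref{lemma:M(C)-spanned-over-K}(i). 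Each arrow then acts $\sigma_a$-semilinearly.

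The heart of this direction is checking that $F(C)$ genuinely becomes an $R$-module. The idempotent and semilinearity relations hold by construction. The quadratic identities $s^2c_i=(\beta_s s-\gamma_s)c_i$ reduce to a short case analysis on which of $C_i,C_{i+1}$ involves $s$ and with what orientation; in the cases where $s$ appears inverted, the normality identities of Lemma~\ref{lemma:characterising_central_skew_quadratics} are what makes the computation close up. For the zero relations $r=a_1\cdots a_n\in Z$, I would show $rc_i=0$ for every $i$ by tracing the letters $a_n,a_{n-1},\dots,a_1$ through $F(C)$ and noting that the only way this propagation can produce a non-zero output is if $C^*$ contains $r^*$ (or $(r^*)^{-1}$) as a consecutive subword near $i$, which is exactly what relation-admissibility forbids. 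Once $F(C)$ is a bona fide $R$-module, the universal property of $M(C)$ yields a surjective $R$-homomorphism $M(C)\twoheadrightarrow F(C)$ with $b_i\mapsto c_i$, and linear independence of the $(c_i)$ transfers to the $(b_i)$ in $M(C)$.

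For the \emph{only if} direction, suppose that some zero relation $r=a_1\cdots a_n\in Z$ has $r^*=C^*_{k+1}\cdots C^*_{k+n}$ (the case in which $r^*$ appears in $(C^*)^{-1}$ is symmetric). Recall that the outer arrows $a_1,a_n$ are ordinary. I would compute $rb_{k+n}$ in $M(C)$ by applying letters one at a time, beginning with $a_nb_{k+n}=b_{k+n-1}$. At each intermediate step either $a_jb_{k+j}=b_{k+j-1}$ (direct case), or, when $a_j$ is a special loop $s_j$ occurring inverted in $C$, we pick up the two-term expression $s_jb_{k+j}=\beta_{s_j}b_{k+j}-\gamma_{s_j}b_{k+j-1}$ from Lemma~\ref{lemma:M(C)-spanned-over-K}(i). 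Tracking the coefficient of the leftmost basis vector $b_k$ through this recursion shows it to be a non-zero scalar---a twisted product of the $-\gamma_{s_j}$ over the inverted special indices, non-zero thanks to the non-singularity of each $q_{s_j}$, and equal to $1$ if no inversions occur. Since $r=0$ in $R$, the identity $rb_{k+n}=0$ then furnishes a non-trivial $K$-linear relation among the $b_i$.

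I expect the main obstacle to lie in verifying that $F(C)$ is a module. The combinatorial claim that reading $r^*$ off the walk (under any orientation of special loops) is the only mechanism producing a non-zero output of a zero-relation action, together with the parallel local check of the quadratic relations around each special loop, both require careful case analysis. By comparison, the bookkeeping in the only-if direction is routine.
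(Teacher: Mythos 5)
Your approach is essentially the same as the paper's: both build the free $K$-module on the symbols $(b_i)$ (your $F(C)$, the paper's $M'(C)$), equip it with an action of the algebra $K_{\boldsymbol\sigma}Q$ modulo the quadratic relations, and reduce the whole lemma to the question of when zero-relations annihilate this free module; the ``only if'' direction then becomes the same coefficient-tracking argument run in reverse. The presentation differs mildly: the paper first shows that linear independence of the $b_i$ is \emph{equivalent} to the zero-relations annihilating $M'(C)$ (via the universal map $M(C)\to M'(C)$ in one direction and the identification $M(C)\cong M'(C)$ in the other), then proves a single sharp assertion covering both implications at once; you instead argue the two implications side by side. The paper also notes up front that paths of length~2 violating the sign convention are automatically killed because $Q_C$ cannot contain such a path, which streamlines the case analysis you would need.

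The genuine gap is the one you already flag as the ``main obstacle'': the combinatorial claim that the only way $r b_i$ can produce a non-zero output, for $r$ a zero-relation, is when $r^*$ (or its inverse) occurs as a consecutive subword of $C^*$ near $i$, together with the dual claim that when it does so occur the coefficient of $b_k$ (the far endpoint) is a non-zero twisted scalar. Both are instances of the paper's assertion, proved by induction on the length of the path: if $p = a_1\cdots a_n$ with $a_n$ ordinary and no consecutive repeated special loop, then $p b'_i$ is a non-zero scalar multiple of $b'_{i-n}$ plus, if $a_1$ is special, a term in $b'_{i-n+1}$; and conversely $p b'_i\neq 0$ forces $p^*$ to appear as a subword. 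The delicate step, which your sketch elides, is why the extra $b'_{i-n+2}$-term that arises when the \emph{second} arrow $a_2$ is an inverted special loop gets annihilated when you apply $a_1$: one has to observe that the walk letter at position $i-n+3$ is an ordinary direct letter, so $Q_C$ has no arrow out of vertex $i-n+2$ sent to $a_1$. Without this step the cancellation you dismiss could, a priori, wipe out the coefficient of $b_k$. So the argument is the right one, but the heart of it remains to be written.
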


\begin{proof}
We can define a free $K$-module $M'(C)$ with basis $(b'_i)_{i\in I}$ and use the relations in the definition of $M(C)$ and Lemma~\ref{lemma:M(C)-spanned-over-K}(i) to turn $M'(C)$ into a representation of the algebra $K_{\boldsymbol{\sigma}} Q/I'$, where $I'$ is the ideal generated by the quadratics $s^2 - \beta_s s + \gamma_s e_{h(s)}$ for each special loop $s\in \Sp$.

Now the $b_i$ are linearly independent over $K$ if and only if $M'(C)$ is annihilated by all the zero-relations defining the algebra $R$. Namely, if the $b_i$ are linearly independent over $K$, then we can identify $M(C)$ and $M'(C)$, and so $M'(C)$ is annihilated by the zero-relations for $R$. Conversely, if $M'(C)$ is annihilated by all the zero-relations for the algebra $R$, then it becomes an $R$-module, and there is a homomorphism $M(C)\to M'(C)$ sending each $b_i$ to $b'_i$, from which it follows that the $b_i$ are linearly independent over $K$. 

By the choice of signs of letters, any path $ab$ of length 2 where the letters $a^{-1}$ and $b$ have the same sign must be a zero-relation. By the condition on signs in the definition of a word, the quiver $Q_C$ cannot contain a path sent to $ab$ by $f_C$, and so the zero-relation $ab$ is automatically satisfied by $M'(C)$.

Let $r$ be any other zero-relation occurring in the definition of $R$. We may suppose that no length 2 path as above occurs as a sub-path of $r$, for otherwise $r$ automatically annihilates $M'(C)$. Moreover, by the definition of a semilinear clannish algebra, $r$ does not involve the square of a special loop. It follows that the sequence of letters $r^*$ is a word. Recall also that a zero-relation must not start or end with a special loop. The result thus follows from the following assertion.

Let $w = C^*$. Let $p=a_1\dots a_n$ be a path in $Q$ with $n\ge 1$. Suppose that $a_n$ is an ordinary arrow and that $p$ does not have a special loop occurring twice in succession. If $i\in I$, then
\begin{itemize}
\item[(i)]
If $i-n\in I$ and $p^* = w_{i-n+1} \dots w_{i-1} w_i$, then $p b'_i$ is a non-zero scalar multiple of $b'_{i-n}$, plus, if $a_1$ is a special loop $s$, a scalar multiple of $b'_{i-n+1}$.
\item[(ii)]
If $i+n\in I$ and $p^* = w_{i+n}^{-1} \dots w_{i+1}^{-1}$, then $p b'_i$ is a non-zero scalar multiple of $b'_{i+n}$, plus, if $a_1$ is a special loop $s$, a scalar multiple of $b'_{i+n-1}$.
\end{itemize}
Conversely, if $p b'_i\neq 0$ then one of these two cases must occur.

We prove (i) by induction on $n$. If $n=1$ it is clear since there is an arrow in $Q_C$ with tail $i$ sent to $a_1$ by $f_C$. Thus suppose $n>1$. By induction $a_2\dots a_n b'_i$ is a non-zero scalar multiple of $b'_{i-n+1}$, plus, if $a_2$ is a special loop $s$, a scalar multiple of $b'_{i-n+2}$. Now in the latter case $n\ge 3$ and $a_3$ is ordinary, so $C_{i-n+3}$ is an ordinary direct letter. But then, since $C_{i-n+2} = s^{\pm 1}$, in $Q_C$ there is no arrow with tail $i-n+2$ sent to $a_1$ by $f_C$, and hence $a_1 b'_{i-n+2} = 0$. Thus $p b'_i$ is a non-zero scalar multiple of $a_1 b'_{i-n+1}$, and the claim follows.

Case (ii) is similar to case (i).

For the converse direction suppose that $p b_i \neq 0$. If $n=1$ then $a_1$ must be an ordinary arrow, and to have $a_1 b_i\neq0$ there must be an arrow in $Q_C$ with tail $i$ sent to $a_1$ by $f_C$. The head is either $i-1$ or $i+1$, giving (i) or (ii). Next suppose $n>1$. By induction (i) or (ii) hold for the path $a_2\dots a_n$; without loss of generality say (i) holds. Thus $a_2\dots a_n b_i$ is a non-zero scalar multiple of $b'_{i-n+1}$, plus, if $a_2$ is a special loop $s$, a scalar multiple of $b'_{i-n+2}$. In the latter case, by the discussion above we have $a_1 b'_{i-n+2} = 0$. Thus we must have $a_1 b'_{i-n+1}\neq 0$. It follows that $Q_C$ must contain an arrow with tail $i-n+1$ sent to $a_1$ by $f_C$. This forces $i-n\in I$ and  $w_{i-n+1}=a_1$, as required.
\end{proof}

For a finite walk $C = C_1\dots C_n$ let $\sigma_C = \sigma_{C_1}\dots \sigma_{C_n}$, where $\sigma_{x^{-1}} = \sigma_x^{-1}$ for an inverse letter $x^{-1}$.

\begin{definition}
\label{definition:pi-i-automorphisms}
Let $C$ be an end-admissible walk with index set $I$. For each $i\in I$, we choose an automorphism $\pi_i$ of $K$ (depending also on $C$, but we suppress this), such that for any arrow $i\to j$ in $Q_C$, say sent to an arrow $a$ in $Q$ by $f_C$, we have $\pi_j = \sigma_a \pi_i$. These conditions uniquely determine the $\pi_i$, but for the free choice of one of them; for definiteness one could fix $\pi_0$ to be the identity automorphism. 
\end{definition}

\begin{lemma}\label{lemma:M(C)-is-an-R-K-bimodule}
There is a unique $R$-$K$-bimodule structure on $M(C)$ with $b_i \lambda = \pi_i(\lambda) b_i$ for $i\in I$ and $\lambda\in K$.
\end{lemma}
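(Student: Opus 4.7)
The plan has two parts: uniqueness from spanning, and existence via a free-bimodule construction.

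For uniqueness, note that by Lemma \ref{lemma:M(C)-spanned-over-K}(ii) the generators $(b_i)_{i\in I}$ span $M(C)$ as a left $K$-module. If $(M(C),\cdot)$ is an $R$-$K$-bimodule, then the right $K$-action must commute with left multiplication by scalars in $K\subseteq R$, so the formula $(\mu b_i)\cdot\lambda = \mu\,\pi_i(\lambda)\,b_i$ is forced for all $\mu,\lambda\in K$. This determines the right action on a left-$K$-spanning set, hence everywhere, giving uniqueness.

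For existence, I would construct the bimodule as a quotient of a free one. Let $F=\bigoplus_{i\in I}R\,e_{v_i(C)}$, viewed as a left $R$-module in the obvious way, with distinguished generators $\tilde b_i := e_{v_i(C)}$ in the $i$th summand. I turn the $i$th summand into a right $K$-module by declaring $r\cdot\lambda := r\,\pi_i(\lambda)$, where the product on the right is computed inside $R$. Associativity of multiplication in $R$ makes this commute with the left $R$-action, so $F$ becomes an $R$-$K$-bimodule, and the relation $e_\ell\lambda=\lambda e_\ell$ in $R$ yields $\tilde b_i\cdot\lambda = \pi_i(\lambda)\,\tilde b_i$.

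Write $M(C) = F/N$, where $N\subseteq F$ is the left $R$-submodule generated by the elements $a\tilde b_i-\tilde b_j$, for each arrow $\alpha\colon i\to j$ in $Q_C$ with $a=f_C(\alpha)$, and by $a\tilde b_i$ for the pairs $(a,i)$ appearing in condition (iii) of Definition \ref{definition:modules-M(C)}; relations of type (i) are automatic in $F$. It suffices to check that $N$ is stable under the right $K$-action, for then $F/N$ inherits the required $R$-$K$-bimodule structure. Using the commutation rule $a\mu=\sigma_a(\mu)a$ in $R$ together with the compatibility $\pi_j=\sigma_a\pi_i$ built into Definition \ref{definition:pi-i-automorphisms}, the first type of generator satisfies
\[
(a\tilde b_i - \tilde b_j)\cdot\lambda \;=\; a\,\pi_i(\lambda) - \pi_j(\lambda)\,\tilde b_j \;=\; \sigma_a(\pi_i(\lambda))\,a\tilde b_i - \pi_j(\lambda)\,\tilde b_j \;=\; \pi_j(\lambda)(a\tilde b_i - \tilde b_j)\in N,
\]
and the second type satisfies $(a\tilde b_i)\cdot\lambda = \sigma_a(\pi_i(\lambda))\,a\tilde b_i \in N$. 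This single verification is really the whole point; the hard (or at least crucial) part is simply to package things so that the $\sigma_a$-twist appearing when pushing $\lambda$ past $a$ in $R$ is absorbed exactly by the defining compatibility $\pi_j=\sigma_a\pi_i$ of the automorphisms $\pi_i$.
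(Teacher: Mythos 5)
Your proof is correct and follows essentially the same route as the paper: realize $M(C)$ as a quotient of a free left $R$-module carrying an obvious $R$-$K$-bimodule structure, then check the relation submodule is stable under the right $K$-action using $\pi_j=\sigma_a\pi_i$. Your choice of $F=\bigoplus_i Re_{v_i(C)}$ rather than a plain free module, which makes the type (i) relations automatic, is a minor tidying, and there is a small typo in the displayed computation (a missing $\tilde b_i$ after $a\,\pi_i(\lambda)$), but the argument is sound.
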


\begin{proof}
Let $F$ be the free $R$-module with basis $(b_i)_{i\in I}$. For any automorphisms $\pi_i$, it has a unique $R$-$K$-bimodule structure with the property that $b_i \lambda = \pi_i(\lambda) b_i$ for $i\in I$ and $\lambda\in K$. Now the relations defining $M(C)$ define an $R$-submodule $F'$ of $F$ with $M(C) \cong F/F'$, and the condition on the $\pi_i$ ensures that $F'$ is a sub-bimodule. For example, if $i\to j$ is an arrow in $Q_C$, sent to $a$ by $f_C$, then $ab_i-b_j\in F'$ and
\[
(ab_i - b_j)\lambda = a \pi_i(\lambda) b_i - \pi_j(\lambda) b_j = \sigma_a(\pi_i(\lambda)) a b_i - \pi_j(\lambda) b_j = \pi_j(\lambda)( ab_i - b_j) \in F'.
\]
Thus $M(C)$ is an $R$-$K$-bimodule. The uniqueness property is clear (for the given left action of $R$).
\end{proof}

\subsection{Bases for semilinear clannish algebras}
Let $R$ be a semilinear clannish algebra given over $K$ by a quiver $Q$. 
We say that a path in $Q$ is \emph{special-admissible} if it doesn't have the same special loop occurring as consecutive arrows, that is, it is not of the form $ps^2q$ for some special loop $s$ and paths $p,q$. 

We say that a path is \emph{admissible} if it is special-admissible and doesn't factor through a zero relation, so is not of the form $prq$ with $r$ a zero-relation. We can now use the modules $M(C)$, and in particular Lemma \ref{lemma:K-basis-of-M(C)-iff-C-relation-admissible}, to prove the following theorem.

\begin{theorem}\label{theorem:K-basis-of-paths-for-clannish-algebra}
The admissible paths form a basis for $R$ as a left or right $K$-module.
\end{theorem}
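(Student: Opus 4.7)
The spanning part is routine. Any path in $K_{\boldsymbol{\sigma}}Q$ containing $s^2$ as a subpath (for $s\in\Sp$) rewrites modulo $I$ as a $K$-linear combination of strictly shorter paths using $s^2=\beta_s s-\gamma_s e_{t(s)}$, and any path containing some $r\in Z$ as a subpath lies in $I$ and so vanishes in $R$. A straightforward induction on length shows that every path reduces to a $K$-linear combination of admissible paths, giving spanning for both the left and the right $K$-module structure.

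For linear independence, the idea is to realize the admissible paths as the $K$-basis of an explicit left $R$-module $F$. Let $F$ be the free left $K$-module with basis $\{[p]:p\text{ admissible}\}$, and define the action of an arrow $a$ on a basis element $[p]$ as follows. Put $a\cdot[p]:=0$ unless $t(a)=h(p)$; when $t(a)=h(p)$, put $a\cdot[p]:=[ap]$ if $a$ is ordinary and $ap$ is admissible, $a\cdot[p]:=0$ if $a$ is ordinary and $ap$ factors through some element of $Z$, $s\cdot[p]:=[sp]$ if $a=s\in\Sp$ and $p$ does not start with $s$, and $s\cdot[p]:=\beta_s[p]-\gamma_s[q]$ if $a=s$ and $p=sq$. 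Extend $\sigma_a$-semilinearly, and let each $e_i$ act as the projection onto the span of those $[p]$ with $h(p)=i$. One verifies that this prescription makes $F$ into a left $K_{\boldsymbol{\sigma}}Q$-module, that each quadratic relation $s^2-\beta_s s+\gamma_s e_{t(s)}$ acts as $0$ on $F$ (by a case split using $\sigma_s(\beta_s)=\beta_s$ and $\sigma_s(\gamma_s)=\gamma_s$ from Lemma~\ref{lemma:characterising_central_skew_quadratics}), and that each zero relation $r\in Z$ acts as $0$. Granting this, the $K$-linear map $R\to F$ sending $r\mapsto r\cdot\sum_{i\in Q_0}[e_i]$ carries each admissible path $p$ to the basis element $[p]$, so any $K$-linear dependence of admissible paths in $R$ pulls back to a dependence in $F$ and forces all coefficients to vanish. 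The right $K$-module case follows by the mirror construction applied to the opposite ring, which is again a semilinear clannish algebra over the opposite quiver with transported data.

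The main obstacle is the vanishing of the action of a zero relation $r=a_1\cdots a_n\in Z$ on $F$. Intuitively, iterating $a_1\cdot(a_2\cdot(\cdots(a_n\cdot[p])))$ should eventually produce a path containing $r$ as a prefix and then collapse via the zero-relation rule; however, whenever a middle letter of $r$ is a special loop the quadratic reduction may spawn additional summands, and one must verify that these all subsequently vanish. This is essentially a confluence check in the style of Bergman's diamond lemma, and relies on the standing hypotheses that elements of $Z$ neither start nor end with a special loop and have no two consecutive copies of a special loop, together with the uniqueness clauses (2) and ($2'$) in the definition of a semilinear clannish algebra.
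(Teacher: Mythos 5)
Your route is genuinely different from the paper's. The paper constructs, for each vertex $\ell$, a single walk $C=D^{-1}E$ from the two longest relation-admissible walks of inverse letters out of $\ell$, and cites Lemma~\ref{lemma:K-basis-of-M(C)-iff-C-relation-admissible}: the admissible paths at $\ell$ biject with the basis $(b_j)$ of $M(C)$ via $p\mapsto pb_i$, and these are linearly independent because the lemma has already checked that a free $K$-module built on a walk with relation-admissible word is killed by the defining relations. Your construction of $F$ is in effect a direct assembly of $R=\bigoplus_\ell Re_\ell$ as a left $R$-module, bypassing words and walks entirely. The paper's route is more economical because it reuses a lemma needed elsewhere; yours is more self-contained and in fact would go through without clauses $(2)$ and $(2')$. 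One small simplification: the paper's reduction from right to left $K$-bases is easier than your mirror construction, since $p\lambda=\sigma_p(\lambda)p$ shows that left and right linear dependences of a set of paths correspond via the invertible maps $\sigma_p$, so it suffices to treat the left case.

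The obstacle you flag for the vanishing of zero relations is, however, not there, and no diamond-lemma confluence argument is needed: the standing hypotheses on $Z$ already prevent the quadratic rule from ever firing in the computation $r\cdot[p]=a_1\cdot(\cdots(a_n\cdot[p]))$. Since $a_n$ is ordinary, step $n$ either kills $[p]$ or produces $[a_np]$. At step $i<n$ the current path $q_i=a_{i+1}\cdots a_np$ (if nonzero) begins with $a_{i+1}$; when $a_i=s$ is special, $a_{i+1}\neq s$ because $r$ has no special loop occurring twice consecutively, so $q_i$ does not begin with $s$. Moreover $sq_i$ is then automatically admissible: it is special-admissible since $q_i$ does not begin with $s$, and it cannot factor through a zero relation since $q_i$ is admissible and no zero relation begins with a special loop. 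So each step either gives $0$ or simply prepends $a_i$, producing a suffix of $rp$, and if all of steps $n,\dots,2$ survive one reaches $a_1\cdot[a_2\cdots a_np]$; since $a_1$ is ordinary and $a_1a_2\cdots a_np=rp$ factors through $r$, this is $0$ by your rule. The quadratic reduction is only ever invoked when you check the relations $s^2-\beta_ss+\gamma_se_{t(s)}$ themselves, and your case split using $\sigma_s(\beta_s)=\beta_s$, $\sigma_s(\gamma_s)=\gamma_s$ from Lemma~\ref{lemma:characterising_central_skew_quadratics} correctly handles that.
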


\begin{proof}
Using the semilinear property of the path algebra, it suffices to prove that the admissible paths form a left $K$-basis. To see that they span $R$ over $K$, note that the quadratic relations for special loops show that any path which is not special-admissible is a linear combination in $R$ of shorter paths, so by induction the special-admissible paths span, and then the paths which factor through a zero-relation are zero in $R$.

Now suppose there is a linear relation between admissible paths. Composing with a trivial path, it follows that for some vertex $\ell$ there is a relation between admissible paths starting at $\ell$.

Let $D$ and $E$ be the unique longest possible walks consisting entirely of inverse letters, with head $\ell$ and signs 1 and $-1$, such that $D^*$ and $E^*$ are relation-admissible. These exist by the definition of a semilinear clannish algebra. Then $C = D^{-1}E$ is a walk with $C^*$ relation-admissible. Also, since none of the zero-relations starts or ends with a special loop, $C^*$ is end-admissible. Let $I$ be the indexing set for $C$ and let $i\in I$ be the element with $D^{-1} = C_{\le i}$ and $E = C_{>i}$. Then there is a 1-1 correspondence between admissible paths starting at $\ell$ and elements of $I$, with $i$ corresponding to the trivial path $e_\ell$. But now if a path $p$ corresponds to vertex $j$, then $p b_i = b_j$ in the module $M(C)$. Since the $b_j$ are $K$-linearly independent by Lemma \ref{lemma:K-basis-of-M(C)-iff-C-relation-admissible}, so are the admissible paths starting at $\ell$.
\end{proof}


\subsection{Parameterizing rings and bimodule structure}
\label{s:asymstring}
Let $w$ be a string or band and let $C_w$ be the canonically associated walk. There are four possible types for $w$, and in this and the following subsections, we will in each case define a $K$-ring $R_w$ and turn $M(C_w)$ into an $R$-$R_w$-bimodule, finitely generated free as a right $R_w$-module, with basis $b_i$ for $i\in J_w$, where $J_w$ is a subset of $I$, the indexing set for $w$. 

In case $w$ is an asymmetric string we define $R_w = K$ and $J_w = I$. Then by Lemma \ref{lemma:M(C)-is-an-R-K-bimodule}, $M(C_w)$ is an $R$-$R_w$-bimodule, and as a right $R_w$-module, $M(C_w)$ is free with basis $b_i$ ($i\in J_w$) by Lemma \ref{lemma:K-basis-of-M(C)-iff-C-relation-admissible}.

\begin{example}\label{example:asymmetric-string}
We continue with the semilinear clannish algebra  $R=K_{\sigma,\theta}\langle s,a \rangle/\ideal{q_{s}(s),a^{2}}$ from Example \ref{example:walks-words-quivers}. Let $w=s^{*}as^{*}as^{*}$, an asymmetric string whose canonically associated walk is $C_{w}=sasas$.

Choose $\pi_{i}$ as in Definition \ref{definition:pi-i-automorphisms} by setting $\pi_{0}$ to be the identity, so that $\pi_{1}=\sigma^{-1}$, $\pi_{2}=\theta^{-1}\sigma^{-1}$, $\pi_{3}=\sigma^{-1}\theta^{-1}\sigma^{-1}$, $\pi_{4}=\theta^{-1}\sigma^{-1}\theta^{-1}\sigma^{-1}$ and  $\pi_{5}=\sigma^{-1}\theta^{-1}\sigma^{-1}\theta^{-1}\sigma^{-1}$.  

Using the $R$-$K$-bimodule structure from Lemma \ref{lemma:M(C)-is-an-R-K-bimodule} the $R$-module $M(C_{w})\otimes V=\bigoplus_{i}b_{i}\otimes V$ may be depicted by the following diagram, in which $b_{i}\otimes V$ is identified with a copy of the twisted $K$-module ${}_{\pi{_i}^{-1}}V$, since $\lambda b_i \otimes v = b_i \pi_i^{-1}(\lambda) \otimes v = b_i \otimes \pi_i^{-1}(\lambda) v$ for $v\in V$ and $\lambda\in K$.



\[
\begin{tikzcd}[column sep=1.2cm,row sep=0.2cm]
V & 
{}_{\sigma}V\arrow["", "s"']{l} & 
{}_{\sigma\theta}V\arrow["", "a"']{l} & 
{}_{\sigma\theta\sigma}V\arrow["", "s"']{l} & 
{}_{\sigma\theta\sigma\theta}V\arrow["", "a"']{l} & {}_{\sigma\theta\sigma\theta\sigma}V\arrow["", "s"']{l}
\end{tikzcd}
\]
\end{example}

\subsection{Symmetric strings}
\label{s:symstring}

Let $w$ be a symmetric string which, by Lemma \ref{lemma:form-of-symmetric-bands}(i), is of the form $u s^* u^{-1}$ for some finite word $u$ and special loop $s$. Let $u$ have length $k$, say, so $w$ has length $n=2k+1$.

\begin{lemma}
\label{lemma:symmetric-strings-R_w-basis}
There is a unique way to turn $M(C_w)$ into an $R$-$R_w$-bimodule, such that
\begin{itemize}
\item[(i)] $R_w = K[x;\tau]/\ideal{q(x)}$ for $\tau\in\Aut(K)$ and $q(x) = x^2 - \beta x + \gamma\in K[x;\tau]$, 
\item[(ii)] the action of $x$ satisfies $b_i x = b_{n-i}$ for $i\le k$, and
\item[(iii)] the action extends the $R$-$K$-bimodule structure of $M(C_w)$.
\end{itemize}
Namely, we need $\tau = \pi_k^{-1}\sigma_s \pi_k$, $\beta = \pi_k^{-1}(\beta_s)$, $\gamma = \pi_k^{-1}(\gamma_s)$ and $b_i x = b_i \beta - b_{n-i} \gamma$ for $i > k$. As a right $R_w$-module, $M(C_w)$ is free with basis $b_i$ \emph{(}$i\in J_w$\emph{)} for $J_w=\{0,1,\dots,k\}$.
\end{lemma}

Note that as in the proof of Corollary \ref{corollary:twisting-(and-inverting-non-singular)-preserves-normal/central-quadratics}, the map sending $\sum_{i=0}^n \lambda_i x^i$ to $\sum_{i=0}^n \pi_k^{-1}(\lambda_i) x^i$ defines a ring isomorphism $K[x;\sigma_s]\to K[x;\tau]$ which we also denote by $\pi_k^{-1}$. This map sends $q_s(x)$ to $q(x)$, so $q(x)$ is normal, and $R_w \cong K[x;\sigma_s]/\ideal{q_s(x)}$.

\begin{proof}
Let $C = C_w$; it is of the form $D s^{-1} D^{-1}$ for some walk $D$ with $D^*=u$.
%
For $i \le k$ and $\lambda\in K$ we need $b_i (x \lambda) = (b_i x)\lambda$.
Now $b_i(x \lambda) = b_i (\tau(\lambda) x) = \pi_i(\tau(\lambda)) b_i x = \pi_i(\tau(\lambda)) b_{n-i}$, and $(b_i x)\lambda = b_{n-i} \lambda = \pi_{n-i}(\lambda) b_{n-i}$. Thus we need $\tau = \pi_i^{-1} \pi_{n-i}$. Note that this doesn't depend on $i$, for if $\alpha : i \to j$ ($i,j\le k$) is an arrow in $Q_{C}$ with $f_{C}(\alpha)=a$, then by symmetry $a=f_{C}(\alpha')$ for an arrow $\alpha' : n-j \to n-i$ in $Q_{C}$,  giving $\pi_j = \sigma_a \pi_i$ and $\pi_{n-j} = \sigma_a \pi_{n-i}$ and hence $\pi_j^{-1} \pi_{n-j} = \pi_i^{-1} \pi_{n-i}$. In particular $\tau = \pi_k^{-1} \pi_{k+1} = \pi_k^{-1} \sigma_s \pi_k$, which is the stated condition on $\tau$.

We also need $(s b_k) x = s (b_k x)$. Now $(s b_k) x = b_{n-k} x = b_k x^2 = b_k (\beta x - \gamma) = \pi_k(\beta) b_k x - \pi_k(\gamma) b_k = \pi_k(\beta) b_{n-k} - \pi_k(\gamma) b_k$. On the other hand $s (b_k x) = s b_{n-k} = s^2 b_k = (\beta_s s - \gamma_s) b_k = \beta_s b_{n-k} - \gamma_s b_k$, corresponding to the conditions on $\beta$ and $\gamma$.

For $i\le k$ we need $b_{n-i} x = b_i x^2 = b_i (\beta x - \gamma) = \pi_i(\beta) b_i x - b_i \gamma
= \pi_{n-i}(\beta) b_{n-i} - b_i \gamma = b_{n-i} \beta - b_i \gamma$, where we have used that $\beta = \tau(\beta)$ by normality, and $\tau = \pi_i^{-1} \pi_{n-i}$, so $\pi_i(\beta) = \pi_{n-i}(\beta)$. This corresponds to the condition on $b_i x$ for $i>k$.

Now if the stated conditions on $\tau,\beta,\gamma$ and the action of $x$ hold, we need to check that
$M(C)$ becomes a right $R_w$-module and that the action commutes with the $R$-module structure.

To check that we have a right action of $K[x;\tau]$ on $M(C)$, we need to check $b_{i} x \lambda = b_{i} \tau(\lambda) x$ for all $i\in I=\{0,\dots,n\}$ and $\lambda\in K$. If $i\leq k$ this is straightforward, as $\pi_{i}\tau=\pi_{n-i}$. Now let $i\geq n-k$, which means $\pi_{n-i}=\pi_{i}\tau^{-1}$. By Lemma \ref{lemma:characterising_central_skew_quadratics}, since $q(x)$ is normal in $K[x;\tau]$ we have  $\tau^{-1}(\beta)=\beta$, $\beta\mu=\tau(\mu)\beta$ and $\gamma\mu=\tau^{2}(\mu)\gamma$. Using that $\pi_{n-i}\tau^{2}=\pi_{i}\tau$ this altogether gives $(b_{i}x)\mu=\pi_{i}(\tau(\mu))(b_{i}x)$, which is precisely the image of $\pi_{i}(\tau(\mu))b_{i}=b_{i}\tau(\mu)$ under $x$. Hence $K[x;\tau]$ acts on the right.
By construction $q(x)$ acts as zero, so $M(C)$ becomes a right $R_w$-module.

To see that $M(C)$ is an $R$-$R_{w}$-bimodule, as in Lemma~\ref{lemma:M(C)-is-an-R-K-bimodule} it suffices to show that the action of $x$ is compatible with the relations defining $M(C)$.
For example if $\alpha:i\to j$ is an arrow in $Q_C$ and $a=f_C(\alpha)$, there is a relation $ab_i = b_j$, and we need to check that $a(b_i x) = b_j x$. By the discussion above, the choice of $\beta$ and $\gamma$ ensures this holds for the arrow from $k$ to $k+1$, so we may assume that $i,j\le k$ or $i,j\ge k$. It suffices to check it for $i,j\le k$, for then multiplying on the right by $x^{-1}$ and using that $b_i x = b_{n-i}$, we obtain $a(b_{n-i}x^{-1}) = (a b_{n-i})x^{-1}$. Since $x$ is a $K$-linear combination of $1$ and $x^{-1}$ in $R_w$, we deduce that $a(b_{n-i}x) = (a b_{n-i})x$.
Now for any $i\le k$ we have $C_{n-i+1} = C_i^{-1}$. Thus there is an arrow $n-i\to n-j$ in $Q_C$, also sent to $a$ by $f_C$. Thus $a(b_i x) = a b_{n-i} = b_{n-j} = b_j x$, as required.



Since $b_{i}=b_{n-i}x$ for $i\geq n-k$, and since the elements $b_{j}$ ($j\in I$) span $M(C)$ over $K$ by Lemma \ref{lemma:M(C)-spanned-over-K},  the elements $b_{i}$ ($i\in J_{w}$) span $M(C)$ over $R_{w}$. Likewise, by a straightforward application of Lemma \ref{lemma:K-basis-of-M(C)-iff-C-relation-admissible}, one can show the elements $b_{i}$ ($i\in J_{w}$) are linearly independent over $R_{w}$, as required.
\end{proof}

%
%
%

\begin{example}
\label{example:symmetric-string}
Recall the semilinear clannish algebra  $R=K_{\sigma,\theta}\langle s,a \rangle/\ideal{q_{s}(s),a^{2}}$ from Example \ref{example:asymmetric-string}. 
Let $w=s^{*}a^{-1}s^{*}as^{*}a^{-1}s^{*}as^{*}$, a symmetric string with $C_{w}=s^{-1}a^{-1}s^{-1}as^{-1}a^{-1}sas$, $k=4$ and $n=9$.  
%
Again set $\pi_{0}$ to be the identity, so that $\pi_{1}=\sigma$, $\pi_{2}=\theta\sigma$, $\pi_{3}=\sigma\theta\sigma$ and $\pi_{4}=\sigma\theta\sigma\theta^{-1}$. 
Note $\sigma^{-1}(q_{s}(x))=q_{s}(x)$. 
Let $\tau=\pi_{4}^{-1}\sigma \pi_{4}=\theta\sigma^{-1}\theta^{-1}\sigma\theta\sigma\theta^{-1}$ and $q(x)=\pi_{4}^{-1}(q(x))=\theta\sigma^{-1}\theta^{-1}(q_{s}(x))$. 
Let $V$ be a left module over $R_{w}=K[x;\tau]/\ideal{q(x)}$. 

As in Example \ref{example:asymmetric-string}, but using Lemma \ref{lemma:symmetric-strings-R_w-basis}, the $R$-module $M(C_{w})\otimes V=\bigoplus_{i}b_{i}\otimes V$ may be depicted by the following diagram, where $b_{i}\otimes V$ is identified with ${}_{\pi_{i}^{-1}}V$ and where the loop on the right encodes the action of $x$ on $V$.
\[
\begin{tikzcd}[column sep=1.2cm,row sep=0.3cm]
V\arrow["s", ""']{r} & 
{}_{\sigma^{-1}}V\arrow["a", ""']{r} & 
{}_{\sigma^{-1}\theta^{-1}}V\arrow["s", ""']{r} & 
{}_{\sigma^{-1}\theta^{-1}\sigma^{-1}}V &
\tensor*[_{\sigma^{-1}\theta^{-1}\sigma^{-1}\theta}]{V}{}{}\arrow["", "a"']{l}\arrow[out=10,in=350,loop,  "s=x", ""',distance=1.5cm]
\end{tikzcd}
\]
\end{example}

\subsection{Asymmetric bands}
\label{s:asymband}
For an automorphism $\sigma$ of $K$ we write $K[x,x^{-1};\sigma]$ for the skew Laurent polynomial ring with $x\lambda = \sigma(\lambda)x$. Since $K$ is a division ring, $K[x,x^{-1};\sigma]$ is a principal left and right ideal domain by \cite[Theorem 1.4.5]{McCRob2001} (applied to this ring and its opposite, which is also a skew Laurent polynomial ring).

Let $w$ be an asymmetric band, so it is a periodic $\Z$-indexed word, without inversion symmetry. Letting it have period $n$, we have $w = {}^\infty u^\infty = \dots u|u u \dots$ for some word $u$ of length $n$.

\begin{lemma}\label{lemma:asymmetric-bands-R_w-basis}
There is a unique way to turn $M(C_w)$ into an $R$-$R_w$-bimodule, such that
\begin{itemize}
\item[(i)] $R_w = K[x,x^{-1};\tau]$ for $\tau\in\Aut(K)$,
\item[(ii)] the action of $x$ satisfies $b_i x = b_{i-n}$ for all $i$, and
\item[(iii)] the action extends the $R$-$K$-bimodule structure of $M(C_w)$.
\end{itemize}
Namely, we need $\tau = \pi_n^{-1}\pi_0$. As a right $R_w$-module, $M(C_w)$ is free with basis $b_i$ \emph{(}$i\in J_w$\emph{)} for $J_w=\{0,1,\dots,n-1\}$.
\end{lemma}

Note that $\pi_n^{-1}\pi_0 = \pi_0^{-1} \sigma_C \pi_0$, so $R_w \cong K[x,x^{-1};\sigma_C]$.

\begin{proof}
Since there is no inversion symmetry, the walk $C = C_w$ is also periodic, of the form ${}^\infty D^\infty$ where $D^*=u$.
For $i\in\Z$ and $\lambda\in K$ we must have $(b_i x)\lambda = b_i (x \lambda)$.
Now $(b_i x)\lambda = b_{i-n}\lambda = \pi_{i-n}(\lambda) b_{i-n}$ and
$b_i(x \lambda) = b_i (\tau(\lambda) x) = \pi_i(\tau(\lambda)) b_i x = \pi_i(\tau(\lambda)) b_{n-i}$.
Thus we need $\tau(\lambda) = \pi_i^{-1}\pi_{i-n}$. Note that this doesn't depend on $i$ since if an arrow $i \to j$ in $Q_{C}$ is sent to $a$ under $f_{C}$ then by periodicity $a$ can also be written as the image under $f_{C}$ of an arrow $ i-n \to j-n$, so $\sigma_a \pi_i = \pi_j$ and $\sigma_a \pi_{i-n} = \pi_{j-n}$, so $\pi_j^{-1} \pi_{j-n} = \pi_i^{-1}\pi_{n-i}$. In particular $\tau_w = \pi_n^{-1} \pi_0$.


To show $M(C)$ is an $R$-$R_{w}$ bimodule it suffices to check $a(b_{i}x)=(ab_{i})x$ for any arrow $a$ and any $i\in\Z$. As in the proof of Lemma \ref{lemma:symmetric-strings-R_w-basis}, this follows from straightforward case analysis, using that $C=C[n]$.

To see that the $b_{i}$ ($i\in J_{w}$) give an $R_{w}$-basis one may apply Lemma \ref{lemma:K-basis-of-M(C)-iff-C-relation-admissible}, as in Lemma \ref{lemma:symmetric-strings-R_w-basis}.
\end{proof}

\begin{example}
\label{example:asymmetric-band}
Recall $R=K_{\sigma,\theta}\langle s,a \rangle/\ideal{q_{s}(s),a^{2}}$ from Example \ref{example:symmetric-string}. 
Let $w = {}^{\infty} (s^{*} a s^{*} a s^{*} a^{-1})^{\infty}$, which is an asymmetric band with  $C_{w}={}^{\infty} (sa s a s a^{-1})^{\infty}$ and period $6$.  
As in Example \ref{example:symmetric-string} we take $\pi_{0}$ to be the identity, and let $\tau=\sigma\theta\sigma\theta\sigma\theta^{-1}$ so that
$R_{w}=K[x,x^{-1};\tau]$. 
Note $a(\lambda(b_{5}\otimes v))=
b_{0}\otimes x^{-1}(\tau\theta(\lambda)v)$. 
By Lemma \ref{lemma:asymmetric-bands-R_w-basis}, 
$M(C_{w})\otimes _{R_{w}} V$ may be depicted as follows.
\[
\begin{tikzcd}[column sep=1.2cm,row sep=0.2cm]
V & 
{}_{\sigma}V\arrow["", "s"']{l} & 
{}_{\sigma\theta}V\arrow["", "a"']{l} & 
{}_{\sigma\theta\sigma}V\arrow["", "s"']{l} & 
{}_{\sigma\theta\sigma\theta}V\arrow["", "a"']{l} & 
{}_{\sigma\theta\sigma\theta\sigma}V\arrow["", "s"']{l}\arrow[out=200,in=330, "", "a=x^{-1}"']{lllll} 
\end{tikzcd}
\]
\end{example}

\subsection{Symmetric bands}
\label{s:symband}
By Lemma \ref{lemma:form-of-symmetric-bands}, any symmetric band has the form $w = {}^\infty (v s^* v^{-1} u^{-1} t^* u)^\infty$ for some $s,t\in \Sp$ and words $u,v$ of lengths $p,r\geq 0$  respectively. Then $w$ has period $2n$ where $n=p+r+1$.

\begin{lemma}
\label{lemma:symmetric-bands-R_w-basis}
There is a unique way to turn $M(C_{w})$ into an $R$-$R_{w}$-bimodule, such that
\begin{itemize}
\item[(i)] $R_w = R_w' *_K R_w''$ with $R_w' =K[x;\rho]/\ideal{q_x(x)}$ and $R''_{w}=K[y;\tau]/\ideal{q_y(y)}$, where $\rho,\tau\in \Aut(K)$,  $q_x(x)=x^{2}-\beta x+\gamma\in K[x;\rho]$ and $q_y(y)=y^{2}-\mu y+\eta\in K[y;\tau]$,
\item[(ii)] the actions of $x$ and $y$ satisfy $b_{i}x=b_{2r+1-i}$ for $i\leq r$ and $b_{i}y=b_{-i-1-2p}$ for $i\geq -p$, and 
\item[(iii)] the action extends the $R$-$K$-bimodule structure of $M(C_w)$. 
\end{itemize}
Namely, we need $\rho=\pi_{r}^{-1}\sigma_{s}\pi_{r}$, $\tau=\pi_{-p}^{-1}\sigma_{t}\pi_{-p}$, 
$\beta=\pi_{r}^{-1}(\beta_{s})$, $\gamma=\pi_{r}^{-1}(\gamma_{s})$, 
$\mu=\pi_{-p}^{-1}(\beta_{t})$, $\eta=\pi_{-p}^{-1}(\gamma_{t})$, 
$b_i x = b_i \beta - b_{2r+1-i} \gamma$ for $i>r$, and
$b_i y = b_i \mu - b_{-i-1-2p} \eta$ for $i < -p$.
As a right $R_{w}$-module, $M(C)_{w}$ is free with basis $b_{i}$ ($i\in J_{w}$) for $J_{w}=\{-p,\dots,r\}$.
\end{lemma}

Note that $R_{w}$ is a free product as in Theorem \ref{theorem:symmetricbandparameterringisHNP}. 
As for symmetric strings, we have an isomorphism $K[x;\sigma_s]\to K[x;\rho]$ which sends
$\sum_{i=0}^n \lambda_i x^i$ to $\sum_{i=0}^n \pi_r^{-1}(\lambda_i) x^i$, and sending $q_s(x)$ to $q_x(x)$, so $q_x(x)$ is normal and $R_w' \cong K[x;\sigma_s]/\ideal{q_s(x)}$. Similarly $q_y(y)$ is normal in $K[y;\tau]$ and $R_w'' \cong K[x;\sigma_t]/\ideal{q_t(x)}$.

\begin{proof}
Letting $C=C_w$, there are walks $D,E$ such that $D^*=u$, $E^*=v$ and 
\[
C = \dots E s E^{-1} D^{-1} t D|E s^{-1} E^{-1} D^{-1} t^{-1} D E s^{-1} E^{-1} D^{-1} t^{-1} \dots.
\]
Now having an $R$-$R_w$-bimodule structure extending the given $R$-$K$-bimodule structure is equivalent to having an $R$-$R_w'$-bimodule structure and an $R$-$R_w''$-bimodule structure, both extending the $R$-$K$-bimodule structure. Observe that $C_{2r-i} = C_i^{-1}$ for $i\neq r+1$ and $C_{-i-2p} = C_i^{-1}$ for $i\neq -p$, so each of these is similar to the case of a symmetric string, and hence the assertion follows from the considerations in section~\ref{s:symstring}.

By Theorem \ref{theorem:symmetricbandparameterringisHNP} the  alternating monomials in $x$ and $y$ give a $K$-basis of $R_{w}$, which we label here by
\[
(\dots,z_{-3},z_{-2}, z_{-1},z_{0},z_{1},z_{2},z_{3},\dots)=(\dots,yxy,xy,y,1,x,yx,xyx,\dots),
\]
where $z_{0}=1$. 
In this notation, an induction shows that for any $i\in J_{w}$ we have $b_{i} z_{l} = b_{ln+i}$ for $l$ even and $b_{i} z_{l} = b_{l n + r-p-i}$ for $l$ odd. It follows that the elements $b_{i}$ ($i\in J_{w}$) give an $R_{w}$-basis of $M(C)$. 
\end{proof}

\begin{example}
\label{example:symmetric-band}
Let $R$ be the semilinear clannish algebra $K_{\sigma,\theta}\langle s,a \rangle/\ideal{q_{s}(s),a^{2}}$ from Example \ref{example:asymmetric-band}. 
Consider the symmetric band $w={}^{\infty}(s^{*}as^{*}a^{-1}s^{*}a^{-1}s^{*}a)^{\infty}$ whose canonically associated walk is $C_{w}={}^{\infty}(sasa^{-1}s^{-1}a^{-1}sa)|(sas^{-1}a^{-1}s^{-1}a^{-1}s^{-1}a)^{\infty}$. 
In the notation above Lemma \ref{lemma:symmetric-bands-R_w-basis}, we have $D=a$, $E=sa$, $r=2$ and $p=1$. 
Again take $\pi_{0}$ to be the identity.   
Let $\rho=\sigma\theta\sigma\theta^{-1}\sigma^{-1}$, $\tau=\theta^{-1}\sigma\theta$, 
$q_x(x)=\sigma\theta(q_{s}(x))$, $q_y(y)=\theta^{-1}(q_{s}(y))$.
Let $V$ be a left module over $R_{w}=K[x;\rho]/\ideal{q_x(x)}*_{K}K[y;\tau]/\ideal{q_y(y)}$. 
By Lemma \ref{lemma:symmetric-bands-R_w-basis} the module $M(C_{w})\otimes _{R_{w}} V$ may be depicted by
\[
\begin{tikzcd}[column sep=1.2cm,row sep=0.3cm] 
\tensor*[_{\theta^{-1}}]{V}{} \arrow[out=165,in=195,loop,  "", "s=y"',distance=1.5cm] & 
{}_{}V\arrow["", "a"']{l} & 
{}_{\sigma}V\arrow["", "s"']{l} &
\tensor*[_{\sigma\theta}]{V}{}{}\arrow["", "a"']{l}\arrow[out=15,in=345,loop,  "s=x", ""',distance=1.5cm]
\end{tikzcd}
\]
\end{example}

\section{Proof of the main theorem}
\subsection{Semilinear Relations}
Linear relations have already been considered, see for example \cite[\S 2]{Cra1988ff} and \cite[\S 4]{Cra1989}. Let $\sigma$ be an automorphism of the division ring $K$, and let $V$ and $W$ be left $K$-modules. A \emph{$\sigma$-semilinear relation} $C:V\to W$ is by definition a $K$-submodule of $V \oplus {}_\sigma W$, where ${}_\sigma W$ is the $K$-module obtained from $W$ by restriction via $\sigma$. Any $\sigma$-semilinear map $\theta:V\to W$ defines a $\sigma$-semilinear relation via its graph $C = \{ (v,\theta(v)) \mid v\in V\}$. Conversely, thinking of a $\sigma$-semilinear relation $C$ as a generalization of a mapping, we write $w\in Cv$ (and in diagrams we use $C$ to label an arrow $v\xrightarrow{} w$) to mean that $(v,w)\in C$. Thus if $w \in Cv$ and $w'\in Cv'$ then $w+w'\in C(v+v')$ and $\sigma(\lambda) w \in C(\lambda v)$ for $\lambda\in K$.

Let $C:V\to W$ be a $\sigma$-semilinear relation. If $U$ is a subset of $V$, we define $C U = \bigcup_{u\in U} C u$. Observe that if $U$ is a $K$-submodule of $V$, then $CU$ is a $K$-submodule of $W$. 
%
%
We write $C^{-1} = \{ (w,v) : (v,w) \in C \}$; it is a $\sigma^{-1}$-semilinear relation from $W$ to $V$.
%
%
If $D:U\to V$ is a $\tau$-semilinear relation, then 
\[
CD = \{ (u,w) \in U\oplus W \mid \text{$(u,v)\in D$ and $(v,w)\in C$ for some $v\in V$}\}
\]
is a $\sigma\tau$-semilinear relation $U\to W$.

Let $C$ be a $\sigma$-semilinear relation \emph{on} $V$, that is, of the form $V\to V$. 
We define  subsets  $C'\subseteq C''$ of $V$ by
\begin{align*}
C'' &= \{v\in V\mid \text{there exists } v_{0},v_{1},\dots \in V\text{ such that }v=v_{0}\text{ and  }v_{i}\in Cv_{i+1}\text{ for all }i\}, \quad \text{and}
\\
C' &=\{v\in V\mid \text{there exists } v_{0},v_{1},\dots \in V\text{ such that }v=v_{0},v_{i}\in Cv_{i+1}\text{ for all }i \text{ and  } v_{i}=0\text{ for }i\gg 0\}.
\end{align*}

\begin{lemma}
\label{stable-image-kernel-are-K-submodules}
Let $C$ be a $\sigma$-semilinear relation on $V$.
Then $C',C''$ are $K$-submodules of $V$.

Furthermore if $V$ is finite-dimensional over $K$ then the following statements hold.
\begin{enumerate}
    \item We have the equations $C''=C'+C''\cap(C^{-1})''$ and $(C^{-1})''=(C^{-1})'+C''\cap(C^{-1})''$.
    \item We have the inclusions $C''\cap(C^{-1})'\subseteq C'$ and $C'\cap(C^{-1})''\subseteq (C^{-1})'$.
\end{enumerate}
\end{lemma}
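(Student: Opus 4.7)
The plan is in two stages: first to show $C'$ and $C''$ are $K$-submodules by a direct semilinear computation, and then in finite dimensions reduce (i) and (ii) to the existence of a $\sigma$-semilinear bijection on the quotient $C''/C'$. Closure under sums is immediate because $C$ is a submodule of $V \oplus {}_{\sigma}V$, so coordinate-wise sums of chains are chains (and ``eventually zero'' is preserved for $C'$). For closure under left multiplication by $\lambda \in K$, the identity $\lambda \cdot (v_{i+1}, v_i) = (\lambda v_{i+1}, \sigma(\lambda) v_i)$ in $V \oplus {}_{\sigma}V$ forces the sequence $w_i := \sigma^{-i}(\lambda) v_i$ to be a chain witnessing $\lambda v \in C''$, and the equivalence $w_i = 0 \iff v_i = 0$ covers $C'$.

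For the finite-dimensional part, the standard stabilisation of $C^n V$ and $C^n 0$ gives $CC'' = C''$ and $CC' \subseteq C'$. Set $T = C'' \cap \mathrm{Dom}(C)$; then $CT = C''$, as $CC''=C''$ forces every $v \in C''$ to lie in $Cu$ for some $u \in C''$, hence in $T$. For $u \in T$, $Cu$ is a non-empty coset of $C0 \subseteq C'$, so $\pi(Cu) \in C''/C'$ is a single element; and for $u \equiv u' \pmod{C'}$ in $T$, subtracting $(u,w),(u',w') \in C$ gives $w-w' \in C(u-u') \subseteq CC' \subseteq C'$, so the assignment $\bar u \mapsto \pi(Cu)$ depends only on $\bar u$. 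This defines a $\sigma$-semilinear map $\bar C \colon \pi(T) \to C''/C'$, surjective by $CT = C''$; a dimension count for surjective semilinear maps between finite-dimensional $K$-spaces then forces $\pi(T) = C''/C'$ and upgrades $\bar C$ to a $\sigma$-semilinear bijection of $C''/C'$.

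Statement (ii) is then nearly immediate. For $v \in C'' \cap (C^{-1})'$ a forward chain $v = u_0, u_1, \ldots, u_m = 0$ with $u_{i+1} \in Cu_i$ is automatically contained in $C''$ (inductively, via $CC'' \subseteq C''$), and the relation $u_{i+1} \in Cu_i$ descends to $\bar u_{i+1} = \bar C \bar u_i$ in $C''/C'$; iterating gives $\bar C^m \bar v = \bar u_m = 0$, and injectivity of $\bar C$ forces $v \in C'$. The second inclusion follows by the symmetry $C \leftrightarrow C^{-1}$.

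For (i), the plan is to introduce the descending $K$-submodules $F_k = \{u \in T : \text{there exist } u = u_0, \ldots, u_k \in T \text{ with } u_{i+1} \in Cu_i\}$, with $F_0 = T$ and $\bigcap_k F_k = C'' \cap (C^{-1})''$, and to show $\pi(F_k) = C''/C'$ for every $k$ by induction. The inclusion $\bar C(\pi(F_k)) \subseteq \pi(F_{k-1})$ is direct. The main obstacle is the converse inclusion $\bar C^{-1}(\pi(F_{k-1})) \subseteq \pi(F_k)$: given $\bar u$ with $\bar C \bar u = \bar w$ for some $w \in F_{k-1}$ and a lift $u \in T$ with $z \in Cu$, the discrepancy $c' := w - z$ lies in $C'$, and a chain $c' = c'_0, c'_1, \ldots, c'_n = 0$ witnessing $c' \in C'$ provides $c'_1 \in C'$ with $c' \in Cc'_1$; then $u' := u + c'_1 \in T$ satisfies $\bar u' = \bar u$ and $w = z + c' \in Cu + Cc'_1 = Cu'$, so $u' \in F_k$. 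Combined with bijectivity of $\bar C$, this gives $\dim \pi(F_k) = \dim \pi(F_{k-1})$, so inductively $\pi(F_k) = C''/C'$ for every $k$; stabilising yields $(C'' \cap (C^{-1})'') + C' = C''$, and the second equality of (i) follows by symmetry.
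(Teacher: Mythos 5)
Your proof of the first part (that $C'$ and $C''$ are $K$-submodules) uses exactly the same rescaling $w_i=\sigma^{-i}(\lambda)v_i$ as the paper, so that part coincides.

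For the finite-dimensional statements (i) and (ii) you take a genuinely different route. The paper works directly with the stabilised powers $C^dV$, $C^d0$, $D^dV$, $D^d0$ (where $D=C^{-1}$): for (i) it takes a chain for $v\in C''$, uses $D^dV=D^{2d}V$ to prolong it backwards by $d$ steps, lets $u_0$ be the middle term of the extended chain (which lies in $C''\cap D''$), and observes that $v_0$ and $u_0$ lie in the same coset $C^du_d$, so $v-u_0\in C^d0=C'$; the proof of (ii) is the analogous one-liner, comparing $u_0$ and $0$ inside $C^du_d$. Your argument instead builds an auxiliary structure: you verify $CC''=C''$ and $CC'\subseteq C'$, set $T=C''\cap\mathrm{Dom}(C)$, check well-definedness of the induced $\sigma$-semilinear map $\bar C\colon\pi(T)\to C''/C'$, and use a dimension count to upgrade it to a bijection of $C''/C'$. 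Inclusion (ii) then drops out of injectivity by descending a terminating chain to the quotient. For (i) you need the extra layer of the filtration $F_k\subseteq T$, show $\bar C$ carries $\pi(F_k)$ onto $\pi(F_{k-1})$ (the non-trivial direction being the lifting of preimages by adjusting lifts by elements of $C'$, which you handle correctly), and stabilise. I checked the details — the identification $\bigcap_kF_k=C''\cap(C^{-1})''$, the well-definedness and semilinearity of $\bar C$, and the inclusion $C0\subseteq C'$ used to make $\pi(Cu)$ a single coset — and they all hold. So your proof is correct. What it buys is a conceptual takeaway (the relation $C$ induces a $\sigma$-semilinear automorphism of $C''/C'$, from which both (i) and (ii) are formal) at the cost of more bookkeeping, especially for (i), where the direct chain-splicing of the paper is shorter. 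Both approaches rely on the same finite-dimensional stabilisation, so neither generalises further.
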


\begin{proof}
Let $D=C^{-1}$. 
Fix $v\in C''$ and $\lambda\in K$. By definition there exists  $v_{0},v_{1},\dots \in V$ such that $v=v_{0}$ and  $v_{i}\in C v_{i+1}$ for all $i$. Since $C$ is $\sigma$-semilinear we have $(\sigma^{-1}(\mu)v_{i+1},\mu v_{i})\in C$ for each $i$ and each $\mu \in K$.  Letting $v'_{i}=\sigma^{-i}(\lambda)v_{i}$ for all $i$ gives a sequence $v'_{0},v'_{1},\dots \in V$ such that $v'_{0}=\lambda v$ and $v'_{i}\in Cv'_{i+1}$ for all $i$. 
Hence $C''$ is a $K$-subspace of $V$. Since $v_{i}=0$ implies $v'_{i}=0$, $C'$ is also a $K$-subspace. 
We now assume $V$ is finite-dimensional,
and so $C''=C^{d}M$, $C'=C^{d}0$, $D''=D^{d}M$ and $D'=D^{d}0$ for some $d$.  

(i) 
Let $v\in C''$ with  $v_{0},v_{1},\dots \in V$ as above. 
Since $v_{d}\in D^{d}v_{0}$ and $D^{d}M=D^{2d}M$, there is a sequence $u_{-d},\dots,u_{0},\dots u_{d}\in V$ where $v_{d}=u_{d}$ and $u_{i}\in Du_{i-1}$ for $-d<i\leq d$. Note that $u_{0}\in C^{d}M\cap D^{d}M=C'' \cap D''$. Taking the difference of $v_{0},u_{0}\in C^{d}u_{d}$ gives $v_{0}-u_{0}\in C^{d}0$ and hence $v=u_{0}+v_{0}-u_{0}$ which lies in $C'+C''\cap(C^{-1})''$.  This gives the first equality; the second follows by replacing $C$ with $C^{-1}$. 

(ii) As above, by symmetry we can just prove the first inclusion. Let $u\in C''\cap(C^{-1})'$, giving a sequence $u_{-d},\dots,u_{0},\dots u_{d}\in V$ where $u=u_{0}$ and $u_{i}\in Cu_{i+1}$ for $-d\leq i<d$. Note $u_{d}\in D^{2d}0=D^{d}0$. Taking the difference of $u_{0},0\in C^{d}u_{d}$ gives $u_{0}\in C^{d}0$ and hence $u\in C'$.
%
%
%
%
\end{proof}



Let $q(x)\in K[x;\sigma]$ be a monic quadratic polynomial, say $q(x)=x^2-\beta x+\gamma$. Let $X$ be a $\sigma$-semilinear relation on $V$. We say that $X$ is \emph{$q(x)$-bound} if $w\in X v $ implies that $\beta w - \gamma v\in X w$. Written another way, $X$ is a $q(x)$-bound if and only if $(v,w)\in X$ implies $(w,\beta w-\gamma v)\in X$. See \cite[\S4]{Cra1989}. For example if $X$ is the graph of a mapping $\theta$, then it is $q(x)$-bound if and only if $q(\theta)=0$. If $q(x)$ is normal then, by Lemma \ref{lemma:characterising_central_skew_quadratics},  $X$ becomes a $K[x;\sigma]$-module, with the action of $x$ given by $x(v,w) = (w,\beta w - \gamma v)$. Moreover, and likewise, the element $q(x)$ acts on $X$ as zero, so $X$ becomes a module for the quotient $K[x;\sigma]/\ideal{q(x)}$.
%

\begin{lemma}
\label{inverting-bound-semilinear-relations}
Let $q(x)=x^{2}-\beta x+\gamma\in K[x;\sigma]$ and let $X$ be a $\sigma$-semilinear relation on $V$.

(i) If $q(x)$ is non-singular and normal, then $X$ is $q(x)$-bound if and only if the $\sigma^{-1}$-semilinear relation $X^{-1}$ is $q'(y)$-bound, where $q'(y)=y^{2}-\gamma^{-1}\beta y+\gamma^{-1}$.

(ii) If $X$ is $q(x)$-bound and $C:V\to W$ is a $\tau$-semilinear relation, then the 
$\tau^{-1} \sigma \tau$-semilinear relation $CXC^{-1}$ on $W$ is $p(y)$-bound, where $p(y) = y^2 - \tau(\beta) y + \tau(\gamma)$.
\end{lemma}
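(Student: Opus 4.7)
The plan is to use the module-theoretic reformulation of $q(x)$-boundedness described just before the lemma: when $q(x)$ is normal, a $\sigma$-semilinear relation $X$ on $V$ is $q(x)$-bound precisely when the formula $x\cdot(v,w) = (w, \beta w - \gamma v)$ equips $X$ with a module structure over $S = K[x;\sigma]/\ideal{q(x)}$.

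For part (i), since $q(x)$ is non-singular, Lemma \ref{lemma:invertible-variable-in-quotient-by-non-singular-quadratic} guarantees that $x$ is invertible in $S$. Hence if $X$ is $q(x)$-bound, then $x$ acts invertibly on $X$, and for any $(v,w)\in X$ the equation $x(a,b)=(v,w)$ forces $b=v$ and $a=\gamma^{-1}\beta v - \gamma^{-1}w$. Thus $(\gamma^{-1}\beta v - \gamma^{-1} w,\, v)$ lies in $X$, which, rewritten in terms of $X^{-1}$, is precisely $q'(y)$-boundedness. The converse will follow by symmetry: Corollary \ref{corollary:twisting-(and-inverting-non-singular)-preserves-normal/central-quadratics} guarantees that $q'(y)$ is itself non-singular and normal in $K[y;\sigma^{-1}]$, $(X^{-1})^{-1}=X$, and a direct one-line check gives $(q')'(x)=q(x)$, so applying the forward direction to $X^{-1}$ recovers that $X$ is $q(x)$-bound.

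For part (ii), the argument is a direct chase through the definition of composition. Given $(w_1,w_2)\in CXC^{-1}$, I would unpack the definition to obtain $v_1,v_2\in V$ with $(v_1,w_1),(v_2,w_2)\in C$ and $(v_1,v_2)\in X$; apply $q(x)$-boundedness to get $(v_2,\beta v_2 - \gamma v_1)\in X$; and then use that $C\subseteq V\oplus {}_\tau W$ is a $K$-submodule to see that the combination $\beta(v_2,w_2)-\gamma(v_1,w_1) = (\beta v_2 - \gamma v_1,\,\tau(\beta)w_2 - \tau(\gamma)w_1)$ lies in $C$. Composing the three memberships $(w_2,v_2)\in C^{-1}$, $(v_2,\beta v_2 - \gamma v_1)\in X$, and $(\beta v_2 - \gamma v_1,\tau(\beta)w_2 - \tau(\gamma)w_1)\in C$ then places $(w_2,\tau(\beta)w_2 - \tau(\gamma)w_1)$ inside $CXC^{-1}$, which is exactly $p(y)$-boundedness.

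The only real bookkeeping obstacle lies in part (i): confirming that $x^{-1}$ acts on $X$ by the claimed formula relies on the normality identities $\sigma(\beta)=\beta$, $\sigma(\gamma)=\gamma$, $\sigma(\lambda)\beta=\beta\lambda$, and $\sigma^2(\lambda)\gamma=\gamma\lambda$ from Lemma \ref{lemma:characterising_central_skew_quadratics} — the same identities needed to make the module structure on $X$ well-defined in the first place — so once those are invoked there is no further subtlety. Part (ii) requires nothing beyond the defining properties of composition, scalar multiplication in $V\oplus {}_\tau W$, and $q(x)$-boundedness.
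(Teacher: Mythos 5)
Your proof is correct and follows essentially the same route the paper intends: the paper's one-line proof just cites Lemma~\ref{lemma:characterising_central_skew_quadratics} and Corollary~\ref{corollary:twisting-(and-inverting-non-singular)-preserves-normal/central-quadratics}, and your argument spells out exactly the computation those results enable, with part (i) packaged through the module structure on $X$ and the invertibility of $x$ in $K[x;\sigma]/\ideal{q(x)}$ (Lemma~\ref{lemma:invertible-variable-in-quotient-by-non-singular-quadratic}), and part (ii) the same direct relation chase the paper calls ``straightforward.'' No gaps.
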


\begin{proof}
(i)  follows by combining Lemma \ref{lemma:characterising_central_skew_quadratics} and Corollary \ref{corollary:twisting-(and-inverting-non-singular)-preserves-normal/central-quadratics}, and (ii) is straightforward.
%
%
%
%
%
%
%
\end{proof}

\begin{lemma}
\label{lemma:oneqrelation-1}
Let $\lambda \in K$ with $\lambda\neq0$. Let $u\in Xw$ where $X$ is a $\sigma$-semilinear $q(x)$-bound relation on $V$.
\begin{itemize}
\item[(i)] We have $\lambda u\in X^{-1}(\mu w+\eta u)$ for some $\mu,\eta\in K$ with $\mu\neq0$. 
\item[(ii)] If $q(x)$ is non-singular then we have $\mu u\in X^{-1}(\lambda w+\eta u)$ for some $\mu,\eta\in K$  with $\mu\neq0$. 
\end{itemize}
\end{lemma}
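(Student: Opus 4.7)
The strategy is to combine the $q(x)$-bound property of $X$ with its $\sigma$-semilinearity; both parts follow from a single derived relation, scaled appropriately. Starting from $u\in Xw$, i.e.\ $(w,u)\in X$, the $q(x)$-bound hypothesis immediately yields a second pair in $X$, namely $(u,\beta u-\gamma w)$, equivalently $\beta u-\gamma w\in Xu$. This is the only input we will need from the $q(x)$-bound structure; everything else reduces to bookkeeping with the $K$-module structure on $X$ and on $X^{-1}$.

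For part (i), I would apply $\sigma$-semilinearity of $X$ to the pair above, scaling by $\lambda$ to obtain $(\lambda u,\ \sigma(\lambda)(\beta u-\gamma w))\in X$, i.e.\ $\sigma(\lambda)\beta u-\sigma(\lambda)\gamma w\in X(\lambda u)$. Passing to the inverse relation, this reads $\lambda u\in X^{-1}(\mu w+\eta u)$ with $\mu=-\sigma(\lambda)\gamma$ and $\eta=\sigma(\lambda)\beta$, which is the desired statement.

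For part (ii), where we instead need $\lambda$ as the prescribed coefficient of $w$ on the right, I would work directly in $X^{-1}$. From $(u,\beta u-\gamma w)\in X$ we get $(\beta u-\gamma w,u)\in X^{-1}$, and since $X^{-1}$ is $\sigma^{-1}$-semilinear, scaling by $b\in K$ gives $(b\beta u-b\gamma w,\sigma^{-1}(b)u)\in X^{-1}$. To force the first coordinate to be of the form $\lambda w+\eta u$ we solve $-b\gamma=\lambda$, which is exactly where the non-singularity assumption enters: it requires $\gamma$ to be invertible. Taking $b=-\lambda\gamma^{-1}$ then produces the relation $\mu u\in X^{-1}(\lambda w+\eta u)$ with $\eta=-\lambda\gamma^{-1}\beta$ and $\mu=\sigma^{-1}(-\lambda\gamma^{-1})$, and $\mu\neq 0$ since $\lambda\neq 0$ and $\sigma$ is an automorphism.

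There is no real obstacle here; the argument is a direct manipulation of the $q(x)$-bound and semilinearity axioms. The only point requiring care is the asymmetry of the two parts: in (i) the scaling takes place inside $X$ itself (via $\sigma$-semilinearity) and produces the factor $\sigma(\lambda)$ on the right, whereas in (ii) the scaling takes place inside $X^{-1}$ (via $\sigma^{-1}$-semilinearity) and the scalar $b$ has to be chosen to match the prescribed coefficient of $w$, which is precisely why non-singularity of $q(x)$ is invoked in the second part but not the first.
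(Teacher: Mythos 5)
Your argument is correct and is essentially the paper's: apply the $q(x)$-bound property to get $\beta u - \gamma w \in Xu$, then scale inside $X$ (for (i)) or inside $X^{-1}$ (for (ii)) to land the prescribed coefficient in the right slot. The paper's proof merely states the resulting scalars without derivation, and its formulas $\mu=-\gamma\sigma^{-1}(\lambda),\ \eta=\beta\lambda$ (resp.\ $\mu=-\gamma^{-1}\sigma(\lambda),\ \eta=\beta\mu$) agree with yours only after invoking the normality identities $\sigma(\lambda)\gamma=\gamma\sigma^{-1}(\lambda)$, $\sigma(\lambda)\beta=\beta\lambda$; your scalars $\mu=-\sigma(\lambda)\gamma,\ \eta=\sigma(\lambda)\beta$ for (i) and $\mu=\sigma^{-1}(-\lambda\gamma^{-1}),\ \eta=-\lambda\gamma^{-1}\beta$ for (ii) hold verbatim with no normality assumption, which is a minor but genuine improvement. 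One small caveat, common to both your write-up and the paper's: in part (i) the scalar $\mu$ is a unit multiple of $\gamma$, so the conclusion $\mu\neq 0$ tacitly needs $\gamma\neq 0$; this is harmless where the lemma is applied (always under non-singularity, or with $\lambda=1$ where $\mu\neq 0$ is not used), but it is worth flagging.
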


\begin{proof}
Let $q(x)=x^{2}-\beta x+\gamma$. For (i) let $\mu=-\gamma\sigma^{-1}(\lambda),\eta=\beta\lambda$; for (ii) let $\mu =-\gamma^{-1}\sigma(\lambda),\eta=\beta \mu$. 
\end{proof}

\begin{lemma}
\label{lemma:oneqrelation}
Let $X$ be a $\sigma$-semilinear $q(x)$-bound relation on $V$. 
If $U \subseteq W \subseteq V$ are $K$-submodules, then
\begin{itemize}
\item[(i)]
$U \cap X W \subseteq U \cap X^{-1} W$, so in particular $W \cap X W \subseteq W \cap X^{-1} W$, and
\item[(ii)]
$(W \cap X U) + (U \cap X W) \subseteq (W \cap X^{-1} U) + (U \cap X^{-1} W)$.
\end{itemize}
Equality holds in all of these inclusions if $q(x) = x^2-\beta x+\gamma\in K[x;\sigma]$ is non-singular and normal.
\end{lemma}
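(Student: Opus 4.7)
The plan is to derive both inclusions directly from the $q(x)$-bound property of $X$, treating (i) as the simpler case, and then to deduce equality in the non-singular normal setting by applying the inclusions to $X^{-1}$ via Lemma~\ref{inverting-bound-semilinear-relations}(i).

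For (i), I would take $a \in U \cap XW$, fix $w \in W$ with $(w,a) \in X$, and use the $q(x)$-bound condition to obtain $(a, \beta a - \gamma w) \in X$. Since $a \in U \subseteq W$ and $w \in W$, the element $\beta a - \gamma w$ lies in $W$, so $a \in U \cap X^{-1}W$. Specialising to $U = W$ yields $W \cap XW \subseteq W \cap X^{-1}W$, which is the extra assertion in the statement.

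For (ii), the summand $U \cap XW$ is already contained in $U \cap X^{-1}W$ by (i), so the only genuine task is to decompose an arbitrary $a \in W \cap XU$. Fix $u \in U$ with $(u,a) \in X$ and set $c = \sigma^{-1}(\beta) \in K$. The claim is that
\[
a = (a - cu) + cu
\]
realises the required splitting. By $\sigma$-semilinearity of $X$ applied to $(u,a) \in X$ we get $(cu, \sigma(c)a) = (cu, \beta a) \in X$, so $cu \in U \cap X^{-1}W$ (with witness $\beta a \in W$). Subtracting this pair from the $q(x)$-bound consequence $(a, \beta a - \gamma u) \in X$ produces $(a - cu, -\gamma u) \in X$, and since $-\gamma u \in U$ while $a - cu \in W$, this gives $a - cu \in W \cap X^{-1}U$.

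For the equality assertion, assume $q(x)$ is non-singular and normal. By Lemma~\ref{inverting-bound-semilinear-relations}(i), the relation $X^{-1}$ is $q'(y)$-bound for $q'(y) = y^2 - \gamma^{-1}\beta y + \gamma^{-1}$, and so applying (i) and (ii) with $(X, q(x))$ replaced by $(X^{-1}, q'(y))$ supplies the reverse inclusions. The main technical hurdle is isolating the correct scalar $c$ in the decomposition: the choice $c = \sigma^{-1}(\beta)$ is forced by the requirement that $\sigma(c) a$ exactly cancel the $\beta a$ term arising from the bound condition, after which everything reduces to routine pair arithmetic inside the submodule $X$.
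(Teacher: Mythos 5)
Your proof is correct and follows essentially the same route as the paper: part~(i) is the $\lambda=1$ case of Lemma~\ref{lemma:oneqrelation-1}(i), part~(ii) uses the identical decomposition $w = \sigma^{-1}(\beta)u + (w - \sigma^{-1}(\beta)u)$ extracted from the $q(x)$-bound condition and semilinearity, and the equality assertion is obtained exactly as in the paper by passing to $X^{-1}$ via Lemma~\ref{inverting-bound-semilinear-relations}(i).
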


\begin{proof}
Taking $\lambda=1$ in Lemma \ref{lemma:oneqrelation-1}(i), part (i) is immediate.  For (ii), note that by (i) it suffices to show that $W \cap X U \subseteq (W \cap X^{-1} U) + (U \cap X^{-1} W)$.
So let $w \in W\cap X U$. 
Now $w\in X u$, for some $u\in U$, so $\beta w - \gamma u \in X w$. By semilinearity, $\beta w \in X (\sigma^{-1}(\beta)u)$. Thus $\gamma u \in X( \sigma^{-1}(\beta)u - w)$. Thus $\sigma^{-1}(\beta)u - w \in W \cap X^{-1}U$. Now $\sigma^{-1}(\beta)u \in U \cap X^{-1} W$, and hence $w \in (W \cap X^{-1} U) + (U \cap X^{-1} W)$.

Now if $q(x)$ is non-singular and normal, then the relation $X^{-1}$ is $q'(y)$-bound in the sense of Lemma \ref{inverting-bound-semilinear-relations}, and the same results applied to $X^{-1}$ give the reverse inclusions.
\end{proof}

\begin{lemma}\label{lemma:rewriting-relations-symmetric-bands}
Let $r(x)\in K[x;\rho]$ and $p(y)\in K[y;\tau]$ be monic, normal and non-singular quadratics.
Let $X$ be a $\rho$-semilinear $r(x)$-bound relation, and $Y$ a $\tau$-semilinear $p(y)$-bound relation, on $V$. 
Then we have 
\begin{equation}\label{top-functor-equations-symmetric-band-1}
    \begin{split}
    (Y^{-1}X^{-1})''\cap (X^{-1}Y^{-1})''=(Y^{-1}X^{-1})''\cap (XY)''=(YX)''\cap (X^{-1}Y^{-1})''=(YX)''\cap (XY)''
    \end{split}
\end{equation}
and
\begin{equation}\label{top-functor-equations-symmetric-band-2}
    \begin{split}
    (Y^{-1}X^{-1})'\cap (X^{-1}Y^{-1})''=(Y^{-1}X^{-1})'\cap (XY)'',\\
    (YX)'\cap (X^{-1}Y^{-1})''=(YX)'\cap (XY)'',\\
     (Y^{-1}X^{-1})''\cap (X^{-1}Y^{-1})'=(YX)''\cap (X^{-1}Y^{-1})',\\
      (Y^{-1}X^{-1})''\cap (XY)'=(YX)''\cap (XY)'.
\end{split}
\end{equation}
Furthermore if $V$ is finite-dimensional then the sets listed in \eqref{top-functor-equations-symmetric-band-2} above are all equal, and they are equal to 
\begin{equation}\label{bottom-functor-equations-symmetric-band}
   \begin{split}
        (Y^{-1}X^{-1})'\cap (X^{-1}Y^{-1})'=(YX)'\cap (X^{-1}Y^{-1})'=(Y^{-1}X^{-1})'\cap (XY)'=  (YX)'\cap (XY)'.
   \end{split}
\end{equation}
\end{lemma}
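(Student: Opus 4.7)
The four sets in \eqref{top-functor-equations-symmetric-band-1} each encode the existence of a canonical infinite chain through $v\in V$: two pure backward chains (of pattern $YXYX\dots$ and $XYXY\dots$) coming from the doubly-primed sets of the composites $YX$ and $XY$, and two alternating forward/backward chains coming from $(Y^{-1}X^{-1})''$ and $(X^{-1}Y^{-1})''$. The plan is to show that in any pairwise intersection, the presence of any two such chains through $v$ forces the existence of the other two. The essential tool is Lemma \ref{lemma:oneqrelation-1}, which via the $q$-bound rule converts an edge $(u,w)\in X$ into a new $X$-edge emanating from $w$, together with the analogous statement for $Y$; combined with Lemma \ref{inverting-bound-semilinear-relations}(i), which says $X^{-1}$ is $q'$-bound for the normal non-singular polynomial $q'$ of Corollary \ref{corollary:twisting-(and-inverting-non-singular)-preserves-normal/central-quadratics}, this allows me to swap $X\leftrightarrow X^{-1}$ (and $Y\leftrightarrow Y^{-1}$) freely in any argument.

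To establish \eqref{top-functor-equations-symmetric-band-1}, I would prove a single chain-construction statement: given two chains witnessing $v\in (YX)''\cap (XY)''$, inductively construct alternating chains witnessing $v\in (X^{-1}Y^{-1})''\cap (Y^{-1}X^{-1})''$. At each step, apply the $q$- (respectively $p$-) bound rule to a known edge to produce a new edge with the required source, then form an appropriate $K$-linear combination of the two original edges (using the $K$-submodule structure of $X\subseteq V\oplus {}_\sigma V$ and $Y\subseteq V\oplus {}_\tau V$, and the twisting by $\sigma$ or $\tau$ on the second coordinate) to pin the target to the prescribed value. The reverse inclusion follows by the same argument applied to $X^{-1}$ and $Y^{-1}$; the mixed equalities in \eqref{top-functor-equations-symmetric-band-1} arise from running the swap on only one of the two chains. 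The equations \eqref{top-functor-equations-symmetric-band-2} then follow from the same construction, since the hypothesised $'$-chain is simply carried along unchanged in each intersection while the $''$-chain is transformed as above.

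For the finite-dimensional case, I use Lemma \ref{stable-image-kernel-are-K-submodules}(ii), which gives $C''\cap (C^{-1})'=C'\cap (C^{-1})'$ for any semilinear relation $C$ on a finite-dimensional space. Applying this with $C$ taken in turn to be each of $YX$, $XY$, $X^{-1}Y^{-1}$, $Y^{-1}X^{-1}$, each of the eight sides of \eqref{top-functor-equations-symmetric-band-2} reduces to an intersection of the form $(\cdot)'\cap (\cdot)'$, and so falls into at most two candidate sets, namely $(Y^{-1}X^{-1})'\cap (XY)'$ and $(YX)'\cap (X^{-1}Y^{-1})'$. A further application of the chain construction at the $'$-level, observing that the $K$-linear combinations used are linear in the inputs so that eventual vanishing is preserved, shows these two candidates coincide with the remaining sets in \eqref{bottom-functor-equations-symmetric-band}. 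The main obstacle is the precise bookkeeping of semilinear scalars and of the constants $\beta,\gamma,\beta',\gamma'$ in the inductive step: normality of $q$ and $p$ (Lemma \ref{lemma:characterising_central_skew_quadratics}) is crucial for commuting these past the automorphisms $\sigma,\tau$ in the required way, and the $'$-preservation at the last stage requires verifying that the constructed edges vanish as soon as the input edges do.
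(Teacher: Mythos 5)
Your overall strategy matches the paper's: prove \eqref{top-functor-equations-symmetric-band-1} by constructing an alternating chain, derive \eqref{top-functor-equations-symmetric-band-2} from it, then use Lemma~\ref{stable-image-kernel-are-K-submodules}(ii) together with a finer $'$-level chain argument to collapse everything to \eqref{bottom-functor-equations-symmetric-band} in the finite-dimensional case. Where you diverge is in the level at which the chain is built for \eqref{top-functor-equations-symmetric-band-1}. The paper sets $W_- = (Y^{-1}X^{-1})''$, $W_+ = (X^{-1}Y^{-1})''$, notes $W_+ = X^{-1}W_-$, $W_- = Y^{-1}W_+$, and then invokes the submodule-level identity $W_-\cap XW_- = W_-\cap X^{-1}W_-$ of Lemma~\ref{lemma:oneqrelation} to show $W = W_-\cap W_+$ is stable under passing one step to the right; this produces the infinite chain with no scalar bookkeeping at all, since each new $v_i$ is simply chosen in $W$. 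You instead take two explicit chains witnessing $v\in(YX)''\cap(XY)''$ and build the new one term-by-term via Lemma~\ref{lemma:oneqrelation-1} and $K$-linear combinations in $V\oplus{}_\sigma V$ and $V\oplus{}_\tau V$. This does work, but the bookkeeping is heavier than you suggest: the new term $w_{i+1}$ is not a combination of ``two original edges'' but of a growing (though boundedly indexed) collection of elements from both input chains together with their auxiliary midpoints, so the induction needs to track all of these. The element-level route does have one payoff, which you point out: the same construction, carried over to the $'$-level, shows vanishing of the inputs forces eventual vanishing of the output, which is exactly the step the paper outsources by reference to \cite[\S 4.5, Lemma]{Cra1988ff}. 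Your reduction of the eight sides of \eqref{top-functor-equations-symmetric-band-2} to the two candidate sets $(Y^{-1}X^{-1})'\cap(XY)'$ and $(YX)'\cap(X^{-1}Y^{-1})'$ via Lemma~\ref{stable-image-kernel-are-K-submodules}(ii) is correct (noting that four of the eight sides are not of the form $C'\cap(C^{-1})''$ and must first be converted using the equalities \eqref{top-functor-equations-symmetric-band-2} themselves), and it organizes the symmetry argument a bit more explicitly than the paper does.
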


\begin{proof}
Let $W_{-}=(Y^{-1}X^{-1})''$ and $W_{+}=(X^{-1}Y^{-1})''$ which means $ W_{+}=X^{-1}W_{-}$ and $W_{-}=Y^{-1}W_{+}$. 
Since $X$ and $Y$ are semilinear relations bound by non-singular quadratics, by Lemma \ref{lemma:oneqrelation} we have that the intersection $W=W_{-}\cap W_{+}$ is equal to both $W_{-} \cap XW_{-}$ and $W_{+}\cap Y W_{+}$. 
Assuming $v\in W$ gives $v\in X v_{1}$ for some $v_{1}\in W_{-} \cap X^{-1}W_{-}=W$, which gives $v_{1}\in Yv_{2}$ for some $v_{2}\in Y^{-1}W_{+}\cap W_{+}=W$. 
Repeating this argument defines a sequence $v_{i}\in V$ ($i>0$) such that $v_{0}=v$, $v_{i}\in Yv_{i+1}$ for $i$ odd and $v_{i}\in X v_{i+1}$ for $i$ even. 
Hence we have $W\subseteq (XY)''$ and by symmetry this gives all of the equalities in \eqref{top-functor-equations-symmetric-band-1}.   

Let $U_{-}= (Y^{-1}X^{-1})'$ which lies in $W_{-}$, so $U_{-}\cap W_{+}\subseteq U_{-}\cap (XY)''$ by \eqref{top-functor-equations-symmetric-band-1}. This gives the first of the eight inclusions required for \eqref{top-functor-equations-symmetric-band-2}. The remaining inclusions follow by symmetry.

For the final claim consider the $\rho\tau$-semilinear relation $C=XY$ on $V$. 
By \eqref{top-functor-equations-symmetric-band-2} we have that $U_{-}\cap W_{+}=(C^{-1})'\cap C''$, which is contained in $C'$ by Lemma \ref{stable-image-kernel-are-K-submodules}. 
To see that $C'\cap(C^{-1})'$ is contained in $(X^{-1}Y^{-1})'$, the proof of \cite[\S 4.5, Lemma]{Cra1988ff} may be adapted to our situation; see Lemma \ref{lemma:oneqrelation-1}.
\end{proof}

\subsection{The functors $C^\pm$ and one-sided filtrations}
Recall that $R = K_{\boldsymbol{\sigma}} Q/I$. We consider (covariant) functors from the category of $R$-modules (or finite-dimensional $R$-modules) to the category of $K$-modules. 
Recall that if $F$ is such a functor, then a \emph{subfunctor} $G$ is given by fixing a $K$-subspace $G(M)$ of $F(M)$ for each $R$-module $M$, in such a way that $F(\theta)(G(M))\subseteq G(M')$ for any homomorphism $M\to M'$. Given a vertex $\ell$ in $Q$, the \emph{$\ell$-forgetful functor} is the functor sending an $R$-module $M$ to the $K$-module $e_\ell M$ (a subfunctor of the forgetful functor sending $M$ to $M$ as a $K$-module).

Let $M$ be an $R$-module. If $a$ is an arrow in $Q$, then the action of $a$ defines a $\sigma_a$-semilinear mapping $e_{t(a)} M\to e_{h(a)} M$, which we also denote by $a$. Considering this as a relation $e_{t(a)} M\to e_{h(a)} M$, we have the inverse relation $a^{-1} : e_{h(a)} M\to e_{t(a)} M$. Using products of relations, any finite walk $C$ induces a $\sigma_C$-semilinear relation $e_{\ell'} M \to e_\ell M$ where $\ell$ is the head of $C$ and $\ell'$ is the head of $C^{-1}$. It is easy to see that the assignments sending a module $M$ to $C\, 0$ or $C\, e_{\ell'} M$ define subfunctors of the $\ell$-forgetful functor.







\begin{definition}
Let $C$ be a walk with $C^* \in H(\ell,\epsilon)$. If $M$ is an $R$-module, we define $C^\pm(M)$ as follows. If $C$ is a finite walk, say with $C^{-1}$ having head $\ell'$, then 
\begin{align*}
C^+(M) &= 
\begin{cases}
Ca^{-1} \, 0 & \text{(if there is an arrow $a$ with $Ca^{-1}$ a walk)} \\
C \, e_{\ell'} M & \text{(otherwise)}
\end{cases}
\\
C^-(M) & =
\begin{cases}
C b \, e_{t(b)} M & \text{(if there is an arrow $b$ with $C b$ a walk)} \\
C \, 0 & \text{(otherwise).} 
\end{cases}
\end{align*}
If $C$ is an infinite walk, then
\begin{align*}
C^+(M) &= \{ x\in e_\ell M : \text{there exist $x=x_0,x_1,\dots$ such that $x_{i-1} \in C_i x_i$ for all $i$} \}, \\
C^-(M) &= \{ x\in e_\ell M : \text{there exist $x=x_0,x_1,\dots$ such that $x_{i-1} \in C_i x_i$ for all $i$ and $x_i = 0$ for $i\gg 0$} \}.
\end{align*}
\end{definition}





We say that a walk $D$ is \emph{special-direct} if special loops only appear in it as direct letters, and \emph{special-inverse} if $D^{-1}$ is special-direct. Given a word $w$, we denote by $D_w$ the special-direct walk with $D_w^*=w$ and by $D'_w$ the special-inverse walk with $(D'_w)^*=w$. (Warning: this conflicts with the notation in \cite{Cra1988ff}.)



\begin{lemma}
\label{lemma:one-sided-filtration}
Let $\ell$ be a vertex in $Q$, let $\epsilon = \pm1$, and let $M$ be an $R$-module.
\begin{itemize}
\item[(i)]
If $w\in H(\ell,\epsilon)$ then $D_w^-(M)\subseteq D_w^+(M) \subseteq e_\ell M$.
\item[(ii)]
If $w,z \in H(\ell,\epsilon)$ and $w<z$ then $D_w^+(M) \subseteq D_z^-(M)$.
\item[(iii)]
Suppose $M$ is finite-dimensional. 
If $S$ is a non-empty subset of $e_\ell M$ with $0\notin S$, then there is some $w\in H(\ell,\epsilon)$ such that $S$ meets $D_w^+(M)$ and does not meet $D_w^-(M)$.
\end{itemize}
Moreover the same statements hold for the special-inverse walks $D'_w$.
\end{lemma}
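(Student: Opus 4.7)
Part (i) unfolds directly from the definitions. For finite walks $D_w$, there are four sub-cases depending on whether $D_w$ extends on the right by a direct letter $b$ and by an inverse letter $a^{-1}$. In three of them the containment $D_w^-(M) \subseteq D_w^+(M)$ is trivial ($0 \subseteq \mathrm{anything}$, or every subspace sits inside $e_{\ell'}M$). In the remaining sub-case, where both extensions exist, the sign convention on letters forces both $a, b$ to be ordinary with $ab \in Z$, whence $b\cdot e_{t(b)}M \subseteq \ker a$, giving the containment. For infinite walks the inclusion is immediate, since the $D_w^-$ condition is a strict refinement of the $D_w^+$ condition. The outer inclusion into $e_\ell M$ is automatic.

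For part (ii), I decompose $w < z$ via the three cases of Definition \ref{definition:order-on-words} and factor out the common prefix walk $D_u$. Since applying a relation preserves inclusions, it suffices to establish the corresponding containment at the branching vertex $p = v_{|u|}(w) = v_{|u|}(z)$. Cases (b) and (c) are direct: in (b), any element of $D_w^+(M)$ pulled back through $D_u$ has the form $y m'$ at $p$, which lies in $y \cdot e_{t(y)} M$, the terminal set witnessing $D_z^-$; in (c), $D_w^+(M) = D_u \cdot \ker x$, and any element of $\ker x$ lies trivially in $x^{-1}(D_{v'}^-(M))$, since $0 \in D_{v'}^-(M)$. Case (a) is the crux. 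The sign convention applied to the distinct letters $y$ and $x^{-1}$ with common head $p$ and common sign forces $\{y, x^{-1}\} = \{a^{-1}, b\}$ for some zero-relation $ab \in Z$, hence $a = x$, $b = y$, and $xy \in Z$. For $m \in D_w^+(M)$ with a witnessing sequence $m_0, m_1, \dots$, we have $m_{|u|} = y m_{|u|+1}$ and so $x m_{|u|} = (xy) m_{|u|+1} = 0$. Setting $m_i^z := m_i$ for $i \le |u|$ and $m_i^z := 0$ for $i > |u|$ then witnesses $m \in D_z^-(M)$: the $x^{-1}$-relation at position $|u|+1$ requires $m_{|u|+1}^z = x m_{|u|} = 0$ (consistent), subsequent positions accept the zero continuation, and the terminal value $0$ always lies in the relevant terminal set.

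For part (iii), finite-dimensionality of $M$ ensures that only finitely many distinct subspaces arise among $\{D_w^\pm(M)\}$, and by (i) and (ii) they form an ascending chain along the total order on $H(\ell,\epsilon)$. Choose $w$ minimal subject to $S \cap D_w^+(M) \neq \emptyset$ (non-empty, since the trivial word $1_{\ell,\epsilon}$ qualifies). For any $w' < w$, minimality gives $S \cap D_{w'}^+(M) = \emptyset$. Combined with the structural identity that $D_w^-(M)$ equals the union of $D_{w'}^+(M)$ over $w' < w$ along the chain --- derivable by unfolding at the branching letter distinguishing $w$ from its immediate predecessor, using the case (a) reasoning of part (ii) --- this forces $S \cap D_w^-(M) = \emptyset$, as required. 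The statement for special-inverse walks $D'_w$ follows by an entirely dual argument, interchanging direct and inverse letters throughout and swapping the roles of the structural conditions on arrow incidences at vertices.

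The main obstacle will be case (a) of part (ii): the device of replacing the continuation of the $D_w^+$-witness by the zero sequence from position $|u|+1$ onward, justified by $xy \in Z$, is the genuine combinatorial content of the lemma and relies essentially on the sign convention and zero-relation structure of the semilinear clannish algebra. Part (iii) also requires some care to justify the filtration identity used in the minimality argument, which is itself a refinement of the arguments in part (ii).
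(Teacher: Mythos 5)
Parts (i) and (ii) are correct and essentially track the paper's argument, only with more detail (the paper's own proof of (ii) is largely a pointer to Ringel's lemma plus the observation about case (a), which you reproduce and carefully complete; the sign-convention argument identifying $\{y,x^{-1}\}=\{b,a^{-1}\}$ with $ab\in Z$, and the zero-continuation trick, are exactly the intended content).

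Part (iii), however, contains a genuine gap, and you yourself flag it but do not fill it. Three points.

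First, the parenthetical ``non-empty, since the trivial word $1_{\ell,\epsilon}$ qualifies'' is false: $D_{1_{\ell,\epsilon}}^+(M)$ is $\ker a$ whenever there is an arrow $a$ with $1_{\ell,\epsilon}\,a^{-1}$ a word, and this may well miss $S$. Non-emptiness of $\{w : S\cap D_w^+(M)\neq\emptyset\}$ is true, but comes from a word far \emph{up} the order (one with only inverse and $*$-letters), not from $1_{\ell,\epsilon}$, which sits in the middle of the total order.

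Second, ``Choose $w$ minimal subject to $S\cap D_w^+(M)\neq\emptyset$'' presupposes that this upward-closed subset of $H(\ell,\epsilon)$ has a least element. The total order on $H(\ell,\epsilon)$ is, in general, dense and not well-ordered, so this is not automatic; at most one can say the finite chain of subspaces $\{D_w^\pm(M)\}$ has a minimal member meeting $S$, which is a different and weaker statement.

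Third, and most seriously, the ``structural identity'' $D_w^-(M)=\bigcup_{w'<w}D_{w'}^+(M)$ is the crux and is only asserted. Your suggested justification --- ``unfolding at the branching letter distinguishing $w$ from its immediate predecessor'' --- cannot work, because in this order there is no immediate predecessor; between $wy$ and $w$ one has the whole interval of words extending $wy$. Proving the reverse inclusion $D_w^-(M)\subseteq\bigcup_{w'<w}D_{w'}^+(M)$ genuinely requires, given $m\in D_w^-(M)$ and a witnessing chain $m_0,\dots,m_n,m_{n+1}$, a letter-by-letter \emph{greedy continuation} of $w$ that tracks $m_{n+1}$, deciding at each step whether to append a direct letter, an inverse letter, or to stop, so as to land in some $D_{w'}^+(M)$ with $w'<w$. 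This is precisely what the paper outsources to the proof of \cite[Lemma 10.3]{Cra2018}, after first reducing to finite walks via the observation (also missing from your argument) that finite-dimensionality of $M$ gives $C^+(M)=C_{\le n}M$ and $C^-(M)=C_{\le n}0$ for some $n$ when $C$ is infinite. A cleaner route, avoiding the structural identity entirely, is to show directly that every $0\neq m\in e_\ell M$ lies in $D_{w(m)}^+(M)\setminus D_{w(m)}^-(M)$ for a word $w(m)$ built by this greedy process, then pick $m\in S$ with $D_{w(m)}^+(M)$ minimal among the finitely many subspaces that arise; parts (i)--(ii) then force $S\cap D_{w(m)}^-(M)=\emptyset$. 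Either way, the greedy construction is the missing idea.
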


\begin{proof}
(i) Let $C=D_{w}$. We can and do assume there exist arrows $a,b$ such that $Ca^{-1}$ and $Cb$ are walks. 
Hence $w$ is finite, say of length $n$. 
Let $u'=(Ca^{-1})^{*}$ and $u=(Cb)^{*}$ which by assumption are both words. 
Since $w$ is end-admissible, $a$ and $b$ must both be ordinary by Lemma \ref{lem-end-adm}. 
%
%
Since the letters $u_{n+1}'=a^{-1}$ and $u_{n+1}=b$ have the same sign, $ab$ is a zero-relation defining the algebra, and so $C^{-}(M)\subseteq C^{+}(M)$. 

(ii)  See for example the lemma on \cite[p. 23]{Rin1975}.  Taking $z=w'$ in Definition \ref{definition:order-on-words}, for  case (a) note that $xy$ must be a zero-relation defining the algebra. 
The proof for cases (b) and (c) are straightforward.

(iii) Since $M$ is finite-dimensional we have that, for any infinite walk $C$, $C^{+}(M)=C_{\leq n}M$ and $C^{-}(M)=C_{\leq n}0$ for some $n>0$. 
From here one can follow the proof of \cite[Lemma 10.3]{Cra2018}. 
\end{proof}

\subsection{Top and bottom functors}
We continue to study functors from the category of $R$-modules to the category of $K$-modules. 
Let $C$ be an end-admissible walk $C$, say $I$-indexed. We have a functor $T_C = \Hom_R(M(C),-)$, where we use the right action of $K$ on $M(C)$. 

Given any $R$-module $M$ we define $K$-subspaces $B_C^+(M)$ and $B_C^-(M)$ of $T_C(M)$ as follows. 
\begin{align*}
B_C^+(M) &= \begin{cases}
\{ \theta \in \Hom_R(M(C),M) \mid \text{$\theta(b_i)=0$ for $i\ll 0$} \} & 
\text{if $I$ is not bounded below,}
\\
\{ \theta \in \Hom_R(M(C),M) \mid \theta(b_0)\in 1_{\ell,-\epsilon}^-(M) \} & 
\text{if $C$ has head $\ell$, sign $\epsilon$, and}
\end{cases}
\\
B_C^-(M) &= \begin{cases}
\{ \theta \in \Hom_R(M(C),M) \mid \text{$\theta(b_i)=0$ for $i\gg 0$} \} & 
\text{if $I$ is not bounded above,}
\\
\{ \theta \in \Hom_R(M(C),M) \mid \theta(b_n)\in 1_{\ell,-\epsilon}^-(M) \} & 
\text{if $C^{-1}$ has head $\ell$, sign $\epsilon$ and $n=\max I$.}
\end{cases}
\end{align*}
We define $B_C(M) = B_C^+(M)+B_C^-(M)$. Clearly $B_C^+$, $B_C^-$ and $B_C$ define subfunctors of $T_C$, and we define $F_C = T_C/B_C$. We normally consider $F_C$ as a functor from the category of $R$-modules (or finite-dimensional $R$-modules) to the category of $K$-modules. In view of the following lemma, if $w$ is a string or band, we may also consider $F_{C_w}$ as a functor to the category of $R_w$-modules (or finite-dimensional $R_{w}$-modules).

\begin{lemma}
\label{lemma:B_C-is-an-R_w-submodule}

Let $w$ be a string or band with canonically associated walk $C$. 
Let $M$ be an $R$-module. 
If we consider $T_{C}(M) = \Hom_R(M(C),M)$ as a left $R_{w}$-module using the right action of $R_{w}$ on $M(C)$, then $B_{C}(M)$ is an $R_{w}$-submodule of $T_{C}(M)$.
\end{lemma}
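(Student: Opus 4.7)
The plan is to show that $B_C(M)$ is preserved under the action of each $K$-algebra generator of $R_w$, using the identity $(r\theta)(m) = \theta(m \cdot r)$ that defines the left $R_w$-structure on $T_C(M)$. The generators needed are $K$ itself (in all four cases of asymmetric/symmetric string/band), $x^{\pm 1}$ in the asymmetric-band case, $x$ in the symmetric-string case, and $x$ together with $y$ in the symmetric-band case; in each case the right action on the basis $(b_i)$ of $M(C)$ is given explicitly by Lemmas~\ref{lemma:symmetric-strings-R_w-basis}, \ref{lemma:asymmetric-bands-R_w-basis} and \ref{lemma:symmetric-bands-R_w-basis}.

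First I would handle the $K$-action: the formula $(\lambda\theta)(b_i) = \pi_i(\lambda)\theta(b_i)$, combined with the fact that $1_{\ell,-\epsilon}^-(M)$ is a $K$-submodule of $M$, shows that $B_C^+(M)$ and $B_C^-(M)$ are individually $K$-subspaces of $T_C(M)$, so their sum $B_C(M)$ is a $K$-subspace as well. This already settles the asymmetric-string case. In the asymmetric-band case, the identities $b_i \cdot x^{\pm 1} = b_{i \mp n}$ show that $x^{\pm 1}$ acts on $T_C(M)$ by shifting the index, which preserves each of the vanishing-at-infinity conditions defining $B_C^\pm(M)$ separately. For the symmetric cases, I would verify first the ``easy'' direction: the identity $(x\theta)(b_0) = \theta(b_0 \cdot x) = \theta(b_n)$ immediately gives $x \cdot B_C^-(M) \subseteq B_C^+(M)$ for both a symmetric string and a symmetric band, and the parallel calculation gives $y \cdot B_C^+(M) \subseteq B_C^-(M)$ in the symmetric-band case.

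The main obstacle is the ``hard'' direction in the symmetric cases: showing that $x \cdot B_C^+(M) \subseteq B_C(M)$ and (for a symmetric band) $y \cdot B_C^-(M) \subseteq B_C(M)$. Here the action of $x$ is not a shift but a reflection modified by the quadratic relation, and one checks directly that neither summand $B_C^+(M)$ or $B_C^-(M)$ is individually preserved. The key device is the scalar-subtraction decomposition
\[
x\theta = \pi_k^{-1}(\beta_s) \cdot \theta + \bigl( x\theta - \pi_k^{-1}(\beta_s) \cdot \theta \bigr),
\]
in which the scalar $\pi_k^{-1}(\beta_s) \in K$ acts through the $K$-subring of $R_w$. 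The first summand lies in $B_C^+(M)$ by the $K$-closure already established, and a direct computation using the formula $b_n \cdot x = b_n \pi_k^{-1}(\beta_s) - b_0 \pi_k^{-1}(\gamma_s)$ shows that the second summand, evaluated at $b_n$, equals $-\pi_0(\pi_k^{-1}(\gamma_s))\,\theta(b_0)$, which lies in $1_{\ell,-\epsilon}^-(M)$ because $\theta(b_0) \in 1_{\ell,-\epsilon}^-(M)$ and $1_{\ell,-\epsilon}^-(M)$ is $K$-linear; so the second summand belongs to $B_C^-(M)$. The identical device using $\pi_r^{-1}(\beta_s)$ for the $x$-generator, and the analogue using $\pi_{-p}^{-1}(\beta_t)$ for the $y$-generator, handles the symmetric-band case.
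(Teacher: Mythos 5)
Your proof is correct and follows essentially the same structure as the paper's: establish $K$-closure first (which already settles asymmetric strings), use the index shift for asymmetric bands, and then analyse the symmetric cases via the explicit right-action formulas. The one notable difference is in how the symmetric cases are finished. The paper announces ``without loss of generality $\theta\in B_C^-(M)$'', shows the generator $x$ (and $y^{-1}$, for symmetric bands) sends $B_C^-(M)$ into $B_C^+(M)$, and then appeals to Lemma~\ref{lemma:invertible-variable-in-quotient-by-non-singular-quadratic} to reduce the other generator ($y$, respectively $x^{-1}$) to the one already treated. Your scalar-subtraction device, writing
\[
x\theta = \pi_k^{-1}(\beta_s)\theta + \bigl(x\theta - \pi_k^{-1}(\beta_s)\theta\bigr),
\]
is the same calculation unfolded: since $x-\pi_k^{-1}(\beta_s) = -\pi_k^{-1}(\gamma_s)x^{-1}$ in $R_w$, the second summand is exactly $-\pi_k^{-1}(\gamma_s)(x^{-1}\theta)$, so your two summands are a $K$-multiple of $\theta$ in $B_C^+(M)$ and a $K$-multiple of $x^{-1}\theta$ in $B_C^-(M)$. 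The advantage of your version is that it spells out explicitly what the paper's ``WLOG'' is standing in for: checking the hard direction $x\cdot B_C^+(M)\subseteq B_C(M)$ does require the quadratic relation (or equivalently the invertibility of $x$ in $R_w$), and this is silent in the paper's treatment of the symmetric-string case.
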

\begin{proof}

Let $C=C_{w}$. 
Fixing $\theta\in B_{C}(M)$ and $z\in R_{w}$ we require that $z\theta\in B_{C}(M)$. 
Without loss of generality we can assume $\theta\in B^{-}_{C}(M)$. 
Note that for any $w$ we have $K\subseteq R_{w}$. 
It is straightforward to check that $B_{C}(M)$ is a $K$-submodule of $T_{C}(M)$. 
%
%
In particular, there is nothing more to prove when $w$ is an asymmetric string. We now consider the remaining cases for $w$.

(1) 
Let $w$ be a symmetric string of length $n$, and so $R_{w}=K[x;\sigma]/\ideal{q(x)}$ for some monic, normal and non-singular quadratic $q(x)$. 
It suffices to assume $z=x$. Since $b_{0}x=b_{n}$ it is trivial that $\theta(b_{n})=0$ implies $(x\theta)(b_{0})=0$, and likewise $\theta(b_{n})\in a(M)$ implies $(x\theta)(b_{0})\in a(M)$. 
Hence $x\theta \in B^{+}_{C}(M)$.

For cases (2) and (3) $w$ is a band, and so $I=\Z$ and there is some $j\in \Z$ such that $\theta(b_{i})=0$ for all $i\geq j$.

(2) 
If $w$ is a asymmetric band then $R_{w}=K[x,x^{-1};\sigma]$ for some automorphism $\sigma$ of $K$. Since $B_{C}(M)$ is a $K$-submodule it suffices to assume $z=x^{-1}$ or $z=x$. Assuming $w$ has period $n$ we have that $b_{i}x^{\pm 1}=b_{i\mp n}$ for all $i$, and so $(x^{\pm 1}\theta)(b_{i})=0$ for all $i\geq j\pm n$, which shows $x^{\pm 1}\theta\in B_{C}^{-}(M)$.

(3) 
If $w$ is a symmetric band then $R_{w}=R_{w}'*_{K} R_{w}''$ where $R_{w}'=K[x;\rho]/\ideal{r(x)}$, $R_{w}''=K[y;\tau]/\ideal{p(y)}$ for some monic, normal and non-singular quadratics $r(x),p(y)$. 
In particular, by Lemma \ref{lemma:invertible-variable-in-quotient-by-non-singular-quadratic}: there exist inverses $x^{-1}$ and $y^{-1}$ of $x$ and $y$ in $R_{w}$; the element $x^{-1}$ can be written as a $K$-linear combination of $x$ and $1$; and the element $y$ can be written as a $K$-linear combination of $y^{-1}$ and $1$. 
Thus, since $B_{C}(M)$ is a $K$-submodule, it suffices to show $x\theta,y^{-1}\theta\in B_{C}(M)$. 

By Lemma \ref{lemma:form-of-symmetric-bands} there are integers $p,r\geq 0$ such that $b_{i}y^{-1}=b_{-i-1-2p}$ for $i\leq -1-p$ and $b_{i}x=b_{2r+1-i}$ for $i\leq r$.
Letting $h$ denote the minimum of $-1-p$ and $-j-1-2p$, we have by construction that $(y^{-1}\theta)(b_{i})=0$ for all $i\leq h$, which shows that $y^{-1}\theta\in B_{C}(M)$. 
Similarly $x\theta \in B_{C}(M)$.
\end{proof}

Let $D,E$ be walks such that one of the words $D^{*},E^{*}$ lies in $H(\ell,1)$ and the other lies in $H(\ell,-1)$. Hence the product  $D^{-1}E$ is a walk. 
For such $D$ and $E$ we define functors $T_{D,E}$,  $B_{D,E}$ and $F_{D,E}$, from the category of $R$-modules to that of $K$-modules, by
\[
T_{D,E}(M) = D^+ (M)\cap E^+(M),
\hspace{0.5em}
B_{D,E}(M) = D^+(M) \cap E^-(M) + D^-(M) \cap E^+(M),
\hspace{0.5em}
F_{D,E} = T_{D,E}/B_{D,E}.
\]
Clearly $T_{D,E}$,  $B_{D,E}$ and $F_{D,E}$ are symmetric in $D$ and $E$. Given an end-admissible walk $C$, say $I$-indexed, and $i\in I$, we define functors $T_{C,i}$, $B_{C,i}$ and $F_{C,i}$ of the same form, by 
\[
T_{C,i} = T_{C_{>i},(C_{\le i})^{-1}},
\quad
B_{C,i} = B_{C_{>i},(C_{\le i})^{-1}},
\quad
F_{C,i} = T_{C,i}/B_{C,i}.
\]
%
%
Given a functor $F$ from the category of $R$-modules to that of $K$-modules, and given an automorphism $\sigma$ of $K$, we write ${}_\sigma F$ for the composition of $F$ with the functor of restriction by $\sigma$.
We call it a \emph{twist} of $F$.

\begin{lemma}\label{lemma:hom-functors-relation-functors}
Let $C$ be an end-admissible $I$-indexed walk and let $i\in I$. For each $R$-module $M$ let $\xi_{M}$ be the map $T_{C}(M)\to M$ sending $\theta$ to $\theta(b_i)$. The following statements hold.
\begin{itemize}
\item[(i)]
$\xi_M$ is a $\pi_i$-semilinear map, so a $K$-linear map $T_C(M)\to {}_{\pi_i} M$.
\item[(ii)]
The image of $\xi_{M}$ is $T_{C,i}(M)$.
\item[(iii)]
The image of the restriction of $\xi_{M}$ to $B_C(M)$ is  $B_{C,i}(M)$.
\item[(iv)]
The kernel of $\xi_{M}$ is contained in $B_C(M)$.
\item[(v)]
The maps $\xi_{M}$ define a natural transformation $T_C\to {}_{\pi_i} T_{C,i}$.
\item[(vi)]
There is an induced natural isomorphism $F_C \to {}_{\pi_i} F_{C,i}$.
\end{itemize}
\end{lemma}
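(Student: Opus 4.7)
Part (i) is a direct computation: using the left $K$-action on $\Hom_R(M(C),M)$ induced by $b_i\lambda = \pi_i(\lambda)b_i$ from Lemma~\ref{lemma:M(C)-is-an-R-K-bimodule}, for $\lambda\in K$ and $\theta\in T_C(M)$ one obtains
\[
\xi_M(\lambda\theta) = (\lambda\theta)(b_i) = \theta(b_i\lambda) = \pi_i(\lambda)\theta(b_i) = \pi_i(\lambda)\xi_M(\theta),
\]
which is $\pi_i$-semilinearity.

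The key tool for parts (ii)--(iv) is Lemma~\ref{lemma:K-basis-of-M(C)-iff-C-relation-admissible}, by which a homomorphism $\theta\colon M(C)\to M$ corresponds to the tuple $(\theta(b_j))_{j\in I}$ subject to the defining relations of Definition~\ref{definition:modules-M(C)}. For the forward inclusion in (ii), the restrictions of this tuple to the right and left halves are walking sequences through $C_{>i}$ and $(C_{\le i})^{-1}$, the endpoint relations (iii) of $M(C)$ at the extremes of $I$ matching the endpoint constraints $Ca^{-1}\cdot 0$ in the definition of $C^\pm$. For the reverse inclusion, given $m\in T_{C,i}(M)$ I will build the candidate tuple $(y_j)_j$ with $y_i=m$ from the two walking sequences and check each defining relation of $M(C)$. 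The only substantive check is relation (iii) at interior positions $j$: a case analysis on the pair $(C_j, C_{j+1})$, combined with the sign rule of \S\ref{s:words} and conditions (1)--(2), shows that any ordinary arrow $a$ with $t(a) = v_j$ missing from $Q_C$ at $j$ forces $ac\in Z$ or $ar\in Z$, where $c,r$ are the underlying arrows of $C_j, C_{j+1}$ respectively; the walking identity $y_j = cy_{j-1}$ or $y_j = ry_{j+1}$ then gives $ay_j = 0$ via this zero relation. This interior case analysis is the main obstacle.

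Part (iii) follows by the same sign/walking argument: $\xi_M$ carries $B_C^-(M)$ into $C_{>i}^-(M)\cap(C_{\le i})^{-1,+}(M)$ and $B_C^+(M)$ into the mirror subspace, whose sum is $B_{C,i}(M)$; conversely, decomposing $m\in B_{C,i}(M)$ as $m_1+m_2$ places each $m_k$ in $T_{C,i}(M)$ by Lemma~\ref{lemma:one-sided-filtration}(i), and applying (ii) produces preimages in $B_C^\mp(M)$. For (iv), suppose $\theta(b_i)=0$; define
\[
\theta^-(b_j) = \begin{cases} \theta(b_j) & j<i, \\ 0 & j\ge i, \end{cases} \qquad \theta^+(b_j) = \begin{cases} 0 & j\le i, \\ \theta(b_j) & j>i. \end{cases}
\]
The verification that both are $R$-module homomorphisms reduces to checking the relations at $j=i$: for instance, if $C_{i+1}=a^{-1}$ is inverse with $ab_i=b_{i+1}$, then $\theta(b_{i+1})=a\theta(b_i)=0$, so $\theta^+(b_{i+1})=0=a\theta^+(b_i)$; if $C_{i+1}=a$ is direct with $ab_{i+1}=b_i$, then $a\theta^+(b_{i+1})=\theta(b_i)=0=\theta^+(b_i)$; the checks at the $C_i$ side are symmetric, and relation (iii) is preserved since in both halves either $\theta^\pm(b_j)=0$ or $\theta^\pm(b_j)=\theta(b_j)$ which is killed by $a$ since $\theta$ itself is a homomorphism. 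Hence $\theta^\pm\in B_C^\pm(M)$ and $\theta=\theta^++\theta^-\in B_C(M)$.

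Finally, (v) is the one-line computation $\xi_{M'}(T_C(\phi)\theta) = \phi(\theta(b_i)) = \phi(\xi_M(\theta))$ for any $R$-module map $\phi\colon M\to M'$, which respects the $\pi_i$-twist automatically. Part (vi) is then formal: (iv) shows that $\xi_M$ descends to a $K$-linear map $F_C(M) \to {}_{\pi_i}F_{C,i}(M)$; (ii) gives surjectivity; (iii) combined with (iv) gives injectivity (if $\bar\theta$ maps to zero, then $\xi_M(\theta)\in B_{C,i}(M)$ so by (iii) there is $\theta'\in B_C(M)$ with $\xi_M(\theta')=\xi_M(\theta)$, and then $\theta-\theta'\in\ker\xi_M\subseteq B_C(M)$ by (iv), so $\theta\in B_C(M)$); naturality is inherited from (v).
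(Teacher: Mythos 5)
Your proof follows the same line of argument as the paper's on every part: the same semilinearity computation for (i), the same tuple/walking-sequence correspondence for (ii) and (iii), the same left/right truncation $\theta^\pm$ for (iv), and the same formal deduction of (v)--(vi). The paper is terser in spots where you spell out the case checks (it labels the interior relation checks and the half-truncation verification as "straightforward"), but the decomposition and key ideas coincide.
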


\begin{proof}
Let $D=C_{>i}$ and $E=(C_{\leq i})^{-1}$ so that $T_{C,i}=T_{D,E}$, $B_{C,i}=B_{D,E}$ and $C=E^{-1}D$. 
Suppose that $D$ is $J$-indexed and $E$ is $H$-indexed.

(i) For $\lambda\in K$ we have $(\lambda \theta)(b_i) = \theta(b_i \lambda) = \theta (\pi_i(\lambda) b_i) = \pi_i(\lambda) \theta(b_i)$.

(ii) Let $\theta \in T_{C}(M)$ and $m=\theta(b_{i})$. 
For each $j\in I$ let $m_{j-i}=\theta(b_{j})$ so that $m=m_{0}$. 
From here it is straightforward to see that $\Ima (\xi_{M})\subseteq T_{D,E}(M)$. 
%

We now prove $\Ima (\xi_{M})\supseteq T_{D,E}(M)$, so let $m\in T_{D,E}(M)$. 
Since $m\in D^{+}(M)$ there is a sequence $m_{j}\in M$ ($j\in J$) such that $m=m_{0}$,  $m_{j}\in D_{j}m_{j+1}$ whenever $j+1\in J$ and (if $n=\max J$ and $Da^{-1}$ is a walk then $am_{n}=0$). Likewise there is a sequence $m'_{h}\in M$ ($h\in H$) such that $m=m_{0}'$, $m_{h}'\in E_{h}m_{h+1}'$ whenever $h+1\in H$ and (if $l=\max H$ and $Ec^{-1}$ is a walk then $cm_{l}'=0$). Now define $\theta$ by setting $\theta(b_{i+j})=m_{j}$ for all $j\in J$ and $\theta(b_{i-h})=m_{h}'$ for all $h\in H$. It is straightforward to check that $\theta(ab_{d})=a\theta(b_{d})$ for any $d\in I$ and any arrow $a$ in $Q$. 
Hence $\theta$ is $R$-linear, and so $\Ima (\xi)\supseteq T_{D,E}(M)$. 

(iii) In the notation above, it is straightforward to see that $m\in T_{D,E}(M)$ implies  $\theta\in B_{C}(M)$. 

Conversely, we claim $\theta\in B^{+}_{C}(M)$ implies $\theta(b_{i})\in E^{-}(M)\cap D^{+}(M)$. 
As above let $m_{j}=\theta(b_{i+j})$ and $m_{h}'=\theta(b_{i-h})$ for all $j\in J$ and $h\in H$.
If $I$ is unbounded below then $H$ in unbounded above, in which case the sequence $m'_{0},m'_{1},\dots$ is eventually zero. 
If instead $I$ is bounded below by $0$, then $H$ is bounded above by $i$, and we have $m'_{i}=\theta(b_{0})$.  Note also that $Ea$ is a walk if and only if $a^{-1}C$ is a walk, in which case $\theta(b_{0})\in a(M)$ if and only if $m_{i}'\in a(M)$; and otherwise $\theta(b_{0})=0=m'_{i}$. 

Thus our claim holds. Hence, and by symmetry, $\xi_{M}$ sends elements of $B^{\pm}_{C}(M)$ to $E^{\mp}(M)\cap D^{\pm}(M)$. 

(iv) Let $\theta:M(C)\to M$ be an $R$-module homomorphism with $\theta(b_{i})=0$. Define $K$-module homomorphisms $\theta_{\pm}:M(C)\to M$ by setting $\theta_{\pm}(b_{j})=\theta(b_{j})$ for $\pm(i-j)<0$ and $\theta_{\pm}(b_{j})=0$ for $\pm(i-j)\geq0$. It is straightforward to check that, since $\theta$ is $R$-linear, so too is $\theta_{\pm}$.  
It is trivial that $\theta=\theta_{-}+\theta_{+}$ and that $\theta_{\pm}\in B_{C}^{\pm}(M)$, and so together we have shown $\Ker(\xi_{M})\subseteq B_{C}(M)$.

(v) Straightforward.

(vi) This follows from (ii), (iii), (iv) and (v).
\end{proof}



\subsection{Orientation changes}
We have defined various functors using a walk $C$. In this section we show that sometimes they only depend on the underlying word $C^*$. In order to study the functors, we evaluate them on a fixed $R$-module $M$.
Given two walks $C,D$ with the same underlying $I$-indexed word $w$, and $i\in I$, we write $C \sim_i D$ to mean that $T_{C,i} = T_{D,i}$ and $B_{C,i} = B_{D,i}$.

\begin{definition}
We adapt a notion from \cite{Cra1988ff}. Let $w$ be an $I$-indexed end-admissible word and let $i\in I'$ be such that $w_i$ is a $*$-letter. We define the \emph{norm} $\norm{i}\in \N \cup\{\infty\}$ of $i$ by the rule that
$\norm{i} \ge n$ if and only if there are words $x,y,z$, with $x$ of length $n$, such that $(w_{\leq i-1})^{-1}=xy$ and $w_{>i}=xz$.
Note that if $i$ is not a symmetry for $w$, then $\norm{i}$ is finite.
\end{definition}


\begin{lemma}\label{lemma:orientation_change_norm_induction_step}
Let $w$ be an $I$-indexed end-admissible word and suppose that $i\in I'$ is such that $w_i$ is a $*$-letter and $i$ is naturally inverse. For any $j$ with $0 < j \le \norm{i}$ such that $w_{i-j}$ and $w_{i+j}$ are  $*$-letters, either $i+j$ is naturally direct with $\norm{i+j}< \norm{i}$, or $i-j$ is naturally direct with $\norm{i-j}< \norm{i}$.
%
\end{lemma}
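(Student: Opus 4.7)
The plan is to compare four words obtained by splitting $w$ at positions $i \pm j$: set $A := (w_{\le i+j-1})^{-1}$, $B := w_{>i+j}$, $C := (w_{\le i-j-1})^{-1}$, $D := w_{>i-j}$, so that the natural orientation at $i+j$ is determined by the comparison of $A$ with $B$, and the natural orientation at $i-j$ by that of $C$ with $D$. Writing $\ell := \norm{i}$, the definition of the norm supplies the palindromic identity $w_{i+k} = w_{i-k}^{-1}$ for $1 \le k \le \ell$; substituting this into the formulas for the four words gives
\[
A_k = D_k = w_{i-j+k} \ (1 \le k \le j+\ell), \qquad B_k = C_k = w_{i-j-k}^{-1} \ (1 \le k \le \ell-j),
\]
and in particular $w_{i-j}=w_{i+j}$ is a common $*$-letter. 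Thus in the ``palindromic range'' $k \le \ell-j$, the two comparisons $A$ vs.\ $B$ and $D$ vs.\ $C$ involve exactly the same pair of letter sequences but with the direct/inverse roles swapped.

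The first step is a case-by-case check that $A<D$ and $C<B$, using the three possible divergence types (a), (b), (c) at position $\ell+1$ of $(w_{\le i-1})^{-1}<w_{>i}$. For instance in case (a), at position $j+\ell+1$ the letter of $A$ is the ordinary direct $w_{i-\ell-1}^{-1}$ while that of $D$ is the ordinary inverse $w_{i+\ell+1}$, so rule (a) of Definition~\ref{definition:order-on-words} applies; a symmetric argument at position $\ell-j+1$ gives $C<B$, and cases (b), (c) are handled by the corresponding rules when one side terminates.

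The heart of the proof is a case split on $n:=\norm{i+j}$. If $n<\ell-j$ then the divergence of $A$ and $B$ falls inside the palindromic range, where $A_{n+1}=D_{n+1}$ and $B_{n+1}=C_{n+1}$; the sign condition defining a semilinear clannish algebra forces the two distinct letters $A_{n+1}$ and $B_{n+1}$ to be ordinary and of opposite types, and the comparison $(C,D)$ diverges at the same position with the direct/inverse roles swapped. Hence exactly one of $i+j$ and $i-j$ is naturally direct, its norm being $n<\ell-j<\ell$, and the lemma is proved in this case.

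The remaining boundary case $n \ge \ell-j$ is the main obstacle. Here the analysis is pinned down by the types of the letters just outside the palindrome at $i$: $B_{\ell-j+1}=w_{i+\ell+1}$ (ordinary inverse in case (a), absent in case (b)), $C_{\ell-j+1}=w_{i-\ell-1}^{-1}$ (ordinary direct in case (a), absent in case (c)), and the common value $A_{\ell-j+1}=D_{\ell-j+1}=w_{i+\ell-2j+1}$ lying inside the palindrome. The sign condition forces a specific type on $A_{\ell-j+1}$ in each sub-case, and reading off the resulting divergence of either $(A,B)$ or $(C,D)$ at position $\ell-j+1$ delivers the naturally direct position with norm at most $\ell-j<\ell$. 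In the delicate sub-case where successive sign-forced letter equalities extend the common prefix of $A$ and $B$ past position $\ell-j+1$, one iterates the argument outward, each step of the iteration being forced by the sign condition; the iteration necessarily terminates before position $\ell$, for any continuation would extend the palindromic identity at $i$ past its fixed length $\ell=\norm{i}$. The careful bookkeeping between the two palindromic structures at $i$ and $i-j$, together with the sign conventions built into a semilinear clannish algebra, is the principal technical challenge.
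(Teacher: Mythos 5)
The paper gives no self-contained proof of this lemma: its entire proof reads ``Same as in the proof of \cite[Lemma A]{Cra1988ff}.'' So there is nothing in the paper itself against which to match your argument line by line; what can be evaluated is whether your proposed proof actually closes the lemma.

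Your set-up with $A=(w_{\le i+j-1})^{-1}$, $B=w_{>i+j}$, $C=(w_{\le i-j-1})^{-1}$, $D=w_{>i-j}$ and the palindromic identities $A_k=D_k$ (for $k\le j+\ell$) and $B_k=C_k$ (for $k\le\ell-j$) is the right framework, and your treatment of the case where the first divergence of $A$ against $B$ falls inside the range $k\le\ell-j$ is correct: there $(A_k,B_k)=(D_k,C_k)$ letter for letter, so exactly one of $A>B$ and $C>D$ holds, with both norms $\le\ell-j-1<\norm{i}$. The observation that $A<D$ and $C<B$ always is also a correct preliminary.

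The gap is in the boundary case $n\ge\ell-j$. Two things go wrong there. First, you claim that ``reading off the resulting divergence of either $(A,B)$ or $(C,D)$ at position $\ell-j+1$ delivers the naturally direct position,'' but this is not what the sign/slot argument gives. At position $\ell-j+1$ the three letters in play are $P:=A_{\ell-j+1}=D_{\ell-j+1}$ (inside the palindrome), $Q:=B_{\ell-j+1}=w_{i+\ell+1}$ and $R:=C_{\ell-j+1}=w_{i-\ell-1}^{-1}$; since $A,B,C,D$ all lie in the same $H(\ell',\epsilon')$ and share the prefix of length $\ell-j$, the sign convention forces $P\in\{Q,R\}$ (this ``at most two letters per slot'' observation, which you gesture at but don't state, is the crucial point). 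In case (a), $Q$ is ordinary inverse and $R$ is ordinary direct, so the position whose comparison \emph{does} diverge at $\ell-j+1$ has the smaller word carrying the \emph{direct} letter; by the order convention in Definition~\ref{definition:order-on-words} that makes the position naturally \emph{inverse}, not naturally direct. So the divergence at $\ell-j+1$ does not by itself hand you the naturally direct side, contrary to what you assert. Second, and more seriously, the termination claim ``the iteration necessarily terminates before position $\ell$, for any continuation would extend the palindromic identity at $i$ past its fixed length $\ell=\norm{i}$'' is not justified: past position $\ell-j$, the comparison of $C$ with $D$ (say) involves the letters $w_{i-\ell-1}^{-1},w_{i-\ell-2}^{-1},\dots$ which lie entirely \emph{outside} the palindrome centred at $i$, so nothing about $\norm{i}=\ell$ constrains how far the common prefix of $C$ and $D$ can run. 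As written, there is no argument that the remaining side has small norm, and no argument that it is naturally direct rather than naturally inverse; that is precisely the content of the hard case, and it is missing. You would need to exploit, explicitly, further structure coming from the sign convention and from the requirement that $w_{i\pm j}$ are $*$-letters to pin down the remaining side, and as it stands the sketch does not do this.
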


\begin{proof}
Same as in the proof of \cite[Lemma A]{Cra1988ff}. 
\end{proof}

\begin{lemma}
\label{lemma:swapset}
Let $w$ be an $I$-indexed end-admissible word and let $C$ and $D$ be walks with $C^*=D^*=w$. Let $\Delta = \{ i\in I' \mid C_i\neq D_i \}$, let $k\in I$, and suppose the following hold.
\begin{itemize}
\item[(A1)]
All $i\in \Delta$ are naturally inverse for $w$;
\item[(A2)]
For distinct $i,j\in \Delta$, we have $|i-j| > \norm{i} + \norm{j} + 1$;
\item[(A3)]
For $i\in \Delta$ and $0<j\le \norm{i}$, we have $C_{i+j} = C_{i-j}^{-1}$ (or equivalently $D_{i+j} = D_{i-j}^{-1}$);
\item[(A4)]
There is no $i\in \Delta$ with $i \le k \le i+\norm{i}$.
\end{itemize}
Then $C \sim_k D$, so $F_{C,k} \cong F_{D,k}$, and hence $F_C$ and $F_D$ are isomorphic up to twist.
\end{lemma}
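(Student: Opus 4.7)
The plan is to induct on $N(\Delta) := \sum_{i \in \Delta}(1 + \norm{i})$, a finite non-negative integer because (A1) forces each $i \in \Delta$ to be naturally inverse and hence to have finite norm. The base case $\Delta = \emptyset$ is immediate since $C = D$. The last assertion, that $F_C$ and $F_D$ are isomorphic up to twist, will follow from the isomorphism $F_{C,k} \cong F_{D,k}$ via Lemma~\ref{lemma:hom-functors-relation-functors}(vi), once we have established $C \sim_k D$.

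For the inductive step, pick $i^{\star} \in \Delta$ minimising $\norm{i}$ over $\Delta$. The strategy is to interpose an auxiliary walk $C'$ with $(C')^{*} = w$ such that both pairs $(C,C')$ and $(C',D)$ satisfy hypotheses (A1)–(A4) with strictly smaller invariant $N$, and then conclude by two applications of the induction hypothesis. If $\norm{i^{\star}} > 0$, apply Lemma~\ref{lemma:orientation_change_norm_induction_step} at $i^{\star}$ with $j = \norm{i^{\star}}$ to obtain a position $i' \in \{i^{\star} - \norm{i^{\star}},\, i^{\star} + \norm{i^{\star}}\}$ which is naturally direct for $w$ and satisfies $\norm{i'} < \norm{i^{\star}}$. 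Hypothesis (A2) together with the minimality of $\norm{i^{\star}}$ forces $i' \notin \Delta$, so $C_{i'} = D_{i'}$. Define $C'$ to be obtained from $C$ by flipping the letters at both $i^{\star}$ and $i'$; the local palindromic symmetry (A3) and the norm drop from Lemma~\ref{lemma:orientation_change_norm_induction_step} together guarantee that the two new difference sets (relative to $C$ and to $D$) retain (A1)–(A4) with strictly smaller $N$. If instead $\norm{i^{\star}} = 0$, define $C'$ by flipping $C$ at $i^{\star}$ alone; this reduces everything to the atomic case where $C$ and $C'$ differ at exactly one norm-zero naturally inverse $*$-letter position.

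The heart of the argument, and the principal obstacle, is the atomic case. Here one must verify directly that flipping the single letter $C_{i^{\star}} = s^{\pm 1}$ (for the special loop $s$ with $w_{i^{\star}} = s^{*}$) to $s^{\mp 1}$ preserves both $T_{C,k}$ and $B_{C,k}$. Unfolding the definitions of the one-sided functors $D^{\pm}(M)$ for the two half-walks $C_{>k}$ and $(C_{\le k})^{-1}$, the flip amounts to replacing a single factor of the composition of semilinear relations by its inverse. Because the action of $s$ on any $R$-module defines a $\sigma_s$-semilinear $q_s(x)$-bound relation on the appropriate vertex subspace, Lemma~\ref{lemma:oneqrelation}, applied to the relevant sums and intersections appearing in $T_{C,k}^{\pm}(M)$ and $B_{C,k}^{\pm}(M)$, yields exactly the interchangeability needed. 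The asymmetric hypothesis (A4), namely that $k \notin [i^{\star},\, i^{\star} + \norm{i^{\star}}]$, is used precisely to pin down which side of the cut at $k$ is affected by the flip, so that the correct instance of Lemma~\ref{lemma:oneqrelation} applies and the flip preserves the functorial data. Pushing through this careful bookkeeping of which relation plays the role of $X$, which $K$-submodules play the roles of $U$ and $W$, and verifying that (A1)–(A4) really are preserved in the passage from $\Delta$ to each of the two smaller difference sets, is the main technical work.
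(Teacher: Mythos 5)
The proposal takes a genuinely different route from the paper, and it has a real gap. The paper's proof is a single direct construction: given an arbitrary compatible tuple $(m_i)_{i\in I}$ witnessing $m\in T_{C,k}(M)$, it explicitly builds a new tuple via the ``adding to the right'' operation $m'_{d+j}=\lambda_{d+j}m_{d+j}+\mu_{d+j}m_{d-1-j}$ for $d\in\Delta$ and $0\le j\le\norm{d}$ (and $m'_i=\lambda_i m_i$ elsewhere), handling the whole set $\Delta$ in one pass and checking the boundary alternatives (L1)/(L2) and (R1)/(R2). Your proposal instead attempts an induction on $N(\Delta)$ that interposes auxiliary walks changing one or two letters at a time.

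The central problem is in your inductive step when $\norm{i^\star}>0$. The position $i'$ produced by Lemma~\ref{lemma:orientation_change_norm_induction_step} is \emph{naturally direct} for $w$. So the auxiliary walk $C'$, obtained by flipping $C$ at both $i^\star$ and $i'$, differs from $C$ at $i'$, and differs from $D$ at $i'$ as well. Both new difference sets $\Delta(C,C')=\{i^\star,i'\}$ and $\Delta(C',D)=(\Delta\setminus\{i^\star\})\cup\{i'\}$ therefore contain the naturally direct element $i'$, violating hypothesis (A1). The induction hypothesis cannot be invoked, and the reduction breaks. A secondary gap is that Lemma~\ref{lemma:orientation_change_norm_induction_step} only applies to those $j$ for which $w_{i^\star-j}$ and $w_{i^\star+j}$ are both $*$-letters; you choose $j=\norm{i^\star}$ without verifying this. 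Finally the atomic case ($\Delta$ a singleton with $\norm{i^\star}=0$) is asserted rather than proved: $T_{C,k}$ and $B_{C,k}$ are not of the form $W\cap XU+U\cap XW$ for a single bound relation $X$ acting on a nested pair $U\subseteq W$, so Lemma~\ref{lemma:oneqrelation} does not directly yield the interchangeability you need. What is actually required is the relation-level rescaling of Lemma~\ref{lemma:oneqrelation-1} applied to the tuple $(m_i)$, together with the boundary case analysis --- which is exactly the paper's construction specialised to a singleton $\Delta$ of norm zero, not a separate shortcut. To repair your approach you would need either to widen (A1) to allow naturally direct elements of $\Delta$ (with a correspondingly modified (A3) and a verification that the ``adding to the right'' step still works in both orientations), or to abandon the reduction and prove the statement directly by the explicit tuple construction as the paper does.
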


\begin{proof}
%
By symmetry it suffices to show inclusions $T_{C,k}(M)\subseteq T_{D,k}(M)$
and $B_{C,k}(M)\subseteq B_{D,k}(M)$. This gives $F_{C,k} \cong F_{D,k}$, and hence $F_C$ and $F_D$ are isomorphic up to twist by Lemma~\ref{lemma:hom-functors-relation-functors}.

Given $m\in T_{C,k}(M)$, there is a tuple of elements $(m_i)_{i\in I}$ of $M$, such that 
\begin{itemize}
\item[(1)]
$m_k = m$,
\item[(2)]
$m_{i-1} \in C_i m_i$ for all $i \in I'$,
\item[(3)]
if $I$ has a minimal element, necessarily 0, then $m_0\in 1^+_{\ell,-\epsilon}(M)$ where $C$ has head $\ell$ and sign $\epsilon$,
\item[(4)]
and if $I$ has a maximal element $u$, then $m_u\in 1^+_{\ell',-\epsilon'}(M)$, where $C^{-1}$ has head $\ell'$ and sign $\epsilon'$.
\end{itemize}
By the operation of `adding to the right' we will construct new elements $m_i'\in M$ satisfying the same properties for the walk $D$. We take 
\[
m'_i = \begin{cases}
\lambda_i m_i + \mu_i m_{d-1-j} & \text{($i=d+j$ for some $d\in \Delta$ and $0\le j\le \norm{d}$)}
\\
\lambda_i m_i & \text{(otherwise)}
\end{cases}
\]
for suitable $\lambda_i,\mu_i\in K$ with $\lambda_i\neq 0$. 

Let $d\in \Delta$. Since $d$ is naturally inverse for $w$, either
\begin{itemize}
\item[(L1)]
$d-1-\norm{d}$ is a minimal element of $I$, or  
\item[(L2)]
$C_{d-1-\norm{d}}$ is an inverse letter, say $a^{-1}$, 
\end{itemize}
and either 
\begin{itemize}
\item[(R1)]
$d+\norm{d}$ is a maximal element of $I$, or 
\item[(R2)]
$C_{d+1+\norm{d}}$ is an inverse letter, say $b^{-1}$.
\end{itemize}
Moreover at least one of (L2) and (R2) must be satisfied. 

In case (L1), condition (3) gives $m_{d-1-\norm{d}} \in 1^+_{\ell,-\epsilon}(M)$ where $C_{d-\norm{d}}$ has head $\ell$ and sign $\epsilon$. This also holds in case (L2), for then $m_{d-1-\norm{d}} \in a(M)$. Now $m'_{d+\norm{d}} = \lambda_{d+\norm{d}} m_{d+\norm{d}} + \mu_{d+\norm{d}} m_{d-1-\norm{d}}$. In case (R1), condition (4) holds for the elements $(m'_i)$ since $m_{d-1-\norm{d}} \in 1^+_{\ell,-\epsilon}(M)$. In case (R2), the same property gives $b(m_{d-1-\norm{d}}) = 0$, so that $m'_{d+\norm{d}} \in C_{d+1+\norm{d}} m'_{d+1+\norm{d}}$. 

Thus, to ensure $(m_{i}')$ satisfies conditions (1)--(4) for the walk $D$, we need to solve the following.

\begin{itemize}
\item[(a)]
$\lambda_k=1$.
\item[(b)]
For any $i\in I'$ not of the form $d+j$ with $d\in \Delta$ and $0\le j\le\norm{d}$
we have $m_{i-1} \in C_i m_i$ and want $m_{i-1}' \in C_i m'_i$, so want $\lambda_{i-1} = \sigma_{C_i}(\lambda_i)$.

\item[(c)]
For any $d\in \Delta$ we have $m_{d-1} \in C_d m_d$, and $C_d=s^*$ for some special loop $s$. Using the non-singular quadratic $q_s(x)$, given $\lambda_d$ we can find $\lambda_{d-1},\mu_d$, or given $\lambda_{d-1}$ we can find $\lambda_d,\mu_d$, such that $\lambda_{d-1} m_{d-1} \in C_d^{-1} (\lambda_d m_d + \mu_d m_{d-1})$, or equivalently $m'_{d-1} \in D_d m'_d$. 
See Lemma \ref{lemma:oneqrelation-1}. 

\item[(d)]
For any $i\in I'$ of the form $d+j$ where $d\in \Delta$ and $1\le j\le\norm{d}$
we have $m_{i-1} \in C_i m_i$ and $m_{d-1-j} \in C_{d-j} m_{d-j}$ with $C_{d-j} = C_i^{-1}$, so $m_{d-j} \in C_i m_{d-1-j}$. We want $m_{i-1}' \in C_i m'_i$ so want $\lambda_{i-1} = \sigma_{C_i}(\lambda_i)$ and $\mu_{i-1} = \sigma_{C_i}(\mu_i)$.
\end{itemize}

This is straightforward, working away from $k$, both increasing and decreasing, to find the $\lambda_i$, and then working away from each element of $\Delta$ to find the $\mu_i$. Thus we can construct the tuple $(m'_i)$ and hence $m\in T_{D,k}(M)$. Thus $T_{C,k}(M)\subseteq T_{D,k}(M)$.

Now recall that $B_{C,k}(M) = B_{C,k}^+(M) + B_{C,k}^-(M)$, where
\[
B_{C,k}^+(M) = ((C_{\le k})^{-1})^-(M) \cap C_{>k}^+(M)
\quad\text{and}\quad
B_{C,k}^-(M) = ((C_{\le k})^{-1})^+(M) \cap C_{>k}^-(M).
\]
The elements $m\in B_{C,k}^+(M)$ are those for which there is a tuple $(m_i)$ as above, with the boundary condition (3) replaced by
($3'$) if $I$ has a minimal element, necessarily 0, then $m_0\in 1^-_{\ell,-\epsilon}(M)$ where $C$ has head $\ell$ and sign $\epsilon$, and if $I$ has no minimal element, then $m_i=0$ for $i\ll 0$. 
The elements $m\in B_{C,k}^-(M)$ are those for which there is a tuple $(m_i)$ as above, with the boundary condition (4) replaced by
($4'$) if $I$ has a maximal element $u$, then $m_u\in 1^-_{\ell',-\epsilon'}(M)$, where $C^{-1}$ has head $\ell'$ and sign $\epsilon'$, and if $I$ has no maximal element, then $m_i=0$ for $i\gg 0$. 

In both cases, the tuple $(m'_i)$ which is constructed preserves the new boundary condition, so $B_{C,k}(M) \subseteq B_{D,k}(M)$. 
For example, if $I$ is unbounded above and $m_{i}=0$ for all $i>t$, then by (A3) there is an upper bound on the set of $d+j$ with  $d\in \Delta$, $0\leq j\leq \norm{d}$  and $d-j\leq t$.
\end{proof}

Recall that if $w$ is a word, we write $D_w$ and $D'_w$ for corresponding special-direct and special-inverse walks. Hence, for example, this means that $D_{w}^{*}=w$ and that every $*$-letter $s^{*}$ in $w$ occurs as $s$ in $D_{w}$.

\begin{theorem}
\label{theorem:orientations-main-result}
Let $w$ be an $I$-indexed end-admissible word which is finite or periodic. Then the functors $F_{D_w}$, $F_{C_w}$ and $F_{D'_w}$ are isomorphic up to twist, that is, there are automorphisms $\sigma,\sigma'$ of $K$ such that ${}_\sigma F_{D_w} \cong F_{C_w} \cong {}_{\sigma'} F_{D'_w}$. Moreover there exists $j\in I$ such that either $D_w \sim_j C_w$ or $D'_w \sim_j C_w$.
\end{theorem}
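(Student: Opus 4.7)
The plan is to deduce both assertions from Lemma~\ref{lemma:swapset}, applied to the pairs $(D_w, C_w)$ and $(D'_w, C_w)$ with $\Delta$ taken as the set of indices where the two walks differ. Unfolding the definition of $C_w$, the set $\Delta_D := \{i \in I' : (D_w)_i \neq (C_w)_i\}$ consists of positions $i$ with $w_i = s^*$ that are either naturally inverse for $w$ or symmetries with $i > 0$; symmetrically $\Delta_{D'}$ consists of $*$-positions that are naturally direct, or symmetries with $i \le 0$. The overall strategy is to use whichever pair yields a $\Delta$ consisting only of naturally-oriented (non-symmetric) positions, so that hypothesis (A1) of Lemma~\ref{lemma:swapset} is automatic.

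In the asymmetric cases (finite $w$ with $w \neq w^{-1}$, or an asymmetric band) neither $\Delta_D$ nor $\Delta_{D'}$ contains symmetries, so (A1) holds for free. To verify (A3) for, say, $\Delta_D$, I would induct on $\norm{i}$, invoking Lemma~\ref{lemma:orientation_change_norm_induction_step}: at a naturally inverse $i$ and offset $0 < j \le \norm{i}$ with $w_{i\pm j}$ both $*$-letters, the lemma forces at least one of $i\pm j$ to be naturally direct with strictly smaller norm and hence oriented as $s$ by $C_w$; the inductive hypothesis then forces the other member of the pair to carry the opposite orientation, giving $C_{i+j} = C_{i-j}^{-1}$. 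For ordinary letters the same identity is just the common-prefix condition $(w_{i-j})^{-1} = w_{i+j}$ built into the definition of $\norm{i}$. Condition (A2) follows because an overlap $|i-i'| \le \norm{i} + \norm{i'} + 1$ between two naturally inverse positions would, via their common-prefix structure, produce either a symmetry (excluded in the finite asymmetric case) or a shorter period (excluded in the asymmetric band case by minimality). Finally (A4) is arranged by taking $k$ to be an endpoint of $I$ in the finite case, or an index in a gap between two consecutive influence intervals $[i, i+\norm{i}]$ in the asymmetric band case; such a gap exists by (A2) and periodicity. For a finite symmetric word $w = us^*u^{-1}$ the unique symmetry sits at the positive position $|u|+1$ and so lies in $\Delta_D$ but not $\Delta_{D'}$; applying the same argument to $(D'_w, C_w)$ then yields $D'_w \sim_k C_w$.

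The main obstacle is the symmetric band case, where by Lemma~\ref{lemma:form-of-symmetric-bands}(ii) both $\Delta_D$ and $\Delta_{D'}$ contain infinitely many symmetries of infinite norm, so neither is amenable to Lemma~\ref{lemma:swapset} as stated. Here the key point is that $C_w$ is engineered so that $(C_w)_{\le 0}$ coincides with $(D_w)_{\le 0}$ on the non-positive half and $(C_w)_{>0}$ coincides with $(D'_w)_{>0}$ on the positive half. Taking $k=0$ immediately identifies one half of $T_{C_w,0}$ with the corresponding half of $T_{D_w,0}$ or $T_{D'_w,0}$, reducing the problem to matching the mismatched half; this last step is handled using Lemma~\ref{lemma:oneqrelation} to exchange $X$ and $X^{-1}$ on stable subspaces via $q$-boundedness of the $*$-letter relations, together with a reduction to arbitrarily long finite truncations (which are symmetric strings, already treated above). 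Once one of $D_w \sim_j C_w$ or $D'_w \sim_j C_w$ is established, Lemma~\ref{lemma:hom-functors-relation-functors}(vi) gives the corresponding twist-isomorphism of $F$-functors, and the remaining isomorphism in the chain $F_{D_w} \cong F_{C_w} \cong F_{D'_w}$ follows by a separate application of Lemma~\ref{lemma:swapset} to $(D_w, D'_w)$, where now \emph{every} $*$-position lies in $\Delta$ but all are either both naturally oriented (in which case the asymmetric analysis applies) or appear in the symmetric band analysis already carried out.
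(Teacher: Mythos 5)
The central gap is your reading of condition (A1) of Lemma~\ref{lemma:swapset}: it demands that every $i\in\Delta$ be naturally \emph{inverse} for $w$, not merely that it fail to be a symmetry. But $\Delta_{D'} = \{i : (D'_w)_i \neq (C_w)_i\}$ consists precisely of naturally \emph{direct} positions (together with non-positive symmetries), so (A1) fails for the pair $(D'_w, C_w)$ in all cases, including the asymmetric ones where you assert it ``holds for free''. The paper circumvents this by passing to $w^{-1}$ and comparing the inverse walks $(D'_w)^{-1}=D_{w^{-1}}$ and $(C_w)^{-1}$; naturally direct positions for $w$ become naturally inverse for $w^{-1}$, and the index $j$ shifts accordingly. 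The same objection defeats your closing step of applying Lemma~\ref{lemma:swapset} to $(D_w, D'_w)$ with $\Delta$ all $*$-positions: that $\Delta$ mixes naturally direct, naturally inverse, and possibly symmetry positions, so (A1) cannot hold and there is no single pass through the lemma. In the finite symmetric case, comparing $(D_w, C_w)$ also fails (A1) because the symmetry position lies in $\Delta_D$; the paper's remedy is to compare $D_w$ with $(C_w)^{-1}$ instead (whose difference set is exactly the naturally inverse positions, both walks satisfying the paper's auxiliary property (*)), and then to prove $C_w \sim_0 (C_w)^{-1}$ separately via Lemma~\ref{lemma:oneqrelation}.

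For the symmetric band case your sketch is too thin and omits the key tool. That case relies on the dedicated Lemma~\ref{lemma:rewriting-relations-symmetric-bands}, which computes equalities among intersections of the stable-image/kernel sets $X'$, $X''$, $Y'$, $Y''$ for \emph{two} interlaced semilinear relations each bound by a normal non-singular quadratic; the single-relation Lemma~\ref{lemma:oneqrelation} you cite is insufficient because at a symmetry boundary of a symmetric band two different $q$-bound relations alternate. ``Reduction to arbitrarily long finite truncations'' is not developed and does not obviously converge: $T_{C,0}$ and $B_{C,0}$ for an infinite $C$ are defined via the $C'$, $C''$ stability constructions rather than as limits of truncations, which is exactly why Lemma~\ref{stable-image-kernel-are-K-submodules} and Lemma~\ref{lemma:rewriting-relations-symmetric-bands} are needed. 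Finally, your claim that (A2) holds automatically in the asymmetric band case is not justified; the paper explicitly allows (A2) to fail and fixes this by subdividing $\Delta$ into finitely many admissible pieces and running an outer induction on a bound for the norms. Your choice of $k$ ``in a gap between influence intervals'' likewise does not address the infinitude of $\Delta$ in the periodic case; the paper's ``Moreover'' clause instead picks $j$ by a minimality argument on the words $w_{>i}$ and $(w_{\le i})^{-1}$.
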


\begin{proof}
We shall consider walks $C$ satisfying
\begin{itemize}
\item[($\ast$)]
$C^*=w$ and for any $i\in I'$ with $w_i$ a $*$-letter, if $i$ is not naturally inverse (so is naturally direct or a symmetry), then $C_i$ is a direct letter.
\end{itemize}
(1) Suppose that $w$ is a finite word of length $n$. By induction on the norm, using Lemma~\ref{lemma:orientation_change_norm_induction_step}, and using  Lemma~\ref{lemma:swapset} with $\Delta$ a singleton set, we see that $C \sim_0 D$ for $C$ and $D$ satisfying ($\ast$).

(1)(i) Suppose $w$ has no symmetry. Then $D_w$ and $C_w$ satisfy ($\ast$), and we deduce that $D_w \sim_0 C_w$.

We apply this to $w^{-1}$ to obtain $(D'_w)^{-1} = D_{w^{-1}} \sim_0 C_{w^{-1}} = (C_w)^{-1}$, and hence $D'_w \sim_n C_w$.

(1)(ii) Suppose $w$ has a symmetry, so $w = w^{-1}$, so it is of the form $u s^* u^{-1}$ for some word $u$. Now $(C_w)^{-1}$ and $D_w$ satisfy ($\ast$), so $D_w \sim_0 (C_w)^{-1}$. Suppose that $C_w$ has head $\ell$ and sign $\epsilon$.  
By Corollary \ref{corollary:twisting-(and-inverting-non-singular)-preserves-normal/central-quadratics} and Lemma \ref{inverting-bound-semilinear-relations} we can consider $C_{w}$ as a $p(y)$-bound $\tau$-semilinear relation for some monic, normal and non-singular quadratic $p(y)\in K[y;\tau]$. 
Letting $U = 1_{\ell,-\epsilon}^-(M)$ and $W = 1_{\ell,-\epsilon}^+(M)$, by Lemma~\ref{lemma:oneqrelation} we have
\begin{align*}
T_{C_w,0}(M) &= W \cap C_w W = W \cap (C_w)^{-1} W = T_{(C_w)^{-1},0}(M), \quad\text{and}
\\
B_{C_w,0}(M) &= (W \cap C_w U) + (U \cap C_w W) = (W \cap (C_w)^{-1} U) + (U \cap (C_w)^{-1} W) =
B_{(C_w)^{-1},0}(M).
\end{align*}
Thus $C_w \sim_0 (C_w)^{-1}$, and hence $D_w \sim_0 C_w$.

Now apply the induction above to $w^{-1}$. The walks $(D'_w)^{-1} = D_{w^{-1}}$ and $(C_w)^{-1}$ satisfy ($\ast$), so $(D'_w)^{-1} \sim_0 (C_w)^{-1}$. Thus $D'_w \sim_n C_w$.

(2) Suppose that $w$ is periodic. It follows that there is a bound on the norms of the $i\in I'$ which are naturally inverse. Consider walks $C$ satisfying ($\ast$). We show by induction on $k$ that, if $C$ and $D$ are walks satisfying ($\ast$), and if $C_{i}= D_{i}$ for all $i\in I'$ with $\norm{i}>k$, then $F_C$ and $F_D$ are isomorphic up to twist. 
By the inductive hypothesis, we may assume that $C_i = D_i$ for $i$ naturally inverse with $\norm{i}<k$. Now the set $\Delta = \{ i \mid C_i\neq D_i\}$ may fail property (A2) in Lemma~\ref{lemma:swapset}, but if so, we can write it as a finite disjoint union of subsets $\Delta_1\cup\dots\Delta_m$ which satisfy this property, and by induction it suffices to show the result $\Delta$ being one of these subsets. Thus we may assume that $\Delta$ satisfies (A2).

By the inductive hypothesis and Lemma~\ref{lemma:orientation_change_norm_induction_step}, we may suppose that (A3) holds. We now choose $k$ arbitrary so that (A4) holds. Then Lemma~\ref{lemma:swapset} implies that $F_C$ and $F_D$ are isomorphic up to twist.

(2)(i) Suppose that $w$ has no symmetries. Then $C_w$ and $D_w$ are of the specified form, so $F_{C_w}$ and $F_{D_w}$ are isomorphic up to twist. 

We apply this to $w^{-1}$. The walks $(D'_w)^{-1} = D_{w^{-1}}$ and $C_{w^{-1}} = (C_w)^{-1}$ satisfy ($\ast$), so give functors which are isomorphic up to twist, and hence so are $F_{D'_w}$ and $F_{C_w}$.

Now consider the words $w_{>i},(w_{\le i})^{-1}$ for $i\in I$. Since $w$ is periodic, only finitely many such words appear. 

If there is $j$ such that $w_{>j}$ is minimal amongst these words, then in any application of Lemma~\ref{lemma:swapset} in the induction above, the choice $k=j$ satisfies property (A4). Namely, suppose that $i\in\Delta$, and $i \le j \le i + \norm{i}$. Then $w_i$ is a $*$-letter, say $s^*$ and $w_{>i} = u w_{>j}$ and $(w_{\le i-1})^{-1} = u (w_{\le 2i-j-1})^{-1}$ for some word $u$ of length $j-i$. Now since $i$ is naturally inverse, $(w_{\le i-1})^{-1} < w_{>i}$, so $(w_{\le 2i-j-1})^{-1} < w_{>j}$, contradicting minimality. Thus the induction shows that $C\sim_j D$ for any walks $C,D$ satisfying ($\ast$). Thus $D_w \sim_j C_w$.

If there is no such $j$ as above, then there is $j$ such that $(w_{\le j})^{-1}$ is minimal. By considering $w^{-1}$, we deduce that $D'_w \sim_j C_w$.

(2)(ii) Now suppose that $w$ has symmetries. Recall Lemma \ref{lemma:form-of-symmetric-bands}. Since $w$ is periodic, it is of the form 
\[
\dots v s^* v^{-1} u^{-1} t^* u|v s^* v^{-1} u^{-1} t^* u v s^* v^{-1} u^{-1} t^* \dots
\]
Let $u$ have length $p$ and $v$ have length $r$. Then $w$ is periodic with period $2p+2r+2$. Using the periodicity and the symmetry, any word of the form $w_{>i}$ or $(w_{\le i})^{-1}$ with $i\in I$ is equal to one of this form with $-p\le i \le r$. Thus there is $-p\le j\le r$ such that either $w_{>j}$ or $(w_{\le j})^{-1}$ is minimal amongst this set of words. 
Then we can write $uv = \overline{u}\, \overline{v}$ with $\overline u$ of length $p+j$ and $\overline v$ of length $r-j$, and $w[j]$ is of the form
\[
\dots \overline v s^* \overline v^{-1} \overline u^{-1} t^* \overline u|\overline v s^* \overline v^{-1} \overline u^{-1} t^* \overline u \overline v s^* \overline v^{-1} \overline u^{-1} t^* \dots
\]
Now $(D_w)[j] = D_{w[j]}$,  and using the induction we obtain that $F_{(D_w)[j]}$ and $F_C$ are isomorphic up to twist, where $C$ is naturally oriented and of the form
\[
\dots E s E^{-1} D^{-1} t D|E s E^{-1} D^{-1} t D E s E^{-1} D^{-1} t \dots
\]
where $D^*=\overline u$ and $E^*=\overline v$. 
Writing $X = E s^{-1} E^{-1}$ and $Y = D^{-1} t^{-1} D$ we have
\[
T_{C,0} = (X^{-1}Y^{-1})'' \cap (Y X)'',
\quad
B_{C,0} = ((X^{-1}Y^{-1})' \cap (Y X)'') + ((X^{-1}Y^{-1})'' \cap (Y X)').
\]
Now since $-p\le j\le r$ we have $(C_w)[j] = C_{w[j]}$, and this is of the form
\[
\dots E s E^{-1} D^{-1} t D|E s^{-1} E^{-1} D^{-1} t^{-1} D E s^{-1} E^{-1} D^{-1} t^{-1} \dots
\]
so
\[
T_{(C_w)[j],0} = (X Y)'' \cap (Y X)'',
\quad
B_{(C_w)[j],0} = ((X Y)' \cap (Y X)'') + ((X Y)'' \cap (Y X)').
\]
Now we have $C \sim_0 (C_w)[j]$ by Lemma~\ref{lemma:rewriting-relations-symmetric-bands}, giving that $F_C$ and $F_{(C_w)[j]}$ are isomorphic up to twist. 
Thus $F_{(D_w)[j]}$ and $F_{(C_w)[j]}$ are isomorphic up to twist, hence so are $F_{D_w}$ and $F_{C_w}$.

Also $(D'_w)[j] = D'_{w[j]}$, and by considering $w^{-1}$, the induction shows that $F_{(D'_w)[j]}$ and $F_{C'}$ are isomorphic up to twist, where $C'$ is naturally oriented and of the form
\[
\dots E s^{-1} E^{-1} D^{-1} t^{-1} D|E s^{-1} E^{-1} D^{-1} t^{-1} D E s^{-1} E^{-1} D^{-1} t^{-1} \dots
\]
Then
\[
T_{C',0} = (X Y)'' \cap (Y^{-1} X^{-1})'',
\quad
B_{C',0} = ((X Y)' \cap (Y^{-1} X^{-1})'') + ((X Y)'' \cap (Y^{-1} X^{-1})').
\]
Now we have $C' \sim_0 (C_w)[j]$ by Lemma~\ref{lemma:rewriting-relations-symmetric-bands}, giving that $F_{C'}$ and $F_{(C_w)[j]}$ are isomorphic up to twist. Thus $F_{(D'_w)[j]}$ and $F_{(C_w)[j]}$ are isomorphic up to twist, hence so are $F_{D'_w}$ and $F_{C_w}$.

Next we want to show that $D_w \sim_j C_w$ or $D'_w \sim_j C_w$. Recall that either $w_{>j}$ or $(w_{\le j})^{-1}$ is minimal amongst words of the form $w_{>i}$ or $(w_{\le i})^{-1}$ with $i\in I$.


Suppose that $w_{>j}$ is minimal. Then in any application of Lemma~\ref{lemma:swapset} in the induction above, the choice $k=j$ satisfies property (A4). Thus the induction shows that $C\sim_j D$ for any walks $C,D$ satisfying ($\ast$). Thus $(D_w)[j] \sim_0 C$ where $C$ is as above.
Then, by the argument using Lemma~\ref{lemma:rewriting-relations-symmetric-bands}, we have $C\sim_0 (C_w)[j]$, so $(D_w)[j] \sim_0 (C_w)[j]$. Thus $D_w\sim_j C_w$. 

Suppose on the other hand that $(w_{\le j})^{-1}$ is minimal. By considering $w^{-1}$, the induction shows that $(D'_w)[j] \sim_0 C'$ where $C'$ is as above,
and another application of Lemma~\ref{lemma:rewriting-relations-symmetric-bands} gives $C'\sim_0 (C_w)[j]$. Thus $(D'_w)[j] \sim_0 (C_w)[j]$, and hence $D'_w \sim_j C_w$.
\end{proof}

\subsection{Splitting for asymmetric bands}
In previous work, various splittings were developed for relations or pairs of relations; see for example the second Corollary in \cite[p. 21]{Rin1975}. See also \cite[p. 391]{Cra1988ff}, \cite[Lemma 4.7]{Cra1989} and \cite[Lemma 4.6]{Cra2018}. In this paper we formulate things differently, and prove splittings directly for top and bottom functors. That is, if $w$ is a string or band, $C$ is the canonically associated walk, and $M$ is an $R$-module, we prove that $B_C(M)$ is a direct summand of $T_C(M)=\Hom_{R}(M(C),M)$ as an $R_w$-module (recall $B_{C}(M)$ is an $R_{w}$-submodule of $T_{C}(M)$ by Lemma \ref{lemma:B_C-is-an-R_w-submodule}). 
If $w$ is a string then $R_w$ is semisimple artinian, so this is immediate. 
For $w$ a band, see Lemmas \ref{splitting-lemma-asymmetric-band} and \ref{lemma:splitting-symmetric-bands}. 

The assumptions made in Lemmas  \ref{splitting-lemma-asymmetric-band} and Lemma \ref{lemma:splitting-symmetric-bands} ensure $F_{C}(M)$ is finite-dimensional over $K$. 
For the proof we lift a $K$-basis of $F_{C}(M)$ to $T_{C}(M)$ so that the $K$-span of these lifts is closed under the action of $R_{w}$. 
To do so, in Remark \ref{remark:for-splitting-suffices-to-find-nice-lifts} we discuss how finite-dimensional $K[x;\sigma]$-modules correspond to matrices.

Let $m,n\in\N$. For $mn\neq0$ write $M_{m,n}(K)$ for the set of matrices $\Omega$ with $m$ rows and $n$ columns, whose entry $\omega_{ij}$ in row $i$ and column $j$ lies in $K$. 
Write $O$ for the zero matrix. When $m=n$ let $M_{m,n}(K)=M_{n}(K)$ and write  $I=(\delta_{ij})$ for the identity matrix. 
In case $mn=0$ we write $M_{m,n}(K)=\{(-)_{m,n}\}$ with the appropriate conventions to
extend matrix multiplication $M_{l,m}(K)\times M_{m,n}(K)\to M_{l,n}(K)$.

For any automorphism $\sigma$ of $K$ we use the same symbol to denote the bijection on $M_{m,n}(K)$ defined by $\sigma(\Omega)=(\sigma(\omega_{ij}))$. 
When $m=n>0$ this is an automorphism of the ring $M_{n}(K)$.

\begin{remark}

\label{remark:for-splitting-suffices-to-find-nice-lifts}

Let $\sigma$ be an automorphism of $K$. 
Consider the category whose objects are matrices $\Lambda\in M_{n}(K)$ ($n\in \N$), where a morphism from $\Lambda$ to $\Gamma\in M_{m}(K)$ is given by $\Phi\in M_{n,m}(K)$ with $\Lambda\Phi=\sigma(\Phi)\Gamma$, and where the composition of $\Phi$ and $\Psi:\Gamma\to \Omega$ is defined by $\Phi\Psi$.  
%
%
%
To each object $\Lambda\in M_{n}(K)$ one can make the set $K^{n}=M_{1,n}(K)$ into a $K[x;\sigma]$-module using the formula $x\Omega =\sigma(\Omega)\Lambda$ ($\Omega\in K^{n}$). 
Hence this category is equivalent to the category of finite-dimensional $K[x;\sigma]$-modules.

\begin{enumerate}
    \item Let $w$ be an asymmetric band.  Then $R_{w}$ has the form $K[x,x^{-1};\sigma]$. 
    Let $V$ be a finite-dimensional $R_{w}$-module. Suppose, under the above equivalence, that $V$ corresponds to $\Lambda$ considered as a $K[x;\sigma]$-module, and to $\Phi$ as a $K[x^{-1};\sigma^{-1}]$-module. 
    Then $\Lambda$ is invertible with inverse $\sigma(\Phi)$. 
%
    \item Let $w$ be a symmetric band. 
    So, $R_{w}$ is the free product over $K$ of two semisimple $K$-rings, each of the form $S=K[x;\sigma]/\ideal{q(x)}$ for $q(x)=x^{2}-\beta x+\gamma$ normal and non-singular. 
    Since $q(x)$ is normal, finite-dimensional $S$-modules correspond to matrices $\Lambda$ with  $\sigma(\Lambda)\Lambda-\beta\Lambda+\gamma I=O$; see Lemma \ref{lemma:characterising_central_skew_quadratics}.  
    Since $q(x)$ is non-singular, $\Lambda$ has inverse $\gamma^{-1}(\beta I-\sigma(\Lambda))$ in $M_{n}(K)$; see Lemma \ref{lemma:invertible-variable-in-quotient-by-non-singular-quadratic}. 
    %
    %
  %
%
%
    %

Since $S$ is semisimple, by Lemma \ref{lemma:characterisingsemisimple-quadratics}  one of (1), (2) or (3) below holds.
    \begin{enumerate}
        \item[(1)] Assume $q(x)$ is  irreducible. Then $S$ is a division ring, and as the unique simple considered with basis given by $1$ and $x$, it corresponds to 
        %
        %
        \[
        \begin{pmatrix}
        0 & 1 \\
        -\gamma & \beta
        \end{pmatrix}
        \in M_{2}(K).
        \]
        \end{enumerate}
        Suppose instead $q(x)=(x-\eta)(x-\mu)$ for $\eta,\mu\in K$. Hence the factors $(x-\eta)$ and $(x-\mu)$ of $q(x)$ each define a $K$-basis for left ideals $I$ and $J$ of $S$ (each of dimension $1$ over $K$). 
        \begin{enumerate}
        \item[(2)] Assume $\sigma(\lambda)\eta\neq \eta\lambda$ for some $\lambda\in K$, which gives $S=J\oplus J\lambda^{-1}$. Hence any simple $S$-module is isomorphic to $J$, which corresponds to the matrix $\begin{pmatrix}
        \eta
        \end{pmatrix}\in M_{1}(K)$.
        %
        %
        %
        \item[(3)] Assume $\eta\neq \mu$ and $\sigma(\lambda)\eta=\eta\lambda$ for all $\lambda\in K$, and so $S=I\oplus J$. 
        Also, $q(x)=(x-\sigma^{2}(\mu))(x-\eta)$, so the matrices corresponding to $I$ and $J$ are given by $\begin{pmatrix}
     \sigma^{2}(\mu)
        \end{pmatrix}$ and $\begin{pmatrix}
        \eta
        \end{pmatrix}$ respectively.
        %
        %
    \end{enumerate}
\end{enumerate}
\end{remark}
In what follows we consider infinite sequences of matrices, say $\Omega_{d}$ ($d\in\Z$), whose entries are denoted~${}^{d}\omega_{ij}$.

\begin{lemma}\label{splitting-lemma-asymmetric-band}
Let $w$ be an asymmetric band with canonically associated walk $C$. 
If $T_C(M)/B_C(M)$ is finite dimensional over $K$, then $B_C(M)$ has a complement in $T_C(M)$ as an $R_{w}=K[x,x^{-1};\sigma]$-submodule.
\end{lemma}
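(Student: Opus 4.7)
The plan is as follows. Following Remark~\ref{remark:for-splitting-suffices-to-find-nice-lifts}, since $F_C(M)=T_C(M)/B_C(M)$ is finite-dimensional over $K$ and $x$ acts invertibly on it, I pick a $K$-basis $\bar\theta_1,\dots,\bar\theta_n$ of $F_C(M)$ and an invertible matrix $\Lambda=(\lambda_{ji})\in M_n(K)$ with $x\bar\theta_j=\sum_i\lambda_{ji}\bar\theta_i$. It is enough to exhibit lifts $\theta_j\in T_C(M)$ of $\bar\theta_j$ satisfying $x\theta_j=\sum_i\lambda_{ji}\theta_i$ in $T_C(M)$: the $K$-span $U=\sum_j K\theta_j$ is then closed under $x$ and, via $\Lambda^{-1}$, under $x^{-1}$, giving an $R_w$-submodule with $U\oplus B_C(M)=T_C(M)$.

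Starting from arbitrary lifts $\theta_j$, the error $\gamma_j:=x\theta_j-\sum_i\lambda_{ji}\theta_i$ lies in $B_C(M)=B_C^+(M)+B_C^-(M)$. Both $B_C^+(M)$ and $B_C^-(M)$ are $R_w$-submodules of $T_C(M)$: the right action of $x$ on $M(C)$ sends $b_i$ to $b_{i-n}$ (where $n$ is the period of $w$), so the induced left action on $T_C(M)$ sends $\theta$ to the map $b_i\mapsto\theta(b_{i-n})$, which preserves both ``zero for $i\ll 0$'' and ``zero for $i\gg 0$''. Writing $\gamma_j=\gamma_j^++\gamma_j^-$ with $\gamma_j^\pm\in B_C^\pm(M)$, it suffices to solve
\[
x\delta_j^\pm-\sum_i\lambda_{ji}\delta_i^\pm=\gamma_j^\pm
\]
separately for $\delta_j^\pm\in B_C^\pm(M)$; then $\theta_j-\delta_j^+-\delta_j^-$ will be the required lifts.

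For $\delta^+$: identifying $\theta\in T_C(M)$ with the $C$-compatible sequence $(\theta(b_k))_{k\in\Z}$ and using $b_k\lambda=\pi_k(\lambda)b_k$ from Definition~\ref{definition:pi-i-automorphisms}, the displayed equation evaluated at $b_k$ reads
\[
\delta_j^+(b_{k-n})-\sum_i\pi_k(\lambda_{ji})\delta_i^+(b_k)=\gamma_j^+(b_k).
\]
I pick $N$ with $\gamma_j^+(b_k)=0$ for all $j$ and all $k<N$, set $\delta_j^+(b_k)=0$ for $k<N$, and for $k\geq N$ solve for $\delta_i^+(b_k)$ by inverting the matrix $\pi_k(\Lambda)$:
\[
\delta_i^+(b_k)=\sum_j\pi_k\bigl((\Lambda^{-1})_{ij}\bigr)\bigl(\delta_j^+(b_{k-n})-\gamma_j^+(b_k)\bigr).
\]
The construction of $\delta^-$ is dual: I pick $M_0$ with $\gamma_j^-(b_k)=0$ for $k\geq M_0$, set $\delta_j^-(b_k)=0$ for $k\geq M_0$, and extend downward using the same recurrence rearranged to express $\delta_j^-(b_{k-n})$ in terms of the $\delta_i^-(b_k)$.

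The hard part will be verifying that these sequences genuinely define $R$-module homomorphisms $M(C)\to M$, i.e., respect the compatibility $\delta_j^+(b_{k-1})\in C_k\delta_j^+(b_k)$ at every $k$. I plan to prove this by induction on $k$, applying $C_k$ to the recursive formula and using (i) the periodicity $C_{k-n}=C_k$, (ii) the relation $\pi_{k-1}=\sigma_{C_k}\pi_k$ (direct-letter case) or its inverse-letter variant from Definition~\ref{definition:pi-i-automorphisms}, which converts the $\pi_k$-twist into the required $\pi_{k-1}$-twist after pushing $C_k$ through, and (iii) the $C$-compatibility of $\gamma_j^+\in T_C(M)$. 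The boundary case $k=N$ needs care only when $C_N$ is an inverse letter; but then the relation $ab_{N-1}=b_N$ in $M(C)$ forces $\gamma_j^+(b_N)=a\gamma_j^+(b_{N-1})=0$, so $\delta_j^+$ remains zero across the boundary. The remaining defining relations of $M(C)$ at each $b_k$ — zero action of ordinary arrows absent from $Q_C$ at $k$, and the quadratic action of special loops via Lemma~\ref{lemma:M(C)-spanned-over-K}(i) — are consequences of the $C$-compatibility just verified.
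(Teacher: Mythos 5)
Your proposal is correct in outline but takes a genuinely different route from the paper's. The paper's proof precomputes two sequences of matrices $\Omega_d,\Psi_d\in M_n(K)$ satisfying recursions driven by $\Lambda$, lifts a basis of $F_C(M)$ arbitrarily to $f_i$ with errors $g_i^\pm\in B_C^\pm(M)$, and forms $f_i' = f_i + \sum_{j,d}({}^d\omega_{ij}x^dg_j^-+{}^d\psi_{ij}x^dg_j^+)$. The crucial economy is that each term $x^dg_j^\pm$ is obtained by the $R_w$-action on $T_C(M)$, so $f_i'$ is automatically an $R$-module homomorphism $M(C)\to M$; one only needs to check that the sum is pointwise finite and that the matrix recursions make the new error vanish, both short linear-algebraic calculations. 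You instead solve the equation $x\delta^\pm - \Lambda\delta^\pm = \gamma^\pm$ pointwise, as a $K$-valued recurrence on the sequence $(\delta(b_k))_{k\in\Z}$, and then must \emph{reconstruct} the fact that the result lies in $T_C(M)$. That verification, the ``hard part'' you flag and leave as a plan, is not a formality but the entire content of the lemma in your formulation, with direct-letter, inverse-letter, special-loop and boundary sub-cases, plus condition (iii) of Definition~\ref{definition:modules-M(C)}. Having gone through your outline, I believe the induction does close: your treatment of the $k=N$ boundary when $C_N$ is an inverse letter is right; the direct-letter case at $k=N$ is equally benign, using $a\gamma^+(b_N)=\gamma^+(b_{N-1})=0$; and condition (iii) propagates by the same induction using periodicity and $R$-linearity of $\gamma^+$. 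But you should carry it out rather than assert it, since it is essentially all the work of your proof. Your other ingredients are sound: that each of $B_C^+(M)$ and $B_C^-(M)$ is individually an $R_w$-submodule is true (and is already implicit in the band cases of the proof of Lemma~\ref{lemma:B_C-is-an-R_w-submodule}), and closure of the span $U$ under $x^{-1}$ is what you want but deserves the one-line argument that $x^{-1}\theta_j$ is, via invertibility of $\Lambda$ and the twist, a $K$-linear combination of the $\theta_i$.
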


\begin{proof}
Let $(h_{1},\dots,h_{n})$ be a $K$-basis for $F_{C}(M)=T_{C}(M)/B_{C}
(M)$. 
For each $i=1,\dots,n$ write $xh_{i}=\sum_{j}\lambda_{ij}h_{j}$ and $x^{-1}h_{i}=\sum_{j}\varphi_{ij}h_{j}$ for some $\lambda_{ij},\varphi_{ij}\in K$. 
By Remark \ref{remark:for-splitting-suffices-to-find-nice-lifts}(i) the matrices $\Lambda=(\lambda_{ij})$ and $\Phi=(\varphi_{ij})$  satisfy $\Lambda^{-1}=\sigma(\Phi)$ in $M_{n}(K)$. 
For each $d\in \Z$ define matrices  $\Omega_{d}=({}^{d}\omega_{ij})$ and $\Psi_{d}=({}^{d}\psi_{ij})$  by
\[
\begin{array}{ccc}
\Omega_{d}=
\begin{cases}
 O & (d<0),\\
 \Lambda^{-1} & (d=0),\\
\Lambda^{-1}\sigma(\Omega_{d-1}) & (d>0),
\end{cases}  & \text{and}  &  
\Psi_{d}=
\begin{cases}
\sigma^{-1}(\Lambda\Psi_{d+1}) & (d<-1),\\
-I & (d=-1),\\
O & (d>-1).
\end{cases}
\end{array}
\]
%
Now lift each $h_{i}$ to $f_{i}\in T_{C}(M)$ and write $xf_{i}=g_{i}^{-}+g_{i}^{+}+\sum\lambda_{ij}f_{j}$ for $g_{i}^{\pm}\in B^{\pm}_{C}(M)$. 
For each $i$, $j$ and $d$ let ${}^{d}z_{ij}={}^{d}\omega_{ij}x^{d}g_{j}^{-} + {}^{d}\psi_{ij}x^{d}g_{j}^{+}$. 
For any $r\in\Z$ we have $x^{d}g_{j}^{-}(b_{r})=g_{j}^{-}(b_{r-dp})$, which is $0$ for $d\gg 0$. Similarly $x^{d}g_{j}^{+}=0$ for $d\ll 0$, and so for each $i$ we have ${}^{d}z_{ij}=0$ for all but finitely many $j$ and $d$, so $\sum_{j,d} {}^{d}z_{ij}$ defines an element of $B_{C}(M)$.
For each $i$ let $f_{i}'=f_{i}+\sum_{j,d} {}^{d}z_{ij}$. 
This gives 
\[
\begin{split}
    xf'_{i}-\sum_{j}\lambda_{ij}f'_{j}
    =g_{i}^{-}+g_{i}^{+}+\sum_{j,d}\left(\sigma({}^{d-1}\omega_{ij})-\sum_{k}\lambda_{ik}{}^{d}\omega_{kj}\right)x^{d}g_{j}^{-}+\left(\sigma({}^{d-1}\psi_{ij})-\sum_{k}\lambda_{ik}{}^{d}\psi_{kj}\right)x^{d}g_{j}^{+},
\end{split}
\]
and so $xf'_{j}=\sum_{i}\lambda_{ji}f'_{i}$ for all $j$. 
\end{proof}

Lemma \ref{lemma:splitting-symmetric-bands} is analogous to Lemma \ref{splitting-lemma-asymmetric-band}, but we take $w$ to be a symmetric band, and for our proof we require the $R$-module $M$ to be finite-dimensional.  

\subsection{Preliminaries for symmetric splitting}\label{s:Solutions-to-simultaneous-equations}

For Lemma \ref{lemma:symmetric-splitting-sequence-of-matrices} we fix solutions in $M_{n}(K)$ to a pair of equations, each given by a monic, normal, non-singular and semisimple quadratic $q(x)\in K[x;\sigma]$.
We then define a sequence of matrices satisfying equations $\eqref{eq1}$--$\eqref{eq5}$. 
Lemma \ref{lemma:technical-symmetric-band-splitting-matrices} deals with \eqref{eq1}. 

\begin{lemma}

\label{lemma:technical-symmetric-band-splitting-matrices}
Let $q(x)=x^{2}-\beta x+\gamma$ be normal, non-singular and semisimple in $K[x;\sigma]$. 
If $\Lambda\in M_{n}(K)$ and $\sigma(\Lambda)\Lambda-\beta\Lambda+\gamma I=O$ then there is a matrix $\Xi\in M_{n}(K)$ such that $\Lambda\Xi+\sigma(\Xi)(\sigma(\Lambda)-\beta I)=I$.
\end{lemma}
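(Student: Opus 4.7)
The plan is to realise the desired equation as a splitting condition in the skew polynomial ring
\[
T \;:=\; M_n(K)[x;\sigma]/\ideal{q(x)},
\]
which under the standard identification $M_n(K)[x;\sigma] \cong M_n(K[x;\sigma])$ (sending $x$ to the scalar matrix $xI$) becomes $T \cong M_n(S)$, where $S = K[x;\sigma]/\ideal{q(x)}$. Since $q(x)$ is semisimple by hypothesis, $S$ is semisimple, hence so is $T$. The hypothesis $\sigma(\Lambda)\Lambda - \beta\Lambda + \gamma I = O$ is exactly the condition for $q(x)$ to factor in $M_n(K)[x;\sigma]$ as $q(x) = (x-\Lambda')(x-\Lambda)$ with $\Lambda' := \beta I - \sigma(\Lambda)$; in particular $\Lambda'\Lambda = \gamma I$.

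First I would show that every element of the left ideal $T(x-\Lambda)$ has the form $B(x-\Lambda)$ with $B \in M_n(K)$: writing a general $t \in T$ as $t_0 + t_1 x$ with $t_0, t_1 \in M_n(K)$, and expanding $t(x-\Lambda)$ using $xM = \sigma(M)x$ and $x^2 = \beta x - \gamma I$ in $T$, one finds $t(x-\Lambda) = B(x-\Lambda)$ for $B = t_0 + t_1\Lambda'$, where the identity $\Lambda'\Lambda = \gamma I$ is exactly what makes the constant terms reconcile. By semisimplicity of $T$ the left ideal $T(x-\Lambda)$ has a complement, so there is an idempotent $e \in T$ with $Te = T(x-\Lambda)$; by the auxiliary fact $e = B(x-\Lambda)$ for some $B \in M_n(K)$, and since $(x-\Lambda) \in Te$ together with $e^2 = e$ implies $(x-\Lambda)e = (x-\Lambda)$, we obtain the identity
\[
(x-\Lambda)\,B\,(x-\Lambda) \;=\; x-\Lambda \qquad \text{in } T.
\]

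Expanding the left hand side using $xB = \sigma(B)x$, the quadratic relation $x^2 = \beta x - \gamma I$, and the normality identity $\sigma(B)\gamma = \gamma\sigma^{-1}(B)$ (applying $\sigma^2(\lambda)\gamma = \gamma\lambda$ from Lemma~\ref{lemma:characterising_central_skew_quadratics} entrywise to $B$) yields
\[
(x-\Lambda)\,B\,(x-\Lambda) \;=\; \bigl[\sigma(B)\Lambda' - \Lambda B\bigr]x + \bigl[\Lambda B \Lambda - \gamma\sigma^{-1}(B)\bigr].
\]
Matching the coefficient of $x$ with that of $x-\Lambda$ gives $\sigma(B)\Lambda' - \Lambda B = I$; setting $\Xi := -B$ and substituting $\Lambda' = \beta I - \sigma(\Lambda)$ this is precisely $\Lambda\Xi + \sigma(\Xi)(\sigma(\Lambda) - \beta I) = I$, as required. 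The matching constant terms give the secondary relation $\Lambda B\Lambda - \gamma\sigma^{-1}(B) = -\Lambda$, but it is automatic: multiplying the $x$-coefficient equation on the right by $\Lambda$ and invoking $\Lambda'\Lambda = \gamma I$ reproduces it. The only delicate point is the skew polynomial bookkeeping, where normality of $q(x)$ is used crucially to pass the scalars $\beta$ and $\gamma$ past $\sigma$-twisted matrix entries; beyond this the argument is formal.
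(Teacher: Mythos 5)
Your argument is correct, and it takes a genuinely different and more conceptual route than the paper. The paper proceeds by cases: when $\mathrm{char}(K)\neq 2$ or $\beta\neq 0$ it writes down an explicit $\Xi=\nu\sigma(\Lambda)-\zeta I$ built from $\beta$ and the discriminant $\beta^2-4\gamma$, and when $\mathrm{char}(K)=2$ and $\beta=0$ it reduces, via the category of matrices in Remark~\ref{remark:for-splitting-suffices-to-find-nice-lifts}, to the single simple $S$-module and checks the identity directly in the two surviving cases from Lemma~\ref{lemma:characterisingsemisimple-quadratics}. You instead pass to the ring $T=M_n(K)[x;\sigma]/\ideal{q(x)}\cong M_n(S)$, observe that the hypothesis on $\Lambda$ is precisely the factorisation $q(x)=(x-\Lambda')(x-\Lambda)$ with $\Lambda'=\beta I-\sigma(\Lambda)$, show that the left ideal $T(x-\Lambda)$ consists of the elements $B(x-\Lambda)$ with $B\in M_n(K)$, and invoke semisimplicity of $T$ to get an idempotent generator of this ideal, equivalently von Neumann regularity $(x-\Lambda)B(x-\Lambda)=x-\Lambda$. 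Comparing $x$-coefficients then delivers $\Xi=-B$. This unifies the two cases and uses the semisimplicity hypothesis in the structurally cleanest way (every left ideal of $T$ is a direct summand), at the small cost of being non-constructive: unlike the paper's first case, the idempotent and hence $\Xi$ is only shown to exist. The skew-polynomial bookkeeping you flag — that $q(x)I$ is normal in $M_n(K)[x;\sigma]$ so the quotient makes sense, the identification with $M_n(S)$, the computation that $t(x-\Lambda)=(t_0+t_1\Lambda')(x-\Lambda)$ for $t=t_0+t_1x$, and the redundancy of the constant-term equation — all checks out, each time hinging on exactly the normality identities of Lemma~\ref{lemma:characterising_central_skew_quadratics} as you say.
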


\begin{proof}
Write $\mathrm{char}(K)$ for the characteristic of $K$. Since $q(x)$ is normal, by Lemma \ref{lemma:characterising_central_skew_quadratics} we have that $\beta\lambda=\sigma(\lambda)\beta$ and $\gamma\lambda=\sigma^{2}(\lambda)\gamma$ for all $\lambda\in K$, and that $\beta$ and $\gamma$ are fixed by $\sigma$.

(i) Suppose either $\mathrm{char}(K)\neq 2$ or $\beta\neq0$. 
We firstly assert that there exists $\nu,\zeta\in K$ such that $\sigma(\nu)=\nu$, $\sigma(\zeta)=\zeta$, $\zeta\beta-2\nu\gamma=1$, $\nu\beta=2\zeta$ and $\nu\sigma^{2}(\lambda)=\lambda\nu$ and $\zeta\sigma(\lambda)=\lambda\zeta$ for all $\lambda\in K$. 
If $\mathrm{char}(K)=2$ then $\beta^{-1}\sigma(\lambda)=\lambda\beta^{-1}$ and $\sigma(\beta^{-1})=\beta^{-1}$, and it suffices let $\zeta=\beta^{-1}$ and $\nu=0$. 

Suppose instead $\mathrm{char}(K)\neq2$. 
We claim that $\beta^{2}\neq 4\gamma$. 
Otherwise setting $\eta=\frac{1}{2}\beta$ gives $\gamma=\eta^{2}$ which means $q(x)=(x-\eta)^{2}$ where $\eta\lambda=\sigma(\lambda)\eta$ for all $\lambda\in K$. 
By Lemma \ref{lemma:characterisingsemisimple-quadratics} this contradicts our assumption that the quotient ring $S=K[x;\sigma]/\ideal{q(x)}$ is semisimple. 
Let $\alpha=\beta^{2}-4\gamma$, and so $\alpha\neq 0$. 
Note that $\beta^{2}\lambda=\sigma^{2}(\lambda)\beta^{2}$ and $(4\gamma)\lambda=4\sigma^{2}(\lambda)\gamma$, and so altogether $\sigma(\alpha)=\alpha$ and  $\alpha\lambda=\sigma^{2}(\lambda)\alpha$ for all $\lambda\in K$. 

It straightforward to check $\zeta=\beta\alpha^{-1}$ and $\nu=2\alpha^{-1}$ satisfy the asserted properties, 
%
%
%
%
%
%
and likewise straightforward to check that $\Xi=\nu\sigma(\Lambda)-\zeta I$ satisfies $\Lambda\Xi+\sigma(\Xi)(\sigma(\Lambda)-\beta I)=I$. 

(ii) Suppose instead $\mathrm{char}(K)= 2$ and  $\beta=0$, meaning $q(x)=x^{2}+\gamma$.  
Since $S=K[x;\sigma]/\ideal{q(x)}$ is semisimple, according to Lemma \ref{lemma:characterisingsemisimple-quadratics} there are three possibilities for $q(x)$ corresponding to parts (1), (2) and (3) of Remark \ref{remark:for-splitting-suffices-to-find-nice-lifts}(ii).  
For (2) and (3) we have $q(x)=(x-\eta)(x-\mu)$ for distinct $\mu,\eta\in K$,  which gives $\eta=\sigma(\mu)$ since $\mathrm{char}(K)= 2$ and  $\beta=0$. 
Since $\gamma\neq 0$ we also have $\mu\neq 0$. 
Hence case (3) cannot occur, for otherwise $\sigma(\lambda)\eta=\eta\lambda$ for all $\lambda \in K$, and in particular $\sigma(\mu)(\eta-\mu)=0$, a contradiction.

Hence either (1) or (2) holds. 

Recall the category of matrices considered in Remark \ref{remark:for-splitting-suffices-to-find-nice-lifts}. 
Since $\sigma(\Lambda)\Lambda+\gamma I=0$ we can consider $\Lambda$ as an object in a category equivalent to the category of finite-dimensional $S$-modules. 
Since either (1) or (2) holds, any such module is isomorphic to a finite direct sum of copies of the unique simple.

Given $\Phi,\Gamma,\Theta\in M_{n}(K)$ with $\Phi$ invertible such that  $\Lambda=\sigma(\Phi)\Gamma\Phi^{-1}$ and $\Gamma\Theta+\sigma(\Theta)\sigma(\Gamma)=I$, setting $\Xi=\Phi\Theta\sigma(\Phi^{-1})$ gives $\Lambda\Xi+\sigma(\Xi)\sigma(\Lambda)=I$. 
%
%
Hence it suffices to consider $\Lambda$ up to isomorphism. 
In this category of matrices, note that the direct sum of $\Lambda\in M_{n}(K)$ and $\Gamma\in M_{m}(K)$ is given by forming a block diagonal matrix $\Omega=\Lambda\oplus\Gamma\in M_{n+m}(K)$.  
Hence if $\Lambda\Xi+\sigma(\Xi)\sigma(\Lambda)=I$ in $M_{n}(K)$ and $\Gamma\Theta+\sigma(\Theta)\sigma(\Gamma)=I$ in $M_{m}(K)$ then $\Omega(\Xi\oplus\Theta)+\sigma(\Xi\oplus\Theta)\sigma(\Omega)=I$ in $M_{n+m}(K)$. 
Hence we can reduce to the case where $\Lambda$ is a simple object in this category of matrices, of which there is only one, up to isomorphism. 

In case (1), the proof of the lemma now follows from the equation
\[
\begin{pmatrix}
        0 & 1 \\
        -\gamma & 0
        \end{pmatrix}\begin{pmatrix}
        0 & 0 \\
        1 & 0
        \end{pmatrix}+\sigma\left(\begin{pmatrix}
        0 & 0 \\
        1 & 0
        \end{pmatrix}\begin{pmatrix}
        0 & 1 \\
        -\gamma & 0
        \end{pmatrix}\right)=\begin{pmatrix}
        1 & 0 \\
        0 & 1
        \end{pmatrix}.
\]
In case (2), $\sigma(\lambda)\eta\neq\eta\lambda$ for some $\lambda\in K$. 
Let $\alpha=\sigma(\lambda)\eta-\eta\lambda$, and since $\alpha\neq 0$ we can consider $\xi=\lambda\alpha^{-1}\neq 0$.  
Since $\sigma(\eta)\eta=\gamma$ we have $\sigma(\alpha)\eta=\sigma(\eta) \alpha$ which gives  $\sigma(\alpha^{-1}\eta)=\eta\alpha^{-1}$ and so $\eta \xi+\sigma(\xi)\sigma(\eta)=1$.
\end{proof}

\begin{lemma}\label{lemma:symmetric-splitting-sequence-of-matrices}
Let $\rho,\tau,\pi,\zeta$ be automorphisms of $K$. Let $r(x)=x^{2}-\beta x+\gamma$ and $p(y)=y^{2}-\mu y+\eta$ be normal, non-singular and semisimple in $K[x;\rho]$ and $K[y;\tau]$ respectively. 

If $\Phi,\Lambda\in M_{n}(K)$ satisfy  $\rho(\Phi)\Phi-\beta\Phi+\gamma I=O$ and $\tau(\Lambda)\Lambda-\mu\Lambda+\eta I=O$ then there is a collection $(\Theta_{k}\mid k\in\N)$ of matrices $\Theta_{k}\in M_{n}(K)$ satisfying \eqref{eq1} and \eqref{eq2}, \eqref{eq3}, \eqref{eq4} and \eqref{eq5} for each $k$.
\begin{equation}
\tau(\pi(\Theta_{0}))(\mu I - \tau(\Lambda))  = -I + \Lambda\pi(\Theta_{0}),  \label{eq1}
\end{equation}
\begin{equation}
\Lambda\pi(\Theta_{2k+2})=\mu\pi(\Theta_{2k+2})+\tau(\pi(\Theta_{2k+1})), \label{eq2}
 \end{equation}
\begin{equation}
  \Lambda\pi(\Theta_{2k+1})=-\tau(\pi(\Theta_{2k+2}))\eta, \label{eq3}
\end{equation}
\begin{equation}
\Phi \zeta(\Theta_{2k}) =-\rho(\zeta(\Theta_{2k+1}))\gamma, \label{eq4}
\end{equation}
\begin{equation}
  \Phi\zeta(\Theta_{2k+1})=\rho(\zeta(\Theta_{2k}))+\beta\zeta(\Theta_{2k+1}). \label{eq5}
\end{equation}
\end{lemma}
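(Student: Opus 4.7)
The plan is to construct $(\Theta_{k})_{k\in\N}$ by induction on $k$: first produce $\Theta_{0}$ using Lemma~\ref{lemma:technical-symmetric-band-splitting-matrices}, then alternately use \eqref{eq4} to pass from $\Theta_{2k}$ to $\Theta_{2k+1}$ and \eqref{eq3} to pass from $\Theta_{2k+1}$ to $\Theta_{2k+2}$. The companion equations \eqref{eq5} and \eqref{eq2} should then follow automatically from the quadratic relations $\rho(\Phi)\Phi-\beta\Phi+\gamma I=O$ and $\tau(\Lambda)\Lambda-\mu\Lambda+\eta I=O$ together with the normality identities provided by Lemma~\ref{lemma:characterising_central_skew_quadratics}.

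For the base case, observe that \eqref{eq1} is equivalent to $\Lambda\pi(\Theta_{0})+\tau(\pi(\Theta_{0}))(\tau(\Lambda)-\mu I)=I$. Applying Lemma~\ref{lemma:technical-symmetric-band-splitting-matrices} to $\Lambda$ with $(\sigma,\beta,\gamma)$ replaced by $(\tau,\mu,\eta)$ produces $\Xi\in M_{n}(K)$ satisfying this equation, and we set $\Theta_{0}:=\pi^{-1}(\Xi)$. For the inductive step from $\Theta_{2k}$ to $\Theta_{2k+1}$, use non-singularity of $r(x)$ to define $\zeta(\Theta_{2k+1}):=-\rho^{-1}(\Phi\zeta(\Theta_{2k}))\gamma^{-1}$, which is exactly \eqref{eq4}. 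To verify \eqref{eq5}, I would apply $\rho^{-1}$ to $\rho(\Phi)\Phi-\beta\Phi+\gamma I=O$, invoking $\rho(\beta)=\beta$ and $\rho(\gamma)=\gamma$ from normality, to obtain the key identity $(\Phi-\beta I)\rho^{-1}(\Phi)=-\gamma I$; left-multiplying the defining formula for $\zeta(\Theta_{2k+1})$ by $\Phi$ and simplifying via this identity together with the entrywise consequence $\gamma\rho^{-1}(A)\gamma^{-1}=\rho(A)$ of the normality identity $\gamma\lambda=\rho^{2}(\lambda)\gamma$ yields \eqref{eq5}. The passage from $\Theta_{2k+1}$ to $\Theta_{2k+2}$ is entirely symmetric: set $\pi(\Theta_{2k+2}):=-\tau^{-1}(\Lambda\pi(\Theta_{2k+1}))\eta^{-1}$, which gives \eqref{eq3} directly, and verify \eqref{eq2} using the analogous identities $(\Lambda-\mu I)\tau^{-1}(\Lambda)=-\eta I$ and $\eta\lambda=\tau^{2}(\lambda)\eta$.

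The only genuine obstacle is the base case, which requires Lemma~\ref{lemma:technical-symmetric-band-splitting-matrices} and therefore exploits all three hypotheses on $p(y)$ (normality, non-singularity, and semisimplicity). The recursion itself is essentially forced by the quadratic relations satisfied by $\Phi$ and $\Lambda$, with non-singularity of both quadratics ensuring that the defining formulas make sense, and with normality providing exactly the commutation identities needed to pass between the equations \eqref{eq4} and \eqref{eq5}, and between \eqref{eq3} and \eqref{eq2}.
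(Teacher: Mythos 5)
Your proposal is correct and follows essentially the same strategy as the paper: obtain $\Theta_{0}$ from Lemma~\ref{lemma:technical-symmetric-band-splitting-matrices} applied to $\Lambda$, then define $\Theta_{2k+1}$ and $\Theta_{2k+2}$ recursively by the non-singularity-based formulas so that \eqref{eq4} and \eqref{eq3} hold by construction, and deduce \eqref{eq5} and \eqref{eq2} from the quadratic relations together with the normality identities of Lemma~\ref{lemma:characterising_central_skew_quadratics}. The only minor cosmetic difference is in how \eqref{eq5} is extracted (you left-multiply by $\Phi$ and invoke $(\Phi-\beta I)\rho^{-1}(\Phi)=-\gamma I$, whereas the paper applies $\rho$ to \eqref{eq4} and then cancels $\rho(\Phi)$), but this does not change the substance of the argument.
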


\begin{proof}
By Remark \ref{remark:for-splitting-suffices-to-find-nice-lifts}(ii) the matrices $\rho(\Phi)$ and $\tau(\Lambda)$ are units in the ring $M_{n}(K)$. 
Since we have $\tau(\Lambda)\Lambda-\mu\Lambda+\eta I=O$, by Lemma \ref{lemma:technical-symmetric-band-splitting-matrices}  there is a matrix $\Xi\in M_{n}(K)$ such that $\Lambda\Xi+\tau(\Xi)(\tau(\Lambda)-\mu I)=I$. 
Setting $\Theta_{0}=\pi^{-1}(\Xi)$, \eqref{eq1} is automatic. 
Now suppose for some $k$ that the matrix $\Theta_{2k}$ has been defined. 
Let $\Theta_{2k+1}=-\zeta^{-1}(\rho^{-1}(\Phi\zeta(\Theta_{2k}))\gamma^{-1})$. 
By Lemma \ref{lemma:characterising_central_skew_quadratics} we have $\rho(\gamma)=\gamma$ and so \eqref{eq4} holds. 
Likewise $\rho^{2}(\lambda)\gamma=\gamma\lambda$ for all $\lambda\in K$, and so applying $\rho$ to \eqref{eq4} gives 
\[
\rho(\Phi \zeta(\Theta_{2k})) =-\rho^{2}(\zeta(\Theta_{2k+1}))\gamma=-\gamma\zeta(\Theta_{2k+1})=\rho(\Phi)(\Phi-\beta I)\zeta(\Theta_{2k+1}).
\]
One yields
\eqref{eq5} 
by multiplying by $(\rho(\Phi))^{-1}$. 
Now let $\Theta_{2k+2}=-\pi^{-1}(\tau^{-1}(\Lambda\pi(\Theta_{2k+1}))\eta^{-1})$. 
By construction \eqref{eq3} holds, and using a similar argument to the one above, this gives \eqref{eq2}.
\end{proof}

We apply Lemma \ref{lemma:expanding-out-relations-for-symmetric-bands} in  Lemma \ref{lemma:a-pair-of-matrices-gives-a-P-submodule} to give a procedure for constructing $R_{w}$-submodules of $T_{C}(M)$ where $w$ is a symmetric band.

\begin{lemma}
\label{lemma:expanding-out-relations-for-symmetric-bands}
Let $d,n> 0$ be integers. 
Let $V_{0},\dots,V_{n-1}$ be $K$-modules, let $\Lambda=(\lambda_{ij})$ and $\Phi=(\varphi_{ij})$ be invertible matrices in $M_{d}(K)$ and let $\tau$, $\rho$ and $\sigma_{h}$ \emph{(}$1\leq h\leq n-1$\emph{)} be automorphisms of $K$. 

Suppose there exists $v^{h}_{i}\in V_{h}$ for each $h$ and each $i=1,\dots,d$ such that the following statements hold.
\begin{enumerate}
    \item[(a)] Each pair $(v_{i}^{0},\sum_{j=1}^{d} \lambda_{ij}v^{0}_{j})$ lies in some $\tau$-semilinear relation $Y:V_{0}\to V_{0}$.
    \item[(b)] Each pair $(v^{h}_{i},v^{h-1}_{i})$ with $1\leq h\leq n-1$ lies in some  $\sigma_{h}$-semilinear relation $A_{h}:V_{h}\to V_{h-1}$.
    \item[(c)] Each pair $(v_{i}^{n-1},\sum_{j=1}^{d} \varphi_{ij}v^{n-1}_{j})$ lies in some $\rho$-semilinear relation $X:V_{n-1}\to V_{n-1}$.
\end{enumerate}
Then, for each $i=1,\dots,d$ there is a sequence $(v_{i}^{l}\mid l\in\Z)$ such that (i)--(iv) below hold for any $r\in \Z$.
\begin{enumerate}
    \item If $r$ is even then $v_{i}^{rn+h}\in A_{h}v_{i}^{rn+h+1}$ for each $h$, and if also  $\pm (r-1)<0 $ then   $v_{i}^{rn-1}\in Y^{\pm 1}v_{i}^{rn}$.
    \item If $r$ is odd then $v_{i}^{rn+n-h}\in A_{h}v_{i}^{rn+n-h-1}$ for each $h$,  and if also  $\pm r<0$ then  $v_{i}^{rn-1}\in X^{\pm 1}v_{i}^{rn}$.
    \end{enumerate}
Let $\sigma_{0}=\tau$ and, for any non-zero  $l\in\Z$, let 
\[
\sigma_{l}=
\begin{cases}
\tau^{\pm 1} & (l=\mp rn\text{ where }0<r\in \Z\text{ is even}),\\
\rho^{\pm 1} & (l=\mp rn\text{ where }0<r\in \Z\text{ is odd}),\\
\sigma_{h} & (l=h+rn\text{ where }r\in \Z\text{ is even and }1\leq h\leq n-1),\\
\sigma_{n-h}^{-1} & (l=h+rn\text{ where }r\in \Z\text{ is odd and }1\leq h\leq n-1).
\end{cases}
\]
\begin{enumerate}\setcounter{enumi}{2}
    \item If $r\leq n-1$ then  $v_{i}^{2n-(r+1)}=\sum_{j=1}^{d} (\sigma_{r+1}\dots \sigma_{n-1})(\varphi_{ij})v_{j}^{r}$ \emph{(}where $\sigma_{n}\dots\sigma_{n-1}$ is the identity\emph{)}.
    \item If $r\geq 0$ then  $v_{i}^{-(r+1)}=\sum_{j=1}^{d} (\sigma_{1}\dots \sigma_{r})^{-1}(\lambda_{ij})v_{j}^{r}$ \emph{(}where $\sigma_{1}\dots\sigma_{0}$ is the identity\emph{)}.
\end{enumerate}
\end{lemma}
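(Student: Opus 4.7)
The plan is to construct the bi-infinite sequence $(v_i^l \mid l \in \Z)$ by inductively extending the given initial data $v_i^0, \ldots, v_i^{n-1}$ in both directions. The extension alternates between two kinds of steps: boundary steps, at indices $\equiv 0 \pmod{2n}$ (where we use $Y^{\pm 1}$) or $\equiv n \pmod{2n}$ (where we use $X^{\pm 1}$), and interior steps within each length-$n$ block (where we use the $A_h^{\pm 1}$). At each step the next element is determined by combining the relevant semilinear relation with the appropriate entries of $\Lambda, \Phi$ or their inverses.

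For the forward extension, hypothesis (c) at the first right boundary gives $(v_i^{n-1}, \sum_j \varphi_{ij} v_j^{n-1}) \in X$, so setting $v_i^n := \sum_j \varphi_{ij} v_j^{n-1}$ realises the $r = n-1$ case of (iii). Then within the odd block, since $A_{n-1}$ is $\sigma_{n-1}$-semilinear with $v_j^{n-2} \in A_{n-1} v_j^{n-1}$, the element $v_i^{n+1} := \sum_j \sigma_{n-1}(\varphi_{ij}) v_j^{n-2}$ lies in $A_{n-1} v_i^n$; iterating this gives $v_i^{n+h} := \sum_j (\sigma_{n-h} \cdots \sigma_{n-1})(\varphi_{ij}) v_j^{n-1-h}$ for $h = 1, \ldots, n-1$, yielding the remaining cases of (iii) for $r \in \{0, \ldots, n-2\}$. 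At the next left boundary I would use hypothesis (a) and the $\tau$-semilinearity of $Y$ to produce $v_i^{2n}$ as a suitable combination of the $v_j^0$'s, then continue the $A_h$ recursion to fill out the even block. The backward extension is entirely symmetric, starting from $v_i^{-1} := \sum_j \lambda_{ij} v_j^0$ (the $r = 0$ case of (iv)) and cascading backwards through the $A_h$'s and the $X^{-1}$-boundary using the invertibility of $\Phi$.

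Properties (i) and (ii) will then follow immediately from the way each $v_i^l$ was constructed: at interior positions from the corresponding $A_h$ relation applied semilinearly, and at boundary positions from the graph of $Y^{\pm 1}$ or $X^{\pm 1}$. Properties (iii) and (iv) will be verified by unrolling the inductive definition and composing automorphisms; the piecewise definition of $\sigma_l$ is precisely calibrated so that at each $Y$-boundary a factor of $\tau^{\pm 1}$ and at each $X$-boundary a factor of $\rho^{\pm 1}$ enters, exactly reproducing the composition $\sigma_{r+1} \cdots \sigma_{n-1}$ in (iii) and $(\sigma_1 \cdots \sigma_r)^{-1}$ in (iv) when $r$ ranges over all of $\Z$. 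The main obstacle will be the combinatorial bookkeeping: tracking how the twisted matrix entries accumulate across repeated reflections off the $X$- and $Y$-boundaries far from $\{0, \ldots, n-1\}$, and making sure the invertibility of $\Lambda$ and $\Phi$ (which encodes the inverse relations $Y^{-1}$ and $X^{-1}$ on the relevant finite-dimensional spans) is used consistently when extending in the direction opposite to the initial one.
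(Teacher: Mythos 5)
Your outline captures the right architecture: extend step by step in both directions, using $X$ and $Y$ at block boundaries and the $A_h$'s in the interiors, with the coefficient formulas in (iii) and (iv) emerging from the cascade of reflections. Properties (i) and (ii) do hold automatically from such a construction, and the opening steps you write out are correct.

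However, the last paragraph — where you acknowledge the ``combinatorial bookkeeping'' as the ``main obstacle'' — is precisely where the proof actually lives, and you do not carry it out. The issue is not mere bookkeeping: properties (iii) and (iv) are \emph{consistency conditions between the forward and backward cascades}, not a recording of what happened in one of them. For example, (iii) with $r=-n-1$ asserts that $v_i^{3n}$ (constructed by extending forward across the $X$-boundary at $n$, the $Y$-boundary at $2n$, and the $X$-boundary at $3n$) equals a twisted $\Phi$-combination of $v_j^{-n-1}$ (constructed by extending backward across the $Y$-boundary at $0$ and the $X$-boundary at $-n$). Nothing in a naive step-by-step construction guarantees this; one must verify a nontrivial matrix identity relating the products of $\Phi$, $\Lambda$ and the twisting automorphisms that accumulate along the two cascades. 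The paper handles this by introducing an auxiliary recursively-defined sequence of matrices $\Omega_r\in M_d(K)$, proving a key intertwining identity $(r,\natural)$ of the form $\Omega_{r\pm1}\sigma^{\mp1}(\Omega_r)=\rho(\sigma^{\mp1}(\Omega_r)\Omega_{r\mp1})$ (for even $r$, and the $\tau$-version for odd $r$), and then carrying out a double induction on $|r|$ with explicit formulas $(r,\dagger)$, $(r,\ddagger)$ recording how $v_i^{rn\pm1}$ is expressed via $\Omega_r$. Without this (or something equivalent), your verification of (iii) and (iv) is a promissory note rather than an argument, and that is where the gap lies.
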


\begin{proof} For each $r\in\Z$ we define a matrix $\Omega_{r}=({}^{r}\omega_{ij})$ in $ M_{d}(K)$, as follows. 
For $s\geq 0$ consider the equations
\[
 \begin{array}{cc}
 (s,\flat) & \Omega_{-s}=(\sigma_{1}\dots \sigma_{sn})^{-1}(\Lambda)\Omega_{s}((\sigma_{1}\dots \sigma_{sn-1})^{-1}(\Lambda))^{-1}\\[-1em]\\
   (s,\sharp) & \Omega_{s+1}=(\sigma_{(1-s)n}\dots \sigma_{n-1})(\Phi)\Omega_{1-s}((\sigma_{(1-s)n+1}\dots \sigma_{n-1})(\Phi))^{-1}
\end{array}
\]
where we declare that ($0,\flat$) and ($0,\sharp$) say that $\Omega_{0}=\Lambda$ and $\Omega_{1}=\Phi$ respectively. 
From here one can define $\Omega_{r}$ ($r\in \Z$) iteratively. 
Let $\sigma=\sigma_{1}\dots\sigma_{n-1}$. 
We claim ($r,\natural$) below holds for any $r\in \Z$ with $r\neq 0,1$.
\[
\begin{array}{cc}
    (r,\natural) &  
 \begin{array}{cc}
  \Omega_{r\pm1}\sigma^{-1}(\Omega_{r})=\rho(\sigma^{-1}(\Omega_{r})\Omega_{r\mp1}) & (\pm r\geq 2\text{ and }r\text{ even})\\[-1em]\\
  \Omega_{r\pm 1}\sigma(\Omega_{r})=\tau(\sigma(\Omega_{r})\Omega_{r\mp 1}) & (\pm r\geq 1,r\neq 1\text{ and }r\text{ odd})
\end{array}
\end{array}
\]
We check ($r,\natural$) for $r\geq 2$ even. 
The other cases are similar. Without loss of generality assume that $n> 1$. 

For $t\geq 1$ let $\Psi_{t}=(\sigma_{-t}\dots \sigma_{n-1})(\Phi)(\sigma_{1}\dots \sigma_{t})^{-1}(\Lambda)$.  For any $s\geq 0$ we have  $\sigma_{-sn}=\sigma_{sn}^{-1}$ which is $\rho$ for $s$ odd and $\tau$ for $s$ even. In both cases $\Psi_{(s+1)n}$ is the image of $\Psi_{(s+1)n-1}$ under this  automorphism.

For $s\geq 3$ we have  $\Omega_{s}=\Psi_{(s-2)n}\Omega_{s-2}(\Psi_{(s-2)n-1})^{-1}$ by  ($s-1,\sharp$) and ($s-2,\flat$). 
For $s=3$, since $\Psi_{n-1}=\sigma^{-1}(\Omega_{2})\Phi$  by ($1,\sharp$), this gives the base case for an induction on $r$. 
Thus, assuming $r\geq 4$, taking $s=r-1,r,r+1$ and applying the inductive hypothesis on $r-2\geq 2$ (which is even) gives 
\[
\Omega_{r+1}\sigma^{-1}(\Omega_{r})=
\rho\left(\Psi_{(r-1)n-1}\sigma^{-1}(\Omega_{r-2})
\Omega_{r-3}\right)
\left(\sigma^{-1}(\Omega_{r-2})\right)^{-1}
\Psi_{(r-1)n-1}^{-1}
\sigma^{-1}\left(\Psi_{(r-2)n}\Omega_{r-2}\Psi_{(r-2)n-1}^{-1}\right).
\] 
Using that $\sigma^{-1}(\Psi_{(r-2)n})=\Psi_{(r-1)n-1}$ and that $\sigma^{-1}(\Psi_{(r-2)n-1})=\rho(\Psi_{(r-3)n-1})$, the induction follows. 

Hence, for the matrices $\Omega_{r}\in M_{d}(K)$ constructed above, we have shown the equations ($r,\natural$) hold. 
Let $v_{i}^{-1}= \sum_{j} \lambda_{ij}v^{0}_{j}$ and  $v_{i}^{n}= \sum_{j} \varphi_{ij}v^{n-1}_{j}$. 
Recall that an arrow from $v\in V$ to $w\in W$ labelled with a relation $C:V\to W$  means that $w\in Cv$. 
For example, our assumptions (a), (b) and (c) may now be summarised by 
\[
\begin{array}{cc} (abc)& 
\begin{tikzcd}
   v_{i}^{-1} &  v^{0}_{i}\arrow[swap]{l}{Y} &  v^{1}_{i}\arrow[swap]{l}{A_{1}} &  \cdots\arrow[swap]{l} & v^{n-2}_{i}\arrow[swap]{l} & v^{n-1}_{i}\arrow[swap]{l}{A_{n-1} }\arrow{r}{X} & v_{i}^{n}
\end{tikzcd}
\end{array}
\]
For each non-zero multiple $m=rn$ of $n$ ($0\neq r\in\Z$), we consider a diagram depicting relations (as in $(abc)$ above).  
These diagrams split into one of two different types, depending on the parity of $r=m/n$. 
\[
\begin{array}{cc}
    (r,1): \begin{tikzcd}[column sep=0.8cm]
   v_{i}^{m-1} &  v^{m}_{i}\arrow[swap]{l}{X^{\pm 1}}\arrow{r}{A_{n-1}} &  v^{m+1}_{i}\arrow{r} &  \cdots\arrow{r} & v^{m+n-2}_{i}\arrow{r}{A_{1} } & v^{m+n-1}_{i} & v_{i}^{m+n}\arrow[swap]{l}{Y^{\pm 1}}
\end{tikzcd} & 
(\pm r<0\text{ odd})\\
  (r,2):  \begin{tikzcd}[column sep=0.8cm]
   v_{i}^{m-1} &  v^{m}_{i}\arrow[swap]{l}{Y^{\pm 1}} &  v^{m+1}_{i}\arrow[swap]{l}{A_{1}} &  \cdots\arrow[swap]{l} & v^{m+n-2}_{i}\arrow[swap]{l} & v^{m+n-1}_{i}\arrow[swap]{l}{A_{n-1} } & v_{i}^{m+n}\arrow[swap]{l}{X^{\pm 1}}
\end{tikzcd} & 
(\pm r<0\text{ even})
\end{array}
\]
Diagrams ($abc$), ($r,1$) and ($r,2$) together summarise the claims made in parts (i) and (ii) of the lemma.  
By induction on $|r|>0$ we shall construct the elements $v_{i}^{l}$  with $m-1\leq l\leq m+n$. We assert, for any $r\in \Z$, that the elements $v_{i}^{l}$ fit into the corresponding diagram, and that the following equations hold.
\[
 \begin{array}{cccc}
 (r,\dagger) & v_{i}^{m-1}=\sum_{j}({}^{r}\omega_{ij}v_{j}^{m}), & v_{i}^{m+n-1}=\sum_{j}({}^{r+1}\omega_{ij}v_{j}^{m+n}) &  (r<0)\\[-1em]\\
  (r,\ddagger) &
    v_{i}^{m}=\sum_{j}({}^{r}\omega_{ij}v_{j}^{m-1}), & v_{i}^{m+n}=\sum_{j}({}^{r+1}\omega_{ij}v_{j}^{m+n-1}) & (r>0)
\end{array}
\]
Once we prove our assertion, we shall prove (iii) and (iv). 
For the constructions we use induction on $|r|>0$. 
For the base case of the induction we must construct the appropriate diagrams to address the case $r=\pm 1$, so $m=\pm n$. 
Starting with the case $m=-n$, we let 
\[
v_{i}^{-h}=
\begin{cases}
\sum_{j}( (\sigma_{1}\dots\sigma_{h-1})^{-1}({}^{0}\omega_{ij})v^{h-1}_{j}), &  (1<h\leq n),\\
\sum_{j}({}^{-1}\omega_{ij}v_{j}^{-n}), & (h=n+1).
\end{cases}
\]
The elements $v_{i}^{-2},\dots,v_{i}^{-n-1}$ defined above satisfy the relations depicted in diagram ($r,1$) when $r=-1$. 
This follows by semilinearity and ($1,\flat$). 
Note also that ($-1,\dagger$) is automatic.

This completes the case $m=-n$. 
For $r=1$, so $m=n$, let 
\[
 v_{i}^{n+h-1}=
\begin{cases}
\sum_{j}( (\sigma_{n-h+1}\dots \sigma_{n-1})({}^{1}\omega_{ij})v^{n-h}_{j}), &  (1<h\leq n),\\
=\sum _{j}({}^{2}\omega_{ij}v_{j}^{2n-1}), & (h=n+1).
\end{cases}
\]
Similarly to the above, our assertion follows from semilinearity and ($1,\sharp$). 

This completes the base case. 
Now assume we have constructed $v_{i}^{l}$  fitting in diagrams ($abc$),  ($r,1$) and ($r,2$), and satisfying equations ($r,\dagger$) and ($r,\ddagger$), say for all $r$ with $-s\leq r\leq s$ for some $s>0$. 
We complete the inductive step firstly assuming $s$ is odd, and then assuming $s$ is even. 

Let $s>0$ be odd and $m=(s+1)n$. 
We already have the $v_{i}^{l}$ with $m-n-1\leq l\leq m$. Now let
\[
v_{i}^{m+h-1}=
\begin{cases}
\sum_{j}(\sigma_{1}\dots \sigma_{h-1})^{-1}({}^{s+1}\omega_{ij})v_{j}^{m-h}), & (1< h\leq n)\\
\sum _{j}({}^{s+2}\omega_{ij}v_{j}^{m+n-1}), & (h=n+1).
\end{cases}
\]
By induction the $v_{i}^{l}$ with $m-n-1\leq l\leq m$ fit into ($s,1$) and satisfy  ($s,\ddagger$). As in the base case $r=-1$, but instead using ($s+1,\natural$), it is straightforward to check that the elements $v_{i}^{l}$ with $l$ fit into ($s+1,2$) and satisfy ($s+1,\ddagger$). 
The case where $m=(s+1)n$ for $s>0$ even is similar, but where $Y$ is swapped with $X$, and likewise $\tau$ with $\rho$ and $(\sigma_{1}\dots \sigma_{h-1})^{-1}$ with  $\sigma_{n-h+1}\dots \sigma_{n-1}$ for each $h\leq n$.

This concludes the inductive step in case $s>0$. 
Suppose instead $m=-(s+1)n$ for $s>0$ odd. 
Then let 
\[
v_{i}^{m-h}=
\begin{cases}
\sum_{j}(\sigma_{1}\dots \sigma_{h-1})^{-1}({}^{-s-1}\omega_{ij})v_{j}^{m+h-1}), & (1< h\leq n)\\
\sum _{j}({}^{-s-2}\omega_{ij}v_{j}^{m-n}), & (h=n+1).
\end{cases}
\]
As in the case $m>0$, when $m=-(s+1)n$ for $r=s>0$ even, the proof is symmetric.
%
The construction of the sequences of elements $v_{i}^{l}$ is now complete, as are the proofs of (i) and (ii).  

Define $\Phi_{t}=({}^{t}\varphi_{ij})$ by  $\Phi_{t}=(\sigma_{t}\dots \sigma_{n-1})(\Phi)$ for each  $t\leq 1$. 
We now check (iii) holds. 
The proof that (iv) holds is similar. 
We use induction on $s>0$ where $r=n-s$.  
The case $s\leq n-1$ follows by construction. 
We assume (iii) holds for all $r\geq n-s$  for some $s$. Let $r=n-(s+1)$, so $2n-(r+1)=n+s$. 

Consider firstly the case where $s=qn$ for some integer $q>0$, so that $n+s=(q+1)n$. 
We assume $q$ is even, since the case where $q$ is odd is similar. 
Now we have
\[
\begin{array}{c}
v_{i}^{n+s}=\sum_{j}({}^{q+1}\omega_{ij}v_{j}^{n+s-1})
=\sum_{k}(\sum_{j}{}^{q+1}\omega_{ij}{}^{n-s+1}\varphi_{jk})v_{k}^{n-s}
\end{array}
\]
by construction and the inductive assumption. 
Note $\Omega_{q+1}\Phi_{n-s+1}=\Phi_{n-s}\Omega_{1-q}$ by ($q,\sharp$). 
Using the formula for $v_{k}^{n-s-1}$, this completes the inductive step in this first case, where $s\in n\Z$. 

The second case is when $r=n-(s+1)$ where instead $s=qn+p$ for some integers $q,p$ with $q\geq 0$ and $1\leq p\leq n-1$. 
Without loss of generality we assume $q$ is odd. 
By ($q,\sharp$) we have  
$(\sigma_{1}\dots\sigma_{p})^{-1}(\Omega_{q+1})\Phi_{(1-q)n+p+1}=\Phi_{(1-q)n-p}(\sigma_{1}\dots\sigma_{p})^{-1}(\Omega_{1-q})$. 
Combining everything so far (including the definition of $v_{j}^{k}$ above) with the inductive hypothesis 
completes the proof.
\end{proof}

\subsection{Splitting for symmetric bands}

The canonically associated walk of a symmetric band $w$ is
\[
C_{w}=\dots E s E^{-1} D^{-1} t D|E s^{-1} E^{-1} D^{-1} t^{-1} D E s^{-1} E^{-1} D^{-1} t^{-1} \dots
\]
given by some $s,t\in\Sp$ and some walks $D,E$ of length $p,r\geq 0$ respectively. 
Recall also the free product 
\[
R_{w} =K[x;\rho]/\ideal{r(x)} *_{K} K[y;\tau]/\ideal{p(y)},
\]
defined by $\rho=\pi_{r}^{-1} \sigma_{s} \pi_{r}$,  $r(x)=\pi_{r}^{-1}(q_{s}(x))$, $\tau=\pi_{-p}^{-1} \sigma_{t} \pi_{-p}$ and  $p(y)=\pi_{-p}^{-1}(q_{t}(y))$. See Section \ref{s:symband}. 

By Theorem \ref{theorem:symmetricbandparameterringisHNP} the ring $R_{w}$ has a left or right $K$-basis given by the alternating words in $x$ and $y$, all of which are units in $R_{w}$ by Lemma \ref{lemma:invertible-variable-in-quotient-by-non-singular-quadratic}.  
In Lemmas \ref{lemma:a-pair-of-matrices-gives-a-P-submodule}, \ref{symmetric-band-splitting-lifting-morphisms} and \ref{lemma:splitting-symmetric-bands} we assume the above notation, let $C=C_{w}$ and let $M$ be some $R$-module. 
We consider the letters of the walks $D$ and $E$ as semilinear relations for appropriate automorphisms of $K$.

\begin{lemma}\label{lemma:a-pair-of-matrices-gives-a-P-submodule}
Let $n>0$ and $w$ be a symmetric band.
In the notation above, suppose that for each $i=1,\dots,n$ there is a sequence $(m_{i}^{d}\mid d=-p,\dots,r)$ with $m_{i}^{d}\in M$ such that (a) and (b) below hold.
\begin{enumerate}
    \item[(a)] For any $i$ we have $m^{d}_{i}\in D_{p+d+1}m^{d+1}_{i}$ when $-p\leq d <0$, and $m^{d-1}_{i}\in E_{d}m^{d}_{i}$ when  $0<d\leq r$.
    \item[(b)] There exist matrices $\Lambda=(\lambda_{ij})$ and $\Phi=(\varphi_{ij})$  in $ M_{n}(K)$ such that 
    \[
    \begin{array}{ccc}
     \sigma_{t}(\Lambda)\Lambda-\beta_{t}\Lambda+\gamma_{t} I=O\text{, } tm^{-p}_{i}=\sum_{j=1}^{n}\lambda_{ij}m_{j}^{-p}    &   \text{and}     & \sigma_{s}(\Phi)\Phi-\beta_{s}\Phi+\gamma_{s} I=O\text{, } sm^{r}_{i}=\sum_{j=1}^{n}\varphi_{ij}m_{j}^{r}.
    \end{array}
    \]
\end{enumerate}
Then for each $i$ there is a sequence $(m_{i}^{d}\mid d\in\Z)$ in $ M$ such that $(m_{i}^{d},m_{i}^{d-1})\in C_{d}$ for all $d$, and such that defining $f_{i}\in T_{C}(M)$ by $f_{i}(b_{d})=m^{d}_{i}$ for all $d\in\Z$ gives a left $R_{w}$-submodule $U=\sum_{i=1}^{n} Kf_{i}$ of $T_{C}(M)$.
\end{lemma}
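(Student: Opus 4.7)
The strategy will be to apply Lemma~\ref{lemma:expanding-out-relations-for-symmetric-bands} to extend each given sequence $(m_i^d)_{d=-p}^{r}$ to a bi-infinite sequence indexed by $\Z$, and then to verify that the resulting homomorphisms span an $R_w$-submodule of $T_C(M)$. To set up the application, I re-index via $h = d+p$, so that $h$ runs over $\{0, 1, \dots, N-1\}$ with $N = p + r + 1$. Taking $V_h = e_{v_{h-p}(C)} M$, the semilinear relations $A_h : V_h \to V_{h-1}$ given by the letter of $C$ at position $h-p$ (i.e., the letters of $D$ and $E$ viewed as relations on $M$), and setting $Y = t$, $X = s$ (the $\sigma_t$- and $\sigma_s$-semilinear relations at the symmetries $-p$ and $r+1$), conditions (a) and (b) supply exactly the hypotheses of Lemma~\ref{lemma:expanding-out-relations-for-symmetric-bands}. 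Note that $\Lambda$ and $\Phi$ are invertible in $M_n(K)$ by Lemma~\ref{lemma:invertible-variable-in-quotient-by-non-singular-quadratic}, since $\gamma_t, \gamma_s \neq 0$.

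The output sequences $(m_i^d)_{d \in \Z}$ from Lemma~\ref{lemma:expanding-out-relations-for-symmetric-bands}, by parts (i)--(ii) of that result, will satisfy $(m_i^d, m_i^{d-1}) \in C_d$ for every $d \in \Z$. Here one uses the reflection symmetry of $C_w$ about each of its symmetries (and their translates by the period $2N$), combined with the rule that the canonical walk $C_w$ takes the letter $s$ versus $s^{-1}$ at a symmetry according to whether the symmetry lies at a non-positive or positive index: this is what forces the alternation between $X$ (respectively $Y$) and its inverse as one crosses successive symmetries in Lemma~\ref{lemma:expanding-out-relations-for-symmetric-bands}. It follows that each assignment $b_d \mapsto m_i^d$ respects the defining relations of $M(C)$ from Definition~\ref{definition:modules-M(C)}, so defines an $R$-module homomorphism $f_i : M(C) \to M$; hence $f_i \in T_C(M)$.

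To conclude that $U = \sum_i K f_i$ is a left $R_w$-submodule, by Theorem~\ref{theorem:symmetricbandparameterringisHNP} it suffices to check $xf_i, yf_i \in U$ for each $i$. Using the right action of $x$ and $y$ on $M(C)$ described in Section~\ref{s:symband} together with the reflection identities (iii)--(iv) of Lemma~\ref{lemma:expanding-out-relations-for-symmetric-bands}, and the identities $\rho_w = \pi_r^{-1}\sigma_s\pi_r$ and $\tau_w = \pi_{-p}^{-1}\sigma_t\pi_{-p}$, one reads off that $xf_i = \sum_j \alpha_{ij}^{(x)} f_j$ and $yf_i = \sum_j \alpha_{ij}^{(y)} f_j$ for scalars determined directly by the entries of $\Phi$ and $\Lambda$. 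The chief technical obstacle is this final step: the reflection formulas in Lemma~\ref{lemma:expanding-out-relations-for-symmetric-bands} produce coefficients that depend on the position $d$ through a product of the $\sigma_h$, and one must verify that this dependence is exactly absorbed into the $\pi_d$-twist appearing in the right $K$-action $b_d \lambda = \pi_d(\lambda) b_d$ on $M(C)$, so that the scalars $\alpha_{ij}^{(x)}, \alpha_{ij}^{(y)} \in K$ are independent of $d$ and $xf_i, yf_i$ genuinely lie in the $K$-span $U$.
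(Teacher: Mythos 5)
Your setup is the same as the paper's: re-index by $h = d+p$, apply Lemma~\ref{lemma:expanding-out-relations-for-symmetric-bands} with $Y=t$, $X=s$ (so its $\tau,\rho$ become $\sigma_t,\sigma_s$), and then check closure of $U$ under the right $K$-ring generators $x,y$ of $R_w$. The passage from the output of Lemma~\ref{lemma:expanding-out-relations-for-symmetric-bands} to a genuine element of $T_C(M)=\Hom_R(M(C),M)$ is also as the paper does it (it cites the proof of Lemma~\ref{lemma:hom-functors-relation-functors}(ii)).

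However, there is a genuine gap: you explicitly name the ``chief technical obstacle'' --- showing that the reflection coefficients from Lemma~\ref{lemma:expanding-out-relations-for-symmetric-bands}(iii)--(iv), which a priori depend on the position $d$ through the composite automorphisms $\sigma_h$, are exactly undone by the $\pi_d$-twist in the right $K$-action $b_d\lambda=\pi_d(\lambda)b_d$ --- and then stop without carrying out the verification. That verification is the content of the lemma, not an afterthought, and the paper spends the bulk of its proof on it: it asserts $xf_i=\sum_j\pi_r^{-1}(\varphi_{ij})f_j$ and $yf_i=\sum_j\pi_{-p}^{-1}(\lambda_{ij})f_j$, and proves these equalities by a case split on $d\le r$ versus $d>r$. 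The case $d\le r$ does follow directly from the reflection identities plus the compatibility $\pi_d=\zeta_{r-d}\pi_r$ where $\zeta_k=\alpha_{r-k+1}\cdots\alpha_r$. But the case $d>r$ is qualitatively different: there $b_d x=b_d\pi_r^{-1}(\beta_s)-b_{2r+1-d}\pi_r^{-1}(\gamma_s)$ has two terms, and matching it with the claimed right-hand side requires both the reflection formula for $m_i^d$ and, crucially, applying the quadratic identity $\sigma_s(\Phi)\Phi-\beta_s\Phi+\gamma_s I=O$ (twisted by an appropriate automorphism $\nu_k$ built from the $\alpha_l$). Your proposal never invokes this quadratic identity at this point, nor the case split, so the argument as written does not establish that $xf_i$ (and symmetrically $yf_i$) lies in $U$. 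Without that, the conclusion that $U$ is an $R_w$-submodule is unproved.
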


\begin{proof}

Assumption (a) may be summarised by the diagram 
\[
\begin{tikzcd}
 m^{-p}_{j} &  \cdots \arrow[swap]{l}{D_{1}} &  m_{j}^{0}\arrow[swap]{l}{D_{p}} &  \cdots\arrow[swap]{l}{E_{1}}  & m^{r}_{j}\arrow[swap]{l}{E_{r}}
\end{tikzcd}
\]
By assumption (b) and  Remark \ref{remark:for-splitting-suffices-to-find-nice-lifts}(ii) the matrices $\Lambda$ and $\Phi$ are invertible. 
Let $A=DE$ and $n=p+r+1$. 
Let $v^{h}_{i}=m_{i}^{h-p}$ for each $i$ and each integer $h$ with $1\leq h\leq n-1$, and let $\alpha_{l}=\sigma_{C_{l}}$ for each $l\in \Z$. 
Hence,
\[
\begin{array}{cc}
    \alpha_{-p}=\tau, & \alpha_{l}=
\begin{cases}
\tau^{\pm 1} & (l+p=\mp dn\text{ where }0<d\in \Z\text{ is even}),\\
\rho^{\pm 1} & (l+p=\mp dn\text{ where }0<d\in \Z\text{ is odd}),\\
\sigma_{A_{h}} & (l+p=h+dn\text{ where }d\in \Z\text{ is even and }1\leq h\leq n-1),\\
\sigma_{A_{n-h}}^{-1} & (l+p=h+dn\text{ where }d\in \Z\text{ is odd and }1\leq h\leq n-1).
\end{cases} 
\end{array}
\]

Thus, by Lemma \ref{lemma:expanding-out-relations-for-symmetric-bands}, for each $i$ there is a sequence $m_{i}^{d}$ ($d\in\Z$) such that:  by parts (i) and (ii) we have $(m_{i}^{d},m_{i}^{d-1})\in C_{d}$ for all $d$; and by parts (iii) and (iv), for the automorphisms defined above we have
\[
\begin{array}{ccc}
    m_{i}^{2r+1-d}=\sum_{j}(\alpha_{d+1}\dots\alpha _{r})(\varphi_{ij})m_{j}^{d} & (d\leq r) & \text{where } \alpha_{r+1}\dots\alpha_{r}\text{ is the identity, and} \\[-1em]\\
    m_{i}^{-1-2p-d}=\sum_{j}(\alpha_{1-p}\dots\alpha _{d})^{-1}(\lambda_{ij})m_{j}^{d} & (d\geq -p) & \text{where } \alpha_{1-p}\dots\alpha_{-p}\text{ is the identity.}
\end{array}
\]
%
That the formula $f_{j}(b_{d})=m^{d}_{j}$ defines an $R$-module homomorphism $M(C)\to M$ follows from (the proof of) Lemma \ref{lemma:hom-functors-relation-functors}(ii). 
For the remaining statement in the lemma  we assert that $xf_{i}=\sum_{j}\pi^{-1}_{r}(\varphi_{ij})f_{j}$. 

For any $d\in\Z$ we have $(\sum_{j}\pi^{-1}_{r}(\varphi_{ij})f_{j})(b_{d})=\sum_{j}\pi_{d}(\pi^{-1}_{r}(\varphi_{ij}))m^{d}_{j}$, 
and for any $k>0$ we have $C_{r-k+1}=C_{r+k+1}^{-1}$ and so $\alpha_{r-k+1}=\alpha^{-1}_{r+k+1}$. 
Firstly let $d\leq r$. 
For simplicity assume $d<r$, and let $k=r-d>0$. 

Consider the composition of  automorphisms given by $\zeta_{k}=\alpha_{r-k+1}\dots \alpha_{r}=\alpha_{r+k+1}^{-1}\dots \alpha_{r+2}^{-1}$. 

In this notation we have $\pi_{d}=\zeta_{k}\pi_{r}$ 
and $m_{i}^{r+k+1}=\sum_{j}\zeta_{k}(\varphi_{ij})m_{j}^{r-k}$.  
Thus
\[\sum\pi_{d}(\pi^{-1}_{r}(\varphi_{ij}))m^{d}_{j}=\sum\zeta_{k}(\varphi_{ij})m^{r-k}_{j}=m_{i}^{r+k+1}=m_{i}^{2r+1-(r-k)}=f_{i}(b_{r-k}x)=(xf_{i})(b_{d}),
\]
as required for the case $d\leq r$.
Let us instead consider the case where $d>r$, and for simplicity (as above) we assume  that $d>r+1$. 
Here we have $d=r+k+1$ for some $k>0$. 
This gives
\[
f_{i}(b_{d}x)=\pi_{d}(\pi^{-1}_{r}(\beta_{s}))m^{d}_{i}-\pi_{2r+1-d}(\pi^{-1}_{r}(\gamma_{s}))m_{i}^{2r+1-d}.
\]

Note that $\pi_{d}=\alpha^{-1}_{d}\pi_{d-1}$, $\pi_{d-1}=\alpha^{-1}_{d-1}\pi_{d-2}$, and so on, through to $\pi_{r+1}=\alpha^{-1}_{r+1}\pi_{r}$. 
Hence $\pi_{d}=\nu_{k}\alpha_{r+1}^{-1}\pi_{r}$ where $\nu_{k}=\alpha^{-1}_{d}\dots \alpha^{-1}_{r+2}$. 
In particular, this gives $\pi_{d}\pi_{r}^{-1}=\nu_{k}\sigma_{s}$, and combining with the above yields $f_{i}(b_{d}x)=\nu_{k}(\beta_{s})m_{i}^{d}-\nu_{k}(\gamma_{s})m_{i}^{r-k}$. 
By a similar argument to the one above, we have that $m^{d}_{i}=\sum_{j}\nu_{k}(\varphi_{ij})m^{r-k}_{j}$. 
Now applying $\nu_{k}$ to $\sigma_{s}(\Phi)\Phi-\beta_{s}\Phi+\gamma_{s} I=O$, our assertion follows. 

A similar argument shows that $yf_{i}=\sum_{j}\pi^{-1}_{-p}(\lambda_{ij})f_{j}$, and the lemma follows. 
\end{proof}

\begin{lemma}

\label{symmetric-band-splitting-lifting-morphisms}

Let $w$ be a symmetric band with $r(x)=x^{2}-\beta x+\gamma$ in the notation above Lemma \ref{lemma:a-pair-of-matrices-gives-a-P-submodule}. 
Let $(h_{1},\dots,h_{n})$ be a $K$-basis for $F_{C}(M)$ with   $xh_{i}=\sum_{j}\varphi_{ij}h_{j}$ for each $i$. The following statements hold.
\begin{enumerate}
    \item Lifting $h_{i}$ to $k_{i}\in T_{C}(M)$ gives $xg_{i}=\beta g_{i}-\sum_{j} \sigma(\varphi_{ij})g_{j}$ where $g_{i}=xk_{i}-\sum_{j}\varphi_{ij}k_{j}$ for each~$i$.
    \item There exist lifts $f_{1},\dots,f_{n}\in T_{C}(M)$ of $h_{1},\dots,h_{n}$  such that $xf_{i}=\sum_{j}\varphi_{ij}f_{j}$ for each~$i$.
\end{enumerate}
\end{lemma}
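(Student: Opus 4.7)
The plan is to prove (i) by a direct calculation with the defining relations of $R_{w}$, then reduce (ii) to a matrix equation and invoke Lemma~\ref{lemma:technical-symmetric-band-splitting-matrices}. Throughout, write $\Phi = (\varphi_{ij})$ for the matrix describing the action of $x$ on the basis $(h_{i})$ of $F_{C}(M)$.

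For (i), I would use two relations inherited from the factor $K[x;\sigma]/\ideal{r(x)} \subseteq R_{w}$: the skew commutation $x\lambda = \sigma(\lambda)x$ and the quadratic $x^{2}=\beta x - \gamma$. These hold when acting on the left $R_{w}$-module $T_{C}(M)$, so expanding $xg_{i}$ produces an expression in terms of $k_{i}$ and the $xk_{j}$. Applying $x$ twice to each $h_{i}$ in $F_{C}(M)$ forces $\Phi$ to satisfy $\sigma(\Phi)\Phi - \beta\Phi + \gamma I = O$, equivalently the entrywise identity $\sum_{j}\sigma(\varphi_{ij})\varphi_{jl} = \beta\varphi_{il} - \gamma\delta_{il}$. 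Substituting this and the normality relation $\sigma(\lambda)\beta = \beta\lambda$ from Lemma~\ref{lemma:characterising_central_skew_quadratics} into the expansion of $\beta g_{i} - \sum_{j}\sigma(\varphi_{ij})g_{j}$, the $k_{l}$-terms cancel and the remaining terms recover $xg_{i}$.

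For (ii), I would look for lifts of the form $f_{i} = k_{i} + \sum_{j}\xi_{ij}g_{j}$ for a matrix $\Xi = (\xi_{ij}) \in M_{n}(K)$ to be determined. Each $g_{j}$ lies in $B_{C}(M)$, so any such $f_{i}$ automatically lifts $h_{i}$. Computing $xf_{i} - \sum_{j}\varphi_{ij}f_{j}$ using (i) reduces its vanishing to the single matrix equation
\[
\Phi\Xi + \sigma(\Xi)\sigma(\Phi) - \beta\Xi = I,
\]
which, after rewriting $\sigma(\Xi)\beta = \beta\Xi$ via normality, is exactly the identity produced by Lemma~\ref{lemma:technical-symmetric-band-splitting-matrices} applied to $\Lambda = \Phi$. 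Part (i) has just verified the hypothesis $\sigma(\Phi)\Phi - \beta\Phi + \gamma I = O$ of that lemma, so a suitable $\Xi$ exists.

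The essential step is recognising the correct ansatz and recasting (ii) as the displayed matrix identity; once this is done, the existence of $\Xi$ is immediate from Lemma~\ref{lemma:technical-symmetric-band-splitting-matrices}, which absorbs all of the genuine difficulty (the case analysis on whether $q(x)$ factors, and the use of the semisimple hypothesis). The only care needed is to track the twist $\sigma$ arising from $x\lambda = \sigma(\lambda)x$ through each manipulation, so that the extracted matrix equation lines up with Lemma~\ref{lemma:technical-symmetric-band-splitting-matrices} precisely.
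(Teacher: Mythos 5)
Your proof is correct and follows essentially the same route as the paper: part (i) is the expansion of $r(x)k_{i}=0$ together with the matrix relation $\rho(\Phi)\Phi-\beta\Phi+\gamma I=O$ from Remark~\ref{remark:for-splitting-suffices-to-find-nice-lifts}, and part (ii) uses the same ansatz $f_{i}=k_{i}+\sum_{j}\xi_{ij}g_{j}$ reduced via normality to the identity $\Phi\Xi+\rho(\Xi)(\rho(\Phi)-\beta I)=I$ of Lemma~\ref{lemma:technical-symmetric-band-splitting-matrices}. (The only cosmetic difference is notational: the $\sigma$ in the lemma statement, which you carry through, is the automorphism the proof in the paper denotes $\rho$.)
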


\begin{proof}
(i) Using that $T_{C}(M)$ is a $R_{w}$-module we have $r(x)k_{i}=0$ for all $i$. 
The matrix $\Phi=(\varphi_{ij})\in M_{n}(K)$ satisifies $\rho(\Phi)\Phi=\beta\Phi-\gamma I$ by Remark \ref{remark:for-splitting-suffices-to-find-nice-lifts}. 
The claim follows by writing $r(x)k_{i}$ in terms of $g_{i}$ and $k_{j}$. 

(ii) By Lemma \ref{lemma:technical-symmetric-band-splitting-matrices} we have $\Phi\Xi+ \rho(\Xi)( \rho(\Phi)-\beta I)=I$ for some $\Xi\in M_{n}(K)$. 
By part (i) it suffices to let $f_{i}=k_{i}+\sum_{j}\xi_{ij}g_{j}$ where $\Xi=(\xi_{ij})$ and each $g_{i}\in B_{C}(M)$ is defined by $g_{i}=xk_{i}-\sum_{j} \varphi_{ij}k_{j}$.
\end{proof}

For the proof of Lemma \ref{lemma:splitting-symmetric-bands} we apply Lemma \ref{symmetric-band-splitting-lifting-morphisms}(ii) directly. 
We then apply Lemma \ref{symmetric-band-splitting-lifting-morphisms}(i) indirectly, exchanging $\rho$,  $r(x)$ and $\Phi=(\varphi_{ij})$ with $\tau$,  $p(y)$ and $\Lambda=(\lambda_{ij})$ respectively.


\begin{lemma}\label{lemma:splitting-symmetric-bands}
Let $w$ a symmetric band. If $M$ is finite-dimensional over $K$ then $B_C(M)$ has a  complement in $T_C(M)$ as an $R_{w}$-module.
\end{lemma}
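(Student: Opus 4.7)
The plan is to construct, for a $K$-basis $h_1,\dots,h_n$ of the finite-dimensional quotient $F_C(M)=T_C(M)/B_C(M)$, simultaneous lifts $f_i\in T_C(M)$ whose values $m_i^d:=f_i(b_d)$ for $-p\le d\le r$ fulfill hypotheses (a) and (b) of Lemma~\ref{lemma:a-pair-of-matrices-gives-a-P-submodule}. That lemma will then furnish an $R_w$-submodule $U=\sum_i Kf_i\subseteq T_C(M)$, and since the $f_i$ lift a $K$-basis of $F_C(M)$, one immediately has $U\cap B_C(M)=0$ and $U+B_C(M)=T_C(M)$, so $U$ is the desired complement.

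To set the matrix data, I will use that $F_C(M)$ is an $R_w$-module (Lemma~\ref{lemma:B_C-is-an-R_w-submodule}) and write $xh_i=\sum_j\pi_r^{-1}(\varphi_{ij})h_j$ and $yh_i=\sum_j\pi_{-p}^{-1}(\lambda_{ij})h_j$. Remark~\ref{remark:for-splitting-suffices-to-find-nice-lifts}(ii), combined with Corollary~\ref{corollary:twisting-(and-inverting-non-singular)-preserves-normal/central-quadratics}, ensures that the matrices $\Phi=(\varphi_{ij})$ and $\Lambda=(\lambda_{ij})$ in $M_n(K)$ satisfy $\sigma_s(\Phi)\Phi-\beta_s\Phi+\gamma_s I=O$ and $\sigma_t(\Lambda)\Lambda-\beta_t\Lambda+\gamma_t I=O$, precisely the equations appearing in hypothesis (b) of Lemma~\ref{lemma:a-pair-of-matrices-gives-a-P-submodule}. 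Starting from arbitrary lifts $k_i$ of $h_i$, I will apply Lemma~\ref{symmetric-band-splitting-lifting-morphisms}(ii) to obtain intermediate lifts $\tilde f_i$ with $x\tilde f_i=\sum_j\pi_r^{-1}(\varphi_{ij})\tilde f_j$ holding on the nose. The residue $g_i:=y\tilde f_i-\sum_j\pi_{-p}^{-1}(\lambda_{ij})\tilde f_j$ then lies in $B_C(M)=B_C^+(M)+B_C^-(M)$; decompose it as $g_i^+ + g_i^-$.

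The key construction is then to form
\[
f_i=\tilde f_i+\sum_{k\ge 0}\sum_j\bigl(\alpha_{k,ij}\,w_k^+(g_j^+)+\beta_{k,ij}\,w_k^-(g_j^-)\bigr),
\]
where $w_k^\pm$ are alternating monomials of length $k$ in $x,y$ (or their inverses) drawn from the $K$-basis of $R_w$ provided by Theorem~\ref{theorem:symmetricbandparameterringisHNP}, and the scalars $\alpha_{k,ij},\beta_{k,ij}\in K$ come from entries of matrices $\Theta_k\in M_n(K)$ produced by Lemma~\ref{lemma:symmetric-splitting-sequence-of-matrices} applied to $\Phi,\Lambda$, suitably twisted by $\pi_r$ and $\pi_{-p}$. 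Finite-dimensionality of $M$ forces each $g_j^\pm$ to vanish on $b_d$ for $|d|\gg 0$, and alternation of $x,y$ propagates this vanishing, so the double sum is pointwise finite on $M(C)$ and $f_i\in T_C(M)$ is well-defined. The main obstacle---and the reason the adjustment is so much more delicate than in the asymmetric case of Lemma~\ref{splitting-lemma-asymmetric-band}---is the two-sided bookkeeping: any correction tailored to cancel the $y$-residue tends to perturb the freshly installed $x$-equation, and vice versa. Equations~\eqref{eq1}--\eqref{eq5} are engineered exactly for this, with \eqref{eq1} securing the zeroth-order step compatibly with the $x$-equation, and the recursive pairs \eqref{eq2}--\eqref{eq3} and \eqref{eq4}--\eqref{eq5} propagating the corrections along alternating words so that each new term introduced to kill a $y$-residue at one layer is absorbed by the next $x$-step, and conversely. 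Once this bookkeeping is verified, condition (a) of Lemma~\ref{lemma:a-pair-of-matrices-gives-a-P-submodule} holds automatically from the $R$-linearity of $f_i$ along the letters of $D$ and $E$, and condition (b) holds by construction, delivering the submodule $U$ and the desired splitting.
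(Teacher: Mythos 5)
Your plan recognises the overall strategy that the paper uses: first lift the $x$-equation via Lemma~\ref{symmetric-band-splitting-lifting-morphisms}(ii), then correct the $y$-residue $g_i$ using the matrices $\Theta_k$ furnished by Lemma~\ref{lemma:symmetric-splitting-sequence-of-matrices}, and finally apply Lemma~\ref{lemma:a-pair-of-matrices-gives-a-P-submodule}. However, the mechanism you propose for the correction has a genuine gap, and in one place you assert something false.

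The paper does \emph{not} decompose $g_i$ as $g_i^+ + g_i^-$ in $B_C^{\pm}(M)$ and then correct each piece separately. Instead it evaluates at $b_r$, observes $g_i(b_r) \in B_{C,r}(M)$, and then invokes Lemma~\ref{lemma:rewriting-relations-symmetric-bands} (where finite-dimensionality of $M$ enters) to deduce the equality $B_{C,r}(M) = (XY)' \cap (YX)''$. This intersection property is stronger than the sum decomposition you use: it produces a \emph{single} sequence $(h_i^d)_{d\in\N}$ in $M$ that agrees with the values $g_i(b_{d-p})$ for $d\le r+p$ \emph{and} is eventually zero. That sequence, not a $\pm$-split of $g_i$, is what is fed into the $\Theta_k$-corrections. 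Your $g_i^-$ is eventually zero but does not agree with $g_i$ on $J_w$, and your $g_i^+$ is not generally absorbable, so the proposed corrections $w_k^\pm(g_j^\pm)$ do not have the structure needed to make the telescoping along \eqref{eq1}--\eqref{eq5} close. You also do not invoke Lemma~\ref{symmetric-band-splitting-lifting-morphisms}(i), which supplies the identity $yg_i = \pi_{-p}^{-1}(\beta_t)g_i - \sum_j\tau(\lambda_{ij})g_j$ that is essential for the $y$-side of the bookkeeping. Relatedly, your convergence argument rests on the claim that ``each $g_j^\pm$ vanishes on $b_d$ for $|d|\gg 0$,'' which is false --- each $g_j^+$ vanishes only for $d\ll 0$ and each $g_j^-$ only for $d\gg 0$; the finiteness of the sum depends on the careful pairing of monomial directions with the one-sided vanishing, and the paper handles this through the constructed $(h_i^d)$ rather than through the $\pm$-split. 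Finally, the assertion that hypothesis (b) of Lemma~\ref{lemma:a-pair-of-matrices-gives-a-P-submodule} ``holds by construction'' is not automatic: in the paper this verification (that $sm_i^r = \sum_j\pi_r(\varphi_{ij})m_j^r$ and $tm_i^{-p} = \sum_j\pi_{-p}(\lambda_{ij})m_j^{-p}$) is a detailed computation checking the $h$-recursion against \eqref{eq1}--\eqref{eq5}, and your sketch supplies no substitute argument.
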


\begin{proof}

The notation above Lemma \ref{lemma:a-pair-of-matrices-gives-a-P-submodule} recalls: the form of the canonically associated walk $C$ in terms of $s,t\in \Sp$ and walks $D,E$ of length $p,r\geq 0$; and the free product $R_{w}$ over $K$ in terms of automorphisms $\rho, \tau$ of $K$ and monic, normal and non-singular quadratics
\[
r(x)=x^{2}-\beta x+\gamma\in K[x;\rho],\quad p(y)=y^{2}-\mu y+\eta\in K[y;\tau]
\]
where $\beta=\pi_{r}^{-1}(\beta_{s})$, $\gamma=\pi_{r}^{-1}(\gamma_{s})$, $\mu=\pi_{-p}^{-1}(\beta_{t})$ and $\eta=\pi_{-p}^{-1}(\gamma_{t})$. 

Let $(v_{1},\dots,v_{n})$ be a $K$-basis for $F_{C}(M)$. 
For each $i=1,\dots,n$ we have $xv_{i}=\sum_{j}\varphi_{ij}v_{i}$ for some $\varphi_{ij}\in K$. 
By Remark \ref{remark:for-splitting-suffices-to-find-nice-lifts} the matrix $\Phi=(\varphi_{ij})$ is invertible, and  satisfies  $\rho(\Phi)\Phi-\beta\Phi+\gamma I=O$. 
By Lemma \ref{symmetric-band-splitting-lifting-morphisms}(ii) we can choose lifts $f_{1},\dots,f_{n}\in T_{C}(M)$ of $v_{1},\dots,v_{n}$ where $xf_{i}=\sum_{j}\varphi_{ij}f_{j}$ for each $i$.

Now let $yf_{i}=\sum_{j}\lambda_{ij}f_{j}+g_{i}$ for some $g_{i}\in B_{C}(M)$ and some $\Lambda=(\lambda_{ij})\in M_{n}(K)$. 
As above, $\Lambda$ is invertible and satisfies $\tau(\Lambda)\Lambda-\mu\Lambda+\eta I=O$, and $yg_{i}=\mu g_{i}-\sum_{j}\tau(\lambda_{ij})g_{j}$ by Lemma~\ref{symmetric-band-splitting-lifting-morphisms}(i).

Let $A=DE$. 
For each $i$ and each $d$ with $0\leq d\leq r+p$, let $f_{i}^{d}=f_{i}(b_{d-p})$ and $g_{i}^{d}=g_{i}(b_{d-p})$. 
Consider the relations $X=s^{-1}$ and $Y=A^{-1}t^{-1}A$ on $M$. 
By definition, and then by Lemma \ref{lemma:rewriting-relations-symmetric-bands}, we have that
\[
g_{i}^{p+r}\in B_{C,r}(M)=(XY)''\cap (YX)'+(XY)'\cap (YX)''=(XY)'\cap (YX)''
\]
since $M$ is finite-dimensional. 
Let $q=1+p+r$. 
Since $g_{i}^{p+r}$ lies in $(XY)'\cap (YX)''$, for each $i$ there is a sequence $(h^{d}_{i}\mid d\in\N)$ in $M$ such that  (a), (b) and (c) below hold. For (b) and (c) we fix $k\in \N$. 

(a) $h^{d}_{i}=g^{d}_{i}$ for all $d\leq  r+p$, and  $h^{d}_{i}=0$ for all but finitely many $d>r+p$.

(b) $h_{i}^{2kq+d-1}\in A_{d}h_{i}^{2kq+d}$ for $1\leq d\leq q-1$, and $sh_{i}^{(2k+1)q-1}=h_{i}^{(2k+1)q}$

(c) $h_{i}^{2kq+d}\in A_{2q-d}h_{i}^{2kq+d-1}$ for $q+1\leq d\leq 2q-1$, and $th_{i}^{2(k+2)q-1}=h_{i}^{2(k+2)q}$.

For each integer $d$ with $0\leq d\leq q-1$ let $\sigma_{d}=\sigma_{A_{d}}$ and define the automorphism $\zeta_{d}$ of $K$ by setting $\zeta_{0}$ to be the identity, and letting $\zeta_{d}=\sigma^{-1}_{d}\dots\sigma^{-1}_{1}$ for $d>0$. 
Considering the automorphisms $\pi=\pi_{-p}^{-1}$ and $\zeta=\pi_{r}^{-1}\zeta_{p+r}$, by Lemma \ref{lemma:symmetric-splitting-sequence-of-matrices} one yields a sequence $(\Theta_{k}\mid k\in\N)$ in $M_{n}(K)$ satisfying: {}

\eqref{eq1} 
$\sigma_{t}(\Theta_{0})(\beta_{t}I-\sigma_{t}(\pi_{-p}(\Lambda))))=-I+\pi_{-p}(\Lambda) \Theta_{0}$; and for each $k\in\N$, 

\eqref{eq2} 
$\pi_{-p}(\Lambda)\Theta_{2k+2}=\beta_{t}\Theta_{2k+2}+\sigma_{t}(\Theta_{2k+1})$;

\eqref{eq3} 
$\pi_{-p}(\Lambda)\Theta_{2k+1}=-\sigma_{t}(\Theta_{2k+1})\gamma_{t}$;

\eqref{eq4} 
$\pi_{r}(\Phi )\zeta_{p+r}(\Theta_{2k}) =-\sigma_{s}(\zeta_{p+r}(\Theta_{2k+1}))\gamma_{s}$;
and 

\eqref{eq5} 
$\pi_{r}(\Phi)\zeta_{p+r}(\Theta_{2k+1})=\sigma_{s}(\zeta_{p+r}(\Theta_{2k}))+\beta_{s}\zeta_{p+r}(\Theta_{2k+1})$.

Let $\Theta_{k}=({}^{k}\theta_{ij})$ for each $k$. 
For any $d$ with $0\leq d\leq r+p$ let
\[
m_{i}^{d-p}=f^{d}_{i}+\sum_{j}\sum_{k}\left(\zeta_{d}({}^{2k}\theta_{ij})h^{2kq+d}_{j}+\zeta_{d}({}^{2k+1}\theta_{ij})h^{2(k+1)q-(d+1)}_{j}\right)
\]
which defines an element of $M$ since $h^{d}_{j}=0$ for $d\gg 0$ by (a). 
For $0<d\leq p+r$ the relation $A_{d}$ on $M$ is $\sigma_{d}$-semilinear, and so $(m^{d-p}_{i},m^{d-p-1}_{i})\in A_{d}$ by (b) and (c) above.
%
We claim (i) and (ii) below hold.
\[
\begin{array}{cc}
\text{(i) } sm^{r}_{i}=\sum_{j}\pi_{r}(\varphi_{ij})m^{r}_{j}, & 
\text{(ii) } tm^{-p}_{i}=\sum_{j}\pi_{-p}(\lambda_{ij})m^{-p}_{j}.
\end{array}
\]

Since $sb_{r}=b_{r}x$ we have $sf^{p+r}_{i}=\sum_{j}\pi_{r}(\varphi_{ij})f^{p+r}_{j}$. 
For (i) we require $s(m^{r}_{i}-f^{p+r}_{i})=\sum_{l=1}^{n}\pi_{r}(\varphi_{il})(m^{r}_{l}-f^{p+r}_{l})$.
Recall $sh^{(2k+1)q-1}_{i}=h^{(2k+1)q}_{i}$ for each $i,k$ by (b). 
By Corollary \ref{corollary:twisting-(and-inverting-non-singular)-preserves-normal/central-quadratics}, $r(x)$ is normal, so
\[
s\zeta_{p+r}({}^{2k+1}\theta_{ij})h^{2(k+1)q-q}_{j}
=\beta_{s}\zeta_{p+r}({}^{2k+1}\theta_{ij})h^{(2k+1)q}_{j}-\sigma_{s}(\zeta_{p+r}({}^{2k+1}\theta_{ij}))\gamma_{s}h^{(2k+1)q-1}_{j}
\] 
by Lemma \ref{lemma:characterising_central_skew_quadratics}. 
Similarly $s\zeta_{p+r}({}^{2k}\theta_{ij})h^{(2k+1)q-1}_{j}=\sigma_{s}(\zeta_{p+r}({}^{2k}\theta_{ij}))h^{(2k+1)q}_{j}$ by (b). 
Taking the sum over $j$ and $k$, (i) follows from a straightforward application of \eqref{eq4} and \eqref{eq5}. 


Recall $yf_{i}=\sum_{j}\lambda_{ij}f_{j}+g_{i}$. 
Since $tb_{-p}=b_{-p}y$ we have $tf^{0}_{i}=\sum_{j}\pi_{-p}(\lambda_{ij})f^{0}_{j}+g^{0}_{i}$. 
So, for (ii) we require $t(m^{-p}_{i}-f^{0}_{i})+g^{0}_{i}=\sum_{l}\pi_{-p}(\lambda_{il})(m^{-p}_{l}-f^{0}_{l})$. 
By (c) we have $th^{2(k+2)q-1}_{j}=h^{2(k+2)q}_{j}$ and so
\[
t({}^{2(k+1)}\theta_{ij}h^{2(k+1)q}_{j})=\beta_{t}({}^{2(k+1)}\theta_{ij}h^{2(k+1)q}_{j})-\sigma_{t}({}^{2(k+1)}\theta_{ij})\gamma_{t}h^{2(k+1)q-1}_{j}
\]
by Lemma \ref{lemma:characterising_central_skew_quadratics}. 
Recall that $yg_{j}=\mu g_{j}-\sum_{l}\tau(\lambda_{jl})g_{l}$. 
Note that for the case $k=0$ we have that $h^{0}_{j}=g^{0}_{j}$, and so using that $tb_{-p}=b_{-p}y$ one can show $th_{j}^{0}=\beta_{t}h_{j}^{0}-\sum_{l}\sigma_{t}(\pi_{-p}(\lambda_{jl}))h_{l}^{0}$.
We also have $t({}^{2k+1}\theta_{ij}h^{2(k+1)q-1}_{j})=\sigma_{t}({}^{2k+1}\theta_{ij})h^{2(k+1)q}_{j}$ for any $k$ by (c). 

From here a straightforward application of \eqref{eq1}
gives
\[
t(m^{-p}_{i}-f^{0}_{i})=
-
g_{i}^{0}+\left(\sum_{l}\left(\sum_{j}\pi_{-p}(\lambda_{ij}){}^{0}\theta_{jl}\right)g^{0}_{l}\right)+\left(\sum_{j,k}t({}^{2k+2}\theta_{ij}h^{2(k+1)q}_{j})+\sigma_{t}({}^{2k+1}\theta_{ij})h^{2(k+1)q}_{j}\right).
\]

Now (ii) follows from \eqref{eq2} and  \eqref{eq3}. 
Hence, by defining 
$\Lambda'=\pi_{r}(\Lambda)$ and $\Phi'=\pi_{-p}(\Phi)$, we have  $sm^{r}_{i}=\sum_{j}\varphi_{ij}'m^{r}_{j}$ and $tm^{-p}_{i}=\sum_{j}\lambda_{ij}'m^{-p}_{j}$. 
The proof now follows from  Lemma \ref{lemma:a-pair-of-matrices-gives-a-P-submodule}. 
\end{proof}

\subsection{Evaluation of functors on modules}
\label{s:evaluation-functors-on-modules}
Let $w$ be an $I$-indexed string or a band and let $\epsilon$ be a sign. For each $i\in I$ exactly one of the words $(w_{\leq i})^{-1}$ or $w_{>i}$ has sign $\epsilon$. Denote this word by $w(i,\epsilon)$. Recall $J_{w}$ is a subset of $I$ such that the elements $b_{i}$ ($i\in J_{w}$) define a free right $R_{w}$-module basis for $M(C_w)$.

\begin{lemma}\label{lemma:representing-by-canonical-b-i}
Let $w$ be a string or band. For any $i\in I$ there is some $j\in J_{w}$ and some unit $z\in R_{w}$ such that $b_{i}=b_{j}z$, $w(i,1)=w(j,1)$ and $w(i,-1)=w(j,-1)$.
\end{lemma}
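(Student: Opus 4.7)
The plan is to handle the four types of $w$ separately, using the explicit formulas for the right action of $R_w$ on $M(C_w)$ given in Sections~\ref{s:asymstring}--\ref{s:symband}. In each case the goal is to locate an appropriate $j\in J_w$ with $b_i=b_jz$ for some monomial $z$ in the generators of $R_w$ (each of which is a unit by non-singularity of the defining quadratic(s), via Lemma~\ref{lemma:invertible-variable-in-quotient-by-non-singular-quadratic}), and then to verify that $w(i,1)=w(j,1)$ and $w(i,-1)=w(j,-1)$.

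For an asymmetric string, $J_w=I$ and we take $j=i$, $z=1$. For a symmetric string of length $n=2k+1$, take $j=i$, $z=1$ if $i\le k$; otherwise $i\ge k+1$ and the formula $b_mx=b_{n-m}$ for $m\le k$ (Section~\ref{s:symstring}) applied with $m=n-i$ gives $b_i=b_{n-i}x$, so we set $j=n-i$, $z=x$. The identity $w=w^{-1}$ gives $w_m=w_{n-m+1}^{-1}$, whence $w_{>i}=(w_{\le n-i})^{-1}$ and $(w_{\le i})^{-1}=w_{>n-i}$, with signs matching automatically since these are equalities of sequences of letters; so $w(i,\epsilon)=w(n-i,\epsilon)$. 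For an asymmetric band of period $n$, write $i=qn+j$ with $j\in J_w=\{0,\ldots,n-1\}$ uniquely; iterating $b_mx=b_{m-n}$ yields $b_i=b_jx^{-q}$, and $x$ is a unit in $R_w=K[x,x^{-1};\tau_w]$. Periodicity $w[n]=w$ immediately forces $w_{>i}=w_{>j}$ and $(w_{\le i})^{-1}=(w_{\le j})^{-1}$.

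The symmetric band case is the most involved. By Theorem~\ref{theorem:symmetricbandparameterringisHNP}, $R_w=R'_w*_KR''_w$ has $K$-basis the alternating monomials $(z_l)_{l\in\Z}$ in $x$ and $y$, each a unit. The formula from the proof of Lemma~\ref{lemma:symmetric-bands-R_w-basis} asserts that for $j\in J_w$, $b_jz_l=b_{ln+j}$ when $l$ is even and $b_jz_l=b_{ln+r-p-j}$ when $l$ is odd, where $n=p+r+1$. As $(l,j)$ ranges over $\Z\times J_w$ with $J_w=\{-p,\ldots,r\}$, these covers $\Z$ bijectively: for each fixed $l$ we obtain a block of $n$ consecutive integers (even-$l$ blocks are $\{2kn-p,\ldots,2kn+r\}$, odd-$l$ blocks are $\{(2k+1)n-p,\ldots,(2k+1)n+r\}$, using $n-p=r+1$) and consecutive blocks are adjacent. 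So each $i\in\Z$ determines unique $j\in J_w$ and $l\in\Z$ with $b_i=b_jz_l$; set $z=z_l$.

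The identification $j\mapsto ln+j$ (for $l$ even) or $j\mapsto ln+r-p-j$ (for $l$ odd) realises the action of the infinite dihedral group on $\Z$ generated by the period-translation $m\mapsto m+2n$ (which preserves $w$) and the reflection $m\mapsto 2r+1-m=2(r+1)-1-m$ attached to the symmetry of $w$ at $r+1$. For any symmetry $s$ of $w$, Lemma~\ref{lemma:form-of-symmetric-bands} gives $w_m=w_{2s-m}^{-1}$, whence $w_{>i}=(w_{\le 2s-1-i})^{-1}$ and $(w_{\le i})^{-1}=w_{>2s-1-i}$ as $\N$-indexed words; the signs match because these are equalities of underlying letter sequences. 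Thus the unordered pair $\{w_{>i},(w_{\le i})^{-1}\}$ (and hence also $w(i,\epsilon)$ for each $\epsilon=\pm1$) is invariant under the action, giving $w(i,\epsilon)=w(j,\epsilon)$. The main obstacle will be this final bookkeeping in the symmetric band case; once the dihedral action on indices is matched to the $R_w$-module structure on $M(C_w)$, the word identities are direct computations from the formulas above.
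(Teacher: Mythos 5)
Your proposal is correct and takes essentially the same route as the paper: the four cases for $w$ are handled identically, using the explicit right $R_w$-action formulas from Lemmas~\ref{lemma:symmetric-strings-R_w-basis}, \ref{lemma:asymmetric-bands-R_w-basis} and \ref{lemma:symmetric-bands-R_w-basis} together with invertibility of the generators via Lemma~\ref{lemma:invertible-variable-in-quotient-by-non-singular-quadratic}. In the symmetric band case you merely repackage the paper's explicit subcase analysis (on the parity of $m$ in $i=mn+k$ and on whether $k\le r$ or $k>r$) into a single argument via the infinite dihedral group acting on $\Z$; the underlying arithmetic and the verification of the word identities are the same.
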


\begin{proof}
If $w$ is an asymmetric string then $I=J_{w}$ and there is nothing to prove. Choose the sign $\epsilon$ such that $w(i,\epsilon)=w_{>i}$ and $w(i,-\epsilon)=(w_{\leq i})^{-1}$. The remainder of the proof is split into cases.

(i) 
Suppose $w$ is a symmetric string, say $w=us^{-1}u^{-1}$ for some $\{0,\dots,k\}$-indexed word $u$. 
Here $R_{w}=K[x;\sigma]/\ideal{q(x)}$ for a normal, non-singular and monic quadratic $q(x)$.
By Lemma \ref{lemma:invertible-variable-in-quotient-by-non-singular-quadratic} we have that $x$ is a unit in $ R_{w}$. If $i\leq k$ let $j=i$ and $z=1$. 
If $i>k$ then $n-i\leq n-k-1=k$, and we let $j=n-i$ and $z=x$.  
Since $w=w^{-1}$ we have $(w_{\leq i})^{-1}=w_{>n-i}$ and $w_{>i}=(w_{\leq n-i})^{-1}$ which means $w(n-i,\pm\epsilon)=w(i,\pm\epsilon)$.

(ii) Suppose $w$ is an asymmetric band, say $w={}^{\infty}u^{\infty}$ for some $\{0,\dots,p\}$-indexed word $u$. 
Here $R_{w}=K[x,x^{-1};\tau]$. Writing $i=j+np$ for some integer $n$ and some remainder $j\in\{0,\dots,p-1\}=J_{w}$ we now let $z=x^{-n}$. 
Since $w=w[pn]$ we have $(w_{\leq i})^{-1}=(w_{\leq j})^{-1}$ and $w_{>i}=w_{>j}$ which means $w(j,\pm\epsilon)=w(i,\pm\epsilon)$.

(iii) Suppose $w$ is a symmetric band. 
By Lemma \ref{lemma:form-of-symmetric-bands}, $w = {}^\infty (v s^* v^{-1} u^{-1} t^* u)^\infty$ for $s,t\in \Sp$ and words $u,v$ of lengths $p,r\geq 0$ respectively, such that $n = p+r+1$ where $2n$ is the period of $w$.
Likewise we have that the set of integers $i$ with $w[i]=(w[i])^{-1}$ is the coset $r+1+n\Z=-p+n\Z$. 
Here we have that $R_{w}$ is the free product over $K$ of semisimple quotients $K[x;\rho]/\ideal{r(x)}$ and $K[y;\tau]/\ideal{p(y)}$ by monic, non-singular and normal quadratics $r(x)$ and $p(y)$. 
As in part (i), by Lemma \ref{lemma:invertible-variable-in-quotient-by-non-singular-quadratic} the variables $x$ and $y$ define units in $R_{w}$. 

Recall that by Theorem \ref{theorem:symmetricbandparameterringisHNP} the ring $R_{w}$ has as a $K$-basis the alternating monomials in $x$ and $y$, and that $J_{w}=\{-p,\dots,r\}$. 
Recall the notation from the proof of Lemma \ref{lemma:symmetric-bands-R_w-basis}. 
Here alternating monomials $z_{l}$ ($l\in\Z$) in $x$ and $y$ were defined in such a way so that $b_{h} z_{l} = b_{l n+h}$ for $l$ even and $b_{h} z_{l} = b_{l n + r-p-h}$ for $l$ odd.
Write $i=mn+k$ for some remainder $k\in \{0,\dots,n-1\}$. We now consider cases.

(iiia) Suppose $k\leq r$ and $m$ is even. 
Let $j=k$ and $z=z_{m}$. 
By construction $b_{i}=b_{j}z$ and $j\in J_{w}$. 
Since $m$ is even we have $w=w[mn]$ since $w$ has period $2n$, which gives $w_{\leq i}=w_{\leq j}$ and $w_{>i}=w_{> j}$. 

(iiib) Suppose $k\leq r$ and $m$ is odd. Let $j=r-p-k$ and $z=z_{m}$.
Since $k\leq r$ we have $j\geq -p$, and since $k\geq 0$ we have $j\leq r$, which means $j\in J_{w}$ and $b_{i}=b_{j}z$. 
Since $m-1$ is even and $i=(m-1)n+n+r-p-j$ we have $w_{>i}=w_{>n+r-p-j}$ since $w$ has period $2n$. 
Since $w[-p]=(w[-p])^{-1}$ we have $w_{d-p}=w_{-p-d}^{-1}$ for any $d\in\Z$. 
Considering the cases where $d\geq n+r-j+1$ we have $w_{>n+r-p-j}=(w_{\leq j-2n})^{-1}$, which again by periodicity is just $(w_{\leq j})^{-1}$, which shows altogether that $(w_{\leq j})^{-1}=w_{>i}$.
Similarly, $(w_{\leq i})^{-1}=w_{>j}$.

If $k> r$, the proof for $m$ odd  (respectively, even) is similar to (iiia) (respectively, (iiib)). 
%
%
%
%
%
\end{proof}

Given any word $u\in H(\ell,\epsilon)$ we let $I_{w}^{+}(u)$ be the set of $i\in I$ such that $v_{i}(w)=\ell$ and $w(i,\epsilon)\leq u$.

\begin{lemma}\label{lemma:evaluation-b-i-is-in-the-trivial-plus}
Let $w$ be an $I$-indexed string or band, let $\ell$ be a vertex and let $\epsilon$ be a sign. Then for any $i\in I_{w}^{+}(1_{\ell,\epsilon})$ we have $b_{i}\in 1_{\ell,\epsilon}^{+}(M(C_w))$.
\end{lemma}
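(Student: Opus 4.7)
The plan is to unfold the definition of $1_{\ell,\epsilon}^+(M(C_w))$: this set equals $e_\ell M(C_w)$ if there is no arrow $a$ in $Q$ such that $1_{\ell,\epsilon}a^{-1}$ is a walk, and otherwise equals $\{x\in e_\ell M(C_w): ax=0\}$. Since $v_i(w)=\ell$ gives $b_i\in e_\ell M(C_w)$ automatically, only the latter case needs attention; there $a$ is necessarily ordinary (since $a^{-1}$ is a letter), $t(a)=\ell$, and the letter $a^{-1}$ has sign $\epsilon$. The goal therefore reduces to proving $ab_i=0$ in $M(C_w)$.

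The main step is a case analysis on the inequality $w(i,\epsilon)\leq 1_{\ell,\epsilon}$, split according to whether it is strict or an equality, and then according to which of $w_{>i}$ or $(w_{\leq i})^{-1}$ realises $w(i,\epsilon)$. In the key case $w(i,\epsilon)=w_{>i}<1_{\ell,\epsilon}$, clause~(b) of Definition~\ref{definition:order-on-words} together with the sign rule~(iii) for words produces $w_{i+1}$ as an ordinary direct letter $y$ of sign $\epsilon$; then $yb_{i+1}=b_i$ in $M(C_w)$ by the defining relations, while the distinct letters $y$ and $a^{-1}$ share head $\ell$ and sign $\epsilon$. The sign convention defining a semilinear clannish algebra then forces $ay$ to be a zero relation, and hence $ab_i=(ay)b_{i+1}=0$. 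All remaining subcases instead reduce to showing that no arrow in $Q_{C_w}$ with tail $i$ is sent to $a$, so that $ab_i=0$ follows from Definition~\ref{definition:modules-M(C)}(iii).

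Carrying this out: if $w(i,\epsilon)=(w_{\leq i})^{-1}<1_{\ell,\epsilon}$ then $w_i$ is an ordinary inverse letter $y^{-1}$, so the arrow at position $i$ in $Q_{C_w}$ has tail $i-1$; condition~(ii) for words then forces any ordinary inverse letter $w_{i+1}=b^{-1}$ to have sign $-\epsilon$, ruling out $b=a$, while any $*$-letter at position $i+1$ yields an arrow sent to a special loop rather than to $a$. In the equality case $w(i,\epsilon)=1_{\ell,\epsilon}$ one of $w_{>i}$ or $(w_{\leq i})^{-1}$ is trivial, so $i$ is an endpoint of $I$ and $w$ is a string; the concatenation rule underlying $w=w_{\leq i}\cdot w_{>i}$ then forces the sign of the neighbouring letter $w_i^{-1}$ or $w_1$ to be $-\epsilon$, which precludes the only candidate direct or inverse letter at that position from coinciding with $a$, and $*$-letter cases are eliminated as above.

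The main obstacle is the bookkeeping among three sign-related conventions: the globally fixed sign of each letter, the sign of a word as constrained by clause~(iii) of the word definition, and the implicit sign of a trivial boundary word $w_{>n}$ or $(w_{\leq 0})^{-1}$ as forced by the concatenation rule defining $w[i]=w_{\leq i}\cdot w_{>i}$. Once these are correctly identified in each subcase, translating back into the quiver $Q_{C_w}$ and invoking the defining relations of $M(C_w)$ is straightforward.
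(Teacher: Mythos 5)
The proposal is correct and follows essentially the same route as the paper: both reduce to the case $1_{\ell,\epsilon}^+(M(C_w)) = a^{-1}(0)$ for an ordinary arrow $a$ with $a^{-1}$ of sign $\epsilon$, and then exploit $w(i,\epsilon)\le 1_{\ell,\epsilon}$ to exhibit either a zero-relation $ay$ (when $w(i,\epsilon)$ is non-trivial and begins with an ordinary direct letter $y$ of sign $\epsilon$) or a sign obstruction (when $w(i,\epsilon)=1_{\ell,\epsilon}$). You split the non-trivial case according to which of $w_{>i}$ or $(w_{\le i})^{-1}$ realises $w(i,\epsilon)$ and invoke Definition~\ref{definition:modules-M(C)}(iii) in the inverse subcase, whereas the paper handles both orientations uniformly by writing $w(i,\epsilon)=cv$ and $b_i=cb_j$; the mathematical content is the same.
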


\begin{proof}
It suffices to assume there is an ordinary arrow $a$ with tail $\ell$ such that the sign of $a^{-1}$ is $\epsilon$, since otherwise $1_{\ell,\epsilon}^{+}(M(C_w))=e_{\ell}M(C_w)$ and there is nothing to prove. 
Hence we have $1_{\ell,\epsilon}^{+}(M(C_w))=a^{-1}(0)$ for some such $a$. 
Since $i\in I_{w}^{+}(1_{\ell,\epsilon})$ either $w(i,\epsilon)=1_{\ell,\epsilon}$ or $w(i,\epsilon)=cv$ for some arrow $c$ and some $v\in H(\ell',\epsilon')$. 

When $w(i,\epsilon)=1_{\ell,\epsilon}$ we have that either ($i-1\notin I$ and $aC_{w}$ is a walk) or ($i+1\notin I$ and $C_{w}^{-1}a^{-1}$ is a walk), which means $ab_{i}=0$. 
When $w(i,\epsilon)=cv$ there is some $j\in I$ with $b_{i}=cb_{j}$, and since $a$ is ordinary and the letters $a^{-1}$ and $c$ have sign $\epsilon$, $ac$ must be a zero relation defining the algebra. 
\end{proof}

Given any word $u\in H(\ell,\epsilon)$ we let  $I_{w}^{-}(u)$ be the set of $i\in I$ such that $v_{i}(w)=\ell$ and  $w(i,\epsilon)< u$. 

Using similar arguments to those used in Lemma \ref{lemma:evaluation-b-i-is-in-the-trivial-plus}, we have the following.

\begin{lemma}\label{lemma:evaluation-b-i-is-not-in-the-trivial-minus}
Let $w$ be an $I$-indexed string or band, let $\ell$ be a vertex and let $\epsilon$ be a sign. Then for any $R_{w}$-module $V$ we have $1_{\ell,\epsilon}^{-}(M(C_w)\otimes V)\subseteq \sum b_{i}\otimes V$ where the sum runs over all $i\in I_{w}^{-}(1_{\ell,\epsilon})$.
\end{lemma}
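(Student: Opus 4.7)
The plan is to mirror the proof of Lemma \ref{lemma:evaluation-b-i-is-in-the-trivial-plus} with the direction of inclusion reversed. First I would reduce to the case that there exists an arrow $b$ of $Q$ with $1_{\ell,\epsilon}b$ a walk, since otherwise the left-hand side is zero and the inclusion is trivial. The (implicit) assumption $1_{\ell,\epsilon}\in H(\ell,\epsilon)$ forces that no $*$-letter $s^{*}$ at $\ell$ has sign $\epsilon$, so this $b$ must in fact be ordinary with $h(b)=\ell$ and its direct letter of sign $\epsilon$; hence $1_{\ell,\epsilon}^{-}(M(C_{w})\otimes V) = b\cdot e_{t(b)}(M(C_{w})\otimes V)$.

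Next I would use that, since $(b_{j})_{j\in J_{w}}$ is a right $R_{w}$-basis for $M(C_{w})$, the subspace $e_{t(b)}(M(C_{w})\otimes V)$ is $K$-spanned by the elements $b_{j}\otimes v$ with $j\in J_{w}$, $v_{j}(C_{w})=t(b)$, and $v\in V$. By the $\sigma_{b}$-semilinearity of $b$, it therefore suffices to show, for each such $j$ and $v$, that $b(b_{j}\otimes v)=(bb_{j})\otimes v$ lies in the sum $\sum_{k\in I_{w}^{-}(1_{\ell,\epsilon})} b_{k}\otimes V$. Following Definition \ref{definition:modules-M(C)} and Lemma \ref{lemma:M(C)-spanned-over-K}, the possibilities are: if no arrow in $Q_{C_{w}}$ with tail $j$ is sent to $b$, then $bb_{j}=0$ (as $b$ is ordinary) and there is nothing to prove; otherwise either $(C_{w})_{j}=b$, giving $bb_{j}=b_{j-1}$, or $(C_{w})_{j+1}=b^{-1}$, giving $bb_{j}=b_{j+1}$.

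In the first sub-case $w_{j}=b$, so $w_{>j-1}$ has sign $\epsilon$ and $w(j-1,\epsilon)=w_{>j-1}$ begins with the ordinary direct letter $b$; in the second sub-case $w_{j+1}=b^{-1}$, so $(w_{\leq j+1})^{-1}$ has sign $\epsilon$ and $w(j+1,\epsilon)=(w_{\leq j+1})^{-1}$ begins with $(w_{j+1})^{-1}=b$. Either way clause (b) of Definition \ref{definition:order-on-words} yields $w(k,\epsilon)<1_{\ell,\epsilon}$, while $v_{k}(w)=\ell$ holds since $h(b)=\ell$; hence $k\in I_{w}^{-}(1_{\ell,\epsilon})$, as needed.

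The main obstacle is recognising that the right-end admissibility of $1_{\ell,\epsilon}$ in $H(\ell,\epsilon)$ is precisely the ingredient that forbids $b$ from being a special loop: a special $b=s$ would instead produce via Lemma \ref{lemma:M(C)-spanned-over-K}(i) the extra term $\beta_{s}b_{j}-\gamma_{s}b_{k}$ at indices $k$ where $w(k,\epsilon)$ begins with $s^{*}$ rather than an ordinary direct letter, hence at indices not lying in $I_{w}^{-}(1_{\ell,\epsilon})$, and the argument would break down.
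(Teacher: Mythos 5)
Your proof is correct and follows exactly the route the paper intends when it says ``using similar arguments to those used in Lemma~\ref{lemma:evaluation-b-i-is-in-the-trivial-plus}'': reduce to the non-degenerate case of a single ordinary arrow $b$ with $h(b)=\ell$ and direct letter of sign $\epsilon$ (the right-end-admissibility of $1_{\ell,\epsilon}$ rules out a special loop), use the free right $R_w$-basis $(b_j)_{j\in J_w}$ of $M(C_w)$ to write $1_{\ell,\epsilon}^-(M(C_w)\otimes V)=b\cdot e_{t(b)}(M(C_w)\otimes V)$ as a span of $bb_j\otimes V$, and then read off from the defining relations of $M(C_w)$ that each nonzero $bb_j$ is some $b_k$ with $w(k,\epsilon)<1_{\ell,\epsilon}$ by clause~(b) of the ordering. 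The case analysis is complete and the index bookkeeping (including $v_k(w)=\ell$) checks out.
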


\begin{lemma}\label{lemma:evaluating-minus-on-strings-bands-inductive-step-technical}
Let $w$ be an $I$-indexed string or a band and let $u\in H(\ell,\epsilon)$ be a word of the form $u=x^{*}u'$ for some word $u'\in H(\ell',\epsilon')$ and some letter $x$. Let $i\in I_{w}^{-}(u')$.
\begin{enumerate}
    \item If $x=a\notin \Sp$ and $ab_{i}\neq 0$ then $ab_{i}=b_{h}$ for some $h\in \{i-1,i+1\}\cap I_{w}^{-}(u)$. 
    \item If $x=s\in\Sp$ then $sb_{i}=b_{k}$ or \emph{(}$i\in I_{w}^{-}(u)$ and $sb_{i}=\beta_{s}b_{i}-\gamma_{s}b_{k}$\emph{)} for some $k\in\{i-1,i+1\}\cap I_{w}^{-}(u)$.
\end{enumerate}
\end{lemma}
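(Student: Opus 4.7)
The plan is to analyze $xb_i$ using the module structure of $M(C_w)$ and then to compare the resulting word $w(h,\epsilon)$ with $u$ via the lexicographic ordering on $H(\ell,\epsilon)$.

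First, for part~(1), since $x=a$ is ordinary and $ab_i\neq 0$, Definition~\ref{definition:modules-M(C)}(iii) forces the existence of an arrow $\alpha$ in $Q_{C_w}$ with tail $i$ and $f_{C_w}(\alpha)=a$; such an arrow has head $h\in\{i-1,i+1\}$, and by the relations $ab_i=b_h$ with $v_h(w)=h(a)=\ell$. For part~(2), Lemma~\ref{lemma:M(C)-spanned-over-K} gives two alternatives: either an arrow in $Q_{C_w}$ with tail $i$ sent to $s$ exists (yielding $sb_i=b_k$ directly) or it does not, in which case the arrow points into $i$ (yielding $sb_i=\beta_s b_i-\gamma_s b_k$); in both alternatives $k\in\{i-1,i+1\}$ with $v_k(w)=h(s)=\ell$.

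Second, I would perform a finite case analysis on the direction $h=i\pm 1$ and on whether the corresponding walk-letter is direct or inverse. In each subcase, exactly one of $w_i$ or $w_{i+1}$ matches the leading letter of $u$: either $w_i$ equals the direct letter $a$ or the $*$-letter $s^*$ (when the step moves to $i-1$), or $w_{i+1}$ equals the inverse letter $a^{-1}$ or the same $*$-letter $s^*$ (when it moves to $i+1$). Consequently the word $w(h,\epsilon)$ is obtained from $w(i,\epsilon')$ by one of two elementary operations: prepending the leading letter of $u$ (when the step moves against the direction of $w(i,\epsilon')$), or stripping off its leading letter (when it moves with it). Unravelling Definition~\ref{definition:order-on-words} and combining with the hypothesis $w(i,\epsilon')<u'$ then yields $w(h,\epsilon)<u$: in the prepending scenario both sides now begin with the same letter and the comparison reduces to $w(i,\epsilon')<u'$ on the remainders; in the stripping scenario, clauses~(b) and~(c) of the ordering directly give the required comparison with $u$, because a direct leading letter makes a word larger than its truncation and an inverse leading letter smaller.

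Third, for the residual subcase of part~(2) where $sb_i=\beta_s b_i-\gamma_s b_k$, I would establish the extra claim $i\in I_w^-(u)$ by the same letter-matching argument, but applied at position $i$ instead of $h$: we have $v_i(w)=\ell'=\ell$ since $s$ is a loop, and the same analysis shows that the first letter of $w(i,\epsilon)$ is the one corresponding to $s^*$, so $w(i,\epsilon)<u$ follows from $w(i,\epsilon')<u'$ by the lexicographic ordering.

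The main obstacle will be the careful bookkeeping of signs: the signs $\epsilon$ and $\epsilon'$ are not related by a fixed transformation but depend on the pre-chosen signs of each letter. One must verify in each subcase that the sign assigned to $w(h,\epsilon)$ is indeed $\epsilon$, using the sign-compatibility clause~(ii) in the definition of a word together with the validity of $u\in H(\ell,\epsilon)$. A secondary concern is the handling of boundary cases in which $w$ is a finite string and $i$ lies at one of its endpoints, so that one of $(w_{\le i})^{-1}$ or $w_{>i}$ is trivial; here end-admissibility of $w$ guarantees that the step to $h$ (or $k$) remains inside $I$ and that the trivial-word comparisons in Definition~\ref{definition:order-on-words} still work out in our favour.
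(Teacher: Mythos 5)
Your first two paragraphs are essentially sound, and your identification of the action of $a$ or $s$ on $b_i$ in terms of walk-arrows, followed by a sign-bookkeeping comparison of $w(h,\epsilon)$ with $u$, is a viable route and close to what the paper does. (A small remark: in all the cases that actually arise, $w(h,\epsilon)$ is obtained by \emph{prepending} a letter to $w(i,\epsilon')$; your ``stripping'' alternative never occurs, though including it does no harm.)

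There is, however, a genuine gap in your third paragraph, which treats the residual subcase $sb_i=\beta_s b_i-\gamma_s b_k$. You claim that ``$w(i,\epsilon)<u$ follows from $w(i,\epsilon')<u'$ by the lexicographic ordering'' after matching the leading letter $s^*$. But once the common prefix $s^*$ is stripped from $w(i,\epsilon)$ and from $u=s^*u'$, the remaining word on the left is \emph{not} $w(i,\epsilon')$. Concretely, if $w_i=s^*$ and $k=i-1$, then $w(i,\epsilon)=(w_{\le i})^{-1}=s^*(w_{\le i-1})^{-1}=s^*\,w(k,\epsilon')$, while $w(i,\epsilon')=w_{>i}$; similarly if $w_{i+1}=s^*$ and $k=i+1$, then $w(i,\epsilon)=w_{>i}=s^*\,w_{>i+1}=s^*\,w(k,\epsilon')$ while $w(i,\epsilon')=(w_{\le i})^{-1}$. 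So the comparison you need is $w(k,\epsilon')<u'$, which is not given by the hypothesis $w(i,\epsilon')<u'$ and can fail in general. What rescues the argument is precisely that $C_w$ is the \emph{canonically associated} (hence naturally oriented) walk: the subcase $sb_i=\beta_s b_i-\gamma_s b_k$ corresponds to $(C_w)_i=s^{-1}$ or $(C_w)_{i+1}=s$, i.e.\ to $i$ being naturally inverse (or $i+1$ naturally direct, or the appropriate symmetry convention), and this is exactly the statement $w(k,\epsilon')\le w(i,\epsilon')$. Combining this with $w(i,\epsilon')<u'$ gives $w(k,\epsilon')<u'$ and hence $w(i,\epsilon)=s^*w(k,\epsilon')<s^*u'=u$. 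This is the role of the clause ``$v\le w(i,-\epsilon)$'' in the paper's proof, and without invoking the natural orientation the extra claim $i\in I_w^-(u)$ does not follow.
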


\begin{proof}
(i) Let $C=C_{w}$ and $L$ be the set of $j\in I_{w}^{-}(u')$ such that $w(j,-\epsilon')=a^{-1}v$ for some $*$-word $v$. 
For any $j\in  I_{w}^{-}(u')\setminus L$ we have $w(j,-\epsilon')\leq 1_{\ell',-\epsilon'}$ and so $b_{j}\in 1^{+}_{\ell',-\epsilon'}(M(C))$ by Lemma \ref{lemma:evaluation-b-i-is-in-the-trivial-plus}.
Since $1^{+}_{\ell',-\epsilon'}(M(C))=a^{-1}(0)$ we have that $i\in L$.
Define $h\in \{i+1,i-1\}$ by ($h=i-1$ if $w(i,-\epsilon')=(w_{\leq i})^{-1}$) and ($h=i+1$ if $w(i,-\epsilon')=w_{>i}$). 
By construction this gives $w(h,\epsilon)=aw(i,\epsilon')<au'=u$ and $ab_{i}=b_{h}$.

(ii) Since the sign of $s^{*}$ is $\epsilon$ we cannot have that $w(i,\epsilon)$ is trivial, for otherwise we would have ($w(i,\epsilon)=(w_{\leq i})^{-1}$ and $s^{*}w$ is a word) or ($w(i,\epsilon)=w_{>i}$ and $ws^{*}$ is a word), in either case contradicting Lemma \ref{lem-end-adm}. 
Hence we have $w(i,\epsilon)=y v$ for some letter $y$ of sign $\epsilon$, which gives 
$y=s^{*}$. 

Define $k\in I$ and $z\in \{s,s^{-1}\}$ by ($k=i-1$ and $z=C_{i}^{-1}$ if $w(i,\epsilon)=(w_{\leq i})^{-1}$) and ($k=i+1$ and $z=C_{i+1}$ if $w(i,\epsilon)=w_{>i}$).
Here we have $w(k,\epsilon)=s^{*}w(i,-\epsilon)<s^{*}u'=u$ and $w(k,-\epsilon)=v$. 
If $z=s^{-1}$ then $sb_{i}=b_{k}$, as required. If instead $z=s$ then $v\leq w(i,-\epsilon)$, $sb_{k}=b_{i}$ and $i,k\in I_{w}^{+}(u)$, since $s^{*}v\leq s^{*}w(i,-\epsilon)$. 
\end{proof}
Given any word $u\in H(\ell,\epsilon)$, we let $J_{w}^{\pm}(u)=J_{w}\cap I_{w}^{\pm}(u)$.
\begin{lemma}\label{lemma:evaluation-b-i-is-not-in-the-minus-inductive-step}

Let $w$ be an $I$-indexed string or a band, $V$ be an $R_{w}$-module and $u\in H(\ell,\epsilon)$ where $u=x^{*}u'$ for some $u'\in H(\ell',\epsilon')$ and some letter $x$ which is not of the form $s^{-1}$ for some $s\in \Sp$. 
Then, 
\[
x\left(\sum_{i\in J_{w}^{-}(u')}b_{i}\otimes_{R_{w}} V\right)\subseteq  \sum_{j\in J_{w}^{-}(u)}b_{j} \otimes_{R_{w}}V.
\]
\end{lemma}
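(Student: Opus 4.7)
The plan is to reduce to a single summand: it suffices to show $x(b_i\otimes v)=(xb_i)\otimes v$ lies in $\sum_{j\in J_w^-(u)}b_j\otimes V$ for each $i\in J_w^-(u')$ and each $v\in V$. Since $J_w^-(u')\subseteq I_w^-(u')$, Lemma \ref{lemma:evaluating-minus-on-strings-bands-inductive-step-technical} controls $xb_i$: in case $x=a\notin\Sp$ either $ab_i=0$ (and there is nothing to prove) or $ab_i=b_h$ for some $h\in\{i-1,i+1\}\cap I_w^-(u)$; in case $x=s\in\Sp$ either $sb_i=b_k$ for some $k\in\{i-1,i+1\}\cap I_w^-(u)$, or else $i\in I_w^-(u)$ and $sb_i=\beta_s b_i-\gamma_s b_k$ for such a $k$. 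The hypothesis that $x$ is not of the excluded form ensures that one of these cases applies.

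The only obstacle is that the produced indices $h$ or $k$ need not lie in $J_w$, whereas the desired right-hand side is indexed by $J_w$. To remove this, invoke Lemma \ref{lemma:representing-by-canonical-b-i} to write $b_h=b_j z$ for some $j\in J_w$ and some unit $z\in R_w$, with $w(j,\epsilon)=w(h,\epsilon)$. Since $h\in I_w^-(u)$ means $w(h,\epsilon)<u$, this forces $j\in J_w^-(u)$. Then
\[
b_h\otimes v \;=\; b_j z\otimes v \;=\; b_j\otimes zv \;\in\; b_j\otimes V,
\]
which lies in the right-hand side. The same manoeuvre disposes of $b_k$.

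It remains only to treat the term $\beta_s b_i$ arising in the second subcase of the special-loop case. There $i\in I_w^-(u)$ together with the standing assumption $i\in J_w$ immediately gives $i\in J_w^-(u)$, and using the right $K$-module structure on $M(C_w)$ from Lemma \ref{lemma:M(C)-is-an-R-K-bimodule} we have $\beta_s b_i\otimes v = b_i\otimes \pi_i^{-1}(\beta_s)v\in b_i\otimes V$, which is one of the summands on the right-hand side. Combining these observations over $i$ and $v$ yields the claimed inclusion.

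No step presents genuine difficulty; all the real content has been isolated in Lemmas \ref{lemma:representing-by-canonical-b-i} and \ref{lemma:evaluating-minus-on-strings-bands-inductive-step-technical}, and the argument is simply the bookkeeping that translates from the $I_w$-indexing (natural for describing the action of $R$ on generators of $M(C_w)$) to the $J_w$-indexing (natural for the right $R_w$-basis of $M(C_w)$).
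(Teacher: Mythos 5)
Your proposal has a genuine gap: you only treat the cases $x=a$ (ordinary direct) and $x=s$ (special direct), and you claim that the exclusion of letters of the form $s^{-1}$ ``ensures that one of these cases applies.'' This is false. The hypothesis only excludes $s^{-1}$ for $s\in\Sp$; it does \emph{not} exclude $a^{-1}$ for $a$ an \emph{ordinary} arrow, which is a perfectly valid letter here. Lemma~\ref{lemma:evaluating-minus-on-strings-bands-inductive-step-technical} handles only direct letters, so it cannot be cited for $a^{-1}$.

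The reason this is not just a trivial omission is that for $x=a^{-1}$ the action on a subspace is not a function applied elementwise. Your opening reduction --- ``it suffices to show $x(b_i\otimes v)=(xb_i)\otimes v$ lies in\ldots'' --- already fails, because $a^{-1}$ acting on $\sum_{i}b_i\otimes V$ is the \emph{preimage} of that subspace under the map $a$, and an element of a preimage is not of the form ``$a^{-1}b_i$ tensored with something.'' One must take an arbitrary $m\in e_\ell(M(C_w)\otimes V)$ with $am$ lying in the given subspace and argue about the coefficients of $m$ in the canonical $R_w$-basis. The paper does exactly this: it expands $m=\sum_{i\in L}b_i\otimes z_i$ over all $i\in J_w$ with $v_i(w)=\ell$, splits $L$ into three pieces $L', L'', L'''$ according to how $w(i,\epsilon)$ compares with $u$ and whether $w(i,\epsilon)$ begins with $a^{-1}$, shows $am'$ and $am''$ land where they should (the latter via Lemma~\ref{lemma:evaluation-b-i-is-in-the-trivial-plus}), and then deduces $am'''\in W$ forces $m'''=0$. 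Your argument for the direct-letter cases matches the paper's, but without the inverse-ordinary case the lemma is not proved.
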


\begin{proof}
Suppose $x=a$ for $a$ ordinary. 
For each $i\in J_{w}^{-}(u')$ with $ab_{i}\neq 0$ we have that $ab_{i}=b_{h}$ for some $h\in \{i-1,i+1\}\cap I_{w}^{-}(u)$ by Lemma \ref{lemma:evaluating-minus-on-strings-bands-inductive-step-technical}(i). 
By Lemma \ref{lemma:representing-by-canonical-b-i} this gives $a(b_{i}\otimes V)= b_{j}\otimes V$ for some $j\in J_{w}^{-}(u)$, since $h\in I_{w}^{-}(u)$. 
Hence the inclusion holds in case $x=a$. 
Similarly one can show the inclusion holds when $x=s$ by applying Lemmas \ref{lemma:representing-by-canonical-b-i} and \ref{lemma:evaluating-minus-on-strings-bands-inductive-step-technical}(ii). 

It remains to consider $x=a^{-1}$ for $a$ ordinary. 
Let $L$ be the set of $i\in J_{w}$ with $v_{i}(w)=\ell$. 
Let $m$ lie in the left hand side of the required inclusion, written as a sum $m=\sum b_{i}\otimes z_{i}$ over $i\in L$ where each $z_{i}\in V$. 

Let $L'$ be the set of $h\in J_{w}^{-}(u)\subseteq L$ such that $w(h,\epsilon)=a^{-1}v$ for some word $v\in H(\ell',\epsilon')$, and note that for each such $v$ we have $v<u'$. 
Let $m'$ be the sum of the terms $b_{h}\otimes z_{h}$ as $h$ runs through $L'$. 

For each $h\in L'$ with $w(h,\epsilon)=a^{-1}v$ for some $v$, if we let ($k=h-1$ when $w(h,\epsilon)=(w_{\leq h})^{-1}$) and ($k=h+1$ when $w(h,\epsilon)=w_{>h}$) then $v=w(k,\epsilon')$ and $ab_{h}=b_{k}$. 
This shows that for any $h\in L'$ there exists $k\in I_{w}^{-}(u)$ such that $ab_{h}=b_{k}$, and hence  $ab_{h}\otimes V=b_{j}\otimes V$ for some $j\in J_{w}^{-}(u)$ by Lemma \ref{lemma:representing-by-canonical-b-i}.

Let $W=\bigoplus b_{i}\otimes V$ where the sum runs over $i\in J_{w}^{-}(u')$. The argument above shows that $am'\in W$, and $am\in W$ by assumption. 
Using $L''=J_{w}^{-}(u)\setminus L'$ and $L'''=L\setminus J_{w}^{-}(u)$ one may define elements $m'',m'''$ as above with $m=m'+m''+m'''$. 
Noting that $L''=J_{w}^{+}(1_{\ell,\epsilon})$ we have $am''=0$ by Lemma \ref{lemma:evaluation-b-i-is-in-the-trivial-plus},  and hence $am'''\in W$. 
By a straightforward application of Lemma \ref{lemma:representing-by-canonical-b-i}, this means $m'''=0$, as required. 
\end{proof}


\begin{lemma}\label{lemma:evaluation-b-i-can-not-be-in-the-minus}
Let $w$ be a string or a band, let $u\in H(\ell,\epsilon)$ and let $V$ be an $R_{w}$-module. 
Then we have
\[
D_{u}^{-}(M(C_w)\otimes_{R_{w}} V)\subseteq \sum _{i\in J_{w}^{-}(u)}b_{i}\otimes_{R_{w}} V.
\]
\end{lemma}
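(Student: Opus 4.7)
The plan is a two-part induction: on the length of $u$ when $u$ is finite, and on the length of an eventually-zero witness when $u$ is $\N$-indexed. The workhorses are Lemmas \ref{lemma:evaluation-b-i-is-not-in-the-trivial-minus} and \ref{lemma:evaluation-b-i-is-not-in-the-minus-inductive-step}, together with Lemma \ref{lemma:representing-by-canonical-b-i}. For the base case $u = 1_{\ell,\epsilon}$, Lemma \ref{lemma:evaluation-b-i-is-not-in-the-trivial-minus} gives containment in $\sum_{i \in I_w^-(u)} b_i \otimes V$; each such $b_i \otimes V$ then equals $b_j \otimes V$ for some $j \in J_w^-(u)$ by Lemma \ref{lemma:representing-by-canonical-b-i}, which supplies $j \in J_w$ and a unit $z \in R_w$ with $b_i = b_j z$ and $w(j,\epsilon) = w(i,\epsilon) < u$.

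For finite $u = y u'$ of length at least one, I would write $D_u = z D_{u'}$ where $z$ is the first letter of the special-direct walk $D_u$; by construction $z$ is of the form $a$, $a^{-1}$, or $s$, but never $s^{-1}$ for $s \in \Sp$. Any $x \in D_u^-(M(C_w) \otimes_{R_w} V)$ admits a witness whose tail witnesses $x_1 \in D_{u'}^-(M(C_w) \otimes_{R_w} V)$ with $x \in z x_1$. The inductive hypothesis then places $x_1$ in $\sum_{i \in J_w^-(u')} b_i \otimes_{R_w} V$, and Lemma \ref{lemma:evaluation-b-i-is-not-in-the-minus-inductive-step} applied to $z$ gives $x \in \sum_{j \in J_w^-(u)} b_j \otimes_{R_w} V$.

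For $\N$-indexed $u$, every $x \in D_u^-(M(C_w) \otimes_{R_w} V)$ admits a finite witness $x_0 = x, x_1, \ldots, x_n = 0$, so I would induct on $n$. The base $n = 0$ is trivial; for $n \geq 1$, the tail $x_1, \ldots, x_n$ witnesses the hypothesis for the shifted word $u_{\geq 2}$ at length $n-1$, and Lemma \ref{lemma:evaluation-b-i-is-not-in-the-minus-inductive-step} applied to the first walk letter of $D_u$ closes the step.

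The hard part will be the inductive step for finite $u$: justifying that the tail of a witness for $D_u^-$ is itself a witness for $D_{u'}^-$ requires matching the right-end boundary conditions of $u$ and $u'$. When $|u| \geq 2$ this is automatic since $u$ and $u'$ share their final letter and so their right-end extendability coincides; the length-one case $u' = 1_{\ell',\epsilon'}$ reduces to a direct sign-bookkeeping argument showing that the arrow extending $u$ is forced by the sign conventions to coincide with the arrow extending $1_{\ell',\epsilon'}$.
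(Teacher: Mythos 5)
Your proof is correct and takes essentially the same route as the paper: the trivial-word base case is Lemma \ref{lemma:evaluation-b-i-is-not-in-the-trivial-minus} (together with Lemma \ref{lemma:representing-by-canonical-b-i} to pass from $I_w^-$ to $J_w^-$), and the step peels off one walk letter at a time using Lemma \ref{lemma:evaluation-b-i-is-not-in-the-minus-inductive-step}. The paper simply packages this as a telescoping chain of inclusions $D_{n+1}V_{n+1}\subseteq V_n$ with $V_n=\sum_{i\in J_w^-(u_{>n})}b_i\otimes V$, which handles the finite and $\N$-indexed cases uniformly rather than via two separate inductions, and the right-end-matching issue you flag is indeed resolved by the sign conventions exactly as you suggest.
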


\begin{proof}
Let $D=D_{u}$, say $J$-indexed. 
By Lemma \ref{lemma:evaluation-b-i-is-not-in-the-trivial-minus} we can assume $J\neq \{0\}$. Let $u(n)=u_{>n}$ ($n\in J$) and $V_{n}=\sum_{i\in J_{w}^{-}(u(n))}b_{i}\otimes V$.  Then $D_{n+1} V_{n+1}\subseteq V_{n}$ by Lemma \ref{lemma:evaluation-b-i-is-not-in-the-minus-inductive-step}. 
Let $m\in D^{-}(M(C_w)\otimes V)$. Let $l$ be the length of $u$ when $J\neq \N$, and when $J=\N$ choose $l\geq 0$ with $m\in D_{\leq l}(0)$. 
By Lemma \ref{lemma:evaluation-b-i-is-not-in-the-trivial-minus} we have $m\in D_{\leq l} V_{l}$. 
Combining the inclusions $D_{n+1} V_{n+1}\subseteq V_{n}$ for each $n=0,\dots l-1$ gives $m\in V_{0}$.
\end{proof}

\subsection{Completion of the proof}
Let $\Omega$ be a set of representatives of the equivalence classes of strings and bands. If $w$ is an end-admissible word,  say $I$-indexed, let $P_w$ be the set of pairs $\{u,v\}$ where $u$ and $v$ are words with $u^{-1}v=w$. Observe that $u,v$ must have the same head and opposite signs. Equivalently 
\[
P_w = \{ \{ (w_{\le i})^{-1},w_{>i} \} \mid i\in I \}.
\]
For $w$ a string or band, recall  there is an associated non-empty finite subset $J_w$ of $I$.

\begin{lemma}
\label{lemma:pairsofwordsproperties}
Let $w$ be an end-admissible word and $\epsilon=\pm1$.
\begin{itemize}
\item[(i)]
If $w$ is not finite or periodic, then $P_w$ is infinite.
\item[(ii)]
If $w$ is a string or band, then $|P_w| = |J_w|$.
\end{itemize}
\end{lemma}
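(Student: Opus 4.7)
The plan is to analyse the canonical surjection $\phi:I\to P_w$, $\phi(i)=\{(w_{\le i})^{-1},w_{>i}\}$, by asking when two distinct indices produce the same unordered pair. Such a coincidence $\phi(i)=\phi(j)$ with $i<j$ splits into two cases: either (A) $(w_{\le i})^{-1}=(w_{\le j})^{-1}$ and $w_{>i}=w_{>j}$, or (B) $(w_{\le i})^{-1}=w_{>j}$ and $w_{>i}=(w_{\le j})^{-1}$.

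For (i), if $I=\N$ or $I=-\N$ then exactly one element of each pair is a finite word, of length $|i|$, so $\phi$ is injective and $P_w$ is infinite. If $I=\Z$ and $w$ is non-periodic, case (A) forces $w$ to be invariant under translation by $j-i$, contradicting non-periodicity; and case (B) forces $w^{-1}$ to coincide with some shift of $w$. Composing two distinct coincidences of type (B) produces a non-trivial $w$-preserving translation, again contradicting non-periodicity, so the fibres of $\phi$ have size at most two and $P_w$ is still infinite.

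For (ii), Lemma \ref{lemma:representing-by-canonical-b-i} already supplies surjectivity of $\phi|_{J_w}$, so it remains to check injectivity in the four types. For an asymmetric string, length considerations rule out (A) for $i\neq j$, and (B) would imply $w=w^{-1}$. For a symmetric string $w=us^*u^{-1}$ with $|u|=k$ and length $n=2k+1$, restricting to $J_w=\{0,\dots,k\}$ keeps $(w_{\le i})^{-1}$ of length $\le k$ and $w_{>j}$ of length $\ge k+1$, so (B) is impossible while (A) reduces to $i=j$. For an asymmetric band of period $n$, case (B) is excluded by asymmetry, and (A) becomes $i\equiv j\pmod n$, making $J_w=\{0,\dots,n-1\}$ a transversal.

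The delicate case is the symmetric band of period $2n$ with $n=p+r+1$. Using Lemma \ref{lemma:form-of-symmetric-bands}(ii) the symmetries of $w$ lie in $-p+n\Z$, so case (B) translates into the modular condition $i+j\equiv -2p-1\pmod{2n}$ (coming from $(w_{\le i})^{-1}=w_{>j}\iff i+j+1=2s$ with $s$ a symmetry), while case (A) becomes $i\equiv j\pmod{2n}$. For $i,j\in J_w=\{-p,\dots,r\}$ one has $|i-j|\le 2n-2<2n$ and $i+j\in\{-2p,\dots,2r\}$, a range which misses both $-2p-1$ and $2r+1=2n-2p-1$. Hence neither identification can occur on $J_w$, so $\phi|_{J_w}:J_w\to P_w$ is a bijection and $|P_w|=|J_w|$. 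I expect the main obstacle to be the careful book-keeping in this symmetric band case, in particular tracking the off-by-one in the reflection identity so that the forbidden sum becomes $-2p-1$ (not $-2p$), and verifying that $J_w$ is a genuine fundamental domain precisely because of that shift.
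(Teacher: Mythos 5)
Your proposal is correct, and for part (ii) it is essentially what the paper leaves implicit: the paper just states that the pairs $\{(w_{\le i})^{-1},w_{>i}\}$ for $i\in J_w$ exhaust $P_w$ without further justification, so your explicit case-by-case verification (surjectivity via Lemma~\ref{lemma:representing-by-canonical-b-i}, injectivity by ruling out types (A) and (B) on $J_w$) is the intended content spelled out. In particular your identification $(w_{\le i})^{-1}=w_{>j}\iff w^{-1}=w[i+j+1]$, the resulting congruence $i+j\equiv -2p-1\pmod{2n}$ for symmetric bands, and the observation that $\{-2p,\dots,2r\}$ has length $2n-1$ and misses exactly that residue class, are all correct; the off-by-one you were worried about is handled properly.

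For part (i), however, the paper's argument is genuinely slicker than yours and avoids the (A)/(B) dichotomy altogether. Since $(w_{\le i})^{-1}$ and $w_{>i}$ always have the same head but \emph{opposite} signs, the two members of any pair in $P_w$ are distinguished by their sign. If $P_w$ were finite while $I$ is infinite, pigeonholing on both the pair and the sign of $(w_{\le i})^{-1}$ produces $i\neq j$ with $\phi(i)=\phi(j)$ and $(w_{\le i})^{-1}$, $(w_{\le j})^{-1}$ of the same sign; equality of the unordered pairs then forces the matching $(w_{\le i})^{-1}=(w_{\le j})^{-1}$, $w_{>i}=w_{>j}$ (type (A) only), yielding periodicity (or, for $I=\N,-\N$, an immediate length contradiction). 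This removes the need for your ``composing two type-(B) coincidences'' argument, which is correct but slightly terse as written: the precise point is that any type-(B) coincidence $(i,j)$ forces $w^{-1}=w[i+j+1]$, so two type-(B) coincidences with $i+j\neq i'+j'$ would give a nontrivial translation, hence $i+j$ is constant across all type-(B) coincidences and each $i$ has at most one partner $j=i'+j'-i$. You might note that the sign observation you use so effectively in (ii) (via $w(i,\pm1)$) is exactly what streamlines (i) as well.
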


\begin{proof}
(i) If $w$ is $I$-indexed, then each $i\in I$ gives a pair $\{ (w_{\le i})^{-1},w_{>i}\}$. Suppose $P_w$ is finite, but $I$ is infinite. Then infinitely many of the words $(w_{\le i})^{-1}$ must have the same sign, and then at least two of the corresponding pairs $\{ (w_{\le i})^{-1},w_{>i}\}$ and $\{ (w_{\le j})^{-1},w_{>j}\}$ must be equal. But then $(w_{\le i})^{-1} = (w_{\le j})^{-1}$ and $w_{>i} = w_{>j}$, so $w$ is periodic.

(ii) The elements in $P_w$ are exactly given by the pairs $\{ (w_{\le i})^{-1},w_{>i}\}$ with $i\in J_w$.
\end{proof}

\begin{theorem}
\label{theorem:locfiniteandreflectisos}
Let $M,M'$ be finite-dimensional $R$-modules and $\theta:M\to M'$ an $R$-module homomorphism.
\begin{itemize}
\item[(i)]
We have
\[
\dim_K M = \sum_{w\in\Omega} |J_w| \, \dim_K F_{C_w}(M),
\]
and in particular $F_{C_w}(M)=0$ for all but finitely many $w\in \Omega$. 
\item[(ii)]
If $F_{C_w}(\theta)$ is an isomorphism for all $w\in \Omega$, then $\theta$ is an isomorphism.
\end{itemize}
\end{theorem}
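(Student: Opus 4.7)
The plan is a double filtration argument in the spirit of Gel'fand--Ponomarev, as adapted in \cite{Cra1988ff,Cra1989,Cra2018}, combined with the orientation theorem (Theorem~\ref{theorem:orientations-main-result}) and the identification $F_C \cong {}_{\pi_i} F_{C,i}$ from Lemma~\ref{lemma:hom-functors-relation-functors}(vi). For part (i), I would decompose $\dim_K M = \sum_\ell \dim_K e_\ell M$. For each vertex $\ell$ the one-sided filtration from Lemma~\ref{lemma:one-sided-filtration}(i),(ii) satisfies $D_u^-(M) \subseteq D_u^+(M) \subseteq D_z^-(M)$ for $u < z$ in $H(\ell,+1)$; since $M$ is finite-dimensional, only finitely many $u$ produce nonzero subquotients $D_u^+(M)/D_u^-(M)$, whose dimensions sum to $\dim_K e_\ell M$. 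Refining each such subquotient by the pullback of the sign-$(-1)$ filtration indexed by $H(\ell,-1)$ yields successive quotients that, by the modular law, identify canonically with
\[
F_{D_u,D_v}(M)=\frac{D_u^+(M)\cap D_v^+(M)}{D_u^+(M)\cap D_v^-(M)+D_u^-(M)\cap D_v^+(M)},
\]
so that
\[
\dim_K M=\sum_{\ell}\sum_{\{u,v\}}\dim_K F_{D_u,D_v}(M),
\]
with the inner sum over unordered pairs with common head $\ell$ and opposite signs.

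Each such pair $\{u,v\}$ determines a word $w=u^{-1}v$, and I would argue that $F_{D_u,D_v}(M)$ vanishes unless $w$ is a string or band. If $w$ fails to be relation-admissible, then some zero-relation of $R$ forces the iterated image conditions defining $T_{D_u,D_v}(M)$ to fall inside $B_{D_u,D_v}(M)$. If $w$ is relation-admissible but neither finite nor periodic, then $P_w$ is infinite by Lemma~\ref{lemma:pairsofwordsproperties}(i); by transporting the orientation-change arguments of Lemma~\ref{lemma:swapset} along finite sub-walks (so as to extend Theorem~\ref{theorem:orientations-main-result} beyond the finite/periodic case), one sees that the $K$-dimension of $F_{D_u,D_v}(M)$ is independent of the choice of pair in $P_w$, hence must be zero lest the finite sum above become infinite. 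For $w$ a string or band, Theorem~\ref{theorem:orientations-main-result} together with Lemma~\ref{lemma:hom-functors-relation-functors}(vi) identifies $F_{D_u,D_v}(M)$ with $F_{C_w}(M)$ up to twist, and twists preserve $K$-dimension. Grouping pairs by their equivalence class $w\in\Omega$ and applying $|P_w|=|J_w|$ from Lemma~\ref{lemma:pairsofwordsproperties}(ii) yields the desired formula; the finiteness of the support is then immediate from $\dim_K M<\infty$.

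For part (ii), the dimension formula in (i) applied to $M$ and $M'$ together with the hypothesis that $F_{C_w}(\theta)$ is an isomorphism for all $w\in\Omega$ gives $\dim_K M=\dim_K M'$, so it suffices to establish injectivity of $\theta$. Suppose $\theta(x)=0$ for some $0\neq x\in M$; by decomposing we may assume $x\in e_\ell M$ for some vertex $\ell$. Apply Lemma~\ref{lemma:one-sided-filtration}(iii) to $\{x\}$ with sign $+1$ to obtain $u\in H(\ell,+1)$ with $x\in D_u^+(M)\setminus D_u^-(M)$, then apply it to the coset $x+D_u^-(M)$ (which is nonempty and avoids $0$) with sign $-1$ to obtain $v\in H(\ell,-1)$ such that this coset meets $D_v^+(M)$ but not $D_v^-(M)$. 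A short modular-law computation then shows that the class of $x$ in $F_{D_u,D_v}(M)$ is nonzero. By the vanishing established for (i), $\{u,v\}\in P_w$ for some string or band $w$, and the natural isomorphism $F_{D_u,D_v}\cong F_{C_w}$ (up to twist) identifies $F_{D_u,D_v}(\theta)$ with $F_{C_w}(\theta)$, which is an isomorphism. But $F_{D_u,D_v}(\theta)$ sends the class of $x$ to zero, a contradiction; hence $\theta$ is injective, and by the dimension equality, it is an isomorphism.

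The main obstacle is the vanishing assertion: that $F_{D_u,D_v}(M)=0$ whenever the associated word $w=u^{-1}v$ is not a string or band. Handling infinite non-periodic $w$ requires extending the orientation equivalences of Lemma~\ref{lemma:swapset} to a setting not directly covered by Theorem~\ref{theorem:orientations-main-result}, or alternatively a pigeonhole argument exploiting that the one-sided filtration of the finite-dimensional $M$ has only finitely many non-zero subquotients while $P_w$ is infinite. A secondary technical point is the naturality of the chain $F_{D_u,D_v}\cong {}_{\sigma}F_{C_w,i}\cong {}_{\sigma'}F_{C_w}$, which must be checked at the level of natural transformations (not merely on objects) so that the induced maps by $\theta$ really correspond.
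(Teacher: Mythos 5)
Your overall plan---a Gel'fand--Ponomarev double filtration by vertex and sign, identification of the subquotients with twists of $F_{C_w}$, a pigeonhole argument for infinite non-periodic words, and the reflection-of-isomorphisms argument for (ii)---is the right one and matches the paper's structure. Part (ii) as you outline it is essentially the paper's argument.

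However, there is a real gap in part (i), and it lies exactly in the spot you flag as a ``secondary technical point,'' but it is not secondary. You build the refined filtration from $D_u$ (special-direct) for $u\in H(\ell,+1)$ and $D_v$ (special-direct) for $v\in H(\ell,-1)$, so the functor $F_{D_u,D_v}$ corresponds, via Lemma~\ref{lemma:hom-functors-relation-functors}, to a walk $(D_u)^{-1}D_v$ whose special loops appear as \emph{inverse} letters on the $u$-side of the split point $i$ and as \emph{direct} letters on the $v$-side. This orientation depends on $i$: as the pair $\{u,v\}\in P_w$ varies (i.e.\ as $i$ varies), you obtain genuinely different walks with underlying word $w$, not shifts of a single walk. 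Theorem~\ref{theorem:orientations-main-result} only compares $F_{D_w}$, $F_{C_w}$ and $F_{D'_w}$, not these $i$-dependent mixed orientations, and Lemma~\ref{lemma:hom-functors-relation-functors}(vi) only moves the split point of a \emph{fixed} walk. So you cannot conclude that $\dim_K F_{D_u,D_v}(M)$ is constant over $\{u,v\}\in P_w$. That constancy is precisely what the pigeonhole step (for infinite non-periodic $w$) and the grouping of terms into $|P_w|\,\dim_K F_{C_w}(M)$ both require. Your proposed remedy, ``extending Theorem~\ref{theorem:orientations-main-result} beyond the finite/periodic case,'' is misplaced: no extension is needed, because the issue is not finiteness/periodicity but the choice of walk.

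The paper's fix is to take $D'_u$ (special-\emph{inverse}) for the first factor and $D_v$ (special-direct) for the second. Then $(D'_u)^{-1}D_v = D_w$ with $w=u^{-1}v$ --- one and the same walk for every split point --- and Lemma~\ref{lemma:hom-functors-relation-functors}(vi) gives $F_{D'_u,D_v}=F_{D_w,i}\cong{}_{\pi_i^{-1}}F_{D_w}$, so all subquotients arising from $P_w$ have common dimension $\dim_K F_{D_w}(M)$. This makes the pigeonhole argument for infinite non-periodic $w$ immediate (no orientation theorem needed at all), and Theorem~\ref{theorem:orientations-main-result} is then only invoked for finite or periodic $w$ to identify $F_{D_w}\cong F_{C_w}$ up to twist. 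Replacing $D_u$ by $D'_u$ throughout your proposal closes the gap; the rest of your argument, including the vanishing for non-relation-admissible words and part (ii), then matches the paper.
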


\begin{proof}
(i) Fix a sign $\epsilon$ and a vertex $\ell$ in $Q$. For $u\in H(\ell,\epsilon)$ and $v\in H(\ell,-\epsilon)$, we define
\[
\begin{array}{cc}
U_{u,v}(M) = (D'_{u})^{-}(M) + D_{v}^{+}(M) \cap (D'_{u})^{+}(M), 
 & 
L_{u,v}(M) = (D'_{u})^{-} (M)+ D_{v}^{-}(M) \cap (D'_{u})^{+}(M).
\end{array}
\]
These are the `upper' and `lower' functors in a `two-sided $\epsilon$-filtration' of the forgetful functor at $\ell$, obtained by refining the $D'_u$ filtration against the $D_v$ filtration. By Lemma~\ref{lemma:one-sided-filtration} we have the following properties.

(a) $L_{u,v}(M) \subseteq U_{u,v}(M)$,

(b) if $(u,v)\neq (u',v')$ then either $U_{u,v}(M) \subseteq L_{u',v'}(M)$ or $U_{u',v'}(M)\subseteq L_{u,v}(M)$,

(c) if $S$ is a non-empty subset of $e_\ell M$ with $0\notin S$, then there is some pair $(u,v)$ 
such that $S$ meets $U_{u,v}(M)$ and does not meet $L_{u,v}(M)$.

Namely, for (c), choose $u$ such that $S$ meets $(D'_u)^+(M)$ but not $(D'_u)^-(M)$. Then the set $S'$ defined by
\[
S' = (D'_u)^-(M) + (S\cap (D'_u)^+(M)) = ((D'_u)^-(M) + S) \cap (D'_u)^+(M)
\] 
is non-empty and doesn't contain 0, so there is some $v$ such that $S'$ meets $D_v^+(M)$ but not $D_v^-(M)$. It follows that $S$ meets $U_{u,v}(M)$ but not $L_{u,v}(M)$.

Taking $K$-bases for the quotients $U_{u,v}(M)/L_{u,v}(M)$, lifting them to elements of $e_\ell M$, and combining them, properties (a)--(c) ensure that we obtain a basis for $e_\ell M$. Thus
\[
\dim_K e_\ell M = \sum_{(u,v)} \dim_K \left( \frac{U_{u,v}(M)}{L_{u,v}(M)} \right).
\]

Let $w$ be an end-admissible word, say $I$-indexed, and let $i\in I$. Let $u = (w_{\le i})^{-1}$ and $v= w_{>i}$ and suppose that $u$ has head $\ell$ and sign $\epsilon$. Then the functors $U_{u,v}$ and $L_{u,v}$ belong to the two-sided $\epsilon$ filtration of the forgetful functor at $\ell$, and by Zassenhaus' Lemma and  Lemma~\ref{lemma:hom-functors-relation-functors} we have isomorphisms
\[
U_{u,v}/L_{u,v} \cong F_{D'_u,D_v} \cong F_{D_w,i} \cong {}_{\pi_i^{-1}} F_{D_w}.
\]
Similarly the functors $U_{v,u}$ and $L_{v,u}$ belong to the two-sided $(-\epsilon)$-filtration of the forgetful functor at $\ell$ and 
\[
U_{v,u}/L_{v,u} \cong F_{D_u,D'_v} \cong F_{D'_w,i} \cong {}_{\pi_i^{-1}} F_{D'_w}.
\]

Considering the quotients $U_{u,v}/L_{u,v}$ which arise in the $\epsilon$- and $(-\epsilon)$-filtrations of the forgetful functors for all vertices $\ell$ in $Q$, we find $|P_w|$ copies of twists of $F_{D_w}$ and $|P_w|$ copies of twists of $F_{D'_w}$. 

Now if $w$ is not finite or periodic, then $P_w$ is infinite by Lemma~\ref{lemma:pairsofwordsproperties}(i), so $F_{D_w}(M)= F_{D'_w}(M) = 0$. 
By Theorem \ref{theorem:orientations-main-result}, if $w$ is finite or periodic and end-admissible then $F_{C_w} \cong F_{D_w} \cong F_{D'_w}$. If $w$ is end-admissible, and if $z$ is an equivalent word, then it is easy to see that there is some $\sigma$ such that $F_{C_z} \cong {}_\sigma F_{C_w}$.

Now as $w$ runs through representatives of the equivalence classes of end-admissible words, the correspondence mentioned above uses up all quotients in the two-sided $\epsilon-$ and $(-\epsilon)$-filtrations. Thus
\[
\dim_K M = \sum |P_w| \, \dim_K F_{C_w}(M)
\]
where the sum is over end-admissible words $w$ which are finite or periodic. 

Next we show that $F_{C_w}=0$ if $w$ is not relation-admissible. Suppose $w$ contains $r^*$ for a relation $r$. Then $D_w$ contains $r$, and so $(D_w)_{>i} = r E$ for some walk $E$ and some $i\in I$. But then $((D_w)_{>i})^\pm(M) = r E^\pm(M) = 0$, so $F_{D_w,i}=0$, so $F_{D_w} = 0$, so $F_{C_w}=0$. Similarly, if $w$ contains $(r^*)^{-1}$, consider $D'_w$.

The result follows, using the formula for $\dim_K M$ above and Lemma~\ref{lemma:pairsofwordsproperties}(ii).

(ii) Suppose $F_{C_w}(\theta)$ is an isomorphism for all $w\in\Omega$.  Then $F_{D_w}(\theta)$ and $F_{D'_w}(\theta)$ are isomorphisms for all end-admissible words $w$. Thus $(U_{u,v}/L_{u,v})(\theta)$ is an isomorphism for all $u,v$. Now $\theta$ is a monomorphism. Namely, if $0\neq m\in e_\ell M$, then taking $S = \{m\}$, by property (c) there is some $(u,v)$ with $m$ inducing a non-zero element of $(U_{u,v}/L_{u,v})(M)$. But then $\theta(m)$ induces a non-zero element of $(U_{u,v}/L_{u,v})(M')$, so $\theta(m)\neq 0$. Now by dimensions $\theta$ is an isomorphism.
\end{proof}



\begin{lemma}
\label{lemma:evoffwonmcwtensorv}
Let $w$ be a string or band and $V$ a finite-dimensional $R_w$-module.
\begin{itemize}
\item[(i)]
The natural map $V\to F_{C_w}(M(C_w)\otimes_{R_w} V)$ is an isomorphism.
\item[(ii)]
If $z$ is a string or band which is not equivalent to $w$, then $F_{C_{z}}(M(C_w)\otimes_{R_w} V) = 0$. 
\end{itemize}
\end{lemma}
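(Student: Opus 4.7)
The plan is to first establish (ii) by evaluating the two-sided filtration from the proof of Theorem \ref{theorem:locfiniteandreflectisos}(i) explicitly on $M := M(C_w)\otimes_{R_w} V$, and then to deduce (i) from a $K$-dimension count together with an injectivity argument for the natural map.

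For (ii), note that $M$ has $K$-dimension $|J_w|\dim_K V$ since $M(C_w)$ is finitely generated free of rank $|J_w|$ as a right $R_w$-module. Fix a vertex $\ell$ and sign $\epsilon$ and consider the two-sided $\epsilon$-filtration of $e_\ell M$ from the proof of Theorem \ref{theorem:locfiniteandreflectisos}(i). We extend Lemmas \ref{lemma:evaluation-b-i-is-in-the-trivial-plus}, \ref{lemma:evaluation-b-i-is-not-in-the-trivial-minus} and \ref{lemma:evaluation-b-i-can-not-be-in-the-minus} by induction on the length of the word $u$, following the pattern of Lemma \ref{lemma:evaluation-b-i-is-not-in-the-minus-inductive-step}, and also to special-inverse walks $D'_u$ in place of $D_u$, to obtain
\[
D_u^{+}(M) \supseteq \sum_{j\in J_w\cap I_w^{+}(u)} b_j\otimes V, \qquad D_u^{-}(M) \subseteq \sum_{j\in J_w\cap I_w^{-}(u)} b_j\otimes V,
\]
and similarly for $D'_u$. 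Combined with Lemma \ref{lemma:representing-by-canonical-b-i}, these identify each filtration subquotient $U_{u,u'}(M)/L_{u,u'}(M)$, up to twist by an automorphism of $K$, with the direct sum of twists of $V$ indexed by those $j\in J_w$ with $\{w(j,1),w(j,-1)\} = \{u,u'\}$; in particular the subquotient vanishes unless $\{u,u'\}\in P_w$. Running the argument of Theorem \ref{theorem:locfiniteandreflectisos}(i), the contributions to $F_{C_z}(M)$ come from pairs in $P_z$, and since $z$ is not equivalent to $w$ no such pair lies in $P_w$, whence (ii).

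For (i), combining Theorem \ref{theorem:locfiniteandreflectisos}(i) with (ii) collapses the sum to $z=w$, giving $\dim_K F_{C_w}(M) = \dim_K V$. Define $\eta\colon V\to T_{C_w}(M)$ by $\eta(v)(m) = m\otimes v$. For $r\in R_w$ the identity $(r\cdot\eta(v))(m) = \eta(v)(mr) = mr\otimes v = m\otimes rv = \eta(rv)(m)$ shows that $\eta$ is $R_w$-linear, so by Lemma \ref{lemma:B_C-is-an-R_w-submodule} it descends to an $R_w$-linear map $V\to F_{C_w}(M)$. Since source and target have equal $K$-dimension, it suffices to verify injectivity.

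To show injectivity, fix $i\in J_w$ and apply Lemma \ref{lemma:hom-functors-relation-functors}(vi) to identify $F_{C_w}(M)\cong {}_{\pi_i}F_{C_w,i}(M)$ via $\theta\mapsto\theta(b_i)$, under which the image of $v\in V$ becomes the class of $b_i\otimes v$. The evaluation identities above, specialised to $u=w(i,\epsilon)$ and $u'=w(i,-\epsilon)$, place $B_{C_w,i}(M)$ inside $\sum_{j\in J_w\setminus\{i\},\,v_j(w)=\ell} b_j\otimes V$ with respect to the direct-sum decomposition $e_\ell M = \bigoplus_{j\in J_w,\,v_j(w)=\ell} b_j\otimes V$, so $b_i\otimes v\in B_{C_w,i}(M)$ forces $v=0$. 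The main technical obstacle is the extension of the evaluation lemmas of \S\ref{s:evaluation-functors-on-modules}: promoting Lemma \ref{lemma:evaluation-b-i-is-in-the-trivial-plus} from the trivial word to arbitrary $u$, and handling both special-direct and special-inverse walks; these generalisations follow the same inductive patterns but require careful bookkeeping of the ordering conditions defining $I_w^{\pm}(u)$.
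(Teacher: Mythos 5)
Your overall strategy inverts the paper's: the paper proves (i) first and deduces (ii) from the dimension formula in Theorem~\ref{theorem:locfiniteandreflectisos}(i), whereas you try to compute the filtration subquotients explicitly to get (ii), then derive (i). That could work in principle, but as written there are two serious gaps.

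The main one is in the injectivity argument. You fix an arbitrary $i\in J_w$, pass to $F_{C_w,i}(M)$ via Lemma~\ref{lemma:hom-functors-relation-functors}(vi), and then claim to bound $B_{C_w,i}(M)$ using "the evaluation identities above." But $B_{C_w,i}(M)$ is built from the walks $(C_w)_{>i}$ and $((C_w)_{\le i})^{-1}$, and these are \emph{not} special-direct or special-inverse walks for a general $i$: the canonically associated walk $C_w$ mixes direct and inverse special letters according to which positions are naturally direct/inverse/symmetries. The evaluation lemmas of \S\ref{s:evaluation-functors-on-modules} (Lemma~\ref{lemma:evaluation-b-i-can-not-be-in-the-minus} and the $D'_u$ analogue you invoke) are stated only for $D_u$ and $D'_u$, and Lemma~\ref{lemma:evaluating-minus-on-strings-bands-inductive-step-technical} explicitly excludes the letter $s^{-1}$, which is precisely the case that arises in an arbitrary orientation. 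The paper bridges this gap by using Theorem~\ref{theorem:orientations-main-result} to find a \emph{specific} $j$ with $C_w\sim_j D_w$ or $C_w\sim_j D'_w$, so that $B_{C_w,j}=B_{D_w,j}$ or $B_{C_w,j}=B_{D'_w,j}$ and the truncations $(D_w)_{>j}$, $((D_w)_{\le j})^{-1}$ really are of the form $D_u$, $D'_{u'}$. Without invoking Theorem~\ref{theorem:orientations-main-result} (or reproving an orientation-change statement for the specific position $i$), the claim that $b_i\otimes V$ meets $B_{C_w,i}(M)$ only in zero is unsupported. (The paper also needs Lemma~\ref{lemma:rewriting-relations-symmetric-bands} in the band case to reduce $B_{C_w,j}(M)$ to a single intersection before applying the evaluation lemma; that step is missing in your sketch too.)

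The second gap is in your (ii) argument. You state only the two one-sided inclusions $D_u^+(M)\supseteq\sum_{j\in J_w\cap I_w^+(u)}b_j\otimes V$ and $D_u^-(M)\subseteq\sum_{j\in J_w\cap I_w^-(u)}b_j\otimes V$ (and their $D'_u$ analogues, the first of which the paper does not prove even for the trivial word). These two inclusions, taken alone, do not "identify" the subquotients $U_{u,u'}(M)/L_{u,u'}(M)$ up to twist as direct sums of twists of $V$; to pin them down one either needs the reverse inclusions (i.e. equalities, which are genuinely harder) or a global dimension count, and the latter is exactly the mechanism the paper uses to derive (ii) from (i), making your ordering circular unless you first establish the injectivity in (i). So as written the argument neither proves (ii) independently nor supplies what is needed to make (i) go through for an arbitrary $i\in J_w$.
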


\begin{proof}
(i) Let $M = M(C_w)\otimes V$. By Theorem \ref{theorem:orientations-main-result} there is an element $j\in I$ such that either
\begin{itemize}
\item[(a)]
$T_{C_w,j} = T_{D_w,j}$ and $B_{C_w,j} = B_{D_w,j}$, or 
\item[(b)]
$T_{C_w,j} = T_{D'_w,j}$ and $B_{C_w,j} = B_{D'_w,j}$.
\end{itemize}
In particular for a string we take $j=0$ and we are in case (a). We deal with case (a). Case (b) is similar.

We have an $R_w$-module homomorphism $V\to \Hom_R(M(C_w),M) = T_{C_w}(M)$, sending $v\in V$ to $\theta_v$ defined by $\theta_v(m) = m\otimes v$ for $m\in M(C_w)$. This induces a map $V\to F_{C_w}(M)$, and by Lemma~\ref{lemma:hom-functors-relation-functors} it suffices to show that the composition
\[
V\to F_{C_w}(M) \to F_{C_w,j}(M)
\]
is an isomorphism. Now $\dim_K M = |J_w|.\dim_K V$, so Theorem~\ref{theorem:locfiniteandreflectisos} gives $\dim_K V \ge \dim_K F_{C_w,j}(M)$. Thus it suffices to prove that the composition is a monomorphism. Now the image of $\theta_v$ under the map $T_{C_w}(M) \to {}_{\pi_j} T_{C_w,j}(M)$ considered in Lemma~\ref{lemma:hom-functors-relation-functors} is $\theta_v(b_j) = b_j\otimes v$. Thus we need to show that $(b_j \otimes V ) \cap B_{C_w,j}(M) = 0$.

For a string, with $j=0$, we can compute the functor $B_{D_w,j}$ on $M$ and show that $b_j \otimes V \cap B_{D_w,j}(M) = 0$. 
Namely, if $w$ is a string with head $\ell$ and sign $\epsilon$,  then by Lemma \ref{lemma:evaluation-b-i-can-not-be-in-the-minus} any element of  $B_{D_w,0}(M)$ is a sum of terms of the form $b_{i}\otimes v_{i}$ with $i\in J_{w}$ such that  $w(i,-\epsilon)<1_{\ell,-\epsilon}$ or $w(i,\epsilon)<w$. 
Since $w(0,-\epsilon)=1_{\ell,-\epsilon}$ and $w(0,\epsilon)=w$, and since the $b_{i}$ with $i\in J_{w}$ give an $R_{w}$-basis for $M(C_{w})$, this means $b_j \otimes V \cap B_{D_w,j}(M) = 0$.

For a band, the walk $(D_w)_{>j}$ is of the form $E^\infty$ for some finite walk $E$. We think of $E$ as a relation. Then $B_{D_w,j} = E'' \cap (E^{-1})' + E' \cap (E^{-1})''$ which is the same as $E' \cap (E^{-1})''$ by Lemma \ref{lemma:rewriting-relations-symmetric-bands} since $\dim_{K}(M)<\infty$. Moreover $E' = ((D_w)_{>j})^-(M)$ satisfies $E'\cap b_j\otimes V=0$ by Lemma \ref{lemma:evaluation-b-i-can-not-be-in-the-minus}, as above. 

(ii) Follows from (i) by the dimension formula in Theorem~\ref{theorem:locfiniteandreflectisos}(i).
\end{proof}

The following result is similar to \cite[Theorem 9.1]{Cra2018}.

\begin{theorem}
Let $u$ be a string or a band. 
Let $M=\bigoplus M(C_{w})\otimes_{R_{w}}V_{w}$ as $w$ runs through a set $W$ of strings and bands, where each $R_{w}$-module $V_{w}$ is finite-dimensional. 
Then for each $w$ in $W_{u}=\{w\in W\mid w\sim u\}$ there is an automorphism  $\pi_{w}$ of $K$ such that $F_{C_{u}}(M)\cong \bigoplus {}_{\pi_{w}}V_{w}$ as $w$ runs through $W_{u}$. 
\end{theorem}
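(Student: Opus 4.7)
The plan is to compute $F_{C_u}$ on each summand of $M$ separately and then combine. First I would verify that $F_{C_u}$ respects direct sums: additivity of $T_{C_u}=\Hom_R(M(C_u),-)$ is immediate, and the conditions defining $B_{C_u}^+$ and $B_{C_u}^-$ are imposed pointwise on the images of the generators $b_i$, so they too are preserved. If $W$ is infinite, this conclusion still holds by reducing to the finite case via Theorem~\ref{theorem:locfiniteandreflectisos}(i): every class in $F_{C_u}(M)$ is represented by a homomorphism whose image, modulo $B_{C_u}$, lies in a finite subsum.

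Granted additivity, the calculation reduces to evaluating $F_{C_u}$ on each $M(C_w)\otimes_{R_w}V_w$. For $w\notin W_u$, Lemma~\ref{lemma:evoffwonmcwtensorv}(ii) immediately gives $F_{C_u}(M(C_w)\otimes V_w)=0$. For $w\in W_u$, write the equivalence $u\sim w$ as $u=w[n]$ or $u=(w^{-1})[n]$. In either case, I would observe that the underlying $R$-module $M(C_w)\otimes_{R_w}V_w$ admits a natural identification with $M(C_u)\otimes_{R_u}V_w'$, where $V_w'$ is the transport of $V_w$ along the ring isomorphism $R_w\cong R_u$ induced by the shift or reversal. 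This identification is canonical up to re-indexing the basis elements $b_i$, which alters the automorphisms $\pi_i$ of Definition~\ref{definition:pi-i-automorphisms} by a common pre-composition; that pre-composition is the twist $\pi_w$. Applying Lemma~\ref{lemma:evoffwonmcwtensorv}(i) to $M(C_u)\otimes_{R_u}V_w'$ then yields
\[
F_{C_u}(M(C_w)\otimes_{R_w}V_w)\cong V_w'\cong {}_{\pi_w}V_w.
\]
Summing over $w\in W$ gives the claimed decomposition $F_{C_u}(M)\cong\bigoplus_{w\in W_u}{}_{\pi_w}V_w$.

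The main obstacle is the identification $M(C_w)\otimes_{R_w}V_w\cong M(C_u)\otimes_{R_u}V_w'$ compatibly with all structure, since the canonically associated walks $C_u$ and $C_w$ may disagree at the symmetries of a symmetric string or band, and not merely by a re-indexing of the generators. To circumvent this, I would pass through the special-direct and special-inverse walks: by Theorem~\ref{theorem:orientations-main-result}, $F_{C_u}\cong F_{D_u}$ and $F_{C_w}\cong F_{D_w}$ up to twist (and similarly with $D'$), and the operations of shift and inversion behave transparently on $D_w$ and $D'_w$, namely $D_{w[n]}=D_w[n]$, $D'_{w[n]}=D'_w[n]$, $D_{w^{-1}}=(D'_w)^{-1}$ and $D'_{w^{-1}}=(D_w)^{-1}$. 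In the symmetric band case one must additionally check that the induced isomorphism $R_w\cong R_u$ is compatible with the free product presentation $R_w=R'_w\ast_K R''_w$, which amounts to a bookkeeping exercise in tracking the special loops $s,t$ and the walks $D,E$ appearing in Lemma~\ref{lemma:form-of-symmetric-bands}(ii) under the shift or inversion.
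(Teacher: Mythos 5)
Your overall strategy is the same as the paper's in outline: use additivity, then evaluate $F_{C_u}$ on each summand $M(C_w)\otimes_{R_w}V_w$ using Lemma~\ref{lemma:evoffwonmcwtensorv}(i) for $w\sim u$ and Lemma~\ref{lemma:evoffwonmcwtensorv}(ii) for $w\not\sim u$. However, there is a genuine gap in the additivity step, and the treatment of $w\sim u$ takes a different and somewhat harder route than the paper.

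The additivity gap: when $u$ is a band, $M(C_u)$ is infinite-dimensional and not finitely generated as a left $R$-module (the generators $b_i$ for $i\in\Z$ are linked by alternating direct and inverse letters, so no finite subset generates). Hence $T_{C_u}=\Hom_R(M(C_u),-)$ does \emph{not} commute with infinite direct sums; a homomorphism to $\bigoplus_W M_w$ may fail to factor through any finite subsum. Your proposed reduction ``via Theorem~\ref{theorem:locfiniteandreflectisos}(i)'' cannot repair this, since that theorem requires $M$ to be finite-dimensional over $K$, which it is not when $W$ is infinite. The paper sidesteps the problem entirely: by Lemma~\ref{lemma:hom-functors-relation-functors}, $F_{C_u}\cong{}_{\pi_0}F_{C_u,0}$, and $F_{C_u,0}$ is built from the relation functors $D^\pm$, which do commute with arbitrary direct sums. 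So while your statement that each class in $F_{C_u}(M)$ lives in a finite subsum is actually correct, the justification you offer does not establish it; the right tool is the passage to $F_{C_u,0}$.

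On the treatment of $w\sim u$: the paper simply invokes a twist isomorphism of \emph{functors}, $F_{C_u}\cong{}_{\pi_w}F_{C_w}$, asserted in the proof of Theorem~\ref{theorem:locfiniteandreflectisos} as ``easy to see'', and then applies Lemma~\ref{lemma:evoffwonmcwtensorv}(i) with the functor $F_{C_w}$ on the summand $M_w$. You instead propose an isomorphism of \emph{bimodules} $M(C_w)\otimes_{R_w}V_w\cong M(C_u)\otimes_{R_u}V'_w$, which is a stronger and more delicate claim precisely because $C_u$ need not be a shift of $C_w$ (they may disagree at symmetry positions, as you note). Your circumvention via $D_w$, $D'_w$ and Theorem~\ref{theorem:orientations-main-result} is plausible but not carried out, and it ultimately reproduces the same twist-of-functors argument the paper relies on, making the detour through the underlying module unnecessary. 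Staying at the level of functors, as the paper does, is both more economical and avoids the bookkeeping you flag at the end.
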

\begin{proof}
Let $C=C_{u}$. 
For any walk $D$ with $D^{*}\in H(\ell,\epsilon)$, the functors $D^{\pm}$ commute with arbitrary direct sums (see for example \cite[Lemma 6.1]{Cra2018}). 
Hence, and by Lemma  \ref{lemma:hom-functors-relation-functors}, the same is true for the functor $F_{C}\cong F_{C,0}$. 
When $w\sim u$ we have an automorphism $\pi_{w}$ with $F_{C}\cong {}_{\pi_{w}}F_{C_{w}}$. 
For each $w\in W$ let $M_{w}=M(C_{w})\otimes _{R_{w}}V_{w}$. 
By part (ii) of Lemma \ref{lemma:evoffwonmcwtensorv} we have that $F_{C}(M)$ is the direct sum of the $F_{C}(M_{w})$ with $u\sim w$. 
By part (i) we have that $F_{C_{w}}(M_{w})\cong V_{w}$ and hence $F_{C}(M_{w})\cong {}_{\pi_{w}}V_{w}$ for all $w\sim u$. 
\end{proof}

\begin{lemma}
\label{lemma:existsgamma}
Given a string or band $w$ and a finite-dimensional $R$-module $M$, there is an $R_w$-module homomorphism $\gamma:M(C_w)\otimes_{R_w} F_{C_w}(M) \to M$ such that $F_{C_w}(\gamma)$ is an isomorphism.
\end{lemma}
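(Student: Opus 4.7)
The plan is to construct $\gamma$ via the tensor-hom adjunction from an $R_w$-linear section of the quotient $T_{C_w}(M) \twoheadrightarrow F_{C_w}(M)$, and then to verify that $F_{C_w}(\gamma)$ is two-sided inverse to the natural map studied in Lemma \ref{lemma:evoffwonmcwtensorv}(i).

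First I would produce an $R_w$-linear section $s: F_{C_w}(M) \to T_{C_w}(M)$; equivalently, I would split $B_{C_w}(M) \subseteq T_{C_w}(M)$ as $R_w$-submodules, which is meaningful by Lemma \ref{lemma:B_C-is-an-R_w-submodule}. The argument is a case analysis on the type of $w$. For $w$ an asymmetric string we have $R_w=K$, so the splitting is automatic. For $w$ a symmetric string the hypotheses on the algebra make $R_w$ semisimple artinian by Lemma \ref{lemma:characterisingsemisimple-quadratics}, so again the splitting is immediate. For $w$ an asymmetric band, Lemma \ref{splitting-lemma-asymmetric-band} provides the complement, using that $F_{C_w}(M)$ is finite-dimensional over $K$ by Theorem \ref{theorem:locfiniteandreflectisos}(i). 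For $w$ a symmetric band, Lemma \ref{lemma:splitting-symmetric-bands} applies directly since $M$ is finite-dimensional.

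Next, since $M(C_w)$ is an $R$-$R_w$-bimodule which is finitely generated and free as a right $R_w$-module, the tensor-hom adjunction
\[
\Hom_R(M(C_w) \otimes_{R_w} V,\, M) \cong \Hom_{R_w}(V,\, \Hom_R(M(C_w), M))
\]
converts the $R_w$-linear section $s$ into an $R$-module homomorphism $\gamma: M(C_w) \otimes_{R_w} F_{C_w}(M) \to M$ determined by $\gamma(m \otimes [\theta]) = s([\theta])(m)$.

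Finally, consider the natural $R_w$-homomorphism $\iota: F_{C_w}(M) \to F_{C_w}(M(C_w) \otimes_{R_w} F_{C_w}(M))$ sending $[\theta]$ to the class of the map $m \mapsto m\otimes [\theta]$. Since $F_{C_w}(M)$ is finite-dimensional over $K$, Lemma \ref{lemma:evoffwonmcwtensorv}(i) asserts that $\iota$ is an isomorphism. Unwinding the definitions (and using that $s$ is a section of the quotient $T_{C_w}(M)\twoheadrightarrow F_{C_w}(M)$) yields $F_{C_w}(\gamma)\circ \iota ([\theta]) = [s([\theta])] = [\theta]$, so $F_{C_w}(\gamma)\circ \iota$ is the identity on $F_{C_w}(M)$, and hence $F_{C_w}(\gamma) = \iota^{-1}$ is an isomorphism. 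The only real content is the splitting, and the difficulty there has been absorbed into the earlier sections; the rest of the proof is a formal application of adjunction together with naturality.
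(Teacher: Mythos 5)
Your proposal is correct and follows essentially the same route as the paper's proof: split off an $R_w$-complement to $B_{C_w}(M)$ inside $T_{C_w}(M)$ using Lemma \ref{lemma:B_C-is-an-R_w-submodule}, semisimplicity of $R_w$ in the string cases, and Lemmas \ref{splitting-lemma-asymmetric-band} and \ref{lemma:splitting-symmetric-bands} in the band cases; then pass to $\gamma$ by tensor-hom adjunction and use Lemma \ref{lemma:evoffwonmcwtensorv}(i) to conclude. The only cosmetic difference is at the end: you show $F_{C_w}(\gamma)\circ\iota=\mathrm{id}$ and invoke that $\iota$ is an isomorphism, whereas the paper deduces surjectivity of $F_{C_w}(\gamma)$ from the identity composition and finishes by a dimension count; both rest on the same ingredients.
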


\begin{proof}
By Lemma \ref{lemma:B_C-is-an-R_w-submodule}, $B_{C_w}(M)$ is an $R_w$-submodule of $T_{C_w}(M)$. The splitting property says that  there is an $R_w$-complement $V$ with $T_{C_w}(M) = B_{C_w}(M)\oplus V$. 
For $w$ a string this property holds automatically, since then $R_{w}$ is a semisimple artinian ring. 
For $w$ a band, see Lemmas \ref{splitting-lemma-asymmetric-band} and \ref{lemma:splitting-symmetric-bands}. 

Hence we can identify $V$ as an $R_w$-module with $F_{C_w}(M)$. The inclusion of $V$ in $T_{C_w}(M)$ is an element of $\Hom_{R_w}(V,\Hom_R(M(C_w),M))$, so corresponds to an element $\gamma \in \Hom_R(M(C_w)\otimes_{R_w} V,M)$. 
Now, the composition of the following maps is the identity.
\[
V \to \Hom_R(M(C_w),M(C_w)\otimes_{R_w}V) = T_{C_w}(M(C_w)\otimes_{R_w}V)\xrightarrow{T_{C_w}(\gamma)} T_{C_w}(M) \to F_{C_w}(M) \cong V
\]
So, the map $T_{C_w}(M(C_w)\otimes_{R_w}V)\to F_{C_w}(M)$ is onto, hence so is 
$F_{C_w}(\gamma) : F_{C_w}(M(C_w)\otimes_{R_w}V)\to F_{C_w}(M)$. Now it is an isomorphism by dimensions, using Lemma~\ref{lemma:evoffwonmcwtensorv}.
%
%
%
%
%
\end{proof}

The main theorem now follows from the lemma at the start of section 3 of \cite{Rin1975}, using the indexing set $\Omega$, the categories of finite-dimensional $R$-modules and of finite-dimensional $R_w$-modules, and the functors $M(C_w)\otimes_{R_w}-$ and $F_{C_w}$, and applying Theorem~\ref{theorem:locfiniteandreflectisos} and Lemmas~\ref{lemma:evoffwonmcwtensorv} and~\ref{lemma:existsgamma}.


\section{Examples}
\label{s:examples}
\subsection{Dynkin and extended Dynkin species of classical type}
The indecomposable representations of quivers of type $A_n$ and $\tilde A_n$ are well-known, but using the fact that the path algebras are string algebras, it is sometimes useful to view them in terms of strings and bands. Using semilinear clannish algebras one can do the same thing for Dynkin and extended Dynkin species of classical type. 

For example the $\R$-subalgebra
\[
A = \begin{pmatrix}
\R & 0 & 0 & 0 \\
0 & \R & 0 & 0 \\
\R & \R & \R & 0 \\
\C & \C & \C & \C 
\end{pmatrix}
\]
of $M_4(\C)$ is a hereditary algebra of type 
\[
\widetilde{CD}_3 \quad : \quad 
\begin{tikzcd}
	\bullet \\
	& \bullet & \bullet \\
	\bullet
	\arrow["{(1,2)}", from=2-2, to=2-3]
	\arrow[from=1-1, to=2-2]
	\arrow[from=3-1, to=2-2]
\end{tikzcd}
\]
in the sense of~\cite{DlabRin1976}. Let $R$ be the semilinear clannish algebra with $K=\R$, 
$Q$ of shape
\[
\begin{tikzcd}
1 \arrow[out=160,in=200,loop,swap,"t"] \arrow[r,"a"] 
& 2 \arrow[r,"b"]
& 3 \arrow[out=340,in=20,loop,swap,"s"]
\end{tikzcd}
\]
with all automorphisms in $\boldsymbol{\sigma}$ trivial and with $t$ and $s$ special where $q_t(x) = x^2-1$ and $q_s(x) = x^2+1$. There is an isomorphism of $\R$-algebras $R\to A$ given by
\[
e_1 \mapsto 
\begin{pmatrix}
1 & 0 & 0 & 0 \\
0 & 1 & 0 & 0 \\
0 & 0 & 0 & 0 \\
0 & 0 & 0 & 0 
\end{pmatrix},
\quad
e_2 \mapsto 
\begin{pmatrix}
0 & 0 & 0 & 0 \\
0 & 0 & 0 & 0 \\
0 & 0 & 1 & 0 \\
0 & 0 & 0 & 0 
\end{pmatrix},
\quad
e_3 \mapsto 
\begin{pmatrix}
0 & 0 & 0 & 0 \\
0 & 0 & 0 & 0 \\
0 & 0 & 0 & 0 \\
0 & 0 & 0 & 1 
\end{pmatrix},
\]
\[
t \mapsto 
\begin{pmatrix}
1 & 0 & 0 & 0 \\
0 & -1 & 0 & 0 \\
0 & 0 & 0 & 0 \\
0 & 0 & 0 & 0 
\end{pmatrix},
\quad
a \mapsto 
\begin{pmatrix}
0 & 0 & 0 & 0 \\
0 & 0 & 0 & 0 \\
1 & 1 & 0 & 0 \\
0 & 0 & 0 & 0 
\end{pmatrix},
\quad
b \mapsto 
\begin{pmatrix}
0 & 0 & 0 & 0 \\
0 & 0 & 0 & 0 \\
0 & 0 & 0 & 0 \\
0 & 0 & 1 & 0 
\end{pmatrix},
\quad
s \mapsto 
\begin{pmatrix}
0 & 0 & 0 & 0 \\
0 & 0 & 0 & 0 \\
0 & 0 & 0 & 0 \\
0 & 0 & 0 & i 
\end{pmatrix}.
\]

\medskip

Another example: the $\R$-subalgebra 
\[
B = \begin{pmatrix}
\R & 0 & 0 \\
\C & \C & 0 \\
\HH & \HH & \HH 
\end{pmatrix}
\]
of $M_3(\HH)$, where $\HH=\R\oplus\R i\oplus \R j\oplus \R k$ is the algebra of quaternions, is a hereditary algebra of type 
\[
\widetilde{BC}_2 \quad : \quad 
\begin{tikzcd}
	\bullet & \bullet & \bullet
	\arrow["{(1,2)}", from=1-1, to=1-2]
	\arrow["{(1,2)}", from=1-2, to=1-3]
\end{tikzcd}
\]
in the sense of~\cite{DlabRin1976}. The $\R$-subalgebra
\[
B' = \begin{pmatrix}
\R & \R & 0 & 0 \\
\R & \R & 0 & 0 \\
\C & \C & \C & 0 \\
\HH & \HH & \HH & \HH 
\end{pmatrix}
\]
of $M_4(\HH)$ is Morita equivalent to $B$, since 
\[
e = 
\begin{pmatrix}
0 & 0 & 0 & 0 \\
0 & 1 & 0 & 0 \\
0 & 0 & 1 & 0 \\
0 & 0 & 0 & 1 
\end{pmatrix}
\]
is an idempotent in $B'$ with $B' e B' = B'$ and $e B' e \cong B$. Let $R$ be the semilinear clannish algebra with $K=\C$, $Q$ as in the first example, $\sigma_t = \sigma_s$ the conjugation automorphism of $\C$, $\sigma_a=\sigma_b=1$, $s$ and $t$ special, $q_t(x) = x^2-1$ and $q_s(x) = x^2+1$. There is an $\R$-algebra isomorphism $R\to B'$ given by
\[
i \mapsto 
\begin{pmatrix}
0 & -1 & 0 & 0 \\
1 & 0 & 0 & 0 \\
0 & 0 & i & 0 \\
0 & 0 & 0 & i 
\end{pmatrix},
\quad
e_1 \mapsto 
\begin{pmatrix}
1 & 0 & 0 & 0 \\
0 & 1 & 0 & 0 \\
0 & 0 & 0 & 0 \\
0 & 0 & 0 & 0 
\end{pmatrix},
\quad
e_2 \mapsto 
\begin{pmatrix}
0 & 0 & 0 & 0 \\
0 & 0 & 0 & 0 \\
0 & 0 & 1 & 0 \\
0 & 0 & 0 & 0 
\end{pmatrix},
\quad
e_3 \mapsto 
\begin{pmatrix}
0 & 0 & 0 & 0 \\
0 & 0 & 0 & 0 \\
0 & 0 & 0 & 0 \\
0 & 0 & 0 & 1 
\end{pmatrix},
\]
\[
t \mapsto 
\begin{pmatrix}
1 & 0 & 0 & 0 \\
0 & -1 & 0 & 0 \\
0 & 0 & 0 & 0 \\
0 & 0 & 0 & 0 
\end{pmatrix},
\quad
a \mapsto 
\begin{pmatrix}
0 & 0 & 0 & 0 \\
0 & 0 & 0 & 0 \\
1 & i & 0 & 0 \\
0 & 0 & 0 & 0 
\end{pmatrix},
\quad
b \mapsto 
\begin{pmatrix}
0 & 0 & 0 & 0 \\
0 & 0 & 0 & 0 \\
0 & 0 & 0 & 0 \\
0 & 0 & 1 & 0 
\end{pmatrix},
\quad
s \mapsto 
\begin{pmatrix}
0 & 0 & 0 & 0 \\
0 & 0 & 0 & 0 \\
0 & 0 & 0 & 0 \\
0 & 0 & 0 & j 
\end{pmatrix}.
\]

As for types $A_n$ and $\tilde A_n$, the classification of the indecomposables for these hereditary algebras is already known, see \cite{DlabRin1976}, but the examples are instructive, as they provide building blocks for the construction of more complicated algebras.


\subsection{Dedekind-like rings}
If $K$ is a field, then the prototypical string algebra $K[x,y]/\ideal{xy}$ is a `Dedekind-like' ring whose unique singular maximal ideal is `strictly split' in the sense of \cite[Definition 10.1 and Theorem and Definition 11.3]{KliLev2005}.

On the other hand, if $K/F$ is a field extension of degree 2, then the subring $A = F + x K[x]$ of $K[x]$ is a Dedekind-like ring whose unique singular maximal ideal is `unsplit'. Writing $K = F(\omega)$ where $\omega^2 = p+q\omega$ with $p,q\in F$, and setting $y = \omega x$, we have $A\cong F[x,y]/\ideal{y^2 - qxy - px^2}$. 

Suppose now that $K/F$ is a separable extension. Then it is Galois, with group $\{1,\sigma\}$, where $\sigma(\omega) = q-\omega$. Let $R$ be the semilinear clannish algebra given by the field $K$, the quiver with a single vertex and loops $a$ and $t$, with $\sigma_a = \sigma_t = \sigma$, $t$ special with $q_t(x)=x^2-1$ and $a$ ordinary with the zero relation $a^2=0$. Thus $R = F\langle \omega,a,t\rangle/I$ where $I$ is given by the relations $\omega^2-q\omega-p=0$, $a^2 = 0$, $t^2 = 1$, $a \omega = (q-\omega) a$, $t \omega = (q-\omega) t$. The $F$-algebra homomorphism $R\to M_2(A)$ given by
\[
\omega\mapsto \begin{pmatrix} 0 & p \\ 1 & q \end{pmatrix}, \quad
a\mapsto \begin{pmatrix} -\omega x & -\omega^2 x \\ x & \omega x \end{pmatrix}, \quad
t\mapsto \begin{pmatrix} 1 & q \\ 0 & -1 \end{pmatrix}, 
\]
is easily seen to be an isomorphism. Thus by Morita equivalence, the classification of finite-dimensional indecomposable modules for $R$ gives a classification of finite-dimensional indecomposable modules for $A$. 
%
%
Note that Klingler and Levy~\cite{KliLev2001} have also given a classification of indecomposables for unsplit Dedekind-like rings.



%
Noncommutative analogues of Dedekind-like rings have been studied by Drozd \cite{Dpn} and with the name `nodal algebras' by Burban and Drozd \cite{BurDro2004}; see also \cite{Zembyk}. Assuming that the base field is algebraically closed, Burban and Drozd classify modules, and more generally objects in the derived category, by reducing to a Bondarenko matrix problem \cite{Bon1991}. It would be interesting to explore examples for other base fields using our semilinear clannish algebras (or `semilinear clans', as mentioned at the end of the introduction).

\subsection{Modular representations of the alternating group $A_4$}
Let $k$ be a field of characteristic 2, not containing a primitive cube root of unity, let $K=k(\omega)$ where $\omega^2+\omega+1=0$, and let $\sigma\in\Aut_k(K)$ be the automorphism with $\sigma(\omega) = \omega^2$. Since the group algebra $kA_4$ is self-injective, any finite-dimensional indecomposable $kA_4$-module which is not projective is a module for the quotient $A$ of $kA_4$ by its socle, and this is Morita equivalent to the semilinear clannish algebra $R$ given by the field $K$ and quiver
\[
\begin{tikzcd}
1 \arrow[out=160,in=200,loop,swap,"c"]
\arrow[r,bend left,"a"]
& 2 \arrow[out=340,in=20,loop,swap,"s"]
\arrow[l,bend left,"b"]
\end{tikzcd}
\]
with $s$ special, $\sigma_a=1$, $\sigma_b=\sigma_c=\sigma_s=\sigma$, $q_s(x)=x^2-1$, and zero-relations $ab,ac,ba,cb,c^2$. Namely, letting 
\[
A' = \bigg\{ \begin{pmatrix}
y & 0 & 0 & u_1 & u_2 & w \\
0 & x_1 & x_2 & 0 & 0 & v_1 \\
0 & x_3 & x_4 & 0 & 0 & v_2 \\
0 & 0 & 0 & x_1 & x_2 & 0 \\
0 & 0 & 0 & x_3 & x_4 & 0 \\
0 & 0 & 0 & 0 & 0 & \sigma(y)
\end{pmatrix}\in M_6(K) \biggm\vert x_i\in k, \ y,u_i,v_i,w\in K \biggr\}
\]
and $e=\diag(1,1,0,0,1,1)$, then $A'eA' = A'$ and $eA'e \cong A$ by the description of $A$ in \cite[\S2]{DlaRin1989}. Writing $E_{ij}$ for the elementary matrix with a 1 at position $(i,j)$, we have a $k$-algebra isomorphism $R\to A'$ given by 
\[
\omega\mapsto
\begin{pmatrix}
\omega & 0 & 0 & 0 & 0 & 0 \\
0 & 0 & 1 & 0 & 0 & 0 \\
0 & 1 & 1 & 0 & 0 & 0 \\
0 & 0 & 0 & 0 & 1 & 0 \\
0 & 0 & 0 & 1 & 1 & 0 \\
0 & 0 & 0 & 0 & 0 & \omega^2 
\end{pmatrix},
\quad
s\mapsto
\begin{pmatrix}
0 & 0 & 0 & 0 & 0 & 0 \\
0 & 1 & 1 & 0 & 0 & 0 \\
0 & 0 & 1 & 0 & 0 & 0 \\
0 & 0 & 0 & 1 & 1 & 0 \\
0 & 0 & 0 & 0 & 1 & 0 \\
0 & 0 & 0 & 0 & 0 & 0 
\end{pmatrix},
\]
$e_1\mapsto \diag(1,0,0,0,0,1)$, $e_2\mapsto \diag(0,1,1,1,1,0)$,
$a\mapsto E_{26}+\omega^2 E_{36}$, $b\mapsto E_{14}+\omega^2 E_{15}$, $c\mapsto E_{16}$.

\subsection{Algebras arising from surfaces with orbifold points of order 2}
Triangulations of surfaces lead to gentle algebras, which are a special case of string algebras.
In the work of Geuenich and Labardini \cite{GeuenichLabardini1}, orbifold points of order 2 are also allowed, and this leads to new algebras involving field extensions. 

For example if the relevant field extension is $\C/\R$ and the triangulation includes a triangle which contains two orbifold points, as in \cite[Definition 5.2(2)]{GeuenichLabardini1}, the building block for the algebra is given by a species with potential, but it is isomorphic to the clannish $\R$-algebra given by the quiver
\[
\begin{tikzcd}
1 \arrow[out=140,in=180,loop,swap,"s"]
\arrow[rr,"a"]
& & 2 \arrow[out=0,in=40,loop,swap,"t"]
\arrow[dl,"c"]
\\
&
\arrow[ul,"b"]
3
&
\end{tikzcd}
\]
with relations $ab = 0$, $bc = 0$, $ca = 0$ and $s$ and $t$ special with polynomial $x^2+1$. We are grateful to Jan Geuenich and Daniel Labardini-Fragoso for help with this example.

\subsection{Dieudonn\'e modules}
Let $K$ be a perfect field of characteristic $p>0$. The ring $W(K)$ of Witt vectors is a discrete valuation ring with maximal ideal $\ideal{p}$ and residue field $K$. The Frobenius automorphism $\sigma$ of $K$ lifts to an automorphism $\hat\sigma$ of $W(K)$. A \emph{Dieudonn\'e module} is a $W(K)$-module $M$ equipped with a $\hat\sigma$-semilinear map $F:M\to M$ and a ${\hat\sigma}^{-1}$-semilinear map $V:M\to M$ satisfying $FV = VF = p 1_M$, see \cite[V,\S1,3.1]{DemGab1970}. Such modules appear in connection with the classification of finite group schemes. 

Clearly the Dieudonn\'e modules annihilated by $p$ are representations of a semilinear string algebra over $K$ given by the quiver with one vertex and ordinary loops $V$ and $F$, with $\sigma_F = \sigma$, $\sigma_V = \sigma^{-1}$ and  $VF = FV = 0$.


\subsection{Existence of $F$-Crystals}
To study the existence of $F$-crystals, Kottwitz and Rapoport \cite{KotRap2003} are led to consider configurations of vector spaces and mappings which amount to representations of the semilinear string algebra $R$ given by an algebraically closed field $K$ of characteristic $p>0$, the quiver 
\[
\begin{tikzcd}
& 1 \ar[rd, bend left, "\phi_2"] \ar[ld, "\psi_1"] \\
0 \ar[ur, bend left, "\phi_1"] \ar[d, "\psi_0"] & & 2 \ar[d, bend left] \ar[ul, "\psi_2"]  \\
\vdots \ar[u, bend left, "\phi_0"] & & \vdots \ar[u] 
\end{tikzcd}
\]
(the `double' of a quiver of type $\tilde A_n$) with all automorphisms $\sigma_{\phi_i}$ and $\sigma_{\psi_i}$ being integer powers (positive, negative or zero) of the Frobenius automorphism, and with the zero relations $\psi_i \phi_i=0$ and $\phi_i \psi_i = 0$. Ringel \cite{Rin-lec} used the string and band classification of indecomposable representations of $R$ to give a new proof of Theorem 6.1 of \cite{KotRap2003}, which states that if $V$ is a non-zero representation of $R$ such all the vector spaces $V_i$ have the same dimension, then $V$ has a subrepresentation $W$ such that all $W_i$ have dimension~1.

\bibliographystyle{abbrv}
\bibliography{BTandCB-semilinear-clannish-algebras-arxiv-v3.bib}
\addtocontents{toc}{\protect\setcounter{tocdepth}{0}}
\end{document}